\newtheorem{theo}{Theorem}[section]
\newtheorem{lem}[theo]{Lemma}
\newtheorem{definition}[theo]{Definition}
\newtheorem{proposition}[theo]{Proposition}
\newtheorem{corollary}[theo]{Corollary}
\newtheorem{ex}[theo]{Example}
\newtheorem{nonex}[theo]{Non-Example}
\newtheorem{conjecture}[theo]{Conjecture}
\title{Strong Property (T) and relatively hyperbolic groups}
\author{Hermès Lajoinie-Dodel \footnote{Hermès Lajoinie-Dodel, hermes.lajoinie@umontpellier.fr, IMAG, Univ Montpellier, CNRS, France}}
\date{December, 2023}
\begin{document}
\maketitle

\begin{center}
\begin{minipage}{0.8\textwidth}
\textsc{Abstract.} We prove that relatively hyperbolic groups do not have Lafforgue strong Property $(T)$ with respect to Hilbert spaces.

To do so we construct an unbounded affine representation of such groups, whose linear part is of polynomial growth of degree $2$. Moreover, this representation is proper for the metric of the coned-off graph.
\end{minipage}
\end{center}

\let\thefootnote\relax\footnotetext{{\bf Keywords} : Relatively hyperbolic groups, strong Property $(T)$, Gromov hyperbolicity, rigidity. {\bf AMS codes} : 53C24, 20F65, 20F67, 19J35}

\section{Introduction}

In 1967, Kazhdan defined in \cite{Kazdhanarticlefondateur} the \emph{Property (T)} for locally compact groups to show that a large class of lattices are finitely generated. One characterization of Property $(T)$, due to Delorme-Guichardet, is that any action by affine isometries on a Hilbert space has a fixed point. This definition enlightens the idea that actions of groups with Property $(T)$ are rigid.

In the same way, another characterization of Property $(T)$ given by Chatterji, Drutu, Haaglund in \cite{Tmedianviewpoint}, is that any action on a median space has bounded orbits. Examples of median spaces are $CAT(0)$ cube complexes, and in particular trees. However this fixed point property fails for general Gromov-hyperbolic spaces, which are a natural generalization of the geometry of trees. In fact, uniform lattices in $Sp(n,1)$ for $n\ge2$ and some random groups \cite{ZukKazdhanandRandomgroups},\cite{KotowskiKotowskirandomgroupsandT}, have Property $(T)$ and are hyperbolic.\\

In his work on the Baum-Connes conjecture, Vincent Lafforgue defined in 2007 in \cite{Lafforgue} a strengthening of Property $(T)$ called the \emph{strong Property $(T)$}. Instead of looking at unitary representations only, strong Property $(T)$ deals with representations with a small growth of the norm. Moreover, strong Property $(T)$ found various applications. The most important one is maybe in dynamics, as it was one of the steps in spectacular progresses on the Zimmer program \cite{BrownFisherHurtadoZimmer'sconjecturecocompact},\cite{BrownFisherHurtadoSlN}, which aims at understanding actions of higher rank lattices on manifolds.

In \cite{Lafforgue}, Lafforgue proved the following Theorem :

\begin{theo} \label{theoreme Lafforgue}\cite[Theorem 1.4]{Lafforgue}

Let $G$ be a locally compact group having the strong Property $(T)$. Let $X$ be a uniformly locally finite Gromov-hyperbolic graph equipped with an isometric action of $G$. Then every orbit for this action is bounded.
    
\end{theo}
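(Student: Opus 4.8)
The plan is to argue by contraposition. I will show that if $G$ acts isometrically on such a graph $X$ with an \emph{unbounded} orbit, then one can manufacture an affine isometric action of $G$ on a Hilbert space whose linear part grows at most polynomially (in the length function underlying strong Property $(T)$) and whose orbits are unbounded. This contradicts the defining consequence of strong Property $(T)$ with respect to Hilbert spaces: for representations of sufficiently slow growth it yields a Kazhdan-type projection (equivalently, it forces the first cohomology to vanish with bounded cocycles), so every affine action with slowly growing linear part has bounded orbits. Thus the entire content lies in extracting, from the geometry of $X$, an unbounded cocycle twisted by a controlled-growth representation.

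First I would fix a basepoint $v_0\in X$ and try to imitate the classical tree cocycle $g\mapsto \mathbf 1_{[v_0,\,gv_0]}\in\ell^2(E(X))$, whose squared norm equals $d(v_0,gv_0)$ and which is an honest cocycle for the unitary permutation action of $G$ on edges. On a general hyperbolic graph this breaks down because geodesics are not unique, so $\mathbf 1_{[v_0,gv_0]}$ is ill-defined and the cocycle identity holds only up to an error concentrated near the center of the relevant triangle. The remedy is to replace the indicator of a single geodesic by a \emph{smeared} weight $b(g)=\theta_{v_0,gv_0}\in\ell^2(E(X))$ averaging over all near-geodesics from $v_0$ to $gv_0$. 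Gromov hyperbolicity (stability of quasi-geodesics, fellow-travelling) keeps $\theta_{x,y}$ concentrated in a bounded neighbourhood of any geodesic $[x,y]$, while uniform local finiteness bounds the number of edges there and so keeps $\|\theta_{x,y}\|^2$ comparable to $d(x,y)$. Consequently $\|b(g)\|\to\infty$ along any sequence realizing the unbounded orbit.

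The main obstacle is that $\theta$ is only an \emph{approximate} cocycle. Writing $\pi$ for the permutation representation, equivariance of the smearing gives $\pi(g)\theta_{v_0,hv_0}=\theta_{gv_0,ghv_0}$, so the defect is $\delta(g,h)=\theta_{v_0,ghv_0}-\theta_{v_0,gv_0}-\theta_{gv_0,ghv_0}$. This is exactly the discrepancy between the broken path $v_0\to gv_0\to ghv_0$ and a geodesic $v_0\to ghv_0$, which by $\delta$-thinness of the triangle $(v_0,gv_0,ghv_0)$ is supported within bounded distance of its center; uniform local finiteness then bounds $\|\delta(g,h)\|$ \emph{uniformly} in $g,h$. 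So $b$ is a quasi-cocycle with unbounded norm and uniformly bounded defect — the key geometric output. To upgrade it to an exact cocycle with a controlled linear part I would introduce a one-parameter family $\pi_s$ of genuine representations deforming $\pi$ (weighting edges by a factor tuned so that $\|\pi_s(g)\|$ stays polynomially bounded, the exponential volume growth being damped by the weight), and extract an honest tangent cocycle $c(g)=\tfrac{d}{ds}\pi_s(g)\,\pi(g)^{-1}\big|_{s=0}$, with the bounded defect of $\theta$ absorbed into a convergent telescoping correction. The delicate point throughout is to keep the growth of the linear part below the threshold that strong Property $(T)$ tolerates while preserving the unboundedness inherited from $\|b(g)\|^2\asymp d(v_0,gv_0)$.

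Finally, once such a pair $(\pi_0,c)$ is in hand with $\pi_0$ of polynomial growth and $c$ unbounded, the conclusion is immediate: strong Property $(T)$ forces every cocycle of a representation of such slow growth to be bounded, contradicting $\|c(g)\|\to\infty$. Hence the orbit of $v_0$ is bounded, and since $g$ acts by isometries, $d(gv,v)\le 2\,d(v,v_0)+d(gv_0,v_0)$ shows every orbit is bounded, as claimed.
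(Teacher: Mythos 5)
Your overall framing (contraposition: an unbounded orbit should produce an affine action with polynomial-growth linear part and unbounded orbits, contradicting the fixed-point consequence of strong Property $(T)$, i.e.\ Proposition \ref{Proposition : fixed point for strong Property (T)}) is legitimate, and the geometric part of your sketch is believable: smeared indicators $\theta_{x,y}$ concentrated near geodesics, $\left\| \theta_{x,y} \right\|^{2} \asymp d(x,y)$ by uniform local finiteness, and a cocycle defect supported near triangle centers, hence uniformly bounded. The genuine gap is the central step, the passage from this bounded-defect \emph{quasi}-cocycle to an honest cocycle over a representation of polynomial growth: it is asserted, not constructed, and it is precisely where all the difficulty of the theorem lives. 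No statement of strong Property $(T)$ available here (Definition \ref{definition : T renforcée de la Salle} and Proposition \ref{Proposition : fixed point for strong Property (T)} concern genuine representations and genuine affine actions) says anything about quasi-cocycles; indeed non-elementary hyperbolic groups with Kazhdan's Property $(T)$ carry plenty of unbounded quasimorphisms, i.e.\ unbounded bounded-defect quasi-cocycles over a unitary representation, so the mere existence of your $b$ can never produce the contradiction by itself. Worse, the specific device you propose, $c(g)=\frac{d}{ds}\pi_{s}(g)\pi(g)^{-1}\big|_{s=0}$, does not yield a Hilbert-space-valued cocycle at all: differentiating $\pi_{s}(gh)=\pi_{s}(g)\pi_{s}(h)$ gives $c(gh)=c(g)+\pi(g)c(h)\pi(g)^{-1}$, an operator-valued, $\mathrm{Ad}$-twisted cocycle (a derivation), not the translation part of an affine action on $H$; and no family $\pi_{s}$ with polynomially bounded norms, nor the claimed ``telescoping absorption'' of the defect, is ever defined. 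As written, the contradiction never materializes.

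For comparison, the proof of this statement (Lafforgue's, which the paper reproduces and generalizes in its Sections on trees and on uniformly fine graphs, see Theorem \ref{theorem:main result}) avoids quasi-cocycles entirely. It builds the Hilbert space $H_{x}$ of finitely supported functions on the vertices with the sphere-partition norm $\sum_{n}(n+1)^{2}\sum_{k\le n}\sum_{z}\bigl|\sum_{a\in I_{z}^{n,k,x}}f(a)\bigr|^{2}$, proves polynomial growth of the translation representation by an explicit combinatorial comparison of the partitions based at $x$ and at $gx$ (each class for $x$ decomposes into $O(d(x,gx))$ classes or differences of classes for $gx$, giving a transfer matrix with $O(d(x,gx))$ nonzero entries per row and column, each of size $O(d(x,gx))$), observes that this representation preserves the continuous linear form $f\mapsto\sum_{a}f(a)$, and then applies Proposition \ref{Proposition : fixed point for strong Property (T)} to obtain a nonzero invariant vector. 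The punchline (Proposition \ref{Proposition : Orbites bornées}) is that an invariant vector is a function constant on orbits, while the $H_{x}$-norm dominates a weighted $\ell^{2}$-norm, so no nonzero element of $H_{x}$ is constant on an infinite set; hence some orbit is finite, and by the isometry argument you correctly give, all orbits are bounded. To repair your proposal you would have to replace the quasi-cocycle upgrade by quantitative estimates of exactly this kind, at which point you would be re-deriving the paper's proof rather than short-cutting it.
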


In particular, looking at the action on its Cayley graph, infinite hyperbolic groups cannot have the strong Property $(T)$.

In \cite[Theorem 2.1]{Lafforgue}, Lafforgue also showed that $SL(3,\mathbb{R})$ has the strong Property $(T)$. Improving this result, de la Salle and de Laat proved in \cite{deLaatdelaSalle(T)renforcéepourgroupesdeLiederangsupérieurs} that connected higher rank Lie groups have the strong Property $(T)$, de la Salle proved also in \cite{delaSallePropTrenforcéepourlesréseauxderangsupérieurs} that this result is also true for higher rank lattices. Another kind of examples of groups with the strong Property $(T)$ are cocompact lattices of $\tilde{A_{2}}$-buildings (see \cite{lécureux2023strong}).\\

Higher rank lattices are well-known to enjoy other rigidity properties. We can mention the Margulis superrigidity theorem. According to Bader and Furman in \cite{BaderFurmanboundariesRigidityhyperbolicdesréseaux} and to Haettel in \cite{ThomasRigiditéHyperboliquedesréseauxderangsupérieur} independently, their actions on hyperbolic spaces are very rigid. In fact, an isometric action of a higher rank lattice on a hyperbolic space is either elliptic or parabolic and this is true without any futher assumption on the hyperbolic space. Since these groups are the most important examples with strong Property $(T)$, a natural question to ask is the following one.

\begin{conjecture} \label{conjecture : conjecture rigidité hyperbolique}
Let $G$ be a discrete countable group with strong Property $(T)$ and let $(X,d)$ be a Gromov-hyperbolic space. Any action of $G$ by isometries on $X$ is elementary.
\end{conjecture}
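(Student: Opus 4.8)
The plan is to argue by contraposition, in the spirit of the present paper's main construction: assuming that $G$ acts non-elementarily by isometries on a Gromov-hyperbolic space $(X,d)$, I would build an \emph{unbounded} affine isometric action of $G$ on a Hilbert space whose linear part $\pi$ has at most \emph{polynomial} (in any case subexponential) growth, and then invoke strong Property $(T)$ to conclude that $H^1(G,\pi)=0$, hence that the affine action has bounded orbits --- contradicting unboundedness. Theorem \ref{theoreme Lafforgue} already yields this conclusion in the uniformly locally finite graph case, and the relatively hyperbolic result of this paper is precisely such a construction (with degree-$2$ linear part) for the non-elementary action of a relatively hyperbolic group on its coned-off graph; the conjecture asks to carry this out for an arbitrary non-elementary action on an arbitrary hyperbolic space.

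First I would note that the elliptic, parabolic (horocyclic) and lineal cases are elementary by definition, so it suffices to rule out the focal and general-type actions; both contain loxodromic isometries and hence have unbounded orbits, the general-type case moreover containing two independent loxodromics. Replacing $X$ by the quasi-convex hull of a $G$-orbit, one may assume $X$ is separable and $G$-invariant and carries the full limit set of the action. The core step is then to manufacture, out of the hyperbolic geometry, a conditionally negative definite kernel $K$ on $X$ --- built from the Gromov product based at a point $x_0$, or equivalently from a coarse codimension-one (wall-like) structure transverse to the loxodromic axes --- such that the associated cocycle $b(g)$ measures the coarse distance $d(x_0, g x_0)$ and is therefore unbounded, precisely because loxodromic elements push $x_0$ to infinity. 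The linear part $\pi$ is the GNS representation attached to $K$.

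The hard part --- and the reason the statement is posed only as a conjecture --- is controlling the growth of $\pi$. In a genuine tree the relevant wall structure is exact, $\pi$ is a permutation representation (bounded growth), and one recovers a-T-menability; in a general hyperbolic space the wall structure is only coarse, and this defect is what inflates the growth of $\pi$. For a relatively hyperbolic group the defect is quadratic, which is how the paper obtains degree-$2$ growth; for an arbitrary hyperbolic $X$ the exponential volume growth and the exponential divergence of geodesics threaten to force $\pi$ to grow exponentially, in which case strong Property $(T)$ --- which tolerates only a definite subexponential (or small exponential) growth window --- gives no contradiction. The decisive test case is a cocompact lattice in $Sp(n,1)$: it is hyperbolic and has Property $(T)$, yet acts non-elementarily and properly on its rank-one symmetric space, so no construction may produce a \emph{proper} affine Hilbert action for it. This is consistent only because the natural linear part for such a group has genuinely exponential growth; it shows that Property $(T)$ alone can never establish the conjecture, and that the growth restriction of strong Property $(T)$ must be used in an essential, quantitative way.

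Granting a uniform subexponential bound on the growth of $\pi$, the conclusion is immediate: strong Property $(T)$ forces every $\pi$-cocycle to be a coboundary, so the affine action fixes a point, the orbits of the isometric action on $X$ are bounded, and the action is elementary. I therefore expect the true obstacle to be the combination of the second and third paragraphs: producing, for an \emph{arbitrary} hyperbolic $X$ and without any local finiteness hypothesis, a single affine isometric action that is simultaneously unbounded on a non-elementary action and of subexponential linear growth. The present paper resolves exactly the relatively hyperbolic instance of this difficulty.
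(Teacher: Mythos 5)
You have not proved the statement, and you could not have: what you were handed is Conjecture \ref{conjecture : conjecture rigidité hyperbolique}, which the paper itself leaves open. The paper only establishes the special case where $X$ is a \emph{uniformly fine} hyperbolic graph (Theorem \ref{theorem:main result}), and it does so by a Lafforgue-type construction (weighted $\ell^{2}$-norms over the partitions of spheres $S_{x}^{n}$, decomposition of the classes $I_{i}^{n,k,x}$ into classes based at $x'=g.x$), not by the conditionally negative definite kernel / GNS route you sketch. Your text is a strategy outline whose decisive step --- producing, from an \emph{arbitrary} non-elementary isometric action on an \emph{arbitrary} Gromov-hyperbolic space, an unbounded affine action on a Hilbert space whose linear part is subexponential --- is precisely the open problem; you concede this yourself when you write ``Granting a uniform subexponential bound on the growth of $\pi$, the conclusion is immediate.'' That concession is the gap: everything after it is the easy half (Proposition \ref{Proposition : fixed point for strong Property (T)} of the paper), and everything before it is heuristic. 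Without the fineness hypothesis none of the counting arguments of Section \ref{section : Preuve groupes relativement hyperbolic } go through (cones need not be finite), which is exactly why, e.g., the mapping class group acting on the curve graph remains out of reach.

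Separately, your ``decisive test case'' paragraph contains a factual error that weakens the heuristic. A cocompact lattice $\Gamma$ in $Sp(n,1)$ is an infinite hyperbolic group, so by Theorem \ref{theoreme Lafforgue} it does \emph{not} have strong Property $(T)$; it simply lies outside the hypotheses of the conjecture, and no delicate ``consistency'' mechanism is needed. In particular the claim that ``the natural linear part for such a group has genuinely exponential growth'' is false: Lafforgue's construction in \cite{Lafforgue}, applied to a Cayley graph of $\Gamma$, yields an unbounded representation of \emph{polynomial} growth --- that is exactly how one sees that $\Gamma$ fails strong Property $(T)$ --- and Nishikawa \cite{nishikawa} even provides a proper affine action with uniformly Lipschitz (degree $0$) linear part, cf.\ Conjecture \ref{conjecture : Shalom}. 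What the $Sp(n,1)$ example does establish is only your weaker methodological point: the conjecture is false if strong Property $(T)$ is replaced by Kazhdan's Property $(T)$, so any proof must use the growth restriction quantitatively. That observation is correct, but it is motivation, not proof.
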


For example, it is an open question to know if the mapping class group of a closed oriented surface does not have the strong Property $(T)$ (this question is also open for Property $(T)$). However, these groups act on the curve graph, which is hyperbolic (see \cite{MasurMinsky}), but this graph is not locally finite, therefore Lafforgue's Theorem \ref{theoreme Lafforgue} does not apply. \\

In this article, we prove the following Theorem, which is a positive answer to Conjecture \ref{conjecture : conjecture rigidité hyperbolique} for a particular class of Gromov-hyperbolic spaces.

\begin{theo}\label{theorem:main result}
Let $G$ be a locally compact group having the strong Property $(T)$. Let $X$ be a \emph{uniformly fine} hyperbolic graph equipped with an isometric action of $G$. Then every orbit for this action is bounded.

\end{theo}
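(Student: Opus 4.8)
The plan is to prove the contrapositive in the spirit of the Delorme--Guichardet philosophy: starting from an isometric action of $G$ on a uniformly fine hyperbolic graph $X$ with an unbounded orbit, I would manufacture an affine isometric action of $G$ on a Hilbert space whose orbits are unbounded but whose linear part grows at most polynomially. Since the strong Property $(T)$ with respect to Hilbert spaces neutralises precisely such slow-growth representations — any affine action whose linear part has subexponential growth must have a bounded orbit — this yields a contradiction. The theorem of Lafforgue (Theorem \ref{theoreme Lafforgue}) already performs this reduction when $X$ is uniformly locally finite; the entire novelty lies in replacing local finiteness by uniform fineness, so I would organise the argument so that fineness enters exactly at the points where Lafforgue uses bounded vertex degree.

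Fix a basepoint $v_0 \in X$ and, for $g \in G$, consider the set $\mathrm{Geo}(v_0, g v_0)$ of geodesics joining $v_0$ to $g v_0$. The key structural input is that, $X$ being \emph{uniformly fine}, there is a function $f$ with $|\mathrm{Geo}(v_0, g v_0)| \le f\big(d(v_0, g v_0)\big)$, uniformly in the endpoints; this boundedness of geodesic multiplicity is the substitute for local finiteness, as it keeps the combinatorial operators built from $X$ well defined and of finite norm. On the Hilbert space $\mathcal H = \ell^2\big(E(X)\big)$ of square-summable functions on the (oriented) edge set, with $G$ acting by the permutation representation $\lambda$, I would define a cochain $b(g)$ by averaging the edge-indicator of each geodesic in $\mathrm{Geo}(v_0, g v_0)$. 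On a genuine tree this recovers the classical cocycle: $b$ is an exact $\lambda$-cocycle with $\|b(g)\|^2 = d(v_0, g v_0)$ and $\lambda$ unitary. On a $\delta$-hyperbolic graph the thin-triangle estimate makes $b$ only an approximate cocycle, the defect being controlled by the overlap of geodesics near their common centre. To turn this into an honest cocycle I would deform the permutation representation by a correction that absorbs the hyperbolic defect; uniform fineness guarantees these corrections involve only boundedly many edges per scale, so the resulting representation $\pi$ is bounded of polynomial growth — of degree $2$, matching the two successive summations used to cancel the defect — rather than unitary.

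It then remains to record two facts. First, $\|b(g)\|$ is comparable to a fixed increasing function of $d(v_0, g v_0)$, so the hypothesis that the $G$-orbit of $v_0$ is unbounded forces the affine orbit $\{b(g)\}$ to be unbounded in $\mathcal H$. Second, because $\pi$ has polynomial, hence subexponential, growth, the Kazhdan-type idempotent furnished by the strong Property $(T)$ applies to $(\pi, \mathcal H)$ and forces every affine orbit with linear part $\pi$ to be bounded. These two statements are incompatible, which proves Theorem \ref{theorem:main result}. Specialising to the action of a relatively hyperbolic group on its coned-off graph, which is uniformly fine and hyperbolic, recovers the non-existence of strong Property $(T)$ announced in the abstract, with the constructed cocycle proper for the coned-off metric.

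The principal obstacle is the second paragraph: making $b$ into a bona fide cocycle while keeping the linear part of controlled growth, in a graph that is genuinely not locally finite. Uniform fineness is what saves the construction — it bounds the number of geodesics, hence the rank of every correction operator, so that the deformation $\pi$ stays bounded with polynomial growth of degree $2$ instead of blowing up. A secondary technical point is to verify carefully that the consequence of strong Property $(T)$ used above — a bounded orbit for affine actions with slow-growth linear part — applies verbatim to this non-unitary representation, which requires checking that $\pi$ falls within the class of representations for which Lafforgue's idempotent is defined.
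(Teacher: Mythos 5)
Your overall philosophy (slow-growth representation plus strong $(T)$) points in the right direction, but there are two genuine gaps, one of which is fatal to the logic of your final step. The consequence of strong Property $(T)$ that you invoke --- ``any affine action whose linear part has subexponential growth must have a bounded orbit'' --- is not what strong Property $(T)$ gives. For a non-unitary linear part, Proposition \ref{Proposition : fixed point for strong Property (T)} yields a \emph{fixed point} of the affine action (equivalently, a non-zero invariant vector), and with a fixed point $x_{0}$ your cocycle becomes $b(g)=x_{0}-\pi(g)x_{0}$, so that $\lVert b(g)\rVert \le (1+\lVert \pi(g)\rVert)\lVert x_{0}\rVert = O\big(d(v_{0},gv_{0})^{2}\big)$. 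Since your cocycle only grows like $\lVert b(g)\rVert^{2}\approx d(v_{0},gv_{0})$, there is no incompatibility: an unbounded cocycle whose growth is dominated by the growth of the linear part produces no contradiction at all. (For unitary linear parts, a fixed point forces all orbits to be bounded; for polynomial linear parts it does not, and that is exactly the regime you are in.) The paper avoids this trap by a different mechanism: the translation representation acts on a weighted space $H_{x}$ of functions on the \emph{vertices} and preserves the continuous linear form $\phi(f)=\sum_{a}f(a)$; strong Property $(T)$ then produces a non-zero invariant vector, and such a vector is a function constant on $G$-orbits whose $H_{x}$-norm dominates the $\ell^{2}(X)$-norm, which forces any orbit on which it is non-zero to be finite, hence all orbits bounded. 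The conclusion is extracted from the \emph{shape} of the invariant vector, not from a growth comparison between a cocycle and its linear part.

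Second, the core of your construction is a black box. The step ``deform the permutation representation by a correction that absorbs the hyperbolic defect, so that $\pi$ is polynomial of degree $2$'' is precisely the hard technical content of the theorem, and no construction is offered; the phrase ``matching the two successive summations used to cancel the defect'' is numerology, not a proof of the degree bound. In the paper the degree $2$ arises quite differently: one compares the norms of $H_{x}$ and $H_{gx}$ by writing each sphere-equivalence class $I_{i}^{n,k,x}$ as a disjoint union of classes, and differences of classes, based at $gx$ (Proposition \ref{propriété clé décomposition}); the change-of-basepoint operator then has at most $K(d(x,gx)+1)$ non-zero entries per row and per column, each of absolute value at most $d(x,gx)+1$, giving the operator-norm bound of Proposition \ref{norme de la matrice version 2}. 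Establishing that decomposition requires the special geodesic triangles of Theorem \ref{formenormaledestriangles} and the counting of Corollary \ref{denombrement relation dequivalence cones}; note also that uniform fineness enters through the uniform finiteness of \emph{cones} (Proposition \ref{cardinal cones}), not through your bound on the number of geodesics, which by itself does not control this combinatorics. If you want to pursue the cocycle picture, you would have to make the correction operators explicit, prove the degree bound, and then still replace your final step, since even a successful construction would leave you with the non-contradiction described above.
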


A graph is uniformly fine if for every edge the set of simplicial loops of a fixed length containing this edge is finite and the number of such loops does not depend on the edge. With this definition, we can see that uniformly fine graphs could be locally infinite.
Yet, locally finite graphs are uniformly fine, therefore Theorem \ref{theorem:main result} is a generalisation of Theorem \ref{theoreme Lafforgue}. This class of graphs are important in geometric group theory to study \emph{relatively hyperbolic groups}.

Relative hyperbolicity was defined by Gromov in 1987 in \cite{Gromovhypgroups}.  It is a geometric generalisation to hyperbolic groups for a larger class of groups.  The rough idea is that a group $G$ is hyperbolic relatively to a family of subgroups $\mathcal{P}$ if the geometry of $G$ is hyperbolic outside of $\mathcal{P}$. One characterization due to Bowditch of these groups (see \cite{Bowditchrelhyp}) is a particular action on a uniformly fine hyperbolic graph, which is unbounded in particular. Therefore Theorem \ref{theorem:main result} leads to the following corollary :

\begin{corollary}\label{corollaire : gp rel hyp n'ont pas T renforcée}

Let $G$ be a group hyperbolic relative to a family of subgroups $\mathcal{P}$ with infinite index in $G$. Then $G$ does not have the strong Property $(T)$.
\end{corollary}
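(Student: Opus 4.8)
The plan is to deduce the corollary from Theorem~\ref{theorem:main result} by contraposition: I would produce, from the relatively hyperbolic structure, an isometric action of $G$ on a uniformly fine hyperbolic graph admitting an \emph{unbounded} orbit, which by Theorem~\ref{theorem:main result} is incompatible with the strong Property $(T)$. The whole argument should then reduce to a single application of that theorem, once the geometric object it requires has been exhibited.

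The starting point is Bowditch's characterization of relative hyperbolicity \cite{Bowditchrelhyp}. Since $G$ is hyperbolic relative to $\mathcal{P}$, there is a connected, fine, $\delta$-hyperbolic graph $X$ (one may take the coned-off Cayley graph, or the graph built by Bowditch) on which $G$ acts isometrically with finitely many orbits of edges, finite edge-stabilizers, and with the subgroups of $\mathcal{P}$ arising as the infinite vertex-stabilizers. First I would record that such an action is cobounded: selecting one edge from each of the finitely many edge-orbits produces a bounded set whose $G$-translates cover $X$, so that a bounded orbit would force $X$ itself to have finite diameter.

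Next I would check the two hypotheses needed to feed $X$ into Theorem~\ref{theorem:main result}. For \emph{uniform fineness}, fineness provides, for each edge $e$ and each integer $n$, finitely many circuits of length $n$ through $e$; because $G$ acts by automorphisms with only finitely many edge-orbits, this number can be bounded uniformly over all edges, so $X$ is uniformly fine. For \emph{unboundedness}, this is precisely where the infinite-index hypothesis enters: it is the condition guaranteeing that the action is non-degenerate, i.e. that $X$ has infinite diameter. Indeed, the degenerate situation where $X$ collapses to bounded diameter corresponds exactly to $G$ coinciding, up to a bounded product with the generating set, with its peripheral cosets, which would force some $P\in\mathcal{P}$ to have finite index, contrary to assumption. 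Hence, by coboundedness, every $G$-orbit in $X$ is unbounded.

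To conclude, I would suppose for contradiction that $G$ has the strong Property $(T)$. Endowing $G$ with the discrete topology makes it a locally compact group, so Theorem~\ref{theorem:main result} applies to the isometric action on the uniformly fine hyperbolic graph $X$ and yields that every orbit is bounded, contradicting the previous paragraph; therefore $G$ does not have the strong Property $(T)$. I expect the delicate point to be the unboundedness step, where one must translate the infinite-index hypothesis into genuine unboundedness of the coned-off graph and rule out the collapsed cases in which a peripheral subgroup has finite index. By contrast, the upgrade from fineness to uniform fineness is a routine consequence of the cocompactness of the action, so the essential geometric content of the corollary lies in identifying the right unbounded action to which Theorem~\ref{theorem:main result} applies.
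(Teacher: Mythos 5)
Your overall strategy is exactly the paper's: the paper deduces the corollary from Theorem~\ref{theorem:main result} by invoking Bowditch's characterization, namely the isometric action on the coned-off graph, which is hyperbolic, uniformly fine (Definition~\ref{Definition Groupe relativement hyperbolique}) and ``unbounded in particular''. Your preliminary steps are sound: coboundedness from the finitely many edge orbits, and the upgrade from fine to uniformly fine (the number of simple loops of length $L$ through an edge is constant along each $G$-orbit of edges, and there are finitely many orbits) is a correct way to reconcile Bowditch's original definition with the one used in the paper.

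However, the step you yourself flag as delicate --- unboundedness of $X$ --- is where your argument has a genuine gap. You claim that if $X$ had bounded diameter, then $G$ would be a bounded product of generators and peripheral cosets, ``which would force some $P\in\mathcal{P}$ to have finite index''. That implication is false as a statement about abstract groups: a group can be a bounded product of infinite-index subgroups (for instance $SL_{3}(\mathbb{Z})$ is boundedly generated by its elementary root subgroups, each of which has infinite index, and more generally bounded generation by infinite-index subgroups is common). So the bounded-product structure alone forces nothing; what excludes the collapsed situation is the relative hyperbolicity of the pair (fineness and hyperbolicity of $X$), which your argument never uses at this point. To close the gap you must either quote unboundedness as part of Bowditch's characterization, as the paper does, or invoke an actual theorem: for example Osin's result that a relatively hyperbolic group which is not virtually cyclic and is not equal to a peripheral subgroup contains an element acting loxodromically on the coned-off graph, so that orbits are unbounded, together with the elementary observation that if $G$ is virtually cyclic (necessarily infinite here, since it admits infinite-index subgroups) its peripheral subgroups are finite and the coned-off graph is quasi-isometric to the Cayley graph, hence unbounded. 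Either way this is a citable but non-trivial geometric fact, not a consequence of the algebraic observation you propose; with that substitution your proof coincides with the paper's.
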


We can also deduce the following rigidity result of groups with the strong Property $(T)$.

\begin{corollary}\label{corollary : propriété de rigidité sur morphisme vers groupe rel hyp}

Let $G$ be a group with the strong Property $(T)$. Let $\Gamma$ be a group hyperbolic relative to $(H_{1},...,H_{k})$, where $H_{i}$ for $1 \le i \le k$ are subgroups of $\Gamma$.

Let $ \varphi: G \to \Gamma$ be a group morphism, then $\varphi(G)$ is finite or is included in one of the conjugates of a $H_{i}$.
\end{corollary}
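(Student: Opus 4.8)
The plan is to let $\Gamma$ act on the uniformly fine hyperbolic graph provided by its relative hyperbolicity, pull this action back to $G$ along $\varphi$, apply Theorem~\ref{theorem:main result} to deduce that $G$ has bounded orbits, and then read off the algebraic conclusion from the structure of bounded-orbit subgroups of a relatively hyperbolic group. This is exactly the strategy underlying Corollary~\ref{corollaire : gp rel hyp n'ont pas T renforcée}, of which the present statement is a morphism-valued refinement.

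First I would fix, by Bowditch's characterisation \cite{Bowditchrelhyp} as recalled in the introduction, a uniformly fine hyperbolic graph $X$ (the coned-off, or Bowditch, graph of $\Gamma$) on which $\Gamma$ acts by isometries. The crucial structural input is that the infinite vertex stabilisers of this action are precisely the conjugates of the peripheral subgroups: the cone vertex attached to a coset $\gamma H_i$ is fixed exactly by $\gamma H_i \gamma^{-1}$. Composing the action $\Gamma \curvearrowright X$ with $\varphi$ yields an isometric action of $G$ on $X$, given by $g\cdot x := \varphi(g)\cdot x$. Since $G$ has the strong Property $(T)$ and $X$ is a uniformly fine hyperbolic graph, Theorem~\ref{theorem:main result} applies directly to this action and guarantees that every $G$-orbit is bounded. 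Fixing a basepoint $x_0$, the orbit $\varphi(G)\cdot x_0$ thus has finite diameter, so the subgroup $\varphi(G)\le\Gamma$ acts on $X$ with bounded orbits.

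The remaining, and genuinely substantive, step is to convert ``bounded orbits in $X$'' into the stated dichotomy, and this is where the difficulty lies. One cannot argue naively with hyperbolic elements of the Cayley graph of $\Gamma$, because a parabolic subgroup fixes a cone vertex and therefore \emph{does} have bounded orbits in $X$. Instead I would invoke the classification of subgroups of a relatively hyperbolic group: an infinite subgroup $L\le\Gamma$ that is not conjugate into any peripheral subgroup contains an element $g$ acting loxodromically on $X$, so that $d_X(x_0, g^n\cdot x_0)\to\infty$ and the orbit of $L$ is unbounded. Contrapositively, a subgroup with bounded orbits must be finite or conjugate into some $H_i$. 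Applying this to $L=\varphi(G)$ gives precisely the conclusion. Alternatively, and perhaps more in the spirit of the fine-graph setting, I would use that fineness forces an infinite subgroup with bounded orbits to fix a vertex of $X$; this vertex then has infinite stabiliser, hence is a cone vertex, so $\varphi(G)$ is contained in its stabiliser $\gamma H_i \gamma^{-1}$, while a finite $\varphi(G)$ yields the other alternative.

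The main obstacle, then, is isolating and justifying this classification statement: that bounded orbits on the Bowditch graph characterise exactly the finite and parabolic subgroups. I expect to establish it either from the convergence-group dynamics of $\Gamma$ on the boundary $\partial X$ — a subgroup with bounded orbits has empty or single-point limit set, the degenerate case corresponding to a parabolic fixing a cone vertex — or by the fixed-vertex argument above, extracting the required fineness consequence from the literature on relatively hyperbolic groups. Everything else is a routine composition of the main theorem with this structural fact.
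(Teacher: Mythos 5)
Your proposal is correct, and its first half is exactly the paper's argument: pull the isometric action of $\Gamma$ on the coned-off graph $X$ back through $\varphi$, apply Theorem \ref{theorem:main result} to conclude that $\varphi(G)$ acts on $X$ with bounded orbits, and reduce the corollary to the purely structural statement that a subgroup of $\Gamma$ acting on $X$ with bounded orbits is finite or conjugate into some $H_{i}$ (the paper's Proposition \ref{proposition:sous-groupe d'un groupe relativement hyperbolique avec des orbites bornées }). The divergence is in how that structural statement is handled. You invoke the classification of subgroups of relatively hyperbolic groups: an infinite subgroup not conjugate into a peripheral subgroup contains a loxodromic element, hence has unbounded orbits; this is to be justified by convergence-group dynamics on the Bowditch boundary (Tukia's theorem that an infinite subgroup without loxodromics fixes a unique boundary point, plus geometrical finiteness to identify such a fixed point as a bounded parabolic point whose stabilizer is a peripheral conjugate). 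The paper instead proves the statement from scratch, entirely inside the fine-graph toolkit it has built: it minimizes over vertices the function $f(x)=\max\bigl\{\,n \mid \{g : d(x,g.x)=n\} \text{ is infinite}\,\bigr\}$, then uses the Chatterji--Dahmani distance formula, the angle triangle inequality (Proposition \ref{Triangle inequality}), Proposition \ref{angletoutegéod}, and uniform finiteness of cones (Proposition \ref{cardinal cones}) to show the minimum is $0$ and is attained at a parabolic vertex fixed by the whole subgroup --- which is precisely your alternative ``fixed-vertex'' sketch, carried out in detail. Both routes are sound. Yours is shorter and conceptually clean, but the facts it imports (geometrical finiteness of the boundary action, the identification of infinite point stabilizers with peripheral conjugates, exclusion of conical limit points as fixed points of infinite subgroups) are exactly of the ``known to specialists, awkward to cite'' kind that led the author to write a self-contained proof; moreover, as written you offer two candidate justifications of the key step rather than carrying one out, so the substantive work --- which is all the paper's proof consists of --- is delegated rather than done.
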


To prove Theorem \ref{theorem:main result}, we build a representation with polynomial growth on a Hilbert space without global non-zero fixed vector.

\begin{theo}\label{theorem:main result bis}

Let $G$ be a group hyperbolic relatively to a family of subgroups with infinite index in $G$.

Then, there exists a representation $\alpha$ of $G$ on a Hilbert space $H$, which is proper for the metric of the coned-off graph, such that the linear part $\pi$ of $\alpha$, is polynomial of degree $2$, i.e. there exists a length $l$ on $G$ such that, for all $g \in G$ :

$$ \vert\vert \pi(g) \lvert\lvert \leq P(l(g))$$

where $P \in \mathbb{R}[X]$ and $deg P = 2$.

\end{theo}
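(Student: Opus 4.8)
The plan is to realise the abstract coned-off geometry of $G$ concretely and then to build the affine action $\alpha=(\pi,b)$ by hand on an $\ell^2$-space of edges, using a tree-like model and correcting for the failure of geodesic concatenation. First I would invoke Bowditch's characterisation to fix a connected uniformly fine hyperbolic graph $X$ on which $G$ acts by isometries with finitely many orbits of edges and finite edge-stabilisers, in such a way that the combinatorial metric $d_X$ is quasi-isometric to the metric of the coned-off graph. Choosing a basepoint $o$, the hypothesis that the peripheral subgroups have infinite index guarantees that the orbit $G\cdot o$ is unbounded for $d_X$, so it suffices to produce an affine action whose cocycle $b$ satisfies $\|b(g)\|\to\infty$ as $d_X(o,g\cdot o)\to\infty$ and whose linear part is polynomially bounded of degree $2$ in a length associated with $d_X$.

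For the model I would take $H_0=\ell^2(E)$, where $E$ is the set of oriented edges of $X$ with $\delta_{\bar e}=-\delta_e$, and let $\lambda$ be the (unitary) permutation representation of $G$ on $H_0$. In a tree the signed indicator $b_0(g)=\mathbf 1_{[o,g\cdot o]}$ of the geodesic from $o$ to $g\cdot o$ is an honest $\lambda$-cocycle with $\|b_0(g)\|^2=d_X(o,g\cdot o)$, which would immediately give properness. The essential point is that uniform fineness lets one make sense of this construction in the locally infinite setting (the relevant sums over edges are finite and the constants are uniform), while Gromov hyperbolicity controls the defect of the cocycle relation: since $[o,g\cdot o]$, $g\cdot[o,h\cdot o]=[g\cdot o,gh\cdot o]$ and $[o,gh\cdot o]$ form a thin geodesic triangle, the error $b_0(gh)-b_0(g)-\lambda(g)b_0(h)$ is supported on a uniformly bounded number of edges near the triangle. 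Thus $b_0$ is a genuine quasi-cocycle for the unitary representation $\lambda$, with defect bounded in norm independently of $g,h$.

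The difficulty — and the reason the degree is $2$ rather than $0$ — is that this bounded defect cannot in general be absorbed into a coboundary for the unitary representation $\lambda$ itself (otherwise hyperbolic groups with Property $(T)$ would be $a$-$T$-menable). I would therefore deform $\lambda$ into a non-unitary but polynomially bounded representation $\pi$ for which the defect becomes an exact coboundary. Concretely, I would work on a two-step object $H=H_0\oplus H_1\oplus H_2$ carrying a triangular (unipotent-type) extension of the permutation representation: the diagonal blocks are unitary permutation representations, and the off-diagonal blocks transport the quasi-cocycle defect along the geodesic $[o,g\cdot o]$. Each of the $\sim d_X(o,g\cdot o)$ thin triangles met along this geodesic contributes a transport term of controlled size, so the first off-diagonal block grows linearly and the iterated second block grows quadratically in the length; this is exactly what produces $\|\pi(g)\|\le P(l(g))$ with $\deg P=2$. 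The corrected cocycle $b$ keeps its leading term $b_0$, so $\|b(g)\|\gtrsim\sqrt{d_X(o,g\cdot o)}$ and properness for the coned-off metric is preserved.

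The main obstacle I expect is to carry out this deformation so that three requirements hold simultaneously: that $\pi$ is a genuine representation (the triangular transport must satisfy the multiplicativity $\pi(gh)=\pi(g)\pi(h)$ exactly), that the modified $b$ is a genuine $\pi$-cocycle with the defect cancelled exactly rather than approximately, and that the growth of the off-diagonal transport is bounded by a quadratic polynomial uniformly in $g$. All three hinge on quantitative hyperbolic estimates — thinness of triangles and the exponential divergence of geodesics — together with the uniform fineness of $X$, which supplies the finiteness and uniformity of constants needed for the locally infinite coned-off graph. Once these estimates are in place, the verification of the cocycle identity and of the polynomial bound should reduce to a bookkeeping of the transport terms along geodesics.
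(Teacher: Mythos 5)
Your argument breaks down at its second step, before the point you yourself flag as the main obstacle. You claim that because the triangle with vertices $o$, $g\cdot o$, $gh\cdot o$ is $\delta$-thin, the defect $b_{0}(gh)-b_{0}(g)-\lambda(g)b_{0}(h)$ is supported on a uniformly bounded set of edges, hence uniformly bounded in norm. This inference is invalid: thinness says the three sides stay at Hausdorff distance $\delta$ from one another, not that they share edges, and signed indicators of two edge-disjoint fellow-travelling geodesics do not cancel at all. The defect is an edge-cycle supported on the union of the three sides, and when the chosen geodesics are edge-disjoint along their fellow-travelling portions its $\ell^{2}$-norm squared is comparable to the perimeter of the triangle, hence unbounded in $g,h$. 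The exact cancellation you compute is a tree phenomenon; outside trees it fails, and uniform fineness does not repair it (a hyperbolic group is hyperbolic relative to the trivial subgroup, so its locally finite --- in particular uniformly fine --- Cayley graph is a legitimate coned-off graph here, and such graphs contain arbitrarily long geodesic bigons whose two sides share no edges). This is precisely why the known quasi-cocycle constructions on hyperbolic groups (Mineyev's homological bicombing, and its use by Mineyev--Monod--Shalom and Hamenst\"adt) replace single geodesics by flows spread over many paths with exponential tails; the naive geodesic indicator does not have bounded defect. On top of this, the heart of your proposal --- the triangular ``unipotent-type'' extension in which the defect becomes an exact coboundary, $\pi$ is exactly multiplicative, and the transport blocks grow quadratically --- is never constructed; the three requirements you list at the end are not a technical verification left to the reader, they are the entire content of the theorem, and no mechanism is offered that would make the cancellation exact or stop the growth at degree $2$ rather than higher or exponential.

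For contrast, the paper never works with quasi-cocycles or deformations of a unitary representation. It builds, for each vertex $x$ of the coned-off graph, a Hilbert space $H_{x}$ of functions on the \emph{vertices}, completed for Lafforgue's weighted norm
$$ \Bigl\| \sum_{a} f(a)e_{a} \Bigr\|_{H_{x}}^{2}= \sum_{n} (n+1)^{2} \sum_{k=0}^{n} \sum_{i} { \Bigl\vert \sum_{a \in I_{i}^{n,k,x}} f(a) \Bigr\rvert }^{2}, $$
where the $I_{i}^{n,k,x}$ are classes of points of the sphere $S_{x}^{n}$ with the same distance profile to $B(x,k)$. The polynomial bound $\| \pi(g)\| \le (Kl(g)+K)(l(g)+1)$ comes from a combinatorial comparison of the classes based at $x$ and at $g\cdot x$ (this is where fineness, cones and angles do the real work), not from any transport along geodesics. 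The representation fixes the continuous linear form $\phi(f)=\sum_{a}f(a)$, so the affine subspace $\delta_{e}+\{\phi=0\}$ is invariant; following Alvarez--Lafforgue this yields the affine action $\alpha$ with linear part $\pi$ and cocycle $c(g)=\delta_{e}-g\cdot\delta_{e}$, and properness for the coned-off metric is then immediate because the $H_{e}$-norm dominates the weighted $\ell^{2}$-norm, giving $\|c(g)\|^{2}\ge (d(1,g)+1)^{2}$. In other words, the ``correction'' you are trying to encode in off-diagonal blocks is instead absorbed into the basepoint-dependent norm of the space, and the cocycle remains the naive one $\delta_{e}-g\cdot\delta_{e}$; if you want to salvage your outline, that is the structural change to make.
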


It is unknown whether this could be improved to a proper action if parabolic subgroups have the Haagerup Property. For instance, non-uniform lattices in $Sp(n,1)$ are hyperbolic relatively to nilpotent subgroups according to \cite[Lemma II.10.27]{BridsonHaefliger}. Therefore parabolics subgroups have the Haagerup Property, but the lattices have Property $(T)$.

The study of representations of polynomial growth and in particular uniformly bounded representations is related to the following conjecture.

\begin{conjecture}\label{conjecture : Shalom}(Shalom, see \cite[Conjecture 35]{Nowakbouquin})
    Every hyperbolic group acts properly by affine uniformly Lipschitz actions on a Hilbert space.
\end{conjecture}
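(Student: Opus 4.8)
The plan is to realise $G$ as acting properly and by uniformly Lipschitz affine maps on a Hilbert space $H$, i.e.\ to produce a \emph{uniformly bounded} representation $\pi\colon G\to\mathcal{B}(H)$ with $\sup_{g}\lVert\pi(g)\rVert<\infty$ together with a $1$-cocycle $b\colon G\to H$ (so $b(gh)=b(g)+\pi(g)b(h)$) whose norm $\lVert b(g)\rVert$ tends properly to infinity with the word length $l(g)$. The uniform boundedness of $\pi$ makes the map $g\mapsto(v\mapsto\pi(g)v+b(g))$ automatically uniformly Lipschitz, and properness of $b$ makes the action metrically proper. The key structural point is that $\pi$ \emph{cannot} be taken unitary: when $G$ is hyperbolic with Property $(T)$ (e.g.\ a uniform lattice in $Sp(n,1)$), a unitary $\pi$ with proper $b$ is forbidden by Delorme--Guichardet. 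Thus, unlike the Haagerup situation, the entire difficulty is concentrated in replacing isometries by merely uniformly bounded linear maps, and $\pi$ must genuinely be non-unitary. Compared with Theorem~\ref{theorem:main result bis}, which tolerates a linear part of polynomial growth of degree $2$, here the growth of $\lVert\pi(g)\rVert$ must be driven all the way down to a constant.

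The construction I would use is modelled on the boundary representations of rank-one groups. Fix a visual, Ahlfors-regular quasiconformal (Patterson--Sullivan) measure $\mu$ on the Gromov boundary $\partial G$, and consider the analytic family on $L^{2}(\partial G,\mu)$ defined for a complex parameter $z$ by $\pi_{z}(g)f(\xi)=\bigl(\tfrac{dg^{-1}_{*}\mu}{d\mu}(\xi)\bigr)^{z}f(g^{-1}\xi)$, which is unitary on the symmetric axis $\Re z=\tfrac12$. The first task is to prove that $\pi_{z}$ is \emph{uniformly bounded}, with a bound independent of $g$, for $z$ in a small complex neighbourhood of $\tfrac12$; this rests on precise control of the Radon--Nikodym cocycle through the Gromov product, on the doubling property of $\mu$, and on a shadow-lemma comparison of $\mu$-masses of shadows, all made uniform over $G$. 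One then extracts the cocycle by differentiating the family: setting $b(g)=\left.\partial_{z}\pi_{z}(g)\mathbf{1}\right|_{z=1/2}$ (applied to the constant vacuum vector) produces, up to normalisation, a genuine $1$-cocycle for the unitary $\pi_{1/2}$ whose values encode the Busemann-type quantity $\xi\mapsto\log\tfrac{dg^{-1}_{*}\mu}{d\mu}(\xi)$. As a purely combinatorial alternative to the same object I would invoke Mineyev's $G$-equivariant homological bicombing on the Cayley graph, whose symmetrised version yields an honest cocycle relation $q(e,gh)=q(e,g)+g\cdot q(g^{-1},h)$ with a proper lower bound built in from the length of the combing.

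For properness I would estimate $\lVert b(g)\rVert$ from below. The Busemann cocycle integrates, over the shadows seen from the identity and from $g$, a quantity comparable to the translation of $g$ along geodesics; bounding the $\mu$-masses of these shadows below via Ahlfors regularity and the shadow lemma yields $\lVert b(g)\rVert^{2}\geq c\,l(g)$ for some $c>0$, which diverges properly. Combined with the uniform bound on $\pi_{z}$ for $z$ near $\tfrac12$, this produces the uniformly Lipschitz proper affine action required. If instead $z$ is pushed genuinely off the critical line in order to enlarge the cocycle, the same mechanism applies with a complementary-series-type uniformly bounded $\pi_{z}$ replacing the derivative construction.

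The hard part, and the reason the conjecture remains open in full generality, is the simultaneous control of two competing requirements. Uniform boundedness of $\pi_{z}$ forces $z$ to stay near the symmetric axis, while producing a properly divergent cocycle pushes the deformation to be as large as possible; these pull in opposite directions, and the constants in the doubling and shadow estimates must be shown uniform over \emph{all} hyperbolic groups, not only the free groups, surface groups, or rank-one lattices where such estimates are classically available. Proving that the multiplicative constants entering the uniform boundedness of the analytic boundary family are controllable independently of the large-scale geometry of $G$ --- equivalently, that Mineyev's bicombing can be upgraded from a bounded-defect quasi-cocycle into an honest cocycle for a uniformly bounded (rather than unitary) representation --- is precisely the obstacle I expect to be the crux of the argument.
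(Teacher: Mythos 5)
This statement is not a theorem of the paper: it is Shalom's conjecture, stated as Conjecture \ref{conjecture : Shalom} precisely because it is \emph{open}. The paper offers no proof and could not be expected to; it only records the known partial cases (hyperbolic groups with the Haagerup property, where one can even take $\pi$ unitary, and lattices in $Sp(n,1)$ via Nishikawa's work \cite{nishikawa}), and it contrasts the conjecture with Theorem \ref{theorem:main result bis}, where the linear part is only of polynomial growth of degree $2$ rather than uniformly bounded. So there is no paper proof to compare yours against, and your proposal should be judged as a standalone argument --- and as such it is not a proof but a research program. You identify the right circle of ideas (analytic deformation of boundary representations near the critical exponent, Patterson--Sullivan measures and shadow lemmas, Mineyev's bicombing as a combinatorial substitute), and to your credit you state explicitly that the crux --- uniform boundedness of $\pi_{z}$ off the unitary axis with constants controlled uniformly, or equivalently upgrading Mineyev's bounded-defect quasi-cocycle to an honest cocycle for a uniformly bounded representation --- is unproven. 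That crux is exactly the content of the conjecture; conceding it means the proposal establishes nothing beyond the already-known special cases.

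One concrete technical error worth flagging even at the level of the sketch: your cocycle extraction $b(g)=\left.\partial_{z}\pi_{z}(g)\mathbf{1}\right|_{z=1/2}$ does not satisfy the cocycle identity for $\pi_{1/2}$. Differentiating $\pi_{z}(gh)=\pi_{z}(g)\pi_{z}(h)$ and applying to $\mathbf{1}$ gives $b(gh)=\dot{\pi}(g)\pi_{1/2}(h)\mathbf{1}+\pi_{1/2}(g)b(h)$, and the first term equals $\pi_{1/2}(g)$-twisted data only if $\mathbf{1}$ is $\pi_{1/2}(h)$-invariant --- which fails on the critical line unless $\mu$ is actually invariant (the amenable case). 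The derivative trick produces a genuine cocycle only at the endpoint of the deformation where the constant vector is fixed (the parameter at which the Radon--Nikodym factor disappears), which is how the complementary-series degeneration works for $SO(n,1)$ and how Nishikawa's construction proceeds for $Sp(n,1)$; there the linear part is the endpoint representation, whose uniform boundedness on an appropriate (Sobolev-type, non-$L^{2}$) completion is the delicate analytic input. Repairing this would force you back to precisely the uniform estimates you acknowledge you cannot supply for a general hyperbolic group, so the gap is not incidental but is the whole problem.
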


In other words, if this conjecture is true, every hyperbolic group acts properly by affine transformations on a Hilbert space with a linear part of degree $0$. This result is clearly true for hyperbolic groups, which have Haagerup Property. For hyperbolic groups with Property $(T)$, only lattices in $Sp(n,1)$ are known to satisfy the conjecture, due to the work of Nishikawa \cite{nishikawa}.\\

Conjecture \ref{conjecture : Shalom} is not true for a large class of groups which generalizes the geometry of hyperbolic groups and includes relatively hyperbolic groups called \emph{acylindrically hyperbolic groups}. In fact, as noticed by Drutu and Mckay in \cite{drutu2023actions}, according to the work of Minasyan–Osin \cite{OsinMinasyan} and de Laat–de la Salle \cite{deLaatdelaSalleL2sepctralgap}, there exists an acylindrically hyperbolic group which statisfies the fixed point property for every uniformly bounded representation on $L^{p}$ and $\ell^{p}$ for $1< p < \infty$ and for $p=2$ in particular. This result implies that we have to look at polynomial representations of degree at least $1$ to have a chance to find a representation without non-zero fixed vectors on Hilbert spaces for acylindrically hyperbolic groups. \\

In this article, we will focus on representations on Hilbert spaces only. Yet another idea to generalize Property $(T)$ is to study \emph{isometric representations on Banach spaces}. In \cite{BaderfurmantsachikMonodBanachT}, Bader, Furman, Gelander and Monod proved that Property $(T)$ is equivalent to the fixed property for isometric representations on the Banach space $X=L^{1}([0,1])$. On the other hand, Yu proved in \cite{Yuactionaffinesurdeslp} that every hyperbolic group admits a proper action on an $\ell^{p}$-space for $p$ large enough. This result provides examples of groups with Property $(T)$, that do not satisfy the fixed point property on $L^{p}$ for $p$ large enough.

In the same way, Vincent Lafforgue defined in \cite{Lafforgue} the \emph{strong Banach Property $(T)$}, which looks at representations with a small growth of the norm on Banach spaces. In \cite{delaSallePropTrenforcéepourlesréseauxderangsupérieurs}, \cite{lécureux2023strong}, it is proved that examples of groups with the strong Property $(T)$ with respect to Hilbert spaces, also satisfy the strong Property $(T)$ for a large class of Banach spaces including $L^{p}$ spaces for $1<p<\infty$.

In \cite{ChatterjiDahmani}, Chatterji and Dahmani generalize Yu's result and proved that relatively hyperbolic groups admit an isometric action on an $\ell^{p}$ for $p$ large enough with an unbounded orbits. Therefore these groups do not have the strong Property $(T)$ with respect to the Banach spaces $L^{p}$, for $p$ large enough, but this question was still open for Hilbert spaces. In the rest of the text, the strong Property $(T)$ will always denote the strong hilbertian Property $(T)$.\\

\textbf{Structure of the article.} Section \ref{section : Propriété T } and Section \ref{section : Propriété T renforcée } are an overview on Property $(T)$ and strong Property $(T)$ with an emphasis on fixed points.

Section \ref{section : groupes relativement hyperbolic} is dedicated to the geometry of relatively hyperbolic groups. We give the proof of the existence of a particular geodesic triangle in a uniformly fine graph, which will be very useful in the rest of the paper. We give also a proof of Corollary \ref{corollary : propriété de rigidité sur morphisme vers groupe rel hyp}.

In Section \ref{section : le cas des arbres}, we give the proof of Theorem \ref{theorem:main result} in the simplest case, the case of locally infinite tree.

In Section \ref{section : Preuve groupes relativement hyperbolic }, we give the proof of Theorem \ref{theorem:main result} in the general case. We construct a subexponential representation on Hilbert spaces of relatively hyperbolic groups, which is unbounded. In the last paragraph, we will show that this representation is proper for the coned-off graph metric and prove Theorem \ref{theorem:main result bis}.\\

\textbf{Acknowledgements.}  The author is very grateful to Thomas Haettel for many helpful discussions, his availability and his careful reading of the article. The author would like to thank Pablo Montealegre, Jérémie Brieussel and Anthony Genevois for interesting discussions. The author would like to thank Amélie Vernay for her skills with linux and the installation of IPEdrawing.

\section{Property (T) and fixed points}\label{section : Propriété T }

This section is here to give a general and quick presentation on Property $(T)$ with an emphasis on fixed points. As a general reference on the topic, the reader could look at the monograph \cite{Bouquin(T)}.

In this definition Hilbert spaces are \emph{complex}.

\begin{definition}\label{Property (T)}\cite{Kazdhanarticlefondateur} (Kazhdan Property $(T)$)

Let $G$ be a topological group.

$G$ has Kazdhan Property $(T)$ if every unitary representation $(\pi,H)$ with almost invariant vectors has a non-zero invariant vector.

\end{definition}

We will present in this section another characterization of Property $(T)$ called Property $(FH)$, which is a fixed point result for affine representations on Hilbert spaces and discuss some of its consequences.

In this paragraph, Hilbert spaces are \emph{real}.

\begin{definition}(Property (FH))

A topological group $G$ has $\emph{Property (FH)}$ if every affine isometric action of $G$ on a real Hilbert space has a fixed point.

\end{definition}

The next theorem states that Property $(T)$ and Property $(FH)$ are equivalent for a large class of groups. The first point is due to Delorme \cite[Théorème V.1]{Delorme}, the second point is due to Guichardet \cite[Théorème 1]{Guichardet}.

\begin{theo} \label{theorem de Delorme-Guichardet}   (Delorme-Guichardet)

Let $G$ be a topological group.
\begin{itemize}
    \item If $G$ has Property $(T)$, then $G$ has Property $(FH)$.
    
    \item If $G$ is a $\sigma$-compact locally compact group with Property $(FH)$, then $G$ has Property $(T)$.

\end{itemize}
    
\end{theo}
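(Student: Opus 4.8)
The plan is to reformulate both implications in terms of the $1$-cohomology of orthogonal representations. Recall that an affine isometric action $\alpha$ of $G$ on a real Hilbert space $H$ can be written $\alpha(g)v = \pi(g)v + b(g)$, where $\pi$ is the orthogonal linear part and $b\colon G\to H$ satisfies the cocycle relation $b(gh)=b(g)+\pi(g)b(h)$. The orbit of $0$ is exactly $\{b(g):g\in G\}$, and $\alpha$ has a fixed point if and only if this orbit is bounded: a bounded subset of a Hilbert space has a unique circumcenter (the center of the smallest closed ball containing it, unique by the parallelogram identity), and since $G$ acts by isometries permuting the orbit, this circumcenter is $G$-fixed. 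Thus Property $(FH)$ is equivalent to the assertion that every such cocycle is bounded, and the whole theorem becomes a statement comparing boundedness of cocycles with the spectral-gap condition defining Property $(T)$.

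For the first implication (Delorme), assume $G$ has Property $(T)$ and let $b$ be a cocycle for $\pi$. First I would observe, by Schoenberg's theorem, that $\psi(g)=\lVert b(g)\rVert^2$ is a continuous conditionally negative definite function, so that $\varphi_t(g)=\exp(-t\psi(g))$ is normalized positive definite for every $t>0$. Passing to the GNS representation $(\pi_t,\xi_t)$ of $\varphi_t$, one has $\lVert \pi_t(g)\xi_t-\xi_t\rVert^2 = 2(1-e^{-t\psi(g)})$, which tends to $0$ uniformly on any fixed compact set as $t\to 0^+$, since $\psi$ is bounded on compacta. Choosing a Kazhdan pair $(Q,\kappa)$ for $G$, the vector $\xi_t$ becomes $(Q,\kappa)$-almost invariant for $t$ small, so $\pi_t$ has a nonzero invariant vector; a short argument then forces $\psi$, hence $b$, to be bounded, and the circumcenter gives the fixed point. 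Equivalently, one invokes the standard consequence of Property $(T)$ that every continuous conditionally negative definite function on $G$ is bounded.

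For the converse (Guichardet), I would argue contrapositively, using $\sigma$-compactness to fix an exhaustion $Q_1\subseteq Q_2\subseteq\cdots$ of $G$ by compact sets with $\bigcup_n Q_n=G$. If $G$ fails Property $(T)$, then for every $n$ there is an orthogonal representation $(\pi_n,H_n)$ with no nonzero invariant vector carrying a unit vector $\xi_n$ with $\sup_{g\in Q_n}\lVert\pi_n(g)\xi_n-\xi_n\rVert\le\delta_n$, where the $\delta_n$ may be prescribed to decay as fast as we like. On $H=\bigoplus_n H_n$ with $\pi=\bigoplus_n\pi_n$, I would then set $b(g)=\bigl(c_n(\pi_n(g)\xi_n-\xi_n)\bigr)_n$ for coefficients $c_n\to\infty$ chosen so that $\sum_n c_n^2\delta_n^2<\infty$. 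The tail estimate guarantees $b(g)\in H$ for each $g$ (for fixed $g$ all but finitely many indices satisfy $g\in Q_n$), and $b$ is a cocycle since each coordinate is a coboundary. The point is that $b$ cannot be bounded: if $\lVert b(g)\rVert\le M$ for all $g$, then $\lVert\pi_n(g)\xi_n-\xi_n\rVert\le M/c_n$ for all $g$ and all $n$, so for large $n$ the unit vector $\xi_n$ is globally almost invariant; the minimal-norm element of the closed convex hull of its orbit is then a nonzero $G$-invariant vector in $H_n$, contradicting the choice of $\pi_n$. Hence $\alpha(g)v=\pi(g)v+b(g)$ has unbounded orbits and no fixed point, contradicting Property $(FH)$.

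The routine parts are the cocycle bookkeeping and the continuity of $\pi$ and $b$, where $\sigma$-compactness and the strong continuity of each $\pi_n$ are used. The two genuinely substantive points, and where I expect the main difficulty, are the two directions of the dictionary ``bounded cocycle $\Leftrightarrow$ invariant vectors'': in Delorme's direction, extracting almost invariant vectors from $e^{-t\psi}$ and concluding boundedness of $\psi$; and in Guichardet's direction, proving that the assembled cocycle is genuinely unbounded, which rests on the convex-hull (circumcenter) argument turning a globally almost invariant unit vector into an honest invariant one.
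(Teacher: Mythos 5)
The paper offers no proof to compare against: Theorem \ref{theorem de Delorme-Guichardet} is quoted as a classical result, the first item attributed to Delorme and the second to Guichardet via the two cited references. So your proposal must be judged on its own, and it is correct; it is the standard argument, essentially the proof in Bekka--de la Harpe--Valette's monograph (the paper's general reference for Property $(T)$): Property $(FH)$ is reformulated as boundedness of $1$-cocycles via the Chebyshev-center lemma, Delorme's direction goes through Schoenberg's theorem for $\psi(g)=\lVert b(g)\rVert^{2}$ and the GNS representations of $e^{-t\psi}$, and Guichardet's direction assembles an unbounded cocycle $b(g)=\bigl(c_{n}(\pi_{n}(g)\xi_{n}-\xi_{n})\bigr)_{n}$ out of ever-more-invariant unit vectors, with the convex-hull argument converting a globally $(G,\varepsilon)$-invariant unit vector ($\varepsilon<1$) into a nonzero invariant one.

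Two places where your sketch compresses real (but standard) content deserve to be made explicit. First, in Delorme's direction, the ``short argument'' needs more than the mere existence of a nonzero invariant vector in $\pi_{t}$: you need the quantitative form of Property $(T)$ (a $(Q,\kappa\delta)$-invariant unit vector lies within $\delta$ of the invariant subspace), so that $e^{-t\psi(g)}=\langle\pi_{t}(g)\xi_{t},\xi_{t}\rangle$ is bounded below uniformly in $g$ by decomposing $\xi_{t}$ into its invariant and orthogonal parts; only then does $\psi(g)\le Ct^{-1}$ follow. Second, in Guichardet's direction, continuity of $b$ (without which the affine action is not a legitimate counterexample to $(FH)$) requires choosing the exhaustion with $Q_{n}\subseteq\mathrm{int}\,Q_{n+1}$, so that every compact set lies in some $Q_{n}$ and the series defining $b$ converges uniformly on compacta; this is precisely where local compactness and $\sigma$-compactness enter, as you indicate. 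Finally, since the paper defines Property $(T)$ by ``almost invariant vectors imply invariant vectors'' rather than by Kazhdan pairs, both of your directions implicitly use the standard equivalence of the two formulations (obtained by taking a direct sum of counterexample representations over all pairs $(Q,\varepsilon)$); this is worth a line, but it is not a gap.
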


The equivalence between Property $(T)$ and Property $(FH)$ is true in particular for all discrete countable groups.

\section{Strong Property $(T)$}\label{section : Propriété T renforcée }

In this section, we give the precise definition of the strong Property $(T)$ and discuss about it.

\subsection{Property $(T)$ in term of Kazhdan Projection}

In this subsection, we give a last equivalent definition of Property $(T)$ in terms of Kazhdan Projection. This definition was the main ingredient of the definition of Lafforgue's strong Property $(T)$.

\begin{definition}\label{definition:Kazhdan projection}
Let $G$ be a locally compact group.

A \emph{Kazhdan projection} for $G$, denoted by $p$ is an idempotent in $C^{\star}_{max}(G)$ such that for any unitary representation $(\pi,H)$ on a Hilbert space, $\pi(p)$ is the orthogonal projection on the subspace of $H$ of $G$-invariant vectors.

\end{definition}
The following Theorem is due to Akemann and Walker in \cite{akemann_walter_1981}.

\begin{theo}\cite{akemann_walter_1981} \label{Theorem : Prop (T) Kazhdan projection}
Let $G$ be a locally compact group.

The group $G$ has Property $(T)$ if and only if $G$ admits a Kazhdan projection.

\end{theo}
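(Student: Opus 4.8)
The plan is to prove both implications by passing to the maximal group C*-algebra $C^*_{\max}(G)$, using the bijective correspondence between unitary representations of $G$ and non-degenerate $*$-representations of $C^*_{\max}(G)$, and exploiting that its norm is the supremum $\|a\| = \sup_\pi \|\pi(a)\|$ over all such representations.

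For the implication "Kazhdan projection $\Rightarrow$ Property $(T)$", I would argue by contradiction. Suppose $(\pi, H)$ has almost invariant vectors but no non-zero invariant vector; then by definition $\pi(p) = 0$. Pick $f_k \in C_c(G)$ with $f_k \to p$ in the $C^*_{\max}$-norm, so that $\|\pi(f_k)\| \to \|\pi(p)\| = 0$. Testing $p$ against the trivial representation $\tau$ (for which every vector is invariant, so $\tau(p) = 1$) gives $\int_G f_k \, dg = \tau(f_k) \to \tau(p) = 1$. On the other hand, for a sequence of almost invariant unit vectors $\xi_n$ one estimates $\|\pi(f_k)\xi_n - (\int_G f_k)\xi_n\| \leq \int_G |f_k(g)| \, \|\pi(g)\xi_n - \xi_n\|\, dg$, which tends to $0$ as $n \to \infty$ for each fixed $k$ because $f_k$ has compact support. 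Hence $\|\pi(f_k)\| \geq |\int_G f_k|$, and letting $k \to \infty$ yields $0 \geq 1$, a contradiction. Therefore every representation with almost invariant vectors has a non-zero invariant vector, i.e. $G$ has Property $(T)$.

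For the converse, I would build $p$ by continuous functional calculus from a spectral gap. Assuming Property $(T)$, fix a Kazhdan pair $(Q, \epsilon)$ with $Q$ a compact generating set, and let $h \in C^*_{\max}(G)$ be the self-adjoint contraction given by a symmetric compactly supported probability measure whose support generates $G$ and is comparable to $Q$ (in the discrete case, $h = \frac{1}{|Q|}\sum_{g \in Q}\delta_g$). A short computation with $\operatorname{Re}\langle \pi(g)\xi,\xi\rangle = 1 - \tfrac12\|\pi(g)\xi - \xi\|^2$ shows that for any $\pi$, a unit vector satisfies $\pi(h)\xi = \xi$ if and only if it is $G$-invariant, while the Kazhdan pair forces $\langle \pi(h)\xi, \xi\rangle \leq 1 - \delta$ for unit vectors orthogonal to the invariant vectors, with $\delta > 0$ independent of $\pi$. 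Consequently the spectrum of $\pi(h)$ lies in $[-1, 1-\delta] \cup \{1\}$ for every $\pi$, and since the $C^*_{\max}$-norm is a supremum over representations, $\sigma(h) \subseteq [-1, 1-\delta] \cup \{1\}$, so that $1$ is an isolated point of $\sigma(h)$. I then set $p = \chi(h)$, where $\chi$ is any continuous function equal to $1$ on a neighborhood of $1$ and to $0$ on $[-1, 1-\delta]$; continuous functional calculus places $p$ in $C^*_{\max}(G)$, makes it a self-adjoint idempotent, and ensures $\pi(p) = \chi(\pi(h))$ is exactly the spectral projection onto the eigenvalue-$1$ eigenspace, namely the space of $G$-invariant vectors. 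This is the desired Kazhdan projection.

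The main obstacle is the converse direction, specifically establishing the spectral gap uniformly over all representations without invariant vectors. In the discrete finitely generated case the averaging estimate above gives $\delta = \epsilon^2/(2|Q|)$ directly; for general locally compact $G$ one must choose the measure defining $h$ carefully so that non-$(Q,\epsilon)$-invariance of $\xi$ genuinely produces a uniform defect in $\langle \pi(h)\xi,\xi\rangle$, and I would invoke the standard reformulation of Property $(T)$ as a uniform spectral gap for such an averaging operator (as in Bekka--de la Harpe--Valette). Once this uniform gap is in hand, the isolation of $1$ in $\sigma(h)$ and the functional calculus are routine.
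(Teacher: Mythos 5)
First, a framing remark: the paper does not prove this theorem at all --- it is quoted from Akemann--Walter \cite{akemann_walter_1981} and used as a black box --- so there is no internal proof to compare yours against, and what follows assesses your argument on its own terms. Your first implication (Kazhdan projection $\Rightarrow$ Property $(T)$) is correct and complete: approximating $p$ by $f_{k}\in C_{c}(G)$, testing against the trivial representation to get $\int_{G}f_{k}\to 1$, and using almost invariant vectors to force $\|\pi(f_{k})\|\ge\bigl\lvert\int_{G}f_{k}\bigr\rvert$ while $\|\pi(f_{k})\|\to\|\pi(p)\|=0$ is exactly the standard contradiction. The converse is also the standard construction, and your functional-calculus step is sound; note in passing that your $\chi$ vanishes at $0$, which is genuinely needed since $C^{*}_{max}(G)$ is non-unital when $G$ is not discrete, and that for non-unimodular $G$ ``symmetric'' must be understood with respect to the $C^{*}$-involution $f^{*}(g)=\overline{f(g^{-1})}\Delta(g^{-1})$, not the naive $f(g)=f(g^{-1})$.

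The one genuine mathematical issue is precisely the one you flag yourself: the uniform spectral gap. For non-discrete $G$, the Kazhdan pair only provides a single $g_{0}\in Q$ with $\|\pi(g_{0})\xi-\xi\|\ge\epsilon$, and this does not bound $\int f(g)\|\pi(g)\xi-\xi\|^{2}\,dg$ from below: the point $g_{0}$ has Haar measure zero, and strong continuity is not uniform over representations, so one cannot spread the defect to a neighbourhood of $g_{0}$ with constants independent of $\pi$ and $\xi$. Hence the ``short computation'' really is special to the discrete case, and the uniform-gap lemma is the true heart of the theorem for locally compact groups. The standard repair (what ``choosing the measure carefully'' amounts to) is: take $f$ with support of nonempty interior generating $G$; if the gap failed, there would be representations $\pi_{n}$ without invariant vectors and unit vectors $\xi_{n}$ with $\int f(g)\|\pi_{n}(g)\xi_{n}-\xi_{n}\|^{2}\,dg\to 0$; pass to $\bigoplus_{n}\pi_{n}$, which still has no invariant vectors, and upgrade this $\mu$-almost invariance to genuine almost invariance (uniform on compacta) using the inequality $1-\mathrm{Re}\,\phi(gh)\le 2\bigl(1-\mathrm{Re}\,\phi(g)\bigr)+2\bigl(1-\mathrm{Re}\,\phi(h)\bigr)$ for the positive definite functions $\phi_{n}(g)=\langle\pi_{n}(g)\xi_{n},\xi_{n}\rangle$, contradicting Property $(T)$. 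Since you identified this obstacle concretely and proposed to quote the known reformulation, I regard your proposal as essentially correct; but a complete write-up should either include the argument just sketched or cite a reference where the gap statement is proved for locally compact (not just discrete) groups --- it appears in the literature on Kazhdan projections (e.g.\ Drutu--Nowak) rather than as an explicit statement in Bekka--de la Harpe--Valette.
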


\subsection{Definition and fixed point properties}

In this section, we give the precise definition of the strong Property $(T)$ and discuss about it.

\begin{definition}\label{definition de longueur}

Let $G$ be a topological group.

A \emph{length function} on $G$ is a continuous function $l:G \longrightarrow \mathbb{R}^{+}$ such that :

\begin{itemize}
    \item for all $g \in G$, $l(g)=l(g^{-1})$,
    
    \item for all $g_{1}, g_{2}$, $l(g_{1}g_{2}) \le l(g_{1})+l(g_{2}).$

\end{itemize}
    
\end{definition}

In a finitely generated group, the word metric with respect to a generating set of the group is a particular example of a length function.

\begin{definition}\label{algebrelongueur}
Let $G$ be a locally compact group. If $l$ is a length function on $G$, we denote by $\mathcal{E}_{G,l}$ the set of continuous representations $\pi$ of $G$ on a Hilbert space $H$ such that $|| \pi(g) || \le e^{l(g)}$ for all $g \in G$. \\

We denote by $\mathcal{C}_{l}(G)$ the involutive Banach algebra, which is the completion of $C_{c}(G)$ for the norm $\vert\vert f\lvert\lvert=\sup_{(H,\pi)\in \mathcal{E}_{G,l}} \vert\vert \pi(f)\lvert\lvert$.

\end{definition}

In particular, for $l=0$, we have $\mathcal{C}_{0}(G)=C^{\ast}_{max}(G)$. In \cite{Lafforgue}, Lafforgue sets a definition of strong Property $(T)$, which looks like the characterization of Property $(T)$ in Theorem \ref{Theorem : Prop (T) Kazhdan projection}, for representations with a small exponential growth.

\begin{definition}
A locally compact group $G$ has \emph{strong Property $(T)$} if for every length function $l$, there exists $s>0$, such that for all $C\in \mathbb{R}_{+}$, there exists a real and selfadjoint projection $p$ in $\mathcal{C}_{sl+C}(G)$, such that for all representations $\pi \in \mathcal{E}_{G,sl+C}$, $\pi(g)$ is a projection on the $\pi(G)$-invariant vectors.

\end{definition}

Contrary to the case of Property $(T)$, the projection is not necessarily orthogonal.\\

According to de la Salle in \cite{delaSalleBanachTpoursl3}, the initial definition of strong Property $(T)$ of Lafforgue is equivalent to the following definition.

\begin{definition}\cite[Definition 1.1]{delaSalleBanachTpoursl3}\label{definition : T renforcée de la Salle}

A locally compact group $G$ has \emph{strong Property $(T)$} with respect to Hilbert spaces if for every length function $l$ on $G$ there is a sequence of compactly supported symmetric Borel measures $m_{n}$ on $G$ such that, for every Hilbert space $H$, there is a constant $t>0$ such that the following holds. For every strongly continuous representation $\pi$ of $G$ on $H$ satisfying $ \left\| \pi(g) \right\|_{B(X)} \le L e^{tl(g)} $ for some $L \in \mathbb{R}_{+}$, $\pi(m_{n})$ converges in the norm topology of $B(X)$ to a projection on the $\pi(G)$-invariant vectors of $X$.

\end{definition}

The definition of strong Property $(T)$ has huge consequences towards fixed point property for sub-exponential representations.

\begin{definition}\label{definition sous-exponentiel}

A representation $\pi: G \longrightarrow B(H)$ on a Hilbert space is \emph{sub-exponential} if there exists a length $l$  such that for all $\varepsilon>0$, there exists $C>0$ such that for all $g\in G$ :

$$ \left\| \pi(g) \right\| \le Ce^{ \varepsilon l(g)}.$$

A representation $\pi: G \longrightarrow B(H)$ on a Hilbert space is \emph{polynomial} if there exists a length $l$ and a polynomial $P$ such that for all $g\in G$ :

$$ \left\| \pi(g) \right\| \le P(l(g)).$$

A representation $\pi: G \longrightarrow B(H)$ on a Hilbert space is \emph{uniformly bounded} if there exists $K \in \mathbb{R}_{+}$ such that for all $g\in G$ :

$$ \left\| \pi(g) \right\| \le K.$$

\end{definition}

A polynomial representation is in particular sub-exponential.\\

To prove Theorem \ref{theorem:main result}, we will use the following Property of groups having the Strong Property $(T)$ :

\begin{proposition}\label{Proposition : fixed point for strong Property (T)}
Let $G$ be a locally compact group having the strong Property $(T)$.

\begin{itemize}
    \item If $G$ admits  a sub-exponential representation $\pi$ on an Hilbert space $H$ and the representation fixes a closed affine hyperplane, $H$ also admits a non-zero $G$-invariant vector.

    \item If $G$ admits an affine representation on an affine Hilbert space $H$, whose linear part is sub-exponential then $H$ admits a non-zero $G$-invariant vector.
    
\end{itemize}

\end{proposition}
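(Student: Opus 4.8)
The plan is to deduce the second item from the first by the standard linearisation of an affine action, so that the whole substance of the proposition reduces to the first item together with one elementary property of the measures furnished by strong Property $(T)$. I would first set up how Definition \ref{definition : T renforcée de la Salle} is used. Fix the length function $l$ witnessing that the linear part is sub-exponential, and let $(m_n)$ be the associated sequence of compactly supported symmetric measures. Applying the definition to the trivial one-dimensional representation (whose norm is bounded by $e^{tl(g)}$ for every $t$) shows that the total masses satisfy $m_n(G)\to 1$. On the other hand, given a Hilbert space carrying a sub-exponential representation $\pi$, for the constant $t>0$ attached to that space we may choose $C$ with $\|\pi(g)\|\le Ce^{tl(g)}$; thus $\pi$ meets the hypothesis of Definition \ref{definition : T renforcée de la Salle}, and $\pi(m_n)$ converges in operator norm to a projection $p$ onto the $\pi(G)$-invariant vectors. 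It then suffices to show $p\neq 0$.

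For the first item, let $A$ be the $\pi$-invariant closed affine hyperplane; I may assume $A$ does not pass through the origin, which is the case occurring in the application, and write $A=\{v\in H:\langle v,w\rangle=1\}$ for a suitable $w\neq 0$. Pick any $\xi\in A$. Since $\pi(g)\xi\in A$ for every $g$, the quantity $\langle\pi(g)\xi,w\rangle$ is identically equal to $1$, so, viewing $\pi(m_n)$ as a weak integral,
\[
\langle p\xi,w\rangle=\lim_n\langle\pi(m_n)\xi,w\rangle=\lim_n\int_G\langle\pi(g)\xi,w\rangle\,dm_n(g)=\lim_n m_n(G)=1.
\]
Hence $p\xi\neq 0$, so $p\neq 0$ and $H$ carries a non-zero $\pi(G)$-invariant vector.

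For the second item I would linearise. Writing the affine action as $\alpha(g)v=\pi(g)v+b(g)$, define $\tilde\pi$ on $\tilde H=H\oplus\mathbb{R}$ by $\tilde\pi(g)(v,s)=(\pi(g)v+s\,b(g),\,s)$; the cocycle identity $b(gh)=\pi(g)b(h)+b(g)$ makes $\tilde\pi$ a linear representation, and it preserves the closed affine hyperplane $H\times\{1\}$, which does not contain the origin. Applying the first item to $\tilde\pi$ and this hyperplane, with normal functional $(v,s)\mapsto s$, produces a non-zero $\tilde\pi(G)$-invariant vector $p(\xi_0,1)$; its second coordinate equals $\lim_n m_n(G)=1$, so it has the form $(v^{\ast},1)$, and invariance reads $\pi(g)v^{\ast}+b(g)=v^{\ast}$, i.e. $v^{\ast}$ is a fixed point of the affine action, as desired.

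The main obstacle is to check that $\tilde\pi$ is itself sub-exponential, since a priori its norm involves the cocycle through $\|\tilde\pi(g)\|\lesssim 1+\|\pi(g)\|+\|b(g)\|$. This reduces to controlling $b$: expanding along a word $g=s_1\cdots s_n$ gives $b(g)=\sum_{i=1}^{n}\pi(s_1\cdots s_{i-1})b(s_i)$, and combining the sub-exponential bound on $\pi$ with $l(s_1\cdots s_{i-1})\le l(g)$ yields $\|b(g)\|\le C'\,l(g)\,e^{\varepsilon l(g)}$ for every $\varepsilon>0$, which is sub-exponential for the same length $l$; hence so is $\tilde\pi$, and the reduction above goes through.
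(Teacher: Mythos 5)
Your overall route is the same as the paper's: item 1 via the measures $m_n$ of Definition \ref{definition : T renforcée de la Salle} with $\pi(m_{n})$ converging in norm to a projection, and item 2 reduced to item 1 by the standard linearisation (the paper does that step by citing Lafforgue's Proposition 5.6 instead of writing out the cocycle computation). Your treatment of item 1 is correct and in fact more careful than the paper's: the paper asserts that $p_{\pi}(x_{0})$ lies in the invariant hyperplane ``since $\mathcal{H}$ is closed and $G$-invariant'', which implicitly needs the total masses $m_{n}(G)$ to tend to $1$; your trick of applying the definition to the trivial one-dimensional representation to get $m_{n}(G)\to 1$, and then pairing with the defining functional $w$, supplies exactly that missing justification. (Both you and the paper tacitly assume the affine hyperplane avoids the origin, which is the case in the application.)

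The one genuine flaw is in your cocycle estimate. For a general length function $l$ the inequality $l(s_{1}\cdots s_{i-1})\le l(g)$ is false --- prefixes of a word can be much longer than the word itself as measured by $l$ --- and the bound you derive from it, $\|b(g)\|\le C'\,l(g)\,e^{\varepsilon l(g)}$, is also false in general: take $G=\mathbb{Z}$, $l\equiv 0$ (a legitimate length function), $\pi$ trivial (sub-exponential for this $l$) and $b(n)=nv$; the right-hand side is then bounded while $\|b(n)\|\to\infty$. This matters here because the lengths actually used in the paper, such as $l_{x}(g)=d(x,g.x)$ for the coned-off metric, do vanish on infinite subgroups. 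The repair is standard but must be said: since $G$ has strong Property $(T)$ it has Property $(T)$, hence is compactly generated; fix a compact generating set $S$ and let $|\cdot|$ be the associated word length. The continuous length $l$ is bounded on $S$, so $l\le M|\cdot|$ and $\pi$ is therefore sub-exponential for $|\cdot|$ as well. Writing $g$ as a geodesic word $s_{1}\cdots s_{n}$ with $n=|g|$, every prefix satisfies $|s_{1}\cdots s_{i-1}|\le |g|$, and your expansion of the cocycle then gives $\|b(g)\|\le C'_{\varepsilon}\,|g|\,e^{\varepsilon |g|}$. Thus $\tilde\pi$ is sub-exponential \emph{for the word length}, not ``for the same length $l$''. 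This suffices: sub-exponentiality only asks for the existence of some length function, and strong Property $(T)$ quantifies over all of them, so item 1 applies to $\tilde\pi$ and the rest of your argument (second coordinate equal to $\lim_{n} m_{n}(G)=1$, hence a fixed point of the affine action) goes through unchanged.
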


\begin{proof}
Since $G$ has the strong Property $(T)$ and $\pi$ is sub-exponential, according to \ref{definition : T renforcée de la Salle} , there exists a sequence of compactly supported symmetric Borel measures $m_{n}$ such that $\pi(m_{n})$ converges to $p_{\pi}$ a projection on $G$-invariant vector.\\

Let us denote $\mathcal{H}$ the affine closed hyperplan fixed by $\pi$. Let $x_{0} \in \mathcal{H} $. Then : 

$$\left\| \pi(m_{n})(x_{0})-p_{\pi}(x_{0}) \right\| \le \left\| \pi(m_{n})-p_{\pi} \right\| \left\| x_{0}  \right\|. $$

Since the right hand converges to zero, $\pi(m_{n})(x_{0})$ converges to the $G$-invariant vector $p_{\pi}(x_{0})$.

Moerover since $\mathcal{H}$ is closed and $G$-invariant, $p_{\pi}(x_{0}) \in \mathcal{H}$. Since $\mathcal{H}$ is affine, we deduce that $p_{\pi}(x_{0})$ is non-zero and the proposition is proved. \\

To prove the second point, as explained in \cite[Proposition 5.6]{LafforgueTrenforcéebanachiqueettransforémedeFourierrapide}, every affine representation  $\alpha$ on an affine Hilbert space $H$ with basis $H_{0}$, can be realized as the restriction to the affine hyperplane $H_{0} \oplus \mathbb{C}$ of linear representation $\pi \oplus id_{\mathbb{C}}$. Moreover if the linear part of $\alpha$ is sub-exponential then $\pi$ will also be sub-exponential. According to the first point of the proposition, $\pi$ admits a non-zero invariant vector. With this vector, we get a non-zero invariant vector for $\alpha$.

\end{proof}

In particular, if a representation $(\pi,H)$ of a group $G$ with strong Property $(T)$ preserves a continuous linear form $l$, i.e 

$$ \forall g \in G, \forall \xi \in H , l(\pi(g)\xi)=l(\xi). $$

We can apply Proposition \ref{Proposition : fixed point for strong Property (T)} to the affine closed hyperplane $\{ \xi \in H, l(\xi)=1 \}$ and we can deduce the existence of a non-zero $G$-invariant vector.

\subsection{Examples of groups with the strong Property $(T)$}

In this subsection, we will briefly gives examples of groups with strong Property $(T)$.

The first result is due to Lafforgue in \cite{Lafforgue}.

\begin{theo}\label{Theorem: strong Property Tpour sl(3,R)}\cite[Thérorème 2.1]{Lafforgue}\

  $Sl_{3}(\mathbb{R})$  has the strong Property $(T)$.
\end{theo}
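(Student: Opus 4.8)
The plan is to verify Definition \ref{definition : T renforcée de la Salle} directly, constructing the averaging measures $m_n$ by harmonic analysis on $G = SL_3(\mathbb{R})$ and exploiting its higher rank structure. First I would fix a maximal compact subgroup $K = SO(3)$ together with the Cartan decomposition $G = KAK$, where $A$ is the group of diagonal matrices with positive entries. The natural candidates for $m_n$ are $K$-bi-invariant probability measures supported on the shells $K a_n K$, with $a_n$ tending to infinity along a regular direction of the positive Weyl chamber. Averaging against $K$ on both sides reduces the analysis to $K$-finite, and after a first reduction essentially $K$-invariant, vectors, so the problem becomes one of controlling spherical-type matrix coefficients of the operators $\pi(m_n)$.

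Second, the crucial point is a decay estimate that is uniform not only over unitary representations but over every $\pi \in \mathcal{E}_{G,sl+C}$, that is, representations of small exponential growth $\|\pi(g)\| \le L e^{t\ell(g)}$. Classical Harish-Chandra estimates give decay of spherical functions in the unitary case; here I would look for a version in which the intrinsic decay coming from rank two dominates the extra exponential factor $e^{t\ell(g)}$. The heart of the matter is the root system of type $A_2$: there are two independent $SL_2(\mathbb{R})$ directions, and the additional room they provide is exactly what makes the averaged operators contract toward the invariant part even after paying the price of the growth budget $t$.

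Third, I would show that $(\pi(m_n))_n$ is Cauchy in operator norm, with a rate depending only on $t$ and $L$ and not on the particular representation, provided $t$ is small enough relative to the fixed length $\ell$. The limit is then an idempotent $p_\pi$; to identify $p_\pi$ with the projection onto the $\pi(G)$-invariant vectors, one uses that invariant vectors are fixed by each $\pi(m_n)$, while the decay estimate forces $\pi(m_n)$ to annihilate the complementary part in the limit. This is precisely the form of convergence demanded by Definition \ref{definition : T renforcée de la Salle}.

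The main obstacle is the second step: producing matrix-coefficient decay that remains effective for non-unitary representations of controlled growth. For unitary representations one has the full apparatus of the unitary dual, and Property $(T)$ supplies a spectral gap directly; for representations in $\mathcal{E}_{G,sl+C}$ one loses orthogonality and must argue geometrically, tracking how the two rank-one directions combine and verifying that the exponential growth does not overwhelm the decay. This is exactly where the higher rank of $SL_3(\mathbb{R})$ is indispensable, and I expect it to be the technical core of the whole argument.
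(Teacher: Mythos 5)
First, a point of comparison: the paper does not prove this statement at all. Theorem \ref{Theorem: strong Property Tpour sl(3,R)} is quoted from Lafforgue \cite{Lafforgue} as background material in the section listing examples of groups with strong Property $(T)$, so the only proof your attempt can be measured against is Lafforgue's original one. Your outline does reproduce the broad architecture of that proof: Cartan decomposition $G=KAK$, $K$-bi-invariant probability measures $m_{n}$ supported on shells $Ka_{n}K$ going to infinity in the Weyl chamber, a norm-Cauchy estimate for $(\pi(m_{n}))_{n}$ uniform over $\pi\in\mathcal{E}_{G,sl+C}$, and identification of the limit idempotent with a projection onto the invariant vectors, in the form demanded by Definition \ref{definition : T renforcée de la Salle}.

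The genuine gap is exactly the step you yourself flag as the technical core, and the mechanism you propose for it would not work. You suggest finding a version of the Harish-Chandra spherical estimates in which ``the intrinsic decay coming from rank two dominates the extra exponential factor $e^{tl(g)}$''. But for a non-unitary representation of small exponential growth there is no intrinsic decay to dominate anything: the classical decay of matrix coefficients rests on unitarity (positive-definiteness of coefficients, the spectral theory of the unitary dual, restriction-to-$SL_{2}$ arguments), and all of this is unavailable in $\mathcal{E}_{G,sl+C}$. Lafforgue's proof does not perturb unitary estimates; it \emph{manufactures} decay from scratch by a cancellation argument special to rank at least $2$: writing $e_{K}\pi(g)e_{K}$ (with $e_{K}$ the averaging over $K$) as explicit integrals over $K$-orbits, he compares the operators attached to two chamber points differing along one root direction and shows the difference is exponentially small, then chains such comparisons along the two transverse $SL_{2}$-directions to prove that $\langle\pi(g)\xi,\eta\rangle$ is Cauchy as $g\to\infty$ regularly, with an error beating $Le^{tl(g)}$ for $t$ small. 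That explicit estimate is essentially the entire content of the theorem, and your plan neither supplies it nor any substitute for it. A second, smaller omission: identifying the limit as a projection \emph{onto} the invariant vectors requires producing a closed $\pi(G)$-invariant complement of the invariant subspace (the image of $1-p_{\pi}$); in the unitary case this is the orthogonal complement for free, but for $\pi\in\mathcal{E}_{G,sl+C}$ it is an output of the same estimates, not something one may assume.
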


According to the work of Lafforgue, the strong Property $(T)$ passes from a group to its cocompact lattices. Therefore according to Lafforgue's Theorem \ref{theoreme Lafforgue}, connected simple Lie groups with real rank $1$ do not have the strong Property $(T)$. In fact some of these even do not have Kazhdan Property $(T)$ and the other contains hyperbolic cocompact lattice.\\

For Lie groups of rank at least $2$, the situation is completely different. De la Laat and de la Salle generalized Theorem \ref{Theorem: strong Property Tpour sl(3,R)}.

\begin{theo}\cite[Theorem A]{deLaatdelaSalle(T)renforcéepourgroupesdeLiederangsupérieurs}

Let $G$ be a connected simple Lie group with real rank at least $2$. Then $G$ has strong Property $(T)$.
\end{theo}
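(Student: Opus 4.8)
The plan is to verify the reformulation of strong Property $(T)$ from Definition \ref{definition : T renforcée de la Salle}: for every length function $l$ on $G$ one must produce compactly supported symmetric Borel measures $m_n$ such that, whenever $\pi$ satisfies $\left\| \pi(g) \right\| \le L e^{t l(g)}$ for a small enough $t$, the operators $\pi(m_n)$ converge in norm to a projection onto the $\pi(G)$-invariant vectors. Since $G$ is a connected simple Lie group, the natural length is comparable to the Riemannian distance in the symmetric space $G/K$ with $K$ a maximal compact subgroup, and the Cartan decomposition $G = K \overline{A^{+}} K$ suggests taking each $m_n$ to be the $K$-bi-invariant probability measure supported on the sphere of radius $n$. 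The whole problem then becomes one of radial harmonic analysis: bounding $\left\| \pi(m_n) \right\|$ in terms of the growth rate $t$ and the radius $n$.

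First I would isolate the analytic core as a uniform decay estimate for matrix coefficients. Writing $P$ for the candidate invariant projection, the goal is a bound of the shape $\left\| \pi(m_n) - P \right\| \le C \, e^{t n} \rho^{\, n}$ with $\rho < 1$ holding uniformly over all representations in the relevant class; choosing $t < -\log \rho$ then yields the required norm convergence. This is exactly the point where the rank hypothesis enters: when $\mathrm{rank}_{\mathbb{R}} G \ge 2$ the closed Weyl chamber $\overline{A^{+}}$ has dimension at least two, and contraction along two independent root directions can be leveraged to turn the mere growth control $\left\| \pi(g) \right\| \le L e^{t l(g)}$ into genuine exponential decay of the spherical averages. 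This higher-rank contraction phenomenon is absent in rank one, consistent with the failure of strong Property $(T)$ for rank-one groups noted above.

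Second, I would reduce an arbitrary higher-rank $G$ to two model rank-two cases. Every connected simple Lie group of real rank at least $2$ contains a closed connected subgroup whose Lie algebra is isomorphic to $\mathfrak{sl}_3(\mathbb{R})$ or to $\mathfrak{sp}_4(\mathbb{R})$, sitting compatibly with the Cartan decomposition. One establishes the decay estimate directly for these two groups, strengthening Lafforgue's analysis for $SL_3(\mathbb{R})$ from Theorem \ref{Theorem: strong Property Tpour sl(3,R)} (where $\pi$ is studied through its restrictions to the embedded copies of $SL_2(\mathbb{R})$), and then transfers the radial estimate to $G$ along the embedding, the measures $m_n$ and the length being controlled by the subgroup's geometry.

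The hard part will be this uniform decay estimate for matrix coefficients of representations of small exponential growth: making Lafforgue's delicate $SL_3$-type estimates also work for $\mathfrak{sp}_4(\mathbb{R})$, and packaging both so that the transference to a general $G$ is clean and the constants stay uniform over \emph{all} Hilbert-space representations with $\left\| \pi(g) \right\| \le L e^{t l(g)}$, not only the unitary ones. Controlling this larger class of representations is precisely what separates strong Property $(T)$ from ordinary Property $(T)$ and is the crux of the whole argument.
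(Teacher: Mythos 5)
There is an important mismatch of expectations here: the paper you are working from does \emph{not} prove this statement. It is quoted as background, with the proof deferred entirely to the cited reference of de Laat and de la Salle, so there is no proof in the paper to compare yours against. Judged instead against the proof in that reference, your roadmap is broadly faithful to it: the argument there does run through the Cartan decomposition $G=K\overline{A^{+}}K$ and radial (i.e.\ $K$-bi-invariant) harmonic analysis, it does establish a uniform exponential-decay estimate for spherical averages of matrix coefficients over the whole class of representations satisfying $\left\| \pi(g) \right\| \le L e^{t l(g)}$ with $t$ small, and it does reduce the general case to rank-two models via the classical fact that every simple real Lie algebra of real rank at least $2$ contains a subalgebra isomorphic to $\mathfrak{sl}_3(\mathbb{R})$ or $\mathfrak{sp}_4(\mathbb{R})$, transferring the estimates along the embedding much as you describe.

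As a proof, however, your text has a genuine gap, and you name it yourself: the ``hard part'' --- the uniform decay estimate for the two model groups, valid for \emph{all} Hilbert-space representations of small exponential growth rather than just unitary ones --- is precisely the content of the theorem, and nothing in your proposal carries it out or even sketches the mechanism (in Lafforgue's $SL_3(\mathbb{R})$ case this is a delicate comparison of averages of coefficients over pairs of $K$-orbits, where rank at least $2$ enters quantitatively, not merely as the qualitative remark that the Weyl chamber is two-dimensional). Two further points would need attention even granting the model cases. First, the embedded subgroup is only \emph{locally} isomorphic to a model, so you must also treat covering groups; in particular the universal cover $\widetilde{Sp}_4(\mathbb{R})$ has infinite center and is a genuine additional difficulty, handled separately in the cited work. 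Second, your target estimate $\left\| \pi(m_n) - P \right\| \le C e^{tn}\rho^n$ presupposes that the invariant projection $P$ exists, which for non-unitary representations is part of what must be proven: the correct formulation bounds differences such as $\left\| \pi(m_n) - \pi(m_{n+1}) \right\|$, concludes that $\pi(m_n)$ is Cauchy in operator norm, and only then identifies the limit as an idempotent onto the $\pi(G)$-invariant vectors.
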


Contrary to Property $(T)$, when $\Gamma$ is a non-uniform lattice in $G$, representations which are not uniformly bounded do not behave well under induction. Therefore, we do not know if strong Property $(T)$ passes to lattices. Yet, de la Salle proved the following Theorem with different methods.

\begin{theo}\cite[Theorem 1.1]{delaSallePropTrenforcéepourlesréseauxderangsupérieurs}

 Every lattice in a higher-rank group has strong Property $(T)$.
\end{theo}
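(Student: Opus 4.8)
The plan is to deduce strong Property $(T)$ for a lattice $\Gamma$ from the strong Property $(T)$ of the ambient higher-rank group $G$, which is available by the theorem of de Laat--de la Salle stated above (and, in the $S$-arithmetic setting, its $p$-adic analogues for the non-Archimedean factors). For cocompact lattices this transfer is already known, as recalled in the text: one induces a representation of $\Gamma$ to a representation of $G$ and exploits that the word metric on $\Gamma$ is quasi-isometric to the metric induced from $G$, so that a controlled growth bound for $\pi$ yields a controlled growth bound for $\mathrm{Ind}_\Gamma^G \pi$. The whole difficulty, as the excerpt emphasizes, is that for non-uniform lattices the induction of representations that are not uniformly bounded does not obviously preserve the exponential growth condition.

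First I would set up the transfer at the level of the convolution picture. Using the characterization of Definition \ref{definition : T renforcée de la Salle}, strong Property $(T)$ amounts to producing, for each length $l$, a sequence of compactly supported symmetric measures whose images under every controlled growth representation converge in operator norm to a projection onto the invariant vectors. Accordingly I would transport the measures $m_n$ furnished by strong Property $(T)$ of $G$ to measures $m_n'$ on $\Gamma$, using a Borel fundamental domain for $G/\Gamma$ together with a cutoff, and then verify that $\pi(m_n')$ converges in norm for every admissible representation $\pi$ of $\Gamma$.

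The crucial quantitative input would be a comparison of length functions, for which I would invoke the undistortion theorem of Lubotzky--Mozes--Raghunathan: in higher rank the word length $l_\Gamma(\gamma)$ is Lipschitz comparable to the ambient length $l_G(\gamma)$. This is precisely the property that fails in rank one, where the cusps produce exponential distortion, and it is exactly what makes the growth bound survive induction: an estimate $\|\pi(\gamma)\| \le L\, e^{t\, l_\Gamma(\gamma)}$ becomes an estimate of the same exponential type in $l_G$, so the induced representation lands in a class $\mathcal{E}_{G,\, s l_G + C}$ to which strong Property $(T)$ of $G$ applies. Having obtained norm convergence of $\pi_G(m_n)$ on the induced space, I would restrict back to $\Gamma$ and identify the limiting projection with the projection onto the $\Gamma$-invariant vectors.

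I expect the main obstacle to be making the non-unitary induction rigorous while keeping the operator-norm estimates uniform over the whole growth class: for unbounded representations the induced module is not a Hilbertian completed tensor product, and one must simultaneously control the integrability over the fundamental domain and the propagation of the exponential weight through the cocycle defining the induction. A secondary difficulty is checking that the transported measures $m_n'$ remain compactly supported and symmetric, and that the limit of $\pi_G(m_n)$ — a priori only a projection for the induced representation — descends to the projection onto $\Gamma$-invariants rather than onto some strictly larger invariant subspace.
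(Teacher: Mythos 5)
Your proposal is precisely the induction argument that this paper explicitly flags as unavailable: immediately before stating this theorem, the text remarks that ``when $\Gamma$ is a non-uniform lattice in $G$, representations which are not uniformly bounded do not behave well under induction'', and that de la Salle proved the theorem \emph{with different methods}. Indeed, the statement is a cited result (Theorem 1.1 of de la Salle's paper), not proved in this article, and the reason it required new methods is exactly the step you propose. The gap is concrete. Write $D$ for a Borel fundamental domain of $\Gamma$ in $G$ and $\beta : G \times D \to \Gamma$ for the return cocycle, so that the induced representation acts on $L^{2}(D;H)$ by a formula of the shape $(\tilde\pi(g)f)(x)=\pi\bigl(\beta(g,x)\bigr)\,f(\sigma(g^{-1}x))$; its operator norm is governed by $\operatorname{ess\,sup}_{x\in D}\|\pi(\beta(g,x))\|$. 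For a cocompact lattice one has $l_{\Gamma}(\beta(g,x))\le l_{G}(g)+C$ uniformly in $x$, which is why the transfer works there. For a non-uniform lattice, $D$ is non-compact and $l_{\Gamma}(\beta(g,x))$ is unbounded in $x$ for fixed $g$: points deep in a cusp are returned by lattice elements of arbitrarily large word length. Hence, for a representation $\pi$ of $\Gamma$ satisfying only $\|\pi(\gamma)\|\le L e^{t\,l_{\Gamma}(\gamma)}$, the operators $\tilde\pi(g)$ are in general unbounded, so $\mathrm{Ind}_{\Gamma}^{G}\pi$ does not lie in any class $\mathcal{E}_{G,\,sl_{G}+C}$ --- it is not even a representation by bounded operators --- and the strong Property $(T)$ of $G$ can never be applied to it.

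The Lubotzky--Mozes--Raghunathan theorem, which you present as the crucial quantitative input, does not repair this. It compares $l_{\Gamma}(\gamma)$ with $l_{G}(\gamma)$ for elements $\gamma$ \emph{of the lattice}, whereas the failure concerns the cocycle: one only has $l_{G}(\beta(g,x))\le l_{G}(g)+l_{G}(x)+l_{G}(\sigma(gx))$, and $l_{G}(x)$ is unbounded over the non-compact domain $D$, so undistortion merely converts one unbounded quantity into another. The same unboundedness defeats your fallback of transporting the measures $m_{n}$ to $\Gamma$ through the fundamental domain: the pushforward under $(g,x)\mapsto \beta(g,x)$ of a compactly supported measure need not be compactly supported, and no cutoff can be chosen uniformly over the whole growth class, which is what Definition \ref{definition : T renforcée de la Salle} requires. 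In short, your outline reduces the theorem to a transfer principle that is known to work only in the cocompact case, while the non-uniform case --- the only one that was open --- is exactly where that principle breaks down; closing this gap is the content of de la Salle's paper and cannot be done by the induction-plus-undistortion scheme as stated.
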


For example, strong Property $(T)$ for $Sl_{n}(\mathbb{R})$, $n\le 3$ is a corollary of this Theorem and was still an open question before.\\

The first example of groups with strong Property $(T)$, that do not come from algebraic groups over local fields is due to Lécureux, de la Salle and Witzel.

\begin{theo} \cite[Main Theorem.]{lécureux2023strong}

Let $X$ be an $\tilde{A}_{2}$-building, and let $\Gamma$ be an undistorted lattice on $X$. Then $\Gamma$ has strong Property $(T)$.

\end{theo}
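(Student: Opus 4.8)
The plan is to verify the measure-theoretic characterization of strong Property $(T)$ from Definition \ref{definition : T renforcée de la Salle}: for each length function $l$ on $\Gamma$, I would produce compactly supported symmetric measures $m_n$ on $\Gamma$ such that $\pi(m_n)$ converges in operator norm to a projection onto the invariant vectors, for every representation $\pi$ satisfying $\|\pi(g)\| \le L e^{t l(g)}$ with $t$ small. Because $\Gamma$ is an \emph{undistorted} lattice on $X$, its word length is bi-Lipschitz to the restriction of the combinatorial (gallery) distance on the chambers of $X$ via the orbit map. This reduces the problem to analysing radial averaging operators indexed by combinatorial spheres in the building, and simultaneously transports the subexponential-growth hypothesis from the word metric on $\Gamma$ to the building metric.

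First I would set up the radial operators. Fixing a base chamber $c_0$, and using that $\Gamma$ acts properly, I would take $m_n$ to be symmetric measures supported on the finitely many $\gamma \in \Gamma$ with gallery distance $d(\gamma c_0, c_0)$ in a window around $n$, weighted so as to mimic the radial averaging elements of the Iwahori--Hecke algebra of the building. The heart of the argument is then a \emph{propagation inequality} comparing $\pi(m_{n+1})$ with $\pi(m_n)$ and $\pi(m_{n-1})$. In a building this comparison is dictated entirely by the local geometry: passing from sphere $n$ to sphere $n+1$ is controlled, gallery by gallery, by the links of vertices, which for an $\tilde{A}_2$-building are spherical buildings of type $A_2$, that is, incidence graphs of projective planes.

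The key step is therefore a \emph{local contraction estimate} on these projective-plane links. For a unitary representation this is a spectral-gap statement about the local incidence operator, which one reads off from the known eigenvalues of the incidence graph of a finite projective plane. The genuine obstacle is that here the representations are only subexponential, hence neither unitary nor normal, so no spectral calculus is available and one must bound the relevant operator norms directly. I would follow the Lafforgue-style strategy of proving the contraction as an \emph{a priori operator-norm inequality}, valid uniformly over $\mathcal{E}_{\Gamma,\, sl+C}$, exploiting the combinatorial rigidity of the projective plane (two points lie on a unique line, two lines meet in a unique point) to compensate for the missing self-adjointness. The factor $e^{t l}$ then enters as a controlled perturbation that a sufficiently small $t$ keeps strictly below the local contraction rate.

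Finally I would propagate and sum. Since $X$ is $\mathrm{CAT}(0)$ and its residues enjoy the gate (projection) property, geodesic galleries decompose cleanly and the local contraction at successive vertices compounds multiplicatively in the radial direction. Choosing $t$ small enough that the exponential growth of the spheres is dominated by this compounded contraction, the differences $\|\pi(m_{n+1}) - \pi(m_n)\|$ form a geometric series, so $p = \lim_n \pi(m_n)$ exists in operator norm. A separate and easier verification shows that $p$ is idempotent and that its range is exactly the space of $\pi(\Gamma)$-invariant vectors: invariant vectors are fixed by each averaging measure, while the strict local contraction annihilates the complementary directions, and by the uniform operator-norm estimate this persists for the subexponential representations as well. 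This produces the required projection and hence strong Property $(T)$ for $\Gamma$.
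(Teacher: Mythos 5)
You should first note a mismatch of expectations: the paper does not prove this statement at all. It is quoted verbatim from Lécureux--de la Salle--Witzel \cite{lécureux2023strong} in the survey subsection listing examples of groups with strong Property $(T)$, so there is no internal proof to compare against; your proposal can only be measured against the cited work itself. Against that benchmark, your outline does reproduce the correct global architecture of the known argument: undistortedness of the lattice is indeed what lets one transport a word length on $\Gamma$ to the combinatorial metric of $X$ so that the measures $m_{n}$ of Definition \ref{definition : T renforcée de la Salle} can be taken as weighted orbit averages; the local analysis does take place in the links, which are incidence graphs of finite projective planes; and the Lafforgue scheme of a priori operator-norm estimates, stable under the perturbation $e^{tl(g)}$ for $t$ small, is the right mechanism for getting norm convergence of $\pi(m_{n})$ to a projection onto the invariant vectors.

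Nevertheless, as a proof your text has a genuine gap, located exactly where you yourself flag the ``genuine obstacle'': the local contraction estimate, uniform over $\mathcal{E}_{\Gamma,sl+C}$, is asserted rather than proved, and it is the entire technical content of the cited paper (as it was of Lafforgue's original $SL_{3}$ argument). Without self-adjointness you cannot read any contraction off the eigenvalues of the incidence graph; one must construct explicit intermediate operators attached to retractions and to configurations of vertices at prescribed Weyl-distances, and prove inequalities of the shape $\left\| \pi(m_{n})-\pi(m_{n'}) \right\| \le Ce^{-\alpha \min(n,n')}$ through exact counting identities in the projective plane, not merely its ``rigidity''. Two further inaccuracies in your plan would need repair. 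First, in an $\tilde{A}_{2}$-building the natural averaging operators form a two-parameter family indexed by the cone of dominant vertex distances (pairs of integers), not a single radial parameter, and the convergence must be organized along this cone; collapsing it to spheres in the gallery metric discards the structure on which the estimates are actually carried out. Second, your ``multiplicative compounding'' of local contractions along geodesic galleries implicitly uses relations like $\pi(m_{a})\pi(m_{b})\approx\pi(m_{a+b})$, which fail for non-unitary representations; controlling such products is again part of the hard estimates, not a consequence of the $CAT(0)$ gate property. In short, you have written a faithful roadmap of the Lécureux--de la Salle--Witzel strategy, but the steps you defer are the theorem.
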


\section{ Relatively hyperbolic groups and geodesic triangles }\label{section : groupes relativement hyperbolic}

To prove Theorem \ref{theorem:main result}, we will start with a group $G$ having strong Property $(T)$ and a uniformly fine graph $X$ equipped with an isometric action of $G$. We will build a representation of $G$ on a Hilbert, which follows hypothesis of Proposition \ref{Proposition : fixed point for strong Property (T)}.

In this part, we will describe some useful tools to work on relatively hyperbolic groups and we will prove the existence of a particular geodesic triangle between any three points that we will use in the following sections.

\subsection{Fine graphs and relatively hyperbolic groups}

Let us start by recalling some basic definitions and results.

Let $X$ be a simplicial graph.
To fix notations, for all $x,y \in X^{(0)}$, $[x,y]$ will denote a geodesic between $x$ and $y$. We will denote by $I(x,y)=\{a \in X^{(0)} ~|~ d(x,y)=d(x,a)+d(a,y) \} $ the geodesic interval between $x$ and $y$. For all $x,y,z \in  X^{(0)}$, $[x,y,z]$ will denote a geodesic triangle between $x$, $y$ and $z$ i.e. a choice of three geodesics between each of these three points. We denote by $X^{(1)}$ the set of unoriented edges of $X$. For $e \in X^{(1)}$, $e \subset I(x,y)$ will denote an edge on some geodesic between $x$ and $y$ and $e \subset [x,y]$ will denote an edge on the geodesic $[x,y]$. For $e\in X^{(1)}$, for $x\in X$, $x\in e$ will denote a vertex on the edge $e$. \\

Let us recall the definition of hyperbolicity of a graph in the sense of Gromov.

\begin{definition}

A connected graph $X$ is \emph{$\delta$-hyperbolic} for some $\delta >0$ if all geodesic triangles $[x,y,z]$ are $\delta$-thin, i.e. every side of the triangle is contained in the $\delta$-neighbourhood of the union of the other two sides.\\

A graph $X$ is said to be \emph{hyperbolic} if there exists $\delta>0$ such that $X$ is $\delta$-hyperbolic.     
\end{definition}

With this definition of $\delta$-hyperbolicity, according to \cite[Proposition 21]{GhysDelaHarpe}, each geodesic triangle admits a $4\delta$-quasi-center, i.e. for a geodesic triangle $[x,y,z]$ there exists a vertex $t\in X^{(0)}$ such that $d([x,y],t)\le 4\delta$, $d([x,z],t)\le 4\delta$ and $d([y,z],t)\le 4\delta$.\\

Let us recall the definition of the Gromov-product. For all $x,y,z \in X^{(0)}$, the Gromov-product between $x$ and $y$ seen from $z$ is defined by :

$$ (x,y)_{z}=\frac{1}{2}(d(x,z)+d(y,z)-d(x,y)). $$

In the rest of the paper, we will need a particular quasi-center for geodesic triangles defined in terms of Gromov product.

\begin{proposition} \label{quasicentreparticulier}

Let $x,y,z$ be vertices in $X$ and $[x,y,z]$ a geodesic triangle between these three points. We denote :

\begin{itemize}
    \item $u$ the vertex on $[x,z]$ such that $d(x,u)=(y,z)_{x}$ and $d(z,u)=(x,z)_{y}$,
    \item $v$ the vertex on $[y,z]$ such that $d(y,v)=(x,z)_{y}$ and $d(z,v)=(x,y)_{v}$,
    \item $w$ the vertex on $[x,y]$ such that $d(x,w)=(y,z)_{x}$ and $d(y,w)=(x,z)_{y}$.
\end{itemize}

There exists a vertex $t$ in $X$ such that, $d(u,t)\le 4\delta$, $d(v,t)\le 4\delta$ and $d(w,t) \le 4\delta$, i.e. $t$ is a $4\delta$-quasi center of $[x,y,z]$ such that $u$, $v$ and $w$ are close points on each side.
    
\end{proposition}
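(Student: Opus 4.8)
The plan is to recognize the three points $u$, $v$, $w$ as the \emph{internal points} of the triangle $[x,y,z]$ --- the three points that map to the centre of its comparison tripod --- and then to use $\delta$-thinness to show they all cluster around a single vertex. First I would record that the defining distances are mutually consistent: since $(y,z)_x+(x,y)_z=d(x,z)$, $(x,z)_y+(x,y)_z=d(y,z)$ and $(y,z)_x+(x,z)_y=d(x,y)$, the points $u$, $v$, $w$ are genuinely well defined on their sides, and moreover $d(x,u)=d(x,w)=(y,z)_x$, $d(y,v)=d(y,w)=(x,z)_y$ and $d(z,u)=d(z,v)=(x,y)_z$. This is the key structural observation: on each pair of sides sharing a vertex, the two internal points sit at the \emph{same} distance from that shared vertex.

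Next I would produce a $4\delta$-quasi-centre $t$ from \cite[Proposition 21]{GhysDelaHarpe}, so that $d(t,[x,y])$, $d(t,[x,z])$ and $d(t,[y,z])$ are all at most $4\delta$, and choose points $a\in[x,y]$, $b\in[x,z]$, $c\in[y,z]$ realizing these distances. The heart of the argument is to locate $t$ relative to the vertices in terms of Gromov products. Using that $a$, $b$, $c$ lie on geodesic sides and that $d(t,a),d(t,b),d(t,c)\le 4\delta$, the triangle inequality gives $d(x,a)\approx d(x,b)\approx d(x,t)$, $d(y,a)\approx d(y,c)\approx d(y,t)$ and $d(z,b)\approx d(z,c)\approx d(z,t)$, all up to bounded multiples of $\delta$. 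Substituting the splittings $d(x,y)=d(x,a)+d(a,y)$, $d(x,z)=d(x,b)+d(b,z)$, $d(y,z)=d(y,c)+d(c,z)$ into $(y,z)_x=\tfrac12(d(x,y)+d(x,z)-d(y,z))$, the cross terms cancel and one is left with $(y,z)_x\approx d(x,t)$; symmetrically $d(y,t)\approx(x,z)_y$ and $d(z,t)\approx(x,y)_z$.

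From this the conclusion follows: since $b\in[x,z]$ satisfies $d(x,b)\approx(y,z)_x=d(x,u)$ and both $b$ and $u$ lie on the geodesic $[x,z]$, we get $d(b,u)=|d(x,b)-d(x,u)|$ small, whence $d(t,u)\le d(t,b)+d(b,u)$ is controlled; the same computation bounds $d(t,v)$ and $d(t,w)$. The remaining work is purely quantitative, namely tightening each $\approx$ so that the three bounds come out as exactly the claimed $4\delta$ rather than some larger universal multiple of $\delta$; this is where the specific choice of quasi-centre and the precise slimness constant have to be exploited.

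I expect this constant bookkeeping to be the main obstacle. In the discrete graph setting $(y,z)_x$ need not be an integer, so ``the vertex at distance $(y,z)_x$'' must be read with rounding, and $t$ must be a genuine vertex; reconciling these roundings with the sharp constant $4\delta$ is the delicate point, whereas the geometric mechanism --- that thinness pins the three internal points together around a central vertex --- is robust and essentially forced.
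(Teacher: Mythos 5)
The paper contains no written proof of this proposition: it is stated as an immediate consequence of the cited \cite[Proposition 21]{GhysDelaHarpe}, whose actual content is the ``slim implies fine'' implication --- in a $\delta$-slim space the three \emph{internal points} $u,v,w$ of a geodesic triangle are pairwise at distance at most $4\delta$ --- so that one may simply take $t$ to be one of $u,v,w$. Measured against that, your proposal has a genuine gap, and you identify it yourself but misjudge its nature. Starting from an \emph{unspecified} $4\delta$-quasi-centre $t$ and points $a,b,c$ realizing $d(t,[x,y]),d(t,[x,z]),d(t,[y,z])\le 4\delta$, your substitution argument gives $(y,z)_x=d(x,t)+\varepsilon$ where $\varepsilon$ collects six error terms each bounded by $4\delta$, halved, i.e.\ $|d(x,t)-(y,z)_x|\le 12\delta$; hence $d(t,u)\le d(t,b)+|d(x,b)-d(x,t)|+|d(x,t)-(y,z)_x|\le 4\delta+4\delta+12\delta=20\delta$. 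The tightening to $4\delta$ that you defer as ``constant bookkeeping'' is not bookkeeping: the hypothesis $d(t,\text{sides})\le 4\delta$ is simply too weak an input, because nothing in your argument ties $t$ to the internal points any more closely than these accumulated errors allow. In effect you are trying to re-derive the strong form of the cited proposition from its weakened corollary (existence of some quasi-centre), and that direction loses a fixed factor.

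The repair is to reverse the logical order, which is exactly what the citation provides. By $\delta$-slimness, $w\in[x,y]$ lies within $\delta$ of some $p\in[x,z]\cup[y,z]$. If $p\in[x,z]$, then $|d(x,p)-d(x,w)|\le\delta$, and since $d(x,u)=d(x,w)=(y,z)_x$ with $p,u$ on the same geodesic $[x,z]$, we get $d(p,u)\le\delta$, hence $d(w,u)\le 2\delta$; if $p\in[y,z]$, the same computation using $d(y,w)=d(y,v)=(x,z)_y$ gives $d(w,v)\le 2\delta$. Applying this to each of $u,v,w$, every internal point is $2\delta$-close to another one, and a graph on three vertices with minimum degree one is connected, so all three pairwise distances are at most $4\delta$. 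Now set $t:=u$: then $d(t,u)=0$, $d(t,v)\le 4\delta$, $d(t,w)\le 4\delta$, and no abstract quasi-centre is needed at all. Note that your ``key structural observation'' $d(x,u)=d(x,w)$, $d(y,v)=d(y,w)$, $d(z,u)=d(z,v)$ is precisely the ingredient this argument consumes --- you had the right fact but pointed it in the wrong direction. (Your side remark about Gromov products being half-integers in a graph is fair, but it concerns the statement itself, which already presupposes that $u,v,w$ are vertices; it is not something the proof is expected to resolve.)
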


\begin{definition}{(Fine graph)} \cite[Proposition 2.1.]{Bowditchrelhyp}

 A graph is \emph{fine} if for every edge $e$, for all $L\ge0$, the set of simple simplicial loops of length at most $L$ which contain $e$ is finite. \\

 It is \emph{uniformly fine} if this set has cardinality bounded above by a function depending only on $L$. We denote by $\varphi$ this function of uniform finesse.

\end{definition}

This property is used by Bowditch in \cite[Theorem 7.10.]{Bowditchrelhyp} to give a combinatorial definition of relative hyperbolicity, equivalent to the bounded coset penetration condition (see \cite{Farbrelhyp}), as we explain below in Definition \ref{Definition Groupe relativement hyperbolique}.

\begin{ex}

\begin{itemize}
    \item Any locally finite graph is fine.
    
    \item Trees are fine, since they have no loops.

\end{itemize}

\end{ex}

\begin{definition}(Coned-off graph)

Let $G$ be a finitely generated group and $H_{1},...,H_{k}$ subgroups of $G$. We consider $Cay(G)$ a Cayley graph of $G$ with respect to a finite generating set. The coned-off of $G$ relatively to the subgroups $H_{1},...,H_{k}$ is the graph constructed as follows : for each $i$ and for each left coset $gH_{i}$ of $H_{i}$, add a vertex $\widehat{gH_{i}}$ and for each $h \in H_{i}$, add an edge starting at $\widehat{gH_{i}}$ and ending at $gh_{i}$. We denote this graph by $\widehat{Cay}(G)$.

\end{definition}

Lets us recall now Bowditch's definition \cite[Theorem 7.10.]{Bowditchrelhyp} of relative hyperbolicity.

 \begin{definition} \label{Definition Groupe relativement hyperbolique}
 
A pair $(G,{H_{1},...,H_{k}})$ of a finitely generated group $G$ and a collection of subgroups is relatively hyperbolic, if one (or equivalently every) coned-off Cayley graph $\widehat{Cay}(G)$ over $H_{1},...,H_{k}$ is hyperbolic and uniformly fine.

 \end{definition}

 \begin{ex}
 
 \begin{itemize}
     
     \item If $K$ and $N$ are two finitely generated groups, then $\Gamma=K\star N$ is hyperbolic relatively to $(K,N)$.
     
     \item Let $M$ be a complete Riemannian manifold of finite volume, with pinched negative sectionnal curvature, and with a single cusp. Then according to \cite[Theorem 4.11]{Farbrelhyp}
     , the fundamental group of $M$, $\pi_{1}(M)$ is hyperbolic relatively to the cusp subgroup.

 \end{itemize}

 \end{ex}

\begin{nonex}

According to \cite[Theorem 1.3]{MasurMinsky} , the coned-off of the Mapping class group of an oriented closed surface relatively to stabilizers of simple closed curves is hyperbolic. Nevertheless, this graph is not fine therefore Mapping class groups are not relatively hyperbolic with respect to curve stablilizers.

\end{nonex}
 One difficulty of working on relatively hyperbolic groups comes from the fact that coned-off graphs are not locally finite. Angles and cones, which we will define bellow, allow to solve this difficulty. 

\subsection{Angles and cones}
 
Following Chatterji and Dahmani in \cite{ChatterjiDahmani}, let us recall the definition of angle .

\begin{definition} {(Angle)}

Let $X$ be a graph. Given a vertex $v \in X^{(0)}$ and two unoriented edges $e_{1}=\{v,w\}$ and $e_{2}=\{v,w'\}$ such that  $v \in e_{1}\cap e_{2}$, the \emph{angle} between $e_{2}$ and $e_{1}$ at $v$, denoted by $\measuredangle_{v}(w,w')$, is defined as follows :

$$ \measuredangle_{v}(e_{1},e_{2})=d_{X   \backslash \{ v \} }(w,w').$$

In other words, the \emph{angle} between $e_{1}=\{v,w\}$ and $e_{2}=\{v,w'\}$ at $v$ is the infimum of length of paths from $w$ to $w'$ that avoid $v$.
In particular, an angle could be equal to $+\infty$.

By convention, we will use the following abuse of notation : if $x_{1}, x_{2}$ and $v$ are vertices of $X$, distinct from $v$, we say that $ \measuredangle_{v}(x_{1},x_{2}) > \theta$ if there exist two edges $e_{1}$, $e_{2}$, such that $ v \in e_{1}\cap e_{2} $,  with $e_{i}$ on a geodesic between $v$ and $x_{i}$ $(i=1,2)$ and such that $ \measuredangle_{v}(e_{1},e_{2}) > \theta$. 
We say that $ \measuredangle_{v}(x_{1},x_{2}) \leq \theta$ otherwise, i.e., for all geodesics between $v$ and $x_{i}$ $(i=1,2)$ and starting by the edges $e_{i}$ $(i=1,2)$ such that $v \in e_{1}\cap e_{2}$, then $\measuredangle_{v}(e_{1},e_{2})\leq \theta$.

\end{definition}

 \begin{proposition} \label{Triangle inequality} (Triangle inequality)\\ For all edges $e_{1}$, $e_{2}$ and $e_{3}$ and vertex $v$ such that $ v \in e_{1}\cap e_{2} \cap e_{3}$ , we have :

 $$ \measuredangle_{v}(e_{1},e_{3})\le \measuredangle_{v}(e_{1},e_{2}) + \measuredangle_{v}(e_{2},e_{3}) .$$

 Let $a,b,c \in X^{(0)} \setminus \{v\}$, we have also :
 
$$ \measuredangle_{v}(a,b)\le \measuredangle_{v}(a,c) + \measuredangle_{v}(c,b) .$$

 \end{proposition}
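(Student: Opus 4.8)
The first inequality is nothing more than the triangle inequality for the (possibly infinite-valued) path metric $d_{X \setminus \{v\}}$ on the graph obtained by deleting $v$. Writing $e_i = \{v, w_i\}$ for $i = 1, 2, 3$, the definition gives $\measuredangle_v(e_i, e_j) = d_{X \setminus \{v\}}(w_i, w_j)$. If $\measuredangle_v(e_1, e_2)$ or $\measuredangle_v(e_2, e_3)$ is infinite the claim is trivial, so I would assume both are finite and take a path $\gamma_1$ from $w_1$ to $w_2$ and a path $\gamma_2$ from $w_2$ to $w_3$, both avoiding $v$ and of length within $\varepsilon$ of the respective angles. Concatenating them produces a path from $w_1$ to $w_3$ that still avoids $v$ and has length at most $\measuredangle_v(e_1,e_2) + \measuredangle_v(e_2,e_3) + \varepsilon$; letting $\varepsilon \to 0$ and taking the infimum over such paths yields $\measuredangle_v(e_1, e_3) \le \measuredangle_v(e_1, e_2) + \measuredangle_v(e_2, e_3)$.

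For the second inequality the plan is first to make the abuse of notation precise: the convention says exactly that $\measuredangle_v(x_1, x_2) = \sup \measuredangle_v(e_1, e_2)$, where the supremum runs over all pairs of edges $e_1, e_2$ with $v \in e_1 \cap e_2$ and $e_i$ lying on a geodesic from $v$ to $x_i$. Indeed, $\measuredangle_v(x_1,x_2) > \theta$ holding for some admissible pair is precisely the statement that this supremum exceeds $\theta$. Granting this reformulation, I would fix an arbitrary admissible pair $e_a, e_b$ for the endpoints $a, b$, and choose any edge $e_c$ with $v \in e_c$ on a geodesic from $v$ to $c$; such an edge exists because $c \neq v$ and the graph is connected, so a geodesic $[v,c]$ has length at least one. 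Applying the already-proved edge inequality to $e_a, e_c, e_b$ at $v$ gives $\measuredangle_v(e_a, e_b) \le \measuredangle_v(e_a, e_c) + \measuredangle_v(e_c, e_b)$, and bounding each summand by the corresponding supremum gives $\measuredangle_v(e_a, e_b) \le \measuredangle_v(a, c) + \measuredangle_v(c, b)$. Since $e_a, e_b$ were arbitrary, taking the supremum over them yields $\measuredangle_v(a, b) \le \measuredangle_v(a, c) + \measuredangle_v(c, b)$.

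I do not expect a serious obstacle here: the whole content is the reduction of the vertex-version to the edge-version, which is purely the transitivity of the relation ``there is a short path avoiding $v$''. The only two points requiring care are the correct reading of the convention as a supremum over admissible initial edges, and the existence of the intermediate edge $e_c$, which is exactly where the hypothesis $c \neq v$ enters; the cases where some angle is infinite are absorbed into the convention that the right-hand side is then $+\infty$ and the inequality is automatic.
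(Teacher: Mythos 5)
Your proof is correct and follows the same route as the paper: the first inequality is the triangle inequality for the path metric $d_{X\setminus\{v\}}$, and the second reduces to the first by unwinding the convention for vertex angles (your supremum reformulation is a faithful reading of it) and inserting an admissible edge toward $c$. The paper's own proof is just a terse two-sentence version of exactly this argument, so you have essentially reproduced and fleshed it out.
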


 \begin{proof}
     The first property comes directly from the triangle inequality in $X \setminus \{v\}$.\\

     To deduce the second inequality with vertices, we just have to come back to the definition of angle and use the first point.

 \end{proof}

In a $\delta$-hyperbolic graph, large angles at a vertex force geodesics to pass through this vertex.

\begin{proposition}\label{angletoutegéod}\cite[Proposition 1.2]{ChatterjiDahmani} 
Let $X$ be a $\delta$-hyperbolic graph and $a,b,c$ vertices in $X$, $a,b \neq c$, if $\measuredangle _{c}(a,b) > 12\delta$, then every geodesic between $a$ and $b$ goes through $c$.
\end{proposition}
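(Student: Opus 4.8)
The plan is to argue by contradiction. Suppose $\measuredangle_c(a,b) > 12\delta$ and yet some geodesic $\gamma=[a,b]$ does not pass through $c$. By the convention fixing the meaning of $\measuredangle_c(a,b)>12\delta$, there are geodesics $[c,a]$ and $[c,b]$ whose initial edges $e_1=\{c,w\}$ and $e_2=\{c,w'\}$ satisfy $\measuredangle_c(e_1,e_2)=d_{X\setminus\{c\}}(w,w')>12\delta$. I would fix these two geodesics together with $\gamma$, forming a geodesic triangle $[a,b,c]$, and reduce the statement to the purely metric claim that \emph{there is a path from $w$ to $w'$ in $X\setminus\{c\}$ of length at most $12\delta$}, which contradicts the displayed inequality. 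Before any geometry I would dispose of the degenerate situation: since $w$ is joined to $a$ by the subpath of $[c,a]$ missing $c$, and $w'$ to $b$ by the subpath of $[c,b]$ missing $c$, and since $\gamma$ joins $a$ to $b$ inside $X\setminus\{c\}$, the vertices $w$ and $w'$ automatically lie in the same connected component of $X\setminus\{c\}$. Thus $d_{X\setminus\{c\}}(w,w')$ is finite and the whole content is to \emph{shorten} the long path $w\to a\to b\to w'$ using hyperbolicity.

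The main tool is the $\delta$-thinness of $[a,b,c]$, used as fellow-travelling: writing $p=(a,b)_c$ and parametrising $[c,a]$ and $[c,b]$ by arclength from $c$, the points $[c,a](s)$ and $[c,b](s)$ stay within $2\delta$ of each other for every $s\le p-\delta$ (the standard comparison of the two sides issuing from $c$ up to the branch point, whose location is pinned down by Proposition \ref{quasicentreparticulier}). I would also record that $p\le d(c,\gamma)\le p+4\delta$, so that the Gromov product measures, up to $4\delta$, how far $c$ sits from the opposite side.

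In the main regime, when $p$ exceeds a fixed multiple of $\delta$, the construction is immediate: choose $s$ with $2\delta<s\le p-\delta$, set $x=[c,a](s)$ and $y=[c,b](s)$, and concatenate the subpath of $[c,a]$ from $w$ to $x$, a geodesic from $x$ to $y$ of length at most $2\delta$, and the subpath of $[c,b]$ from $y$ to $w'$. The two side-portions avoid $c$ because they move away from $c$, and the crossing avoids $c$ because each of its vertices lies within $2\delta$ of $x$, hence at distance at least $s-2\delta>0$ from $c$; taking $s$ just above $2\delta$ makes the total length at most $6\delta<12\delta$.

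The genuine obstacle is the opposite regime, where $p$ is small, i.e. $c$ lies within $O(\delta)$ of $\gamma$ and the geodesics $[c,a]$, $[c,b]$ diverge almost at once at $c$. Here one cannot cross between the two sides far from $c$, and the crossing must instead be routed through $\gamma$, which avoids $c$ by hypothesis; the delicate point is that the short jumps connecting $w$ and $w'$ to $\gamma$ pass close to $c$ and must be shown not to contain the vertex $c$ itself, as opposed to merely passing near it. I expect this to be where the full strength of the constant $12\delta$ is spent: one combines fellow-travelling just beyond the branch point (where the sides run within $2\delta$ of $\gamma$) with the bound $d(c,\gamma)\ge 1$ to keep the jumps off $c$, and then sums the three controlled pieces along $[c,a]$, along $\gamma$, and along $[c,b]$. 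Carrying out this bookkeeping and checking that the total never exceeds $12\delta$ is the crux; everything else is a direct application of thinness.
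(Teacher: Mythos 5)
The paper itself contains no proof of this statement: it is imported wholesale from Chatterji--Dahmani \cite[Proposition 1.2]{ChatterjiDahmani}, so your attempt has to stand on its own. Its first half essentially does: the contradiction set-up via the convention on $\measuredangle_{c}(a,b)>12\delta$, the observation that $w$ and $w'$ lie in one component of $X\setminus\{c\}$, and the crossing construction in the regime where $p=(a,b)_{c}$ is large are all sound modulo constants. (One remark: the $2\delta$ fellow-travelling bound for $s\le p-\delta$ does not follow from Lemma \ref{lemme de Bridson} with $K=4\delta$, which only covers $s\le p-5\delta$; it is nevertheless true, because a point $[c,a](s)$ that is $\delta$-close to $\gamma$ forces $d(c,\gamma)\le s+\delta$, while $d(c,\gamma)\ge p$, so for $s<p-\delta$ the point must be $\delta$-close to $[c,b]$.)

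The genuine gap is the complementary regime $p\lesssim 3\delta$, which you explicitly identify as ``the crux'' and then do not carry out --- and the one mechanism you propose for it fails. You claim the short jumps from $[c,a]$ and $[c,b]$ onto $\gamma$ can be kept off $c$ by ``the bound $d(c,\gamma)\ge 1$''. But a geodesic of length at most $\delta$ from $y\in[c,a]$ to $z\in\gamma$ contains the vertex $c$ exactly when $d(y,z)=d(y,c)+d(c,z)$, which is entirely possible once $d(c,y)+d(c,z)\le\delta$ --- and that is precisely the situation in this regime, where the jump points must be taken within $O(\delta)$ of $c$. Positivity of $d(c,z)$ excludes nothing; to keep a vertex off a geodesic one needs a strict betweenness inequality, not a nonzero distance. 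Moreover, even granting the jumps, the ``three controlled pieces'' do not sum to $12\delta$: climbing each of $[c,a]$, $[c,b]$ to the level $\approx p+\delta$ at which they come $\delta$-close to $\gamma$ (the minimum needed for jumps to clear $c$), jumping, and traversing $\gamma$ between the landing points costs about $2p+8\delta$ plus additive constants, which for $p$ near $3\delta$ exceeds $14\delta$. So the case that carries the whole content of the proposition --- for instance $p=0$, where $c\in I(a,b)$ yet $\gamma$ avoids $c$, exactly the picture of a cone vertex in a coned-off graph --- is left unproven, and the sketch offered for it cannot be completed by the stated means; this is presumably why the paper cites the result rather than reproving it.
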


Following Chatterji and Dahmani in \cite{ChatterjiDahmani}, let us define cones.

\begin{definition} {(Cone)}\\
Let $X$ be a graph. Let $e$ be an edge
and a number $\theta>0$. \\

The cone of parameter $\theta$ around the edge, denoted by $Cone_{\theta}(e)$, is the subset of vertices and edges of $X$, $v$ and $e'$ such that there is path from length at most $d$ from $e$ to $e'$ and for which two consecutive edges make an angle at most $\theta$, i.e. :

$$\begin{aligned} 
         Cone_{\theta}(e) & =\{e' ~|~ \exists e_{0}=e, e_{1}, ..., e_{n}=e' \text{such that}~ n\le \theta,  \forall i , \exists v_{i}\in e_{i} \cap e_{i+1} \text{and} \measuredangle_{v_{i}}(e_{i},e_{i+1}) \le \theta \} \\
         & \cup \{ v ~|~ \exists e'=\{v,w\} ~\text{and}~ e_{0}=e, e_{1}, ..., e_{n}=e' \text{such that}~ n\le \theta,  \forall i , \exists v_{i}\in e_{i} \cap e_{i+1} \text{and} \measuredangle_{v_{i}}(e_{i},e_{i+1}) \le \theta \} .
\end{aligned}$$

\end{definition}

\begin{proposition} \label{cardinal cones}\cite[Propostion 1.10]{ChatterjiDahmani} 
In a fine graph, cones are finite.\\ 

If the graph is uniformly fine, for all $e\in  X^{(1)} $ and $\theta > 0$, the cardinality of $Cone_{\theta}(e)$ can be bounded above by a function of $\theta$.

\end{proposition}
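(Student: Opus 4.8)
The plan is to control the \emph{branching} of the cone. I will show that for any edge $e$ the number of edges that share an endpoint with $e$ and make an angle at most $\theta$ with it is bounded by a function of $\theta$ (and is finite in the merely fine case), and then observe that $Cone_{\theta}(e)$ is precisely the set of edges reachable from $e$ by at most $\lfloor\theta\rfloor$ such steps. Its cardinality is then governed by a geometric sum in this branching bound, and the vertices of the cone, being endpoints of its edges, are controlled likewise.

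The key step is to turn a small angle into a short \emph{simple} loop. Fix $e=\{a,b\}$ and a neighbouring edge $e'=\{b,c\}$ with $c\neq a$ and $\measuredangle_{b}(e,e')\le\theta$. By definition this means $d_{X\setminus\{b\}}(a,c)\le\theta$, so I may choose a geodesic $P$ from $a$ to $c$ in $X\setminus\{b\}$. Being a shortest path, $P$ is simple, has length at most $\theta$, and avoids $b$; hence adjoining the two edges $\{b,a\}$ and $\{c,b\}$ produces a simple loop $C(e')$ of length at most $\theta+2$ that contains both $e$ and $e'$. The same construction applies verbatim to edges sharing the endpoint $a$ rather than $b$.

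Now I count. Suppose $e'_{1}=\{b,c_{1}\}$ and $e'_{2}=\{b,c_{2}\}$ are two admissible edges at $b$ with $c_{1}\neq c_{2}$, and assume $C(e'_{1})=C(e'_{2})$. This single simple loop would then contain the three pairwise distinct edges $e=\{a,b\}$, $\{b,c_{1}\}$ and $\{b,c_{2}\}$, all incident to $b$; but a simple loop meets each of its vertices in exactly two edges, a contradiction. Hence $e'\mapsto C(e')$ is injective, so the number of admissible neighbours of $e$ at the endpoint $b$ is at most the number of simple loops of length at most $\theta+2$ through $e$. By fineness this is finite, and by uniform fineness it is at most $\varphi(\theta+2)$; adding the symmetric count at $a$, the branching is at most $2\varphi(\theta+2)$.

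Finally, iterating over the at most $\lfloor\theta\rfloor$ steps allowed in the definition, the number of edges of $Cone_{\theta}(e)$ is at most $\sum_{n=0}^{\lfloor\theta\rfloor}\bigl(2\varphi(\theta+2)\bigr)^{n}$, a quantity depending only on $\theta$, and the vertices are at most twice as numerous. In the merely fine case the same induction gives finiteness: each of the finitely many edges reached has finite branching and the number of steps is bounded, so the tree of admissible paths is finite. The part I expect to require the most care is exactly the simple-loop extraction together with the injectivity argument: one must use a \emph{geodesic} in $X\setminus\{b\}$ (so that the resulting cycle is genuinely simple, and hence counted by fineness) and must dispose of the degenerate cases $c=a$ and $e'=e$, so that the three edges at $b$ really are distinct.
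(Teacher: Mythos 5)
Your proof is correct. Note that the paper itself gives no proof of this proposition --- it is quoted directly from Chatterji--Dahmani \cite[Proposition 1.10]{ChatterjiDahmani} --- so there is no internal argument to compare with; what you wrote is essentially the standard argument behind the cited result: a bounded angle at a shared vertex yields a short \emph{simple} loop through the edge (because a geodesic in $X\setminus\{b\}$ is a simple path avoiding $b$), simplicity of loops gives injectivity of $e'\mapsto C(e')$ (a simple loop uses exactly two edges at each of its vertices, so it cannot contain three distinct edges at $b$), and fineness then bounds the branching by $\varphi(\theta+2)$ at each endpoint. The one point worth underlining is the one you implicitly use in the iteration: it is the \emph{uniformity} of $\varphi$ (independence of the edge) that makes the branching bound $2\varphi(\theta+2)$ valid at every edge $e_{i}$ of the chain and not only at $e$, which is exactly what the geometric sum $\sum_{n=0}^{\lfloor\theta\rfloor}\bigl(2\varphi(\theta+2)\bigr)^{n}$ requires; in the merely fine case your induction on the number of steps correctly replaces this by finiteness at each stage.
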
  

The following property allows to use cones in relatively hyperbolic groups as a sort of finite neighbourhoods of geodesics.

\begin{proposition} \cite[Propostion 1.10]{ChatterjiDahmani} \label{conetriangle} {(Cones and triangles)} 

In a $\delta$-hyperbolic graph, geodesic triangles are conically fine, i.e. for any geodesic triangle $[a,b,c]$, every edge $e$ on $[a,b]$ is contained in a cone of parameter $50\delta$ around an edge $e'$ that is either on $[a,c]$ at the same distance of $a$ than $e$, either on $[b,c]$ at the same distance of $b$ than $e$. 
\end{proposition}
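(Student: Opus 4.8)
The plan is to match each edge $e$ on $[a,b]$ to an edge $e'$ at the corresponding position on one of the two other sides using $\delta$-thinness, and then to exhibit an explicit short path of edges from $e'$ to $e$ whose consecutive edges make small angles, which is exactly what it means for $e$ to belong to $Cone_{50\delta}(e')$.

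First I would set up the matching. Write $e=\{x,y\}$ with $d(a,x)=i$ and $d(a,y)=i+1$, and set $\alpha=(b,c)_a$. If $i+1\le \alpha$, I match $e$ with the side $[a,c]$: let $x'$ and $y'$ be the vertices of $[a,c]$ at distances $i$ and $i+1$ from $a$, and put $e'=\{x',y'\}$. The standard consequence of $\delta$-thinness, namely the comparison of the triangle with its tripod that also underlies Proposition \ref{quasicentreparticulier}, gives $d(x,x')\le 4\delta$ and $d(y,y')\le 4\delta$, since $x,x'$ and $y,y'$ lie on the two sides issuing from $a$ before the internal point at distance $\alpha$. If instead $i\ge \alpha$, then $x$ lies on the $b$-side and the symmetric estimate measured from $b$ matches $e$ with an edge $e'$ on $[b,c]$ at the same distance from $b$; the values of $i$ near $\alpha$ are absorbed by the generous constant. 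This produces the edge $e'$ asked for in the statement.

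Next I would build the cone-path. I fix a geodesic $[y',y]$, of length at most $4\delta$, and form the edge-path $e_0=e',\,e_1,\dots,e_m,\,e_{m+1}=e$ consisting of $e'$, the successive edges of $[y',y]$, and finally $e$. Its number of edges is at most $4\delta+2\le 50\delta$, so the length condition in the definition of $Cone_{50\delta}(e')$ holds. It then remains to control the angles at the vertices shared by consecutive edges of this path.

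The main obstacle is precisely this angle condition. Here I would use Proposition \ref{angletoutegéod} in contrapositive form: if the angle at an apex $v$ between two neighbours $p,q$ exceeded $12\delta$, then every geodesic from $p$ to $q$ would pass through $v$, so it suffices to produce a single short path from $p$ to $q$ avoiding $v$. Each such detour is routed through the nearby sides: every vertex of $[y',y]$ lies within $4\delta$ of $y$, hence of $[a,b]$, and within $4\delta$ of $y'$, hence of $[a,c]$, so one connects $p$ to $q$ by projecting onto one of these two sides, travelling a bounded distance along it, and projecting back, the total length staying well inside the $50\delta$ budget (combining angle estimates with Proposition \ref{Triangle inequality} where convenient). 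The delicate point, which I expect to demand the most care, is guaranteeing that these detours can be chosen to \emph{avoid} the apex $v$; this is exactly where hyperbolicity is essential and where the large constant leaves room. It is also where one must dispose of the degenerate, tree-like configuration in which a would-be apex is a genuine cut vertex: in that situation the matched edges $e$ and $e'$ actually coincide, the connecting geodesic is trivial, and no interior angle has to be estimated at all, so the difficulty only arises in the "thick" regions where the required detours do exist.
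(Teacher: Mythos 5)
Your matching step (thinness gives an edge $e'$ on $[a,c]$ whose endpoints $x',y'$ are $4\delta$-close to the endpoints $x,y$ of $e$) is fine, and it is also how the paper sets things up. The genuine gap is exactly the point you yourself flag as ``delicate'': nothing in your construction bounds the angles along the path $e',[y',y],e$, and your proposed repair does not close the circle. In a fine hyperbolic graph the interior vertices of the connector $[y',y]$ are completely uncontrolled --- they may be cone-type or cut-type vertices at which two prescribed neighbours $p,q$ make an enormous (even infinite) angle --- and bounding $\measuredangle_{v}(p,q)$ \emph{means}, by the very definition of the angle, exhibiting a short path from $p$ to $q$ avoiding $v$. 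Your detours are assembled from projections onto the sides and further thinness connectors, i.e.\ from geodesics that have no more reason to avoid $v$ than $[y',y]$ itself did: the argument is circular. (Note also that the contrapositive of Proposition \ref{angletoutegéod} requires a \emph{geodesic} from $p$ to $q$ missing $v$, not merely a path; a short path avoiding $v$ does bound the angle, but directly by the definition, not via that proposition.) Finally, your fallback for the degenerate case --- ``if the apex is a cut vertex then $e=e'$'' --- is asserted, not proved, and in the hard configurations it is essentially equivalent to the statement you are trying to prove.

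The missing idea is to close your path into a \emph{loop}. Since $d(x,x')\le 4\delta$ and $d(y,y')\le 4\delta$, the circuit $e\cdot[y,y']\cdot e'\cdot[x',x]$ has length at most $8\delta+2$ and contains both $e$ and $e'$; for any vertex $v$ traversed once by such a loop, the complementary arc of the loop is a path between the two loop-neighbours of $v$ that avoids $v$, so every angle along the loop is at most its length minus $2$, and the loop itself is the required cone-path from $e'$ to $e$ --- no separate detour construction is needed (one only has to treat, with some care, vertices visited twice and the degenerate case $e=e'$). This is precisely how the paper argues: Proposition \ref{conetriangle} itself is quoted from \cite{ChatterjiDahmani} without proof, but the variant actually proved in the paper, Proposition \ref{conetriangleplusfort}, runs exactly this loop argument, using Lemma \ref{lemme de Bridson} to produce two connectors of length at most $2\delta$, closing up along $[a,c]$ into a loop of length at most $22\delta$, and concluding $e\in Cone_{50\delta}(e')$ because short loops automatically have small angles along them.
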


A corollary of this proposition is the fact that in a uniformly fine graph intervals are uniformly finite even if the graph is not locally finite.

\begin{corollary}

In a uniformly fine and $\delta$-hyperbolic graph $X$, for all $x,x' \in X$,  the cardinality of $I(x,x')$ is bounded above by a function depending of $d(x,x')$, of $\delta$ and of the function of uniform finesse $f$.

\end{corollary}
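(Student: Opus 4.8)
The plan is to fix a single reference geodesic $\gamma_0 = [x,x']$ and to prove that every vertex of the interval $I(x,x')$ lies in a cone of parameter $50\delta$ around one of the (at most $d(x,x')$) edges of $\gamma_0$. The corollary then follows by a direct count: Proposition \ref{cardinal cones} bounds the cardinality of each such cone by a quantity depending only on $50\delta$ (hence on $\delta$) and the uniform finesse function, and $\gamma_0$ has $d(x,x')$ edges. Concretely, writing $D = d(x,x')$ and listing the vertices of $\gamma_0$ as $x = p_0, p_1, \dots, p_D = x'$ with edges $f_i = \{p_{i-1},p_i\}$, I would reduce the statement to the claim that for every $a \in I(x,x') \setminus \{x,x'\}$ one has $a \in Cone_{50\delta}(f_k)$, where $k = d(x,a)$.

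To establish this claim, fix such an $a$ and let $\gamma$ be a geodesic from $x$ to $x'$ through $a$; it splits as $[x,a] \cup [a,x']$ with both subsegments geodesic, so that $[x,x',a]$ is a genuine geodesic triangle with sides $\gamma_0$, $[x,a]$ and $[a,x']$. The key move is to apply conical fineness (Proposition \ref{conetriangle}) to the edge $f_k$ of the \emph{reference} side $\gamma_0$, rather than to an edge of $\gamma$. Since $f_k$ sits at distance $k$ from $x$ and at distance $D-k$ from $x'$, Proposition \ref{conetriangle} places it in a cone $Cone_{50\delta}(e')$ where $e'$ is either the edge of $[x,a]$ at distance $k$ from $x$ or the edge of $[x',a]$ at distance $D-k$ from $x'$. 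The crucial observation is that both of these candidate edges are precisely the edges of $\gamma$ adjacent to the apex $a$, so in either case $e'$ has $a$ as an endpoint.

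It then remains to transfer this membership back to $a$. Because the angle $\measuredangle_v(\cdot,\cdot)$ is symmetric and any admissible cone path may be reversed without changing its length or its consecutive angles, the relation $e' \in Cone_{50\delta}(f_k)$ is equivalent to $f_k \in Cone_{50\delta}(e')$; hence the previous paragraph gives $e' \in Cone_{50\delta}(f_k)$, and since $a$ is a vertex of $e'$ the vertex clause in the definition of a cone yields $a \in Cone_{50\delta}(f_k)$, as claimed. Summing over the at most $D$ edges of $\gamma_0$ and adding the two endpoints, I obtain $|I(x,x')| \le 2 + D \cdot \max_i |Cone_{50\delta}(f_i)|$, which Proposition \ref{cardinal cones} bounds by a function of $d(x,x')$, of $\delta$ and of the finesse function.

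The one point that requires genuine care — and the reason for applying conical fineness to the reference side rather than to $\gamma$ — is the two-sided alternative in Proposition \ref{conetriangle}. Had I started from an edge of $\gamma$ adjacent to $a$, the fineness alternative could have routed it to the opposite subsegment of $\gamma$ (again meeting at $a$) instead of to $\gamma_0$, yielding no control relative to the fixed geodesic. Applying the alternative at $f_k$ instead forces \emph{both} options to land at the apex $a$, and this is exactly what makes the cone-symmetry step produce a bound against the single reference geodesic $\gamma_0$; I expect verifying this forced coincidence (together with the harmless off-by-one in the distance matching) to be the main subtlety of the argument.
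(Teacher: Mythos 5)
Your proof is correct and follows exactly the route the paper intends: the corollary is stated there without a written proof, as an immediate consequence of Proposition \ref{conetriangle} (conical fineness of triangles) together with Proposition \ref{cardinal cones} (uniform finiteness of cones), and your argument supplies precisely those missing details. In particular, your care in applying conical fineness to the edge $f_k$ of the fixed reference geodesic --- so that both alternatives of the proposition land on an edge adjacent to the apex $a$, after which symmetry of the cone relation and the vertex clause give $a \in Cone_{50\delta}(f_k)$ --- is a valid and clean way to handle the degenerate triangle $[x,x',a]$.
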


The following lemma of \cite[Lemma 1.15]{BridsonHaefliger} will be useful for the next result.

\begin{lem} \cite[Lemma 1.15]{BridsonHaefliger} \label{lemme de Bridson} 

Let $X$ be a geodesic space such that geodesic triangles are $\delta$-thin. Let $\gamma,\gamma' : [0,T] \mapsto X$ be geodesics with $\gamma(0)=\gamma'(0)$. If $d(\gamma(t_{0}),im(\gamma'))\le K$, for some $K>0$ and $t_{0} \in [0,T]$, then $d(\gamma(t),\gamma'(t))\le 2\delta$ for all $t \le t_{0}-K-\delta$.

\end{lem}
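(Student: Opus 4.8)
The plan is to exploit the common basepoint $p=\gamma(0)=\gamma'(0)$ to package the hypothesis into a single thin geodesic triangle, and then to convert metric proximity into proximity of \emph{parameters}. First I would choose a point $\gamma'(s)$ on the image of $\gamma'$ with $d(\gamma(t_{0}),\gamma'(s))\le K$, which exists by hypothesis. Since $\gamma$ and $\gamma'$ are geodesics issued from $p$, we have $d(p,\gamma(t_{0}))=t_{0}$ and $d(p,\gamma'(s))=s$, so the triangle inequality gives $|t_{0}-s|\le d(\gamma(t_{0}),\gamma'(s))\le K$; in particular $s\ge t_{0}-K$, which is non-negative in the regime $t_{0}-K-\delta\ge 0$ where the conclusion is not vacuous.

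Next I would form the geodesic triangle with vertices $p$, $\gamma(t_{0})$ and $\gamma'(s)$, taking as two of its sides the subgeodesics $\gamma|_{[0,t_{0}]}$ and $\gamma'|_{[0,s]}$, and as the third side any geodesic $[\gamma(t_{0}),\gamma'(s)]$, whose length is at most $K$. Fix $t\le t_{0}-K-\delta$. By $\delta$-thinness the point $\gamma(t)$ lies within $\delta$ of the union of the two other sides. I claim the short third side cannot be responsible: any point $q$ on $[\gamma(t_{0}),\gamma'(s)]$ satisfies $d(q,\gamma(t_{0}))\le K$, so if $d(\gamma(t),q)\le\delta$ then $t_{0}-t=d(\gamma(t),\gamma(t_{0}))\le K+\delta$, contradicting $t\le t_{0}-K-\delta$. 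Hence there is a parameter $t'\in[0,s]$ with $d(\gamma(t),\gamma'(t'))\le\delta$.

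Finally I would synchronize the parameters using the common basepoint once more. From $t=d(p,\gamma(t))$ and $t'=d(p,\gamma'(t'))$ we get $|t-t'|=|d(p,\gamma(t))-d(p,\gamma'(t'))|\le d(\gamma(t),\gamma'(t'))\le\delta$. Since $\gamma'$ is a geodesic and $t\le t_{0}\le T$ keeps us in its domain, $d(\gamma'(t'),\gamma'(t))=|t'-t|\le\delta$, whence by the triangle inequality
$$d(\gamma(t),\gamma'(t))\le d(\gamma(t),\gamma'(t'))+d(\gamma'(t'),\gamma'(t))\le 2\delta.$$

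The triangle-inequality estimates are routine; the step I expect to require the most care is the side-selection argument, namely ruling out that $\gamma(t)$ is shadowed by the third side rather than by $\gamma'|_{[0,s]}$, since this is precisely where the cutoff $t\le t_{0}-K-\delta$ enters and where the strict-versus-nonstrict inequality at the boundary must be examined — the slack between the intermediate bound $\delta$ and the final bound $2\delta$ being what absorbs the borderline case. Everything else is bookkeeping with distances to the common basepoint.
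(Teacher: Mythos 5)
The paper itself offers no proof of this lemma; it is quoted directly from Bridson--Haefliger, so there is nothing internal to compare against. Your argument is the standard proof of that statement: build the geodesic triangle on $p=\gamma(0)$, $\gamma(t_{0})$ and a point $\gamma'(s)$ at distance at most $K$ from $\gamma(t_{0})$ (the image of $\gamma'$ is compact, so such an $s$ exists), use thinness to push $\gamma(t)$ onto one of the other two sides, rule out the short third side, and synchronize parameters through the common basepoint. In its main thrust this is correct, and all the basepoint estimates ($|t-s|\le K$, $|t-t'|\le\delta$) are fine.

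The one genuine flaw is exactly the point you flagged, and your proposed resolution of it does not work. At the boundary value $t=t_{0}-K-\delta$, the inequality $t_{0}-t\le K+\delta$ that you derive is \emph{compatible} with $t\le t_{0}-K-\delta$ (equality is allowed), so there is no contradiction and the side-selection step fails there. Moreover, the ``slack'' between $\delta$ and $2\delta$ does not absorb this case on its own: if all you know is that $\gamma(t)$ is $\delta$-close to some point $q$ of the third side, then since $d(q,\gamma'(s))\le K$ the triangle inequality only yields $d(\gamma(t),\gamma'(t))\le 2\delta+2K$, which is useless. Two clean repairs are available. First, your argument does prove the bound for every $t<t_{0}-K-\delta$, and since $t\mapsto d(\gamma(t),\gamma'(t))$ is continuous, the inequality $d(\gamma(t),\gamma'(t))\le 2\delta$ passes to the closure of that parameter set, hence holds at $t=t_{0}-K-\delta$ as well. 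Alternatively, argue directly at the boundary: there $d(q,\gamma(t_{0}))\ge d(\gamma(t),\gamma(t_{0}))-\delta=K$, while $q$ lies on a geodesic of length $d(\gamma(t_{0}),\gamma'(s))\le K$, forcing $d(q,\gamma(t_{0}))=d(\gamma(t_{0}),\gamma'(s))$ and hence $q=\gamma'(s)$; but $\gamma'(s)$ lies on $\gamma'$, so you are back in the good case with $t'=s$ and the same two-step estimate gives $2\delta$. With either repair the proof is complete.
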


\begin{proposition}\label{conetriangleplusfort}

For every geodesic triangle $[a,b,c]$, if $e$ is an edge on $[a,b]$ such that $d(a,e)\le (b,c)_{a}-8\delta$ then if $e'$ is the edge on $[a,c]$ at the same distance of $a$ than $e$, $e$ is contained in the cone of parameter $50\delta$ around $e'$ and conversely.
    
\end{proposition}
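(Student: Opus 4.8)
The plan is to show that the two sides $[a,b]$ and $[a,c]$ of the triangle $2\delta$-fellow-travel along their common initial segment, whose length is essentially the Gromov product $(b,c)_a$, and then to upgrade this metric proximity into the claimed cone relation. To locate where the two sides separate, I would apply Proposition \ref{quasicentreparticulier} to $[a,b,c]$: the vertex $w$ on $[a,b]$ with $d(a,w)=(b,c)_a$ and the vertex $u$ on $[a,c]$ with $d(a,u)=(b,c)_a$ are both within $4\delta$ of the common quasi-center, so $d(u,w)\le 8\delta$, and in particular $d(w,\mathrm{im}([a,c]))\le 8\delta$ (a standard Gromov-product estimate even gives closeness $O(\delta)$ to $[a,c]$, which is what I would use to match the stated constant exactly).

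Next I would parametrize $[a,b]$ and $[a,c]$ by arclength from $a$ as geodesics $\gamma,\gamma'$ with $\gamma(0)=\gamma'(0)=a$, and feed the estimate above into Lemma \ref{lemme de Bridson} with $t_0=(b,c)_a$ and $K=O(\delta)$. This yields $d(\gamma(s),\gamma'(s))\le 2\delta$ for every $s\le (b,c)_a-8\delta$. Consequently, writing $e=\{x_t,x_{t+1}\}$ with $t=d(a,e)\le (b,c)_a-8\delta$ and $e'=\{y_t,y_{t+1}\}$ for the edge of $[a,c]$ at the same distance from $a$, both pairs of corresponding endpoints satisfy $d(x_t,y_t)\le 2\delta$ and $d(x_{t+1},y_{t+1})\le 2\delta$. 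This is the geometric heart of the statement, and it is where the hypothesis $d(a,e)\le (b,c)_a-8\delta$ is used.

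To convert proximity into cone membership, I would form the thin geodesic triangle $[a,x_{t+1},y_{t+1}]$, whose two long sides are the initial segments of $[a,b]$ and $[a,c]$ (so $e,e'$ are their last edges) and whose third side has length $\le 2\delta$. Applying the conically fine property (Proposition \ref{conetriangle}) to this triangle places $e$ in $Cone_{50\delta}$ of either the edge of $[a,c]$ at distance $t$ from $a$, which is exactly $e'$ and finishes the proof, or of an edge of the short third side; running the same argument for $e'$ and using that cones are symmetric (a cone path reverses with the same angles, since $\measuredangle$ is symmetric, so $e'\in Cone_{50\delta}(e)\iff e\in Cone_{50\delta}(e')$) reduces the remaining case to a purely local connection across a side of length $\le 2\delta$, which keeps the total cone parameter below $50\delta$. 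The converse assertion is then immediate from this symmetry, and the whole argument is symmetric under $b\leftrightarrow c$ because $(b,c)_a$ is. The main obstacle is precisely this last step: metric closeness alone does not imply cone membership (in a tree every cone is trivial, yet the statement holds there because the sides literally coincide), so the junction angles of the connecting path must be bounded using Proposition \ref{angletoutegéod} — whenever such an angle would exceed $12\delta$, every geodesic between the relevant endpoints runs through the junction vertex, and the fellow-travelling supplies a competing geodesic avoiding it. Keeping these local angle bounds together with the edge count under the common threshold $50\delta$ is the delicate bookkeeping, and it runs parallel to the proof of Proposition \ref{conetriangle}.
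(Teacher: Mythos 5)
Your first half coincides with the paper's proof: both parametrize $[a,b]$ and $[a,c]$ from $a$ and apply Lemma \ref{lemme de Bridson} with $t_{0}=(b,c)_{a}$ and $K=O(\delta)$ (the paper takes $K=4\delta$) to make the two sides $2\delta$-fellow-travel on the range allowed by the hypothesis. The gap is in the conversion of this metric proximity into cone membership. Your plan is to apply Proposition \ref{conetriangle} to the auxiliary triangle $[a,x_{t+1},y_{t+1}]$ and, in the bad case where $e$ lands in $Cone_{50\delta}(f)$ for $f$ an edge of the short third side, to patch across that side. This patch fails for two reasons. First, cone parameters add under concatenation of edge-paths: from $e\in Cone_{50\delta}(f)$ and any connection of $f$ to $e'$ you can only conclude $e\in Cone_{50\delta+\theta}(e')$ for some $\theta>0$, which overshoots the stated constant $50\delta$ — a constant the paper uses as is later (Definition \ref{pointloingrandangle}, Theorem \ref{formenormaledestriangles}), so "below $50\delta$" is not just cosmetic and your bookkeeping claim is unjustified. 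Second, and more seriously, nothing you invoke controls the angles at the vertices of the short side: a geodesic of length $\le 2\delta$ can have angles $\gg 50\delta$ at its interior vertices (two edges through an apex of a coned-off graph already do this), and your appeal to Proposition \ref{angletoutegéod} is not a proof — that proposition says a large angle forces every geodesic through the vertex, and the fellow-travelling does not hand you a geodesic between the two relevant neighbours that avoids that vertex; it only gives short segments landing somewhere on $[a,c]$.

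The missing idea, which is what the paper does, is a short-loop construction that bounds all the angles at once. Extend $e$ by $3\delta$ along $[a,b]$ on both sides to get a segment $\sigma_{1}$; the hypothesis $d(a,e)\le (b,c)_{a}-8\delta$ keeps both endpoints inside the range of Lemma \ref{lemme de Bridson}, so each can be joined to $[a,c]$ by a segment of length $\le 2\delta$; closing up along a subsegment of $[a,c]$ (of length $\le 11\delta$ by the triangle inequality, and containing $e'$) produces a loop of length $\le 22\delta$ through both $e$ and $e'$, traversing $e$ once. Now the definition of angle does the work that Proposition \ref{conetriangle} cannot: at every vertex of this loop, the complementary arc joins its two neighbours while avoiding the vertex, so every angle along the loop is at most the loop's length $\le 22\delta$, and the loop itself is a path of at most $22\delta$ edges from $e'$ to $e$. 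Hence $e\in Cone_{50\delta}(e')$ directly, with room to spare, and the converse follows by reversing the path. Exhibiting a short loop is the basic mechanism by which fineness and hyperbolicity control angles; it cannot be replaced here by citing the conical thinness of a thinner auxiliary triangle.
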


\begin{proof}

Let $[a,b,c]$ be geodesic triangle between $a,b$ and $c$, let $\gamma$ be the geodesic $[a,b]$ from $a$ to $b$ and $\gamma'$ be the geodesic $[a,c]$ from $a$ to $c$.\\

Be definition of $\gamma$ and $\gamma'$, we have that $\gamma(0)=\gamma'(0)=a$. Moreover, thanks to hyperbolicity, we know that $d(\gamma((b,c)_{a}),\gamma'((b,c)_{a})\le 4\delta$. Then we can apply Lemma \ref{lemme de Bridson}, to the geodesics $\gamma$, $\gamma'$ with $K=4\delta$ and $t_{0}=(b,c)_{a}$.

Let us denote by $\sigma_{1}$ the subsegment of $[a,b]$ obtained from $e$ and extended by $3\delta$ from both sides, unless it reaches $a$ before. Let us denote $\sigma_{1}^{0}$ its initial point and $\sigma_{1}^{1}$ its endpoint.

 By assumption $d(a,e)\le (b,c)_{a}-8\delta$, therefore   $d(a,\sigma_{1}^{1})\le (b,c)_{a}-5\delta$. According to Lemma \ref{lemme de Bridson}, there exist two segments at most $2\delta$ long, $\sigma_{0}$ from $\sigma_{1}^{0}$ to $[a,c]$  and $\sigma_{2}$ from $\sigma_{1}^{1}$ to $[a,c]$. Let us denote $\sigma_{3}$, the subsegment of $[a,c]$ that closes the loop $\sigma_{0}^{-1}\sigma_{1}\sigma_{2}\sigma_{3}$. By the triangular inequlatity, $\sigma_{3}$ is of length at most $11\delta$. Furthermore by the triangular inequality, $e$ cannot be in the transition arcs $\sigma_{0}$, $\sigma_{2}$. If $e \in \sigma_{3}$, the result is clear. Otherwise, the loop $\sigma_{0}^{-1}\sigma_{1}\sigma_{2}\sigma_{3}$ goes only one time through $e$ and it is at most $22\delta$ long , then if $e'$ is the edge of $[a,c]$ at the same distance of $a$ than $e$, we have that $e \in Cone_{50\delta}(e')$.

\end{proof}

With this new version of the fact that geodesic triangles are conically thin, we know that at a uniform distance of the center of a geodesic triangle, every edge of a side belongs to a cone of a bounded parameter of an edge of the closest side of the triangle. \\

\subsection{Triangles in fine graphs}

In this paragraph, we will prove the main result of this section, which is the existence of a particular geodesic triangle between any three points in a fine and hyperbolic graph. Throughout this section, $X$ denotes a fine hyperbolic graph.

\begin{proposition}
    
\label{pointloingrandangleexistence}
Let $a,b,c \in X^{(0)}$, we consider the set of points $v$ with the following properties :

\begin{itemize}
    \item $v$ is contained in every geodesic between $a$ and $b$ and between $a$ and $c$,
    \item $\measuredangle_{v}(a,b)> 50\delta$,
    \item $\measuredangle_{v}(a,c)> 50\delta$.
\end{itemize}

If this set is not empty, it contains a unique point at maximal distance of $a$.

\end{proposition}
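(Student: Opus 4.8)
The plan is to exploit the first of the three defining conditions, which forces every candidate point onto one and the same geodesic, so that the whole set becomes linearly ordered by distance from $a$; once that is established, both the existence and the uniqueness of a point at maximal distance are essentially immediate, and the two angle conditions turn out to play no role in this particular conclusion (they only serve to single out the set for later use).

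First I would fix, once and for all, a single geodesic $[a,b]$ from $a$ to $b$, and write $S$ for the set of vertices $v$ described in the statement. By the first bullet, every $v \in S$ lies on \emph{every} geodesic between $a$ and $b$, hence in particular on the fixed geodesic $[a,b]$. Since a geodesic is an isometrically embedded path, the function $v \mapsto d(a,v)$ is injective along it: for each integer $0 \le i \le d(a,b)$ there is exactly one vertex of $[a,b]$ at distance $i$ from $a$. Consequently $v \mapsto d(a,v)$ is injective on $S$, and its image is contained in the finite set $\{0,1,\dots,d(a,b)\}$. From this the statement follows at once: the distance function maps $S$ injectively into a finite set, so $S$ is finite; being nonempty by hypothesis, the maximum $\max_{v \in S} d(a,v)$ is attained, and by injectivity of $v \mapsto d(a,v)$ on $S$ the maximizer is unique, which is exactly the assertion.

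I do not expect a genuine obstacle here; the entire argument rests on the elementary fact that a geodesic meets each distance-sphere around its origin in a single vertex. The one point I would emphasize is that, although $X$ is only assumed \emph{fine} — so that the distance-spheres around $a$, and the family of geodesics from $a$ to $b$, may well be infinite — the set $S$ is nonetheless finite, this finiteness being forced by the geodesic structure (all of $S$ sitting on the fixed $[a,b]$) rather than by any local finiteness of $X$. If one preferred not to fix a geodesic by hand, the same containment could instead be recovered from Proposition \ref{angletoutegéod}, whose hypothesis $\measuredangle_{v}(a,b) > 12\delta$ is implied by $\measuredangle_{v}(a,b) > 50\delta$; but fixing one geodesic $[a,b]$ is the most economical route, and I would present it that way.
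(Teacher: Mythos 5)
Your proposal is correct and follows essentially the same route as the paper: both arguments fix a single geodesic from $a$ to $b$, use the first bullet to place every candidate point on it, and conclude from the fact that a geodesic meets each sphere around $a$ in exactly one vertex. If anything, your write-up is slightly more complete, since you also justify that the maximum is attained (via finiteness of the set), a point the paper's proof leaves implicit by only checking uniqueness of a maximizer.
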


\begin{proof}

Let $v$ and $v'$ be two vertices following the properties of the Proposition at maximal distance of $a$. In particular, we know that $d(a,v)=d(a,v')$.

Let $\gamma$ be a geodesic from $a$ to $b$, thank to the first property, we have that $v \in \gamma$ and $v' \in \gamma$, and we the fact that $d(a,v)=d(a,v')$, one can deduce that $v=v'$.

\end{proof}

\begin{definition} \label{pointloingrandangle}
Let $a,b,c \in X^{(0)}$, according to Proposition \ref{pointloingrandangleexistence}, we define $\tilde{a}$ as the furthest point from $a$ among the points $v$ with the following properties :

\begin{itemize}
    \item $v$ is contained in every geodesic between $a$ and $b$ and between $a$ et $c$,
    \item $\measuredangle_{v}(a,b)> 50\delta$,
    \item $\measuredangle_{v}(a,c)> 50\delta$.
\end{itemize}

If the set of points following these three properties is empty, we set $\tilde{a}=a$.\\

We define $\tilde{b}$ and $\tilde{c}$ in the same way.

\end{definition}

\begin{proposition}

Let $a,b,c \in X^{(0)}$, we have $d(a,\tilde{a})\leq d(a,\tilde{c})$. In other words, when we travel along a geodesic from $a$ to $c$, we pass through $a$, $\tilde{a}$, $\tilde{c}$ and $c$ in this order.

\end{proposition}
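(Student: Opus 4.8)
The plan is to reduce the statement to two symmetric bounds on Gromov products, and to observe that—perhaps surprisingly—the angle hypotheses in the definitions of $\tilde a$ and $\tilde c$ play no role here: only the first defining property, namely lying on every relevant geodesic, is actually needed.

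First I would record that both $\tilde a$ and $\tilde c$ lie on every geodesic between $a$ and $c$. For $\tilde a$ this is its first defining property; for $\tilde c$ it is the requirement of lying on every geodesic between $c$ and $a$. Hence $\tilde a$ and $\tilde c$ lie on one common geodesic $[a,c]$, so they are comparable along it, and the asserted ordering $a,\tilde a,\tilde c,c$ is literally the inequality $d(a,\tilde a)\le d(a,\tilde c)$. The degenerate cases are immediate: if $\tilde a=a$ the inequality is trivial, and if $\tilde c=c$ then $d(a,\tilde c)=d(a,c)\ge d(a,\tilde a)$ since $\tilde a\in[a,c]$.

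The key step I would prove is the bound $d(a,\tilde a)\le (b,c)_a$. Since $\tilde a$ lies on a geodesic $[a,b]$ and on a geodesic $[a,c]$, additivity of distance along geodesics gives $d(\tilde a,b)=d(a,b)-d(a,\tilde a)$ and $d(\tilde a,c)=d(a,c)-d(a,\tilde a)$. Substituting these into the definition of the Gromov product yields
$$ (b,c)_a = d(a,\tilde a) + (b,c)_{\tilde a}, $$
and since $(b,c)_{\tilde a}\ge 0$ by the triangle inequality, we obtain $d(a,\tilde a)\le (b,c)_a$. The identical argument with the roles of $a$ and $c$ exchanged (using that $\tilde c$ lies on geodesics $[c,a]$ and $[c,b]$) gives $d(c,\tilde c)\le (a,b)_c$.

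Finally I would combine these two bounds with the elementary identity $(b,c)_a+(a,b)_c=d(a,c)$, which follows at once from the definition of the Gromov product. Because $\tilde c$ lies on $[a,c]$, we have $d(a,\tilde c)=d(a,c)-d(c,\tilde c)\ge d(a,c)-(a,b)_c=(b,c)_a\ge d(a,\tilde a)$, which is exactly the claim. There is no serious obstacle in this argument; the only point demanding care is the bookkeeping in the degenerate cases $\tilde a=a$ or $\tilde c=c$, where one Gromov-product bound degenerates to $0\le (b,c)_a$ but remains valid. The conceptual content is entirely the additivity of distances along the geodesics through $\tilde a$ and $\tilde c$ together with the nonnegativity of the Gromov product, so the main thing to get right is recognizing that the angle conditions, while essential elsewhere in the paper, are irrelevant to this particular inequality.
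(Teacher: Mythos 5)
Your proof is correct, and it takes a genuinely different route from the paper's. The paper argues by contradiction: assuming $d(a,\tilde{c})<d(a,\tilde{a})$, so that the order along any geodesic $[a,c]$ is $a,\tilde{c},\tilde{a},c$, it propagates the geodesic-containment property ($\tilde{c}$ lies on every geodesic from $a$ to $\tilde{a}$, hence on every geodesic from $a$ to $b$, hence $\tilde{a}$ lies on some geodesic from $\tilde{c}$ to $b$) to obtain the strict inequality $d(\tilde{a},b)<d(\tilde{c},b)$, and then the symmetric argument with $a$ and $c$ exchanged yields the reverse strict inequality, a contradiction. You instead argue directly: additivity along the geodesics through $\tilde{a}$ gives $(b,c)_{a}=d(a,\tilde{a})+(b,c)_{\tilde{a}}\ge d(a,\tilde{a})$, symmetrically $d(c,\tilde{c})\le(a,b)_{c}$, and the identity $(b,c)_{a}+(a,b)_{c}=d(a,c)$ concludes. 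Both arguments use only the first defining property of $\tilde{a}$ and $\tilde{c}$ and never invoke the angle conditions, so your structural observation agrees with what the paper actually does; neither argument uses hyperbolicity. What your route buys: the intermediate bound $d(a,\tilde{a})\le(b,c)_{a}$ is strictly stronger than the ordering statement, since it localizes $\tilde{a}$ between $a$ and the Gromov-product points $u,w$ of Proposition \ref{quasicentreparticulier}; this sharper fact is precisely what the paper relies on implicitly in Section \ref{section : Preuve groupes relativement hyperbolic }, where the case analysis for the triangle $[x,x',a]$ presumes the order $x$, $u$, $\tilde{a}$, $a$ along $[x,a]$ with $d(a,u)=(x,x')_{a}$, a fact never proved explicitly there. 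What the paper's route buys is economy of means: it is purely combinatorial, with no Gromov-product computation. Your bookkeeping in the degenerate cases $\tilde{a}=a$ and $\tilde{c}=c$ is also fine, since the additivity relations hold trivially at endpoints.
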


\begin{proof}

By contradiction, assume that $d(a,\tilde{c})<d(a,\tilde{a})$.\\

By definition, the vertices $\tilde{a}$ and $\tilde{c}$ belong to every geodesic between $a$ and $c$. By assumption, when we travel along a geodesic from $a$ to $c$, we pass through the vertices $a$, $\tilde{c}$, $\tilde{a}$ and $c$ in this order. In particular, $\tilde{c}$ belongs to every geodesic between $a$ and $\tilde{a}$.\\

Moreover by definition, $\tilde{a}$ belongs to every geodesic between $a$ et $b$, so $\tilde{c}$ belongs to every geodesic between $a$ et $b$. We can deduce that $\tilde{a}$ belongs to some geodesic between $\tilde{c}$ and $b$. In particular, we have $d(\tilde{c},b)=d(\tilde{c},\tilde{a})+d(\tilde{a},b)$ and then :\\

\begin{center}
$d(\tilde{a},b)<d(\tilde{c},b)$, see Figure \ref{ atilde est plus prÃ¨s de a que ctilde_1}.
    
\end{center}

\begin{figure}[!ht]
    \centering
    \includegraphics[scale=0.7]{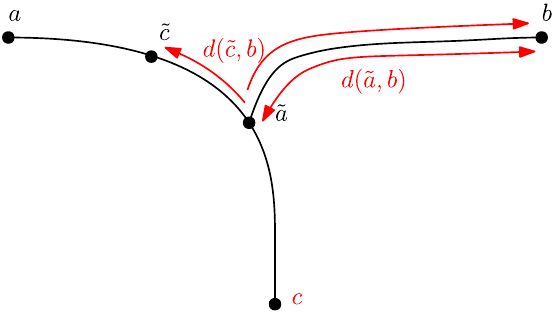}
    \caption{ $\tilde{a}$ is closer to $a$ than $\tilde{c}$ }
    \label{ atilde est plus prÃ¨s de a que ctilde_1}
\end{figure}

The same argument switching the roles of $a$ and $c$ and those of $\tilde{a}$ and $\tilde{c}$ gives the reverse inequality $ d(\tilde{c},b)<d(\tilde{a},b) $. This concludes the proof by contradiction. Therefore we have $d(a,\tilde{a})\leq d(a,\tilde{c})$.

\end{proof}

\begin{theo} \label{formenormaledestriangles}

Let $a,b,c \in X^{(0)}$, there exists a geodesic triangle $[a,b,c]$ between these three vertices such that :
\begin{itemize}
    \item the sides $[a,b]$ and $[a,c]$ coincide between $a$ and $\tilde{a}$,
    \item the sides $[b,c]$ and $[b,a]$ coincide between $b$ and $\tilde{b}$,
    \item the sides $[c,a]$ and $[c,b]$ coincide between $c$ and $\tilde{c}$,
    \item for each $v \in ]\tilde{a},\tilde{b}[$, if we denote by $e_{1}$ and $e_{2}$ the two different edges of $[\tilde{a},\tilde{b}]$ such that $v \in e_{1} \cap e_{2}$ then $\measuredangle_{v}(e_{1},e_{2}) \leq 100\delta$ and similarly this property is also true for $]\tilde{a},\tilde{c}[$ and $]\tilde{b},\tilde{c}[$.\\

\end{itemize}

\end{theo}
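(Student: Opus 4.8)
The plan is to fix the points $\tilde a,\tilde b,\tilde c$ first, build the triangle by concatenating geodesics through them, and then establish the angle bound on the three middle segments by a maximality argument. The first three bullets will be automatic from the construction; the fourth bullet is the real content.

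First I would record the order in which the special points appear on each side. By definition $\tilde a$ lies on every geodesic between $a$ and $b$, and $\tilde b$ lies on every geodesic between $b$ and $a$; the proposition proved just above, applied after exchanging the roles of $b$ and $c$, gives $d(a,\tilde a)\le d(a,\tilde b)$, so along any geodesic from $a$ to $b$ one meets $a,\tilde a,\tilde b,b$ in this order, and symmetrically on the two other sides. In particular
$$ d(a,b)=d(a,\tilde a)+d(\tilde a,\tilde b)+d(\tilde b,b), $$
and likewise for $[a,c]$ and $[b,c]$. I would then choose once and for all geodesics $\sigma_a=[a,\tilde a]$, $\sigma_b=[b,\tilde b]$, $\sigma_c=[c,\tilde c]$, together with geodesics $\tau_{ab}=[\tilde a,\tilde b]$, $\tau_{ac}=[\tilde a,\tilde c]$, $\tau_{bc}=[\tilde b,\tilde c]$, and set $[a,b]=\sigma_a\tau_{ab}\sigma_b^{-1}$, and similarly for the other two sides. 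The additivity above shows each concatenation has length equal to the corresponding distance, hence is a genuine geodesic, and the first three bullets hold by construction since $[a,b]$ and $[a,c]$ share the common initial segment $\sigma_a$, etc.

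It remains to bound the interior angles, which is the heart of the statement, and I would argue by contradiction. Suppose $v\in\,]\tilde a,\tilde b[$ is an interior vertex of $\tau_{ab}$ at which the two edges $e_1$ (towards $\tilde a$) and $e_2$ (towards $\tilde b$) of $[\tilde a,\tilde b]$ satisfy $\measuredangle_v(e_1,e_2)>100\delta$. Since $e_1$ lies on the sub-geodesic of $[a,b]$ from $v$ to $a$ and $e_2$ on the sub-geodesic from $v$ to $b$, this already gives $\measuredangle_v(a,b)>100\delta$. Now pick any edge $e_3$ on a geodesic from $v$ to $c$. The triangle inequality for angles, Proposition \ref{Triangle inequality}, yields
$$ 100\delta<\measuredangle_v(e_1,e_2)\le \measuredangle_v(e_1,e_3)+\measuredangle_v(e_3,e_2), $$
so at least one of the two summands exceeds $50\delta$. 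If $\measuredangle_v(e_1,e_3)>50\delta$, then $\measuredangle_v(a,c)>50\delta$; together with $\measuredangle_v(a,b)>50\delta$, Proposition \ref{angletoutegéod} forces $v$ onto every geodesic $[a,b]$ and every geodesic $[a,c]$, so $v$ satisfies all three conditions of Definition \ref{pointloingrandangle} while $d(a,v)>d(a,\tilde a)$, contradicting the maximality of $\tilde a$. If instead $\measuredangle_v(e_3,e_2)>50\delta$, the same reasoning at the corner $b$ (using $\measuredangle_v(b,c)>50\delta$ and $\measuredangle_v(b,a)>50\delta$) makes $v$ a candidate with $d(b,v)>d(b,\tilde b)$, contradicting the maximality of $\tilde b$. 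Either way we reach a contradiction, so every interior angle of $\tau_{ab}$ is at most $100\delta$; the argument applies verbatim to $\tau_{ac}$ and $\tau_{bc}$.

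I expect the main obstacle to be isolating exactly the right statement to contradict: the clean idea is that a large interior angle splits, via the angle triangle inequality, into a large angle towards one of the two opposite vertices, which is precisely what certifies $v$ as an admissible point lying strictly further out than $\tilde a$ or than $\tilde b$. Beyond this, only the degenerate cases require separate attention, namely $\tilde a=a$ (empty candidate set) or $\tilde a=\tilde b$ (empty middle segment); in the former the candidacy of $v$ contradicts emptiness, and in the latter the relevant bullet is vacuous, so both are disposed of quickly.
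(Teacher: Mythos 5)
Your proposal is correct and follows essentially the same route as the paper: build the triangle by concatenating geodesics through $\tilde{a}$, $\tilde{b}$, $\tilde{c}$ (using the ordering $a,\tilde{a},\tilde{b},b$ from the preceding proposition), then rule out a large interior angle by splitting it via the angle triangle inequality (Proposition \ref{Triangle inequality}) and invoking Proposition \ref{angletoutegéod} to exhibit a point contradicting the maximality in Definition \ref{pointloingrandangle} of $\tilde{a}$ or $\tilde{b}$. Your variant of passing through an auxiliary edge $e_{3}$ toward $c$ before converting to vertex angles, and your explicit treatment of the degenerate cases $\tilde{a}=a$ and $\tilde{a}=\tilde{b}$, are only minor points of additional care over the paper's argument.
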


\begin{proof} 

By definition $\tilde{a}$ belongs to every geodesic between $a$ and $b$, and to every geodesic between $a$ and $c$, thus we can choose a geodesic from $a$ to $\tilde{a}$, denoted $ [ a , \tilde{a}] $, and extend it to a geodesic between $a$ and $b$, denoted $[a,b]$, and to a geodesic between $a$ and $c$, denoted $[a,c]$.\\

By definition of $\tilde{b}$ and $\tilde{c}$, we know that $\tilde{b} \in [a,b]$ and $\tilde{c} \in [a,c]$. Thus, there exists a geodesic between $b$ and $\tilde{b}$, denoted $[b,\tilde{b}]$, such that $[a,b]=[a,\tilde{a}]\cup [\tilde{a},\tilde{b}] \cup [\tilde{b},b]$. In the same way, there exists a geodesic between $c$ and $\tilde{c}$, denoted $[c,\tilde{c}]$ such that $[a,c]=[a,\tilde{a}]\cup [\tilde{a},\tilde{c}] \cup [\tilde{c},c]$. \\

For the side $[b,c]$, we choose a geodesic between $\tilde{b}$ and $\tilde{c}$, and we take $[b,c]=[ b,\tilde{b}]\cup [\tilde{b},\tilde{c}] \cup [\tilde{c},c]$.\\

It remains to show that the triangle satisfies the last point of the theorem. By contradiction, we assume that there exists a vertex $v\in ]\tilde{a},\tilde{b}[$ such that $\measuredangle_{v}(e_{1},e_{2}) > 100\delta $ where $e_{1}$ and $e_{2}$ are the two edges on the triangle such that $v \in e_{1} \cap e_{2}$ see Figure \ref{ fig: forme normale triangle}. In particular, we have that $\measuredangle_{v}(a,b) > 100\delta $ and according to Proposition \ref{angletoutegéod}, $v$ belongs to every geodesic between $a$ and $b$. If we use Proposition \ref{Triangle inequality}, we can remark that :

$$  \measuredangle_{v}(a,b) \le  \measuredangle_{v}(a,c) +  \measuredangle_{v}(c,b) . $$

Thus $\measuredangle_{v}(a,c) > 50\delta $ or $ \measuredangle_{v}(c,b) > 50\delta$. If the first inequality is true, according to Proposition \ref{angletoutegéod}, we deduce that $v$ belongs to every geodesic between $a$ and $b$ and between $a$ and $c$. This leads to a contradiction with Definition \ref{pointloingrandangle} of $\tilde{a}$, in fact $v$ is a vertex verifying all the points of the definition such that $d(a,v)>d(a,\tilde{a})$.\\

If the second inequality is true, we obtain the same contradiction for $\tilde{b}$. This proves the theorem.
\end{proof}

\begin{figure}[!ht]
    \centering
    \includegraphics[scale=0.7]{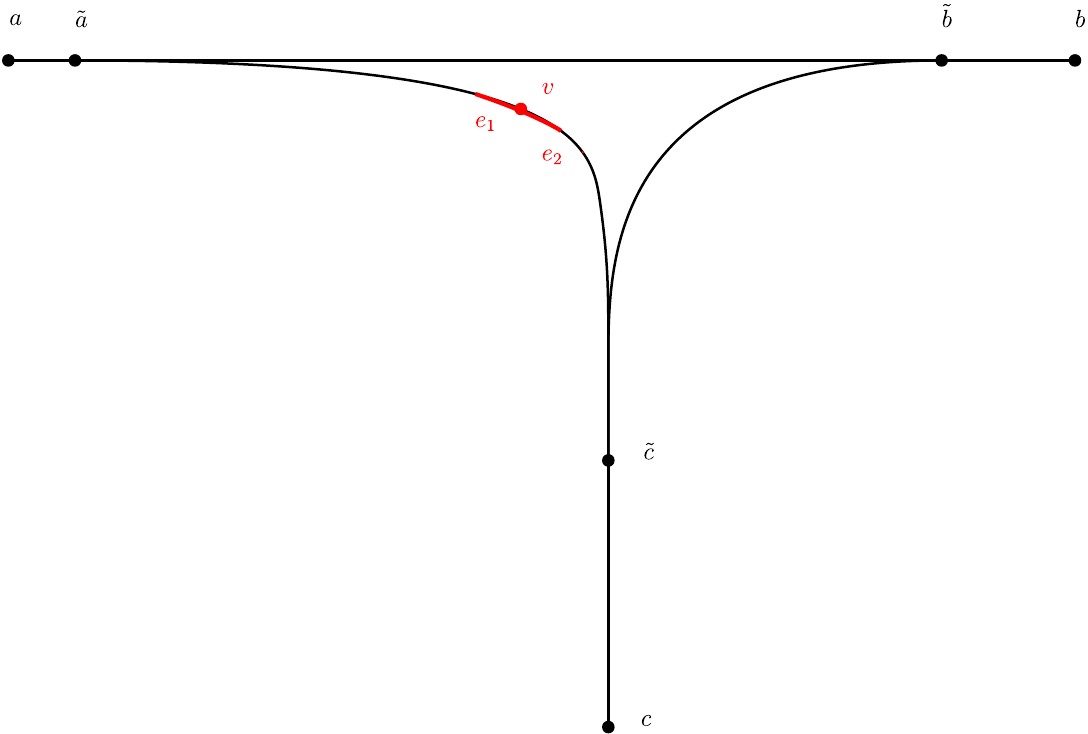}
    \caption{ Particular geodesic triangle between the three points $a$, $b$, $c$ with $\measuredangle_{v}(e_{1},e_{2}) \leq 100\delta$ }
    \label{ fig: forme normale triangle}
\end{figure}

\newpage

\subsection{Proof of Corollary \ref{corollary : propriété de rigidité sur morphisme vers groupe rel hyp}}

This section is dedicated to the proof of Corollary \ref{corollary : propriété de rigidité sur morphisme vers groupe rel hyp}.\\ Let $G$ denote a group with the strong Property $(T)$. Let $\Gamma$ be a group hyperbolic relative to $(H_{1},...,H_{k})$, where $H_{i}$ for $1 \le i \le k$ are subgroups of $\Gamma$. Let $X$ denote a coned-off graph of $\Gamma$.

$G$ acts isometrically on $X$ via $\varphi$. According to Theorem \ref{theorem:main result}, this action should have bounded orbits or equivalently $\varphi(G)$ acts on $X$ with bounded orbits. Thus, Corollary \ref{corollary : propriété de rigidité sur morphisme vers groupe rel hyp} is equivalent to the following Proposition.

\begin{proposition}\label{proposition:sous-groupe d'un groupe relativement hyperbolique avec des orbites bornées }

Let $\Gamma$ be group relative to $(H_{1},...,H_{k})$, $G$ a subgroup of $\Gamma$.
If $G$ acts on a coned-off graph of $\Gamma$ with bounded orbits, then $G$ is either finite or contained in a conjugate of a $H_{i}$.
    
\end{proposition}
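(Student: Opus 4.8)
The plan is to exploit the fact that $\Gamma$ acts on its coned-off graph $X=\widehat{Cay}(\Gamma)$ by left multiplication, with $G\le\Gamma$ acting by restriction, and to identify the vertex stabilizers. On the original vertices $g\in\Gamma$ the action is simply transitive, so $\mathrm{Stab}_\Gamma(g)=\{e\}$; on a cone vertex one has $\gamma\cdot\widehat{gH_i}=\widehat{\gamma gH_i}$, hence $\mathrm{Stab}_\Gamma(\widehat{gH_i})=gH_ig^{-1}$. Consequently, if $G$ fixes an original vertex it is trivial, while if $G$ fixes a cone vertex $\widehat{gH_i}$ then $G\subseteq gH_ig^{-1}$, which is exactly the desired conclusion. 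So the whole proposition reduces to showing: if $G$ has bounded orbits and is infinite, then $G$ fixes a cone vertex of $X$.

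\textbf{A finiteness lemma for common stabilizers.} The algebraic input I would isolate first is that the common stabilizer of two distinct cone vertices $c_1\ne c_2$ is finite. Since edges of $X$ only join a cone vertex to an original vertex, no two cone vertices are adjacent, so $d_X(c_1,c_2)\ge 2$ and every geodesic $[c_1,c_2]$ contains an original vertex $w$. The interval $I(c_1,c_2)$ is finite because $X$ is uniformly fine and hyperbolic (the corollary on finiteness of intervals). The group $\mathrm{Stab}(c_1)\cap\mathrm{Stab}(c_2)$ preserves $I(c_1,c_2)$, so it maps to the finite group $\mathrm{Sym}(I(c_1,c_2))$; its kernel fixes the original vertex $w$ and is therefore trivial. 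Hence $\mathrm{Stab}(c_1)\cap\mathrm{Stab}(c_2)$ embeds in a finite symmetric group and is finite.

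\textbf{Producing and fixing a cone vertex.} Suppose $G$ is infinite with bounded orbits and set $S=G\cdot e$; since original stabilizers are trivial, $|S|=|G|=\infty$, while $S$ is bounded in $X$, say of diameter $D$. As $\Gamma$ is finitely generated, word-metric balls are finite, so $S$ contains elements of arbitrarily large word length; because $d_X\le D$ on $S$ whereas the word length is unbounded, every coned-off geodesic joining two far-apart points of $S$ must use cone-jumps, and the cone vertices it meets with angle $>12\delta$ lie on \emph{every} such geodesic by Proposition \ref{angletoutegéod}. I would then use $\delta$-hyperbolicity, via the Gromov-product quasi-center of Proposition \ref{quasicentreparticulier}, to attach to the infinite bounded set $S$ a non-empty $G$-invariant quasi-center set of diameter $O(\delta)$, and argue that it cannot consist only of original vertices: the forced cone vertices just described sit within bounded distance of it, and fineness (finiteness of cones, Proposition \ref{cardinal cones}, together with finiteness of intervals) cuts them down to a \emph{finite}, non-empty, $G$-invariant set $\mathcal{C}$ of cone vertices. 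Granting this, $G$ permutes $\mathcal{C}$, so a finite-index subgroup $G_0\le G$ fixes every element of $\mathcal{C}$; as $G$ is infinite, $G_0$ is infinite. If $\mathcal{C}$ had at least two elements, $G_0$ would lie in the common stabilizer of two distinct cone vertices, which is finite by the lemma, a contradiction. Hence $\mathcal{C}=\{c\}$ is a single cone vertex, the orbit $G\cdot c$ is a point, $G$ fixes $c$, and $G\subseteq gH_ig^{-1}$.

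\textbf{Main obstacle.} The delicate point is the middle assertion of the last paragraph: passing from the coarsely-defined, $O(\delta)$-sized quasi-center of $S$ to a genuinely finite, non-empty, $G$-invariant set of cone vertices. Boundedness alone does not make a set finite in $X$, since balls around cone vertices are infinite, so one cannot simply take all cone vertices near the center. I expect this to require combining the normal-form-of-triangles Theorem \ref{formenormaledestriangles} with the angle estimates to show that the center of the configuration is controlled by finitely many cone vertices through which the geodesics between points of $S$ are forced to pass (angle $>12\delta$), and then invoking finiteness of cones to bound their number; verifying $G$-invariance and non-emptiness of the resulting finite set is where the argument must be made carefully. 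Once that finite $G$-invariant set is in hand, the stabilizer computation and the common-stabilizer lemma close the proof cleanly.
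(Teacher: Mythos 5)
Your reduction (identifying stabilizers of original vertices as trivial and of cone vertices as conjugates of the $H_i$), your finiteness lemma for common stabilizers of two distinct cone vertices (via finiteness of intervals in a uniformly fine hyperbolic graph plus freeness on original vertices), and your endgame from a finite, non-empty, $G$-invariant set of cone vertices are all correct, and the common-stabilizer lemma is a nice clean observation. But the central step of your argument --- producing that finite, non-empty, exactly $G$-invariant set $\mathcal{C}$ of cone vertices --- is not proved, and it is precisely the content of the proposition, not a verification one can defer. Two distinct difficulties are conflated in your sketch. First, in the coned-off graph bounded does not imply finite: the ball of radius $2$ around an original vertex $g$ already contains the whole coset $gH_{i}$, so ``take the cone vertices near the quasi-center and cut down by fineness'' has no content until you say exactly which cone vertices you keep; Proposition \ref{cardinal cones} only bounds cones around a \emph{given} edge, and nothing in your sketch supplies such an edge canonically. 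Second, any quasi-center of the orbit $S$ in a $\delta$-hyperbolic space is only coarsely canonical (well defined up to $O(\delta)$), so the set you build is a priori neither finite nor \emph{exactly} $G$-invariant --- and exact invariance is indispensable for your finite-index/common-stabilizer argument. A further unproved point: the existence of cone vertices crossed with angle $>12\delta$ by geodesics between far-apart orbit points is not automatic (a geodesic could cross many cone vertices, each with small angle); it requires the Chatterji--Dahmani distance formula comparing the word metric with the coned-off metric plus angles, which you never invoke.

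For comparison, the paper's proof avoids any canonical-center construction. It minimizes over vertices $x$ the displacement invariant $f(x)=\max\{n : d(x,g.x)=n \text{ for infinitely many } g\in G\}$, uses the distance formula to produce a parabolic point crossed with large angle (Lemma \ref{lemma : il existe un point parabolique de grand angle}), and then, by the triangle inequality for angles and Proposition \ref{angletoutegéod}, derives a contradiction unless the minimum of $f$ is $0$ and is attained at a parabolic vertex with infinite stabilizer in $G$; a final conjugation argument upgrades this to a globally $G$-fixed cone vertex. Your endgame could serve as an alternative to that last step, but something equivalent to the displacement-minimization and angle bookkeeping is needed to fill the gap you yourself flag as the ``main obstacle''; as written, the proposal does not contain a proof.
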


This question seems to be known by specialists, nevertheless we will give a proof of this fact for the reader convenience.

In the rest of the section, $\Gamma$ denotes a group hyperbolic relative to $(H_{1},...,H_{k})$, $X$ a coned-off graph of $\Gamma$ and $G$ a subgroup of $\Gamma$, which acts on $X$ with bounded orbits. $d$ will denote the graph metric of $X$. Let us assume that $G$ is infinite, we will prove that $G$ stabilizes a parabolic point in $X$.

\begin{definition}\label{definition : fonction de déplacement maximale }

Let $x$ be a vertex of $X$, we define for all $n \in \mathbb{N}$ :
$$ \mathcal{A}_{n}(x)=\{g \in G ~|~ d(x,g.x)=n \}.$$

We set : 
$$f(x)=\max \{ n \in \mathbb{N} ~|~ |\mathcal{A}_{n}(x)| > +\infty \}. $$

\end{definition}

Let $x_{0}$ a vertex in $X$, such that $f(x_{0})=K$ is the minimum of $f$. We will show that $K=0$. Let us assume by contradiction that $K>0$.

\subsubsection{The case where $x_{0} \in Cay(G)$ }

Let us assume that $x_{0} \in Cay(\Gamma)$.

\begin{lem}\label{lemma : il existe un point parabolique de grand angle}

There exists $g \in G$ such that :

\begin{itemize}

\item $d(x_{0},g.x_{0})=K$,

\item and there exists a vertex $p$ different from $x$ such that $\measuredangle_{p}(x_{0},g.x_{0})> 24\delta$.

\end{itemize}

\end{lem}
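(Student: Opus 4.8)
The plan is to argue by contradiction, exploiting the fact that the only local infiniteness of a coned-off graph is concentrated at the cone points. First I would record two elementary facts about the vertex $x_0 \in Cay(\Gamma)$. Its stabiliser in $\Gamma$, hence in $G$, is trivial, so $g \mapsto g.x_0$ is injective; since $K = f(x_0)$, the set $\mathcal{A}_K(x_0)$ is infinite, and therefore $\{g.x_0 \mid g \in \mathcal{A}_K(x_0)\}$ is an \emph{infinite} set of vertices all lying at distance exactly $K$ from $x_0$. Second, $x_0$ has finite degree $D := \deg(x_0)$: it is joined to finitely many Cayley neighbours and to the $k$ cone points $\widehat{x_0 H_i}$, all local infiniteness being pushed onto the cone points themselves, which is precisely what a large angle will detect.

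I would then suppose, for contradiction, that the conclusion fails, i.e. that for every $g \in \mathcal{A}_K(x_0)$ and every vertex $p \notin \{x_0, g.x_0\}$ we have $\measuredangle_p(x_0, g.x_0) \le 24\delta$. Unravelling the ``abuse of notation'' convention, this forces every geodesic $[x_0, g.x_0] = (x_0 = v_0, v_1, \dots, v_K = g.x_0)$ to satisfy $\measuredangle_{v_i}(\{v_{i-1}, v_i\}, \{v_i, v_{i+1}\}) \le 24\delta$ at each interior vertex $v_i$. The key counting step will then be this: at $v_i$ the edge $\{v_i, v_{i+1}\}$ is reached from $\{v_{i-1}, v_i\}$ by a single step whose angle is at most $24\delta$, so by the definition of a cone $v_{i+1} \in Cone_{24\delta}(\{v_{i-1}, v_i\})$. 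By Proposition \ref{cardinal cones}, in the uniformly fine graph $X$ this cone has cardinality at most a constant $C = C(\delta)$ independent of the edge. Building the geodesic step by step, there are at most $D$ choices for $v_1$ and at most $C$ choices for each subsequent $v_{i+1}$, so the number of geodesics, and a fortiori the number of endpoints $v_K = g.x_0$, is at most $D \cdot C^{K-1} < \infty$.

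This contradicts the infinitude of $\{g.x_0 \mid g \in \mathcal{A}_K(x_0)\}$ from the first step, yielding the desired $g \in \mathcal{A}_K(x_0)$ (so $d(x_0, g.x_0) = K$) together with a vertex $p \ne x_0$ satisfying $\measuredangle_p(x_0, g.x_0) > 24\delta$. As a sanity check, since $24\delta > 12\delta$, Proposition \ref{angletoutegéod} guarantees that any such $p$ lies on every geodesic between $x_0$ and $g.x_0$, so it is a genuine interior branch point (a cone point), consistent with the intended role of $p$ in the later argument.

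The step requiring the most care will be the translation between the vertex-angle $\measuredangle_{v_i}(x_0, g.x_0)$ appearing in the contradiction hypothesis and the edge-angle $\measuredangle_{v_i}(\{v_{i-1}, v_i\}, \{v_i, v_{i+1}\})$ used to place $v_{i+1}$ inside a cone: one must verify, via the convention defining $\measuredangle_{v_i}(x_0, g.x_0) \le 24\delta$ as a bound over \emph{all} geodesic-initial edges, that the angle actually realised along the chosen geodesic is indeed bounded. The finiteness of $\deg(x_0)$, which seeds the induction, is the one place where the precise structure of the coned-off graph is invoked.
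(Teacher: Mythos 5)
Your proof is correct, but it takes a genuinely different route from the paper's. The paper's argument is metric: it invokes the distance formula for relatively hyperbolic groups (\cite[Proposition 1.6]{ChatterjiDahmani}), $\frac{1}{A}d_{w}(1,g)-B \leq d(1,g)+\sum\measuredangle([1,g])$, where $d_{w}$ is a word metric on $\Gamma$ whose generating set contains generating sets of the parabolics. Since $\mathcal{A}_{K}(x_{0})$ is infinite, one can choose $g_{n}$ with $d(x_{0},g_{n}.x_{0})=K$ while $d_{w}(x_{0},g_{n}.x_{0})\to\infty$; the formula forces $\sum\measuredangle([x_{0},g_{n}.x_{0}])\to\infty$, and since a geodesic of length $K$ has at most $K-1$ interior vertices, some single angle becomes arbitrarily large --- moreover at a vertex of infinite valence, which is why the paper can legitimately refer to $p$ afterwards as a \emph{parabolic} vertex. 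You instead run the contrapositive through the cone machinery: if every angle along every geodesic from $x_{0}$ to a point of $\mathcal{A}_{K}(x_{0}).x_{0}$ were at most $24\delta$, each step of such a geodesic would lie in a cone of parameter $24\delta$ around the preceding edge, so by Proposition \ref{cardinal cones} (uniform fineness) together with the finiteness of $\deg(x_{0})$ (valid precisely because $x_{0}\in Cay(\Gamma)$, all infinite valence sitting at cone vertices) the endpoints would form a set of cardinality at most $\deg(x_{0})\,C(\delta)^{K-1}$, contradicting the infinitude of $\{g.x_{0} : g\in\mathcal{A}_{K}(x_{0})\}$, which you correctly derive from $|\mathcal{A}_{K}(x_{0})|=\infty$ and freeness of left multiplication on $Cay(\Gamma)$; your translation between the vertex-angle convention and the edge angles realised along the chosen geodesic is also handled correctly. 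What each approach buys: yours is more elementary and self-contained (no auxiliary word metric, no distance formula, only fineness), and it is essentially the same cone-counting the paper itself performs later in the parabolic case, where it writes $G.x_{0}\cap S(x_{0},K)\subset Cone_{\max(K,24\delta)}(p_{0})$; the paper's proof, in exchange, yields a stronger output --- arbitrarily large angles, located at a parabolic vertex --- which is what justifies the phrase ``parabolic vertex $p$'' used immediately after the lemma. Since the downstream argument only uses $\measuredangle_{p}(x_{0},g.x_{0})>24\delta$, $p\neq x_{0}$, and $f(p)\geq K$, your weaker conclusion suffices for the statement as written.
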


\begin{proof}
To prove the existence of a such element $g$ in the group, we will use the distance formula in relatively hyperbolic groups as it is stated in \cite[Proposition 1.6.]{ChatterjiDahmani}. Let $d_{w}$ be the word metric associated to a generating set of $G$ containing the generating set of parabolic subgroups.

There exist $A,B>1$ for which, for all $g \in G$, for each choice of geodesic $[1,g]$ in $X$, if $\sum \measuredangle([1,g])$ denotes the sum of angles of edges of $[1,g]$ at vertices of infinite valence, we have :

$$ \frac{1}{A}d_{w}(1,g)-B \leq d(1,) + \sum \measuredangle([1,g]) \leq Ad_{w}(1,g)+B . $$

We will just use the first inequality applied to $x_{0}$ and $g.x_{0}$. Since $f(x_{0})=K$, there are infinitely many $g \in G$ such that $d(x_{0},g.x_{0})=K$. In particular, we can find $g_{n} \in G$, such that $d(x_{0},g_{n}.x_{0})=K$ and $d_{w}(x_{0},g_{n}.x_{0}) \rightarrow \infty$ as $n \rightarrow \infty$. Therefore, thank to the distance formula, we deduce that $ \sum \measuredangle([x_{0},g_{n}.x_{0}]) \rightarrow \infty$.\\

Since the orbits of $G$ on $X$ are bounded, a geodesic between $x_{0}$ and $g_{n}.x_{0}$ should meet a finite number of parabolic points. Thus, there exists $g_{n}$ such that $d(x_{0},g_{n}.x_{0})=K$ and a parabolic point $p$ such that $\measuredangle_{p}(x_{0},g_{n}.x_{0})$ is arbitrarily large. Moreover since $x_{0} \in Cay(\Gamma)$, $p$ is different from $x_{0}$.

\end{proof}

Let us consider $g\in G$ and a parabolic vertex $p$ as in \ref{lemma : il existe un point parabolique de grand angle}. Since $f(x_{0})=K$ is minimal, $f(p) \ge K$. Therefore, there exists infinitely many $h \in G$ such that $d(h,h.p) \ge K$.\\

According to Proposition \ref{Triangle inequality}, we have that :

\begin{itemize}
    \item $ \measuredangle_{p}(x_{0},g.x_{0}) \leq \measuredangle_{p}(x_{0},h.p) + \measuredangle_{p}(h.p,g.x_{0}) $,

    \item $ \measuredangle_{h.p}(h.x_{0},hg.x_{0}) \leq \measuredangle_{h.p}(h.x_{0},p) + \measuredangle_{h.p}(h.p,hg.x_{0}) $.
\end{itemize}

Therefore, for each of the two inequalities, for infinitely many $h \in G$ one of the two angles on the right side of the inequality should be greater than $12\delta$. This leads to four cases.\\

Let us assume that we are in the case where $\measuredangle_{p}(x_{0},h.p) > 12 \delta$ and $\measuredangle_{h.p}(h.x_{0},p) > 12 \delta$ for infinitely many $h \in G$. According to Proposition \ref{angletoutegéod}, this implies that :
\begin{itemize}
    \item $p \in I(x_{0},h.p)$,
    \item and $h.p \in I(h.x_{0},p)$.
\end{itemize}

With fact, we deduce that $\measuredangle_{p}(x_{0},h.x_{0}) > 12 \delta$ and then :

$$\begin{aligned} 
           d(x_{0},h.x_{0})&=d(x_{0},p)+d(p,h.x_{0})~(\text{since } p \in I(x_{0},h.x_{0})  \\
            & =d(x_{0},p)+d(p,h.p)+d(h.p,h.x_{0})~(\text{since } h.p \in I(h.x_{0},p))\\      
            & > K. ~(\text{since } d(x_{0},p)>0).
\end{aligned}$$

To conclude, we get infinitely many $h$ such that $d(x_{0},h.x_{0})>K$. This is a contradiction of the fact that $f(x_{0})=K$. The same proof works for the three other cases. Then $x_{0}$ should be a parabolic point.

\subsubsection{The case where $x_{0}$ is a parabolic point}

Let us assume that $x_{0}$
is a parabolic point. If there exists $g$ such that $d(x_{0},g.x_{0})=K$ and such that there exists another vertex $p$ with $\measuredangle_{p}(x_{0},g.x_{0})> 24\delta$, the proof of the previous paragraph applies also here.\\

Let us assume that there are no $g \in G$ and no vertex $p$ such that $\measuredangle_{p}(x_{0},g.x_{0})>24\delta$. In this case, we deduce that :

$$G.x_{0}\cap S(x_{0},K) \subset Cone_{\max(K,24 \delta)}(p_{0}).$$

According to \ref{cardinal cones}, we deduce that $G.x_{0}\cap S(x_{0},K)$ is finite. \\

Since there are infinitely many $g$ such that $d(x_{0},g.x_{0})=K$. We deduce that there exists a vertex in $G.x_{0}$ with an infinite stabilizer in $G$. By conjugation, we deduce that $G_{x_{0}}:=G\cap Stab(x_{0})$ is infinite.\\

Let us a consider $g \in G$ such that $d(x_{0},g.x_{0})=K$ and $x \in I(x_{0},g.x_{0})$ such that $d(x,x_{0})=1$. Since the action of $G$ on $Cay(\Gamma)$ is free and $G_{x_{0}}$ is an infinite subgroup of $\Gamma$, we have that $G_{x_{0}}.x$ is infinite. \\

Therefore, we can find $\gamma \in G_{x_{0}}$ such that $\measuredangle_{x_{0}}(x,\gamma.x)> 12 \delta$. Otherwise, if every angle was small, we could include $G_{x_{0}}.x$ in a cone centered at $x_{0}$, which contradicts the fact that it is infinite.\\

The fact that $\measuredangle_{x_{0}}(x,\gamma.x)> 12 \delta$ implies that $\measuredangle_{x_{0}}(g.x_{0},\gamma g.x_{0})> 12 \delta$. Thus $x_{0} \in I(g.x_{0},\gamma g.x_{0})$, then : $$
d(g.x_{0},\gamma g.x_{0})=d(g.x_{0},x_{0})+d(x_{0},\gamma g.x_{0})=2K .$$ 

Then $d(x_{0},g^{-1}\gamma g.x_{0})=2K$, using the fact that every vertex stabilizer is infinite, we obtain infinitely many $h \in G$ such that $d(x_{0},h.x_{0})=2K$, which contradicts the fact that $f(x_{0})=K$.\\

To conclude, the contradiction implies that the minimum of $f$ is $0$. Once again, the action of $G$ on $Cay(\Gamma)$ is free, thus the minimum of $f$ is reached on a parabolic point $x_{0}$. We will prove that this point is stabilizes by $G$. Since $f(x_{0})=0$, $G_{x_{0}}$, the stabilizer of $x_{0}$ in $G$ is infinite. If there exists $g \in G$ such that $g.x_{0}\neq x_{0}$, by conjugation the stabilizer of $g.x_{0}$ should also be infinite, which contradicts the fact that $f(x_{0})=0$. Therefore $G$ stabilizes a parabolic vertex and this prove Corollary \ref{corollary : propriété de rigidité sur morphisme vers groupe rel hyp}.

\newpage

\section{The case of a locally infinite tree}\label{section : le cas des arbres}

When $X$ is a tree the result of Theorem \ref{theorem:main result}  is already known and it is also true with Property $(T)$ or even with Property FA only. Nevertheless, trees are $0$-hyperbolic and they are particular examples of fine graphs.

We will prove the theorem in this case to illustrate the strategy of proof in the simplest setting. In Lafforgue's article \cite{Lafforgue}, the case of a uniformly locally finite tree is proved, here we will make no further assumption on the tree, thus difficulties will come from the fact that $X$ could be locally infinite.\\

Let us assume that $G$ is a locally compact group having strong Property $(T)$ and let us assume that $X$ is a tree equipped with an isometric action of $G$. We will prove the following theorem :

\begin{theo} \label{theoremeimportant arbre}
Let $G$ be a locally compact group having strong Property $(T)$. Let $X$ be  a tree equipped with an isometric action of $G$. Then every orbit for this action is bounded.

\end{theo}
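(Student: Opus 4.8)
The plan is to argue by contradiction, following the methodological template that will reappear in the relatively hyperbolic case: assuming some orbit of the $G$-action on the tree $X$ is unbounded, I would build an affine isometric action of $G$ on a Hilbert space with sub-exponential (here even uniformly bounded) linear part and unbounded orbits, and then derive a contradiction from the fixed-point statement of Proposition~\ref{Proposition : fixed point for strong Property (T)}. I emphasize that although $G$ having the strong Property~$(T)$ implies Property~$(FH)$ (via Theorem~\ref{theorem de Delorme-Guichardet}), so that this statement is already classical, the interest here is to rehearse the construction through the strong Property~$(T)$ fixed-point machinery, which is what adapts to the general setting.

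First I would fix a base vertex $x_0\in X^{(0)}$ and take $H$ to be the real Hilbert space of square-summable antisymmetric functions on the set $\vec E$ of oriented edges of $X$ (i.e.\ $f(\bar e)=-f(e)$), with the orthogonal representation $\pi\colon G\to O(H)$ induced by the action of $G$ on $\vec E$. The \emph{signed} convention is essential, so let me record the cocycle explicitly:
\[
 b(g)\;=\;\sum_{e\in[x_0,gx_0]}\delta_e,
\]
the sum of signed indicators of the coherently oriented edges along the unique geodesic from $x_0$ to $gx_0$, normalized so that $\|b(g)\|^2$ equals the number of its edges, namely $d(x_0,gx_0)$. Because $X$ is a tree, geodesics are unique and finite, so $b(g)\in H$ for every $g$. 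Using uniqueness of tree geodesics, the concatenation of the directed walk $x_0\to gx_0\to ghx_0$ reduces to $[x_0,ghx_0]$ after cancelling backtracking pairs $\delta_e+\delta_{\bar e}=0$; this yields the cocycle identity $b(gh)=b(g)+\pi(g)b(h)$, so that $\alpha(g)\xi=\pi(g)\xi+b(g)$ is a genuine affine isometric action.

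Next I would close the argument. If the orbit $G\cdot x_0$ is unbounded then $d(x_0,gx_0)$, hence $\|b(g)\|=\|\alpha(g)\cdot 0\|$, is unbounded, so $\alpha$ has no fixed point. But $\pi$ is orthogonal, so $\|\pi(g)\|=1$ for all $g$; in particular its linear part is polynomial of degree $0$, hence sub-exponential. The second point of Proposition~\ref{Proposition : fixed point for strong Property (T)} then forces the affine action of the strong Property~$(T)$ group $G$ to admit a nonzero invariant vector, that is, a fixed point, contradicting the unboundedness of the $\alpha$-orbit. Therefore every orbit of $G$ on $X$ is bounded.

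As for the main obstacle: conceptually the construction is the classical tree (wall-space) cocycle, and the only genuine point to verify is that nothing degenerates when $X$ is locally infinite, which is precisely the situation the paper flags. Here this is harmless, and checking it is the crux: geodesics between two fixed vertices are finite paths, so $b(g)$ always lands in $H$ even when valences are infinite, and $\pi$ remains orthogonal because $G$ merely permutes $\vec E$ bijectively. Thus the hypotheses of Proposition~\ref{Proposition : fixed point for strong Property (T)} hold with no local-finiteness assumption, which is exactly the robustness that must be preserved when one later works over the genuinely locally infinite coned-off graphs.
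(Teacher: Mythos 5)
Your proof is correct, but it takes a genuinely different route from the paper's. You run the classical Serre-type argument: the antisymmetric edge-cocycle $b(g)=\sum_{e\in[x_0,gx_0]}\delta_e$, whose cocycle identity rests on the exact cancellation of backtracking along the \emph{unique} geodesics of a tree, gives an affine isometric action with orthogonal linear part, and the second point of Proposition \ref{Proposition : fixed point for strong Property (T)} then yields a fixed point, hence bounded orbits. This is essentially the classical proof that Property (FH) implies Property (FA), rerouted through the strong Property $(T)$ fixed-point statement; it is shorter and more elementary than what the paper does. One point you should make explicit: the proof of the second point of Proposition \ref{Proposition : fixed point for strong Property (T)} linearizes the affine action on $H\oplus\mathbb{C}$ by $(\xi,t)\mapsto(\pi(g)\xi+t\,b(g),t)$, whose operator norm is of order $1+\|b(g)\|$, so one needs the \emph{cocycle}, and not only the linear part, to be sub-exponential; in your situation this is automatic because $\|b(g)\|=d(x_0,gx_0)^{1/2}=l_{x_0}(g)^{1/2}$ is polynomial in the length $l_{x_0}$, but it deserves a sentence. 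The paper instead constructs Lafforgue's weighted space $H_x$ from the sphere partitions $I_z^{n,k,x}$, proves the translation representation has polynomial norm growth by decomposing each class $I_z^{n,k,x}$ into boundedly many (differences of) classes based at $x'=g(x)$ and bounding the comparison matrix (Proposition \ref{boundonthenorm of A}), exhibits the invariant continuous linear form $f\mapsto\sum_a f(a)$, applies the \emph{first} point of Proposition \ref{Proposition : fixed point for strong Property (T)}, and deduces bounded orbits from Proposition \ref{Proposition : Orbites bornées}, using that the $H_x$-norm dominates the $\ell^2$-norm. What that heavier route buys is exactly what your construction cannot deliver: Section \ref{section : Preuve groupes relativement hyperbolic } generalizes this sphere-partition machinery, step by step, to uniformly fine hyperbolic graphs, where geodesics are not unique, there is no wall structure, and no exact cocycle identity survives, so your $b(g)$ has no analogue there. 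Hence your closing claim that your route ``adapts to the general setting'' is inaccurate: only the fixed-point proposition is reused later, while the content that must be adapted is precisely the construction you bypassed --- which is why the paper proves the (admittedly classical, even under Property (FA) alone) tree case the hard way.
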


\subsection{Angles and cones in trees}

For all $x,y \in X$, we denote by $[x,y]$ the unique geodesic between $x$ and $y$. We will start to describe the concepts of the previous section in this particular setting.

\begin{proposition} \label{arbres angles and cones} (Angles and cones in a tree)\\

Let $e_{1}$ and $e_{2}$ be edges of the tree $X$ such that $v \in e_{1} \cap e_{2}$ then :

\begin{itemize}

    \item $\measuredangle_{v}(e_{1},e_{2})=\infty$ if and only if $e_{1} \neq e_{2}$,

     \item $\measuredangle_{v}(e_{1},e_{2})=0$ if and only if $e_{1}=e_{2}$.

\end{itemize}

For all vertices $a$, $b$, $v$ such that $a,b \neq v$ of the tree $X$ :

\begin{itemize}
    \item $\measuredangle_{v}(a,b)=\infty$ if and only if $v$ separates $a$ and $b$,

    \item $\measuredangle_{v}(a,b)=0$ otherwise.
    
\end{itemize}

Therefore, for each edge $e=\{x,y\}$ of $X$ and for all $\theta>0$ :

$$ Cone_{\theta}(e)= \{x,e, y \} .$$

\end{proposition}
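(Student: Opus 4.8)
The plan is to exploit the two defining features of trees: geodesics are unique, and removing any edge disconnects the tree, which together force every angle to take only the values $0$ and $\infty$.

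First I would handle the two edges $e_{1}=\{v,w\}$ and $e_{2}=\{v,w'\}$. If $e_{1}=e_{2}$ then $w=w'$, so $\measuredangle_{v}(e_{1},e_{2})=d_{X\setminus\{v\}}(w,w)=0$. If $e_{1}\neq e_{2}$ then $w\neq w'$, and since $v$ is adjacent to both, the unique geodesic from $w$ to $w'$ is $w\text{--}v\text{--}w'$. In a tree every path joining two vertices must pass through every vertex of their geodesic, hence through $v$, so no path from $w$ to $w'$ avoids $v$ and $d_{X\setminus\{v\}}(w,w')=+\infty$, i.e. $\measuredangle_{v}(e_{1},e_{2})=\infty$. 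This proves both equivalences and records that the edge-angle in a tree takes only the values $0$ and $\infty$.

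Next, for vertices $a,b\neq v$, I would use that uniqueness of geodesics collapses the quantifier in the definition of $\measuredangle_{v}(a,b)$ to a single choice: $e_{1}$ is the first edge of the unique geodesic $[v,a]$ and $e_{2}$ the first edge of $[v,b]$. If $v$ separates $a$ and $b$, i.e. $v\in[a,b]$, then $[a,b]=[a,v]\cup[v,b]$ forces $e_{1}\neq e_{2}$, so $\measuredangle_{v}(e_{1},e_{2})=\infty$ and hence $\measuredangle_{v}(a,b)=\infty$. If $v$ does not separate them, let $m$ be the projection of $v$ onto $[a,b]$; then $[v,a]$ and $[v,b]$ share the nontrivial initial segment $[v,m]$, so $e_{1}=e_{2}$ and $\measuredangle_{v}(a,b)=0$. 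Combining the two cases yields the stated dichotomy.

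Finally, for the cone the key observation is that for any finite $\theta>0$ the only edge-angle that is $\le\theta$ is $0$, attained exactly when the two edges coincide. Hence in any admissible sequence $e_{0}=e,e_{1},\dots,e_{n}=e'$ with $\measuredangle_{v_{i}}(e_{i},e_{i+1})\le\theta$, consecutive edges must be equal, so $e'=e$; thus the only edge in $Cone_{\theta}(e)$ is $e$ itself, and the only vertices arise as endpoints of this edge, namely $x$ and $y$. This gives $Cone_{\theta}(e)=\{x,e,y\}$. The computations are elementary; the only point requiring care is the second part, where one must check that the quantifier "there exist edges on geodesics from $v$ to $a$ and $b$" is harmless precisely because geodesics are unique in a tree, so the existential and universal readings coincide. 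I expect this bookkeeping, rather than any genuine difficulty, to be the main thing to get right.
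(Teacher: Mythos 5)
Your proof is correct and follows essentially the same route as the paper: the cut-vertex property of trees forces edge-angles to be $0$ or $\infty$ according to whether the edges coincide, the vertex case reduces to the (unique) first edges of the geodesics $[v,a]$ and $[v,b]$, and the cone computation then collapses since any admissible chain of edges must be constant. The only difference is that you spell out the details (the geodesic $w$--$v$--$w'$, the projection $m$ of $v$ onto $[a,b]$, the quantifier collapse from uniqueness of geodesics) which the paper dismisses as "clear by definition."
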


\begin{proof}

To prove the first point, we remark that since $v \in e_{1} \cap e_{2}$, the unique path between the other vertex of $e_{1}$ and of $e_{2}$ goes trough $v$ if and only if they are different. We deduce the first point by definition of angles.

The second point is clear with the definition of angles for vertex.

The third point is also clear by definition of cones and with the fact that the angle between different edges is infinite.

\end{proof}

We will now describe the triangle of Theorem \ref{formenormaledestriangles} in the case of a tree. To be more precise, between every three points, there is a unique geodesic triangle in a tree, which a tripod. We will show that this triangle has the properties of Theorem \ref{formenormaledestriangles}.

\begin{proposition}
Let $a,b,c \in X$. We denote $[a,b,c]$ the unique geodesic triangle between these three points and $t$ its center. Then : $$\tilde{a}=\tilde{b}=\tilde{c}=t $$
where $\tilde{a}, \tilde{b}, \tilde{c}$ are the points of Definition \ref{pointloingrandangle}.

Moreover, $[a,b,c]$ is a triangle satisfying the properties of Theorem \ref{formenormaledestriangles}.
    
\end{proposition}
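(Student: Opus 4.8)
The plan is to reduce everything to the dictionary between angles and separation that is already recorded for trees in Proposition \ref{arbres angles and cones}, together with the tripod structure of geodesic triangles. First I would observe that a tree is $0$-hyperbolic, so every threshold of the form $50\delta$ appearing in Definition \ref{pointloingrandangle} and in Theorem \ref{formenormaledestriangles} becomes mere strict positivity of the angle. Since in a tree an angle takes only the values $0$ and $\infty$, the condition $\measuredangle_{v}(a,b) > 50\delta = 0$ is equivalent to $\measuredangle_{v}(a,b) = \infty$, which by Proposition \ref{arbres angles and cones} means precisely that $v$ separates $a$ and $b$.

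Next I would unwind Definition \ref{pointloingrandangle} of $\tilde{a}$. Because geodesics in a tree are unique, the clause ``contained in every geodesic between $a$ and $b$ and between $a$ and $c$'' simply means $v \in [a,b] \cap [a,c]$. Writing $t$ for the center of the tripod, so that $[a,b] = [a,t] \cup [t,b]$ and $[a,c] = [a,t] \cup [t,c]$, this intersection is exactly the segment $[a,t]$. Among those $v$, the two separation conditions exclude $v = a$ (where the angle is undefined) but are satisfied by every $v \in {]a,t]}$: each such $v$ is an interior vertex of both $[a,b]$ and $[a,c]$ and hence separates $a$ from $b$ and from $c$. The admissible set is therefore exactly ${]a,t]}$, whose furthest point from $a$ is $t$, so $\tilde{a} = t$; the symmetric arguments give $\tilde{b} = \tilde{c} = t$. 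I would also treat the degenerate configuration where $t$ coincides with a vertex, say $a$ lying on $[b,c]$: then $[a,t] = \{a\}$ contains no admissible vertex, the defining set is empty, and the convention of Definition \ref{pointloingrandangle} yields $\tilde{a} = a = t$ consistently.

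Finally, for the ``Moreover'' assertion I would check the bullet points of Theorem \ref{formenormaledestriangles} directly on the tripod. Since $\tilde{a} = \tilde{b} = \tilde{c} = t$, the sides $[a,b]$ and $[a,c]$ coincide along $[a,t] = [a,\tilde{a}]$ by the very construction of the tripod, and likewise for the two remaining pairs. The last bullet point concerns the open intervals ${]\tilde{a},\tilde{b}[}$, ${]\tilde{a},\tilde{c}[}$ and ${]\tilde{b},\tilde{c}[}$, but each of these equals ${]t,t[} = \emptyset$, so the angle condition on interior vertices holds vacuously. The only point requiring genuine care — and thus the main, albeit minor, obstacle — is the bookkeeping of the endpoint and degenerate cases in the middle step: one must make sure that $t$ itself is counted as admissible in the non-degenerate case, that $v = a$ is correctly excluded, and that the empty-set convention is invoked exactly when the tripod collapses.
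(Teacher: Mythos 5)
Your proof is correct and follows essentially the same route as the paper: translate the $50\delta$-thresholds into the tree dictionary of Proposition \ref{arbres angles and cones} (angles are $0$ or $\infty$, and a positive angle means separation), use uniqueness of geodesics to identify the admissible set with $]a,t]$ so that $\tilde{a}=\tilde{b}=\tilde{c}=t$, and note that the fourth condition of Theorem \ref{formenormaledestriangles} holds vacuously because the intervals $]\tilde{a},\tilde{b}[$, $]\tilde{a},\tilde{c}[$, $]\tilde{b},\tilde{c}[$ are empty. The only difference is that you make the endpoint and degenerate-case bookkeeping explicit, which the paper's shorter proof leaves implicit.
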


\begin{proof}

By Definition \ref{pointloingrandangle}, $\tilde{a}$ is the furthest point from $a$ on every geodesic between $a$ and $b$ and every geodesic between $a$ and $c$ such that every angle between $a$ and $\tilde{a}$ is greater than $50\delta$. Therefore according to Proposition \ref{arbres angles and cones}, $\tilde{a}=t$ and the same argument works for $\tilde{b}$ and $\tilde{c}$.

To check that $[a,b,c]$ is a triangle like the one given by Theorem \ref{formenormaledestriangles}, we do not have to check the fourth point since $\tilde{a}=\tilde{b}=\tilde{c}$, the three other points are direct since trees are uniquely geodesic.
    
\end{proof}

\subsection{Strategy of the proof}

We are now ready to prove Theorem \ref{theoremeimportant arbre} and to see that this will be a particular case of the proof in the general case.\\

For all $n \in \mathbb{N}$ and for all  $x \in X$, we denote by $S_{x}^{n}$ the sphere centered at $x$ and of radius $n$. For all $n, k \in \mathbb{N} $, $k\le n$, for all $z \in S_{x}^{k}$, we set $I_{z}^{n,k,x}=\{a\in S_{x}^{n}, z \in [x,a] \}$ following Lafforgue's notation in \cite{Lafforgue} such that $ S_{x}^{n}=\bigcup_{z\in S_{x}^{k}} I_{z}^{n,k,x} $ defines a partition of the sphere $ S_{x}^{n}$. For $k=n$, the partition is the partition with singletons and for $k=0$, this is the trivial partition. \\

We denote by $\mathbb{C}^{(X)}$ the vector space of functions from $X$ to $\mathbb{C}$ with finite support with canonical basis $(e_{a})_{a\in X}$. We will describe the Hilbert space used by Lafforgue in his article \cite{Lafforgue}.

\begin{proposition} \label{c'est une norme}

For each $x \in X$, the following expression :
$$ \left\| \sum_{a} f(a)e_{a} \right\|_{H_{x}}^{2}= \sum_{n \in \mathbb{N}} (n+1)^{2} \sum_{k=0}^{n} \sum_{z \in S_{x}^{k} } { \biggr\vert \sum_{a \in I_{z}^{n,k,x}} f(a) \biggr\lvert }^{2}$$ defines the square of a norm on $\mathbb{C}^{(X)}$ with canonical basis $(e_{a})_{a\in X}$.
    
\end{proposition}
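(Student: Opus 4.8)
The plan is to verify that the given expression defines a genuine norm, which amounts to checking three things: homogeneity, positive-definiteness, and the triangle inequality. The first observation to make is that the expression is built out of linear functionals of $f$. For each triple $(n,k,z)$ with $z \in S_x^k$ and $k \le n$, define the linear form
$$ \Lambda_{n,k,z}(f)=\sum_{a \in I_z^{n,k,x}} f(a). $$
Then the expression can be rewritten as
$$ \left\| f \right\|_{H_x}^2 = \sum_{n \in \mathbb{N}} (n+1)^2 \sum_{k=0}^{n} \sum_{z \in S_x^k} \left| \Lambda_{n,k,z}(f) \right|^2, $$
so that $\left\| \cdot \right\|_{H_x}$ has the shape of an $\ell^2$-norm of the family of complex numbers $\big((n+1)\Lambda_{n,k,z}(f)\big)_{n,k,z}$. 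This reformulation is the whole point: once we see the expression as the $\ell^2$-norm of the image of $f$ under a linear map into a Hilbert space of sequences, homogeneity and the triangle inequality become automatic from the corresponding properties of the $\ell^2$-norm (Minkowski's inequality), and the only substantive content left is positive-definiteness.

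First I would address the convergence and well-definedness: since $f \in \mathbb{C}^{(X)}$ has finite support, only finitely many of the functionals $\Lambda_{n,k,z}(f)$ are nonzero (an element $a$ in the support contributes only to those $(n,k,z)$ with $n = d(x,a)$, $k \le n$, and $z$ the point of $[x,a]$ at distance $k$ from $x$), so the triple sum is a finite sum and the expression is a nonnegative real number. Then homogeneity, $\left\| \lambda f \right\|_{H_x} = |\lambda|\,\left\| f \right\|_{H_x}$, is immediate by pulling $|\lambda|^2$ out of every term. For the triangle inequality I would simply invoke Minkowski's inequality for $\ell^2$: writing $\Phi(f)$ for the sequence $\big((n+1)\Lambda_{n,k,z}(f)\big)_{n,k,z}$, which lives in $\ell^2$ over the index set of admissible triples, linearity of each $\Lambda_{n,k,z}$ gives $\Phi(f+g)=\Phi(f)+\Phi(g)$, whence $\left\| f+g \right\|_{H_x} = \left\| \Phi(f+g) \right\|_{\ell^2} \le \left\| \Phi(f) \right\|_{\ell^2} + \left\| \Phi(g) \right\|_{\ell^2} = \left\| f \right\|_{H_x} + \left\| g \right\|_{H_x}$.

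The step I expect to be the crux is positive-definiteness, i.e. showing $\left\| f \right\|_{H_x} = 0 \implies f = 0$. Here the tree structure is essential. Suppose $\left\| f \right\|_{H_x}=0$; then $\Lambda_{n,k,z}(f)=0$ for every admissible triple. I would argue by descent on $n$, starting from the largest $n$ in the support of $f$. For $k=n$ the partition $S_x^n = \bigcup_{z} I_z^{n,n,x}$ is into singletons (in a tree the only vertex of $S_x^n$ on the geodesic $[x,z]$ for $z \in S_x^n$ is $z$ itself), so $\Lambda_{n,n,z}(f)=f(z)=0$ forces $f$ to vanish on $S_x^n$ for every $n$. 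Thus vanishing of just the $k=n$ functionals already pins down $f=0$ on each sphere, and since $X = \bigsqcup_n S_x^n$ this gives $f \equiv 0$. I should take care to confirm that in a tree the sets $I_z^{n,n,x}$ are indeed singletons, which follows from unique geodesicity (Proposition \ref{arbres angles and cones} and the fact that trees are uniquely geodesic); this is the only place where the geometry of $X$ genuinely enters, the rest of the argument being formal Hilbert-space manipulation. With positive-definiteness established, the three norm axioms hold and the expression defines the square of a norm, as claimed.
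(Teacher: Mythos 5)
Your proof is correct and follows essentially the same route as the paper: finiteness of the sum from the finite support of $f$, the norm axioms from the underlying $\ell^2$ structure (which the paper dismisses as ``clear'' and then makes explicit right after the proposition via the isometry $\Theta_x : H \to \ell^2(U_x)$, exactly your map $\Phi$), and positive-definiteness from the fact that the $k=n$ partition of $S_x^n$ is into singletons. The only cosmetic difference is that your singleton claim needs no unique geodesicity at all --- if $z, a \in S_x^n$ and $z \in [x,a]$ then $d(z,a) = d(x,a) - d(x,z) = 0$ in any graph --- so the geometry of the tree does not actually enter anywhere.
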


\begin{proof}

For every $f \in \mathbb{C}^{(X)} $, since the support of $f$ is finite, for $n$ big enough, if $a \in I_{z}^{n,k,x} $, we have that $f(a)=0$. Then, for $n$ big enough, we have that :  $$\sum_{k=0}^{n} \sum_{z \in S_{x}^{k} } { \biggr\vert \sum_{a \in I_{z}^{n,k,x}} f(a) \biggr\lvert }^{2} =0 .$$

Thus, the sum in the definition of the norm of $f$ is a finite and therefore the norm is well defined.\\

With the definition of the norm, the triangle inequality and the absolute homogeneity are clear. Let us look at the positive definiteness.\\

Let $f \in \mathbb{C}^{(X)}$ such that $\left\| \sum_{a} f(a)e_{a} \right\|_{H_{x}} =0 $. With the fact that the partition of the sphere $S_{x}^{n}$ associated to $k=n$ is the partition with singletons, we have that :

$$\begin{aligned} 
           \left\| \sum_{a} f(a)e_{a} \right\|_{H_{x}}^{2} & = \sum_{n \in \mathbb{N}} (n+1)^{2} \sum_{k=0}^{n} \sum_{z \in S_{x}^{k} } { \biggr\vert \sum_{a \in I_{z}^{n,k,x}} f(a) \biggr\lvert }^{2}\\      
            & \geq \sum_{n \in \mathbb{N}} (n+1)^{2} \sum_{z \in S_{x}^{n} } { \biggr\vert \sum_{a \in I_{z}^{n,n,x}} f(a) \biggr\lvert }^{2}\\
            & \geq \sum_{n \in \mathbb{N}} (n+1)^{2} \sum_{z \in S_{x}^{n} } { \vert f(z) \lvert }^{2}.
\end{aligned}$$

The last expression is zero if and only if $f(z)=0$ for all $z \in X$, therefore if and only if $f=0$. This prove the positive definiteness of the norm.

\end{proof}

According to Proposition \ref{c'est une norme}, we can define the Hilbert space $H_{x}$, which is the completion of $\mathbb{C}^{(X)}$ for the previous norm. In other words, $H_{x}$ is the set of functions $f$ from $X$ to $\mathbb{C}$ such that $\left\|  f \right\|_{H_{x}} $ is finite. \\

We will show that the action of $G$ by left translations on $\mathbb{C}^{(X)}$, given by the formula $\pi(g)(f)(x)=f(g^{-1}x)$ for $g \in G$, $f \in \mathbb{C}^{(X)}$, $x \in X$, extends to a continuous representation of $G$ on $H=H_{x}$ for some $x \in X$. Furthermore  we will prove that there exists a polynomial $P$ such that $||\pi(g)||_{\mathcal{L}(H)} \leq P(l(g))$ for all $g \in G$, that the dual representation $H^{\ast}$ admits a non-zero fixed vector, and that $H$ has a non-zero fixed vector if and only if orbits of the action of $G$ on $X$ are bounded.\\

\subsection{Comparison between equivalence classes}

For all $x \in X$, we set $U_{x} = \{(n,k,z), 0 \leq  k\leq n, z\in S_{x}^{n} \}$. For each $x \in X$, we define a length on $G$, $l_{x}$, by $l_{x}(g)=d(x,g.x)$ for all $g \in G$.

In the rest of the subsection, we fix $x \in X$ and we will denote by $l$ the length $l_{x}$.
Let $g \in G$, we set $x'=g(x)$ such that $d(x,x')=l(g)$. By definition of $H=H_{x}$, there exists an isometry $\Theta_{x} : H \mapsto \ell^{2} (U_{x})$ defined by :

$$ \Theta_{x} \left( \sum_{a} f(a)e_{a} \right) =\left((n+1) \sum_{a \in I_{z}^{n,k,x}}f(a)\right)_{(n,k,z)}. $$

In the same way, we introduce an isometry $\Theta_{x'}$ between $H_{x'}$ and $\ell^{2} (U_{x'}) $.\\

In the rest of the section, we will set $d=d(x,x')$. We will order the geodesic between $x$ and $x'$, in the following sense : $[x,x']=\{x_{0},x_{1},...,x_{d}\}$ with $x_{0}=x$, $x_{d}=x'$ and $d(x_{i},x_{i+1})=1$ for $0\le i \le d-1$.\\

The goal will be to compare the norms of $H_{x}$ and $H_{x'}$. In contrast with Lafforgue's paper \cite{Lafforgue}, we will not write every class $I_{z}^{n,k,x}$ as a disjoint union of $I_{z'}^{n',k',x'}$ but rather as a disjoint union of differences between classes $I_{z'}^{n',k',x'}$.\\

Let $I_{z}^{n,k,x}$ be an equivalence class. 

\begin{proposition}\label{ arbre z sur la géodésique entre x et x'}

Let assume that $z \notin [x,x']$. Then if we set $k'=d(x',z)$, $n'=n-k+k'$ we have that :
$$I_{z}^{n,k,x}=I_{z}^{n',k',x'}.$$

\end{proposition}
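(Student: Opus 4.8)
The plan is to reduce everything to the separation structure of the tree, where the condition ``$z\in[x,a]$'' is purely combinatorial. First I would record the basic consequence of the hypothesis $z\notin[x,x']$: the vertex $z$ does not separate $x$ from $x'$, so $x$ and $x'$ lie in the same connected component of $X\setminus\{z\}$ (in the language of Proposition \ref{arbres angles and cones}, $\measuredangle_{z}(x,x')=0$). I would then claim that for every vertex $a$ the two conditions $z\in[x,a]$ and $z\in[x',a]$ are equivalent. Indeed, in a tree $z\in[x,a]$ holds exactly when $a=z$ or $a$ lies in a component of $X\setminus\{z\}$ different from the one containing $x$; since $x$ and $x'$ lie in the same component, the same description, with $x$ replaced by $x'$, gives $z\in[x',a]$. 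This equivalence is the heart of the argument.

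Next I would translate the distance constraints using this equivalence. If $a$ satisfies $z\in[x,a]$ (hence also $z\in[x',a]$), the unique geodesics split as $[x,a]=[x,z]\cup[z,a]$ and $[x',a]=[x',z]\cup[z,a]$, so that $d(x,a)=k+d(z,a)$ and $d(x',a)=k'+d(z,a)$, where $k=d(x,z)$ and $k'=d(x',z)$. Therefore the membership condition $d(x,a)=n$ is equivalent to $d(z,a)=n-k$, which, using the defining relation $n'=n-k+k'$, is in turn equivalent to $d(x',a)=k'+(n-k)=n'$. Combining this with the equivalence of the separation conditions yields directly $a\in I_{z}^{n,k,x}\iff a\in I_{z}^{n',k',x'}$, which is the desired equality of sets. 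I would also note in passing that $z\in S_{x'}^{k'}$ holds by the definition of $k'$ and that $k'\le n'$ follows from $k\le n$, so the right-hand class is legitimately of the required form.

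I do not expect a genuine obstacle here: the statement is essentially routine once the tree structure is used. The only points deserving a line of care are the passage from ``$z\notin[x,x']$'' to ``$x$ and $x'$ lie on the same side of $z$'', and the degenerate case $a=z$ (corresponding to $n=k$, $n'=k'$), which the description above already covers. The conceptual content is simply that, in a tree, membership in the classes $I_{z}^{n,k,\cdot}$ depends only on which side of $z$ a point lies and on its distance to $z$; this is exactly the feature that will fail for general hyperbolic graphs and that makes the tree case the clean model for the construction.
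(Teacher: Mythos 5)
Your proof is correct and takes essentially the same route as the paper: the paper deduces $z\in[x',a]$ by concatenating the geodesics $[a,z]$ and $[z,x']$ into an injective path, while you deduce the same fact from the component structure of $X\setminus\{z\}$ (the hypothesis $z\notin[x,x']$ putting $x$ and $x'$ on the same side of $z$), and both proofs then conclude with the identical distance computation $d(x',a)=k'+(n-k)=n'$. Your version is if anything slightly more careful, since it makes the separation argument explicit, treats both inclusions at once via an equivalence, and covers the degenerate case $a=z$.
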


\begin{proof}

In this case the proof is the same as Lafforgue's paper \cite{Lafforgue}, let us recall it for the reader's convenience. Let $a \in I_{z}^{n,k,x}$.
The concatenation of the geodesics between $a$ and $z$ and between $z$ and $x'$ is an injective path from $a$ to $x'$ going through $z$. As $X$ is a tree, we can deduce that $z \in [x',a]$. Therefore, $$d(x',a)=d(x',z)+d(z,a)=k'+n-k=n' .$$

Hence $a \in I_{z}^{n',k',x'}$ and  $I_{z}^{n,k,x} \subset I_{z}^{n',k',x'} $. For the reverse inclusion, we apply the same argument.

\end{proof}

Let $a \in I_{z}^{n,k,x}$.
In the rest of the section $t$ will denote the center of $[x,x',a]$ the unique geodesic triangle between $x$, $x'$ and $a$.

\newpage

\begin{figure}[!ht]
    \centering
    \includegraphics[scale=0.6]{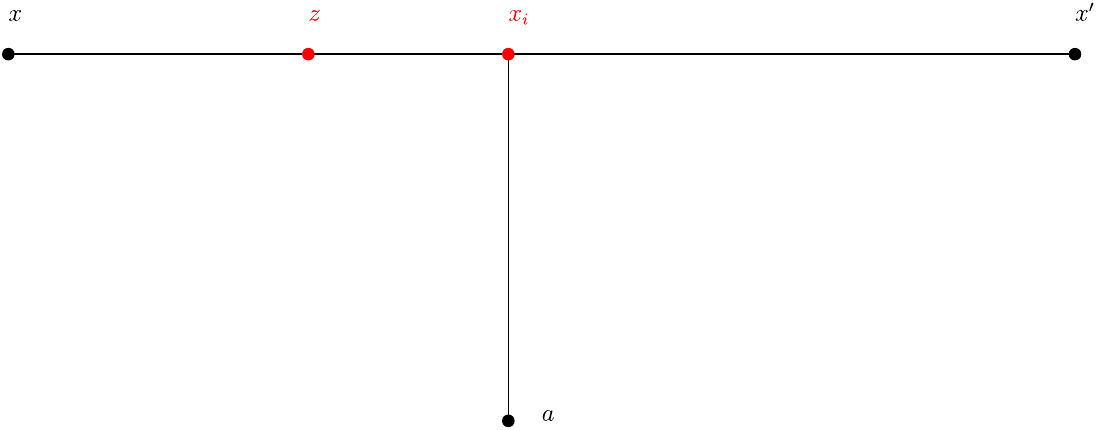}
    \caption{Case where $z$ is in $[x,x']$}
    \label{ fig: Case where $z$ is in $[x,x']$ }
\end{figure}

\begin{proposition}\label{arbre z dans le geod entre x et x'}

Let assume that $z \in [x,t]$, in other words $z \in [x,x']$. There exists $i_{0}\in \mathbb{N}$, $0\le i_{0}\le d$ such that $z=x_{i_{0}}$. Let $i\in \mathbb{N}$, $0\le i\le d$ such that $x_{i}$ is the projection of $a$ on $[x,x']$. Then we have :
\begin{itemize}
    \item$i_{0} \le i \le d$,
    \item $a \in I_{x_{i}}^{n-i+d-i,d-i,x'} - I_{x_{i-1}}^{n-i+d-i,d-i+1,x'}$,
    
    \item $I_{x_{i}}^{n-i+d-i,d-i,x'} - I_{x_{i-1}}^{n-i+d-i,d-i+1,x'} \subset I_{z}^{n,k,x}$, with the convention $I_{x_{-1}}^{n+d,d+1,x'}=\emptyset $.

\end{itemize}

\end{proposition}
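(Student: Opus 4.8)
The plan is to lean entirely on the tree structure, the key observation being that in a tree the center $t$ of the tripod $[x,x',a]$ coincides with the nearest-point projection of $a$ onto the geodesic $[x,x']$. Hence $t = x_i$ for the index $i$ in the statement, and we have the geodesic decompositions $[x,a] = [x,x_i] \cup [x_i,a]$ and $[x',a] = [x',x_i] \cup [x_i,a]$, branching at $x_i$. Since the hypothesis is $z \in [x,t] = [x,x_i]$ and $z = x_{i_0}$, uniqueness of geodesics gives $i_0 \le i \le d$ at once, which is the first bullet.

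For the second bullet I would simply propagate distances through the branch point. From $d(x,a)=n$ and $d(x,x_i)=i$ we get $d(x_i,a)=n-i$, so $d(x',a) = d(x',x_i)+d(x_i,a) = (d-i)+(n-i) = n-i+d-i$ while $d(x',x_i)=d-i$; together with $x_i \in [x',a]$ this yields $a \in I_{x_i}^{n-i+d-i,\,d-i,\,x'}$. To rule out membership in the subtracted class, I note that because $x_i$ is the projection of $a$, the geodesic $[x',a]$ leaves $[x,x']$ exactly at $x_i$ and never reaches $x_{i-1}$, so $x_{i-1} \notin [x',a]$; hence $a$ lies in the difference. When $i=0$ the subtracted class is empty by the stated convention, consistent with the forced equality $z=x$.

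The reverse inclusion is the main point and the part I expect to require care. Take $b$ in the difference. Then $d(x',b)=n-i+d-i$ with $x_i \in [x',b]$ and $d(x',x_i)=d-i$, while $b \notin I_{x_{i-1}}^{n-i+d-i,\,d-i+1,\,x'}$; since the two relevant distances $d(x',b)$ and $d(x',x_{i-1})=d-i+1$ already match, the only way membership can fail is $x_{i-1}\notin[x',b]$. The crux is to pin the projection of $b$ onto $[x,x']$ to exactly $x_i$: if the projection were $x_j$ with $j>i$, then $[x',b]$ would meet $[x,x']$ only in the arc from $x'$ to $x_j$ and so $x_i \notin [x',b]$, a contradiction; if $j<i$, then $[x',b]$ would traverse $x_i$ and continue to $x_{i-1}$, contradicting $x_{i-1}\notin[x',b]$. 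Both exclusions use only the unique-geodesic property of the tree, leaving $j=i$.

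Once the projection of $b$ is $x_i$, I would recompute as before: $d(x_i,b)=d(x',b)-d(x',x_i)=n-i$, hence $d(x,b)=i+(n-i)=n$, and since $z=x_{i_0}$ with $i_0\le i$ lies on $[x,x_i]\subset[x,b]$ with $d(x,z)=i_0=k$, we conclude $b \in I_z^{n,k,x}$. I expect the projection argument in the reverse inclusion to be the only genuine obstacle, as everything else reduces to additivity of length along geodesics through the branch vertex $x_i$.
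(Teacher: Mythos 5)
Your proof is correct and follows essentially the same route as the paper's: both establish $a \in I_{x_i}^{n-i+d-i,d-i,x'} - I_{x_{i-1}}^{n-i+d-i,d-i+1,x'}$ via distance additivity through the branch point, and both prove the reverse inclusion by pinning the projection of an arbitrary element of the difference class to exactly $x_i$ (membership in the class at $x_i$ rules out projections beyond $x_i$, non-membership in the subtracted class at $x_{i-1}$ rules out projections before it), then recomputing distances. The only cosmetic difference is that you identify the tripod center $t$ with the projection $x_i$ at the outset, which makes the first bullet immediate, whereas the paper obtains it by a distance-comparison contradiction.
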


\begin{proof}
We will prove the first point by contradiction. Let assume that $0 \le i < i_{0}$. We know that $x_{i} \in [x,a]$. Furthermore, as $x_{i}$ is projection of $a$ on $[x,x']$, $d(x_{i},a) < d(x_{i_0},a)$. Then, as $x_{i_{0}}\in [x,a]$,

$$d(x,a)=d(x,x_{i})+d(x_{i},a)<d(x,x_{i_{0}})+d(x_{i_{0}},a)\le d(x,a) .$$

This is a contradiction, then we know that $i_{0} \le i \le d$.\\

Now we will prove the second point. The concatenation of the geodesic between $a$ and $x_{i}$ and between $x_{i}$ and $x'$ is an injective path between $a$ and $x'$. As $X$ is a tree, we can deduce that $x_{i} \in [x',a]$. Furthermore, as $d(x',x_{i})=d-i$, we have that $$d(x',a)=d(x',x_{i})+d(x_{i},a)=d-i+n-d(x,x_{i})=d-i+n-i. $$

Then $a \in I_{x_{i}}^{n-i+d-i,d-i,x'}$.\\

We will show that $a \notin I_{x_{i-1}}^{n-i+d-i,d-i+1,x'}$. By contradiction, if $a \in I_{x_{i-1}}^{n-i+d-i,d-i+1,x'}$ then $x_{i-1} \in [a,x']$. Furthermore, as $x_{i}$ is the projection of $a$ on $[x,x']$, we know that  $d(a,x_{i})<d(a,x_{i-1})$, then $$d(x',a)=d(x',x_{i})+d(x_{i},a)<d(x',x_{i-1})+d(x_{i-1},a)\le d(x',a) .$$

This is a contradiction then $a \notin I_{x_{i-1}}^{n-i+d-i,d-i+1,x'}$ and we showed that $a \in I_{x_{i}}^{n-i+d-i,d-i,x'} - I_{x_{i-1}}^{n-i+d-i,d-i+1,x'}$.\\

We will prove the third point. Let $ i \in \mathbb{N}$ such that $i_{0} \le i \le d$, we have to show that $I_{x_{i}}^{n-i+d-i,d-i,x'} - I_{x_{i-1}}^{n-i+d-i,d-i+1,x'} \subset I_{z}^{n,k,x}$. Let $a' \in I_{x_{i}}^{n-i+d-i,d-i,x'} - I_{x_{i-1}}^{n-i+d-i,d-i+1,x'}$, we will show that the projection of $a'$ on $[x,x']$ is $x_{i}$. We denote by $x_{l}$ the projection of $a'$ on $[x,x']$ for $l \in \mathbb{N}$ such that $0\le l  \le d$. 
By a previous argument, as $a'\in I_{x_{i}}^{n-i+d-i,d-i,x'}$, we know that $0 \le l \le i$. Let assume by contradiction that $0 \le l < i$, so we have that $ x_{i-1} \in [x_{l},x']$. As $x_{l} \in [x',a']$, we deduce that $x_{i-1} \in [x',a']$. Moreover, we know that $d(x',a')=n-i+d-i$, which implies that $a' \in I_{x_{i-1}}^{n-i+d-i,d-i+1,x'}$, which is a contradiction with the assumption. Then $x_{l}=x_{i}$.

As $x_{i}$ is the projection of $a'$ on $[x,x']$, we know now that $x_{i} \in [x,a']$. We can also remark that $x_{i_0} \in [x,x_{i}]$ as $i_{0}\le i$, we deduce then that $ x_{i_0} \in [x,a'] $.\\

We know also that $d(x',a')=d(x',x_{i})+d(x_{i},a')=d-i+n-i$, where $d(x_{i},a')=n-i$. Thus we remark that :
 $$ d(x,a')=d(x,x_{i})+d(x_{i},a')=i+n-i=n .$$

 Then $a' \in I_{z}^{n,k,x}$ and $I_{x_{i}}^{n-i+d-i,d-i,x'} - I_{x_{i-1}}^{n-i+d-i,d-i+1,x'} \subset I_{z}^{n,k,x}$, we proved the third point.

\end{proof}

\begin{corollary}\label{corollaire decomposition si z dans la geodesique}
If $z \in [x,x']$, i.e. there exists $i_{0} \in \mathbb{N}$, $0 \leq i_{0} \leq d$ such that $z=x_{i_{0}}$, then :
$$I_{z}^{n,k,x}= \bigcupdot \limits_{i=i_{0}}^{d} (I_{x_{i}}^{n-i+d-i,d-i,x'} - I_{x_{i-1}}^{n-i+d-i,d-i+1,x'}) $$
    with the convention that $I_{x_{-1}}^{n+d,d+1,x'}=\emptyset $, where $\bigcupdot$ denote a disjoint union.

\end{corollary}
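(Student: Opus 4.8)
The plan is to assemble the corollary directly from the three points of Proposition \ref{arbre z dans le geod entre x et x'}, which together yield both inclusions and the disjointness of the union; no new geometric argument is really needed.

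First I would prove the inclusion $I_{z}^{n,k,x} \subseteq \bigcupdot \limits_{i=i_{0}}^{d}(\cdots)$. Given any $a \in I_{z}^{n,k,x}$, let $x_{i}$ denote the projection of $a$ onto $[x,x']$; since $X$ is a tree this projection exists and is unique. The first point of Proposition \ref{arbre z dans le geod entre x et x'} guarantees $i_{0} \le i \le d$, so the index $i$ does occur in the union, and the second point places $a$ in the corresponding difference set $I_{x_{i}}^{n-i+d-i,d-i,x'} - I_{x_{i-1}}^{n-i+d-i,d-i+1,x'}$. Hence $a$ lies in the union. The reverse inclusion $\bigcup_{i=i_{0}}^{d}(\cdots) \subseteq I_{z}^{n,k,x}$ is precisely the content of the third point of the Proposition, applied for each $i$ with $i_{0} \le i \le d$.

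It then remains to check that the union is disjoint, i.e. that it is legitimately a $\bigcupdot$. Here I would reuse the observation extracted inside the proof of the third point: a vertex $a'$ belongs to $I_{x_{i}}^{n-i+d-i,d-i,x'} - I_{x_{i-1}}^{n-i+d-i,d-i+1,x'}$ exactly when its projection onto $[x,x']$ equals $x_{i}$, since membership in the larger class forces $x_{i} \in [x',a']$ while removing the smaller class forbids $x_{i-1} \in [x',a']$. As the nearest-point projection of a vertex onto a geodesic in a tree is unique, distinct indices $i$ index vertices with distinct projections, so the difference sets are pairwise disjoint. The boundary convention $I_{x_{-1}}^{n+d,d+1,x'}=\emptyset$ simply covers the edge case $i_{0}=0$, where the first difference set reduces to $I_{x_{0}}^{n+d,d,x'}$.

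I do not expect any serious obstacle: all of the metric work was already carried out in Proposition \ref{arbre z dans le geod entre x et x'}, and the only genuinely new point is recognizing that each difference set is exactly the fibre of the projection to $[x,x']$ over $x_{i}$, which delivers the disjointness at once and lets us repackage the three bullet points of the Proposition into the single displayed partition.
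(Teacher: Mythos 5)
Your proof is correct and follows essentially the same route as the paper: the forward inclusion from the second point of Proposition \ref{arbre z dans le geod entre x et x'}, the reverse inclusion from its third point, and disjointness from the fact (established inside the proof of that third point) that every element of $I_{x_{i}}^{n-i+d-i,d-i,x'} - I_{x_{i-1}}^{n-i+d-i,d-i+1,x'}$ projects onto $[x,x']$ exactly at $x_{i}$. Your explicit appeal to the first point to justify that the index $i$ lies in the range $i_{0}\le i\le d$ is a small clarification the paper leaves implicit, but it is not a different argument.
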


\begin{proof}
The inclusion $I_{z}^{n,k,x} \subset \bigcup \limits_{i=i_{0}}^{d} (I_{x_{i}}^{n-i+d-i,d-i,x'} - I_{x_{i-1}}^{n-i+d-i,d-i+1,x'}) $ is clear with the second point of Proposition \ref{arbre z dans le geod entre x et x'}.\\

The reverse inclusion $\bigcup\limits_{i=i_{0}}^{d} (I_{x_{i}}^{n-i+d-i,d-i,x'} - I_{x_{i-1}}^{n-i+d-i,d-i+1,x'}) \subset I_{z}^{n,k,x} $ is also clear with the third point of Proposition \ref{arbre z dans le geod entre x et x'}, thus $$ I_{z}^{n,k,x} = \bigcup \limits_{i=i_{0}}^{d} (I_{x_{i}}^{n-i+d-i,d-i,x'} - I_{x_{i-1}}^{n-i+d-i,d-i+1,x'}) .$$

It remains to show that the inclusion is disjoint. In the proof of Proposition \ref{arbre z dans le geod entre x et x'}, we proved that the projection of the points in $I_{x_{i}}^{n-i+d-i,d-i,x'} - I_{x_{i-1}}^{n-i+d-i,d-i+1,x'}$ on $[x,x']$ is $x_{i}$, therefore the union is disjoint.

\end{proof}

Now we are ready to compare the norms of $H_{x}$ and $H_{x'}$. We will show that for all $ (n,k,z) \in U_{x} $, the class $I_{z}^{n,k,x}$ admits a decomposition with a uniformly bounded number of classes $I_{z'}^{n',k',x'}$ for $(n',k',z') \in U_{x'}$ and conversely every class $I_{z'}^{n',k',x'}$ appears in a uniformly bounded number of decompositions of classes $I_{z}^{n,k,x}$.
Both points are corollaries of the previous properties.

\begin{proposition}\label{arbredenombrement1}

For all $(n,k,z) \in U_{x} $, the number of classes $I_{z'}^{n',k',x'}$ for $(n',k',z') \in U_{x'} $ which appear in the decomposition of the class $I_{z}^{n,k,x}$ is bounded above by $2d(x,x')+2$.

\end{proposition}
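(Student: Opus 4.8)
The plan is to split the count according to whether the base vertex $z$ of the class $I_z^{n,k,x}$ lies on the geodesic $[x,x']$ or not, reusing the two decompositions established just above.

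If $z\notin[x,x']$, the proposition above treating the case $z\notin[x,x']$ shows that $I_z^{n,k,x}=I_z^{n',k',x'}$ with $k'=d(x',z)$ and $n'=n-k+k'$. The decomposition then consists of a single class $I_{z'}^{n',k',x'}$, so the number of classes appearing is $1$, which is certainly at most $2d(x,x')+2$, and there is nothing further to check.

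The substantial case is $z\in[x,x']$, say $z=x_{i_0}$ with $0\le i_0\le d$. Here I would invoke the decomposition of Corollary~\ref{corollaire decomposition si z dans la geodesique},
$$ I_z^{n,k,x}=\bigcupdot_{i=i_0}^{d}\Bigl(I_{x_i}^{n-i+d-i,\,d-i,\,x'}-I_{x_{i-1}}^{n-i+d-i,\,d-i+1,\,x'}\Bigr), $$
and simply count the classes $I_{z'}^{n',k',x'}$ occurring on the right-hand side. Each index $i\in\{i_0,\dots,d\}$ contributes at most two classes, the positive term $I_{x_i}^{n-i+d-i,d-i,x'}$ and the negative term $I_{x_{i-1}}^{n-i+d-i,d-i+1,x'}$ (with the convention $I_{x_{-1}}^{n+d,d+1,x'}=\emptyset$, so that the index $i=i_0=0$, when it occurs, contributes only one class). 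As $i$ runs over the $d-i_0+1\le d+1$ possible values, the total number of listed classes is at most $2(d-i_0+1)\le 2(d+1)=2d(x,x')+2$, which is exactly the desired bound.

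This reduces the statement to a routine counting argument, so I do not anticipate a genuine obstacle. The only point worth a remark is that the upper bound is insensitive to possible repetitions among the listed classes: the distinct classes form a subset of the listed terms, so any coincidence can only decrease the count. In fact one checks easily that no two of the listed classes coincide, since the positive class at index $i$ and the only negative class with the same base vertex $x_i$, occurring at index $i+1$, carry different sphere radii $n-i+d-i$ and $n-(i+1)+d-(i+1)$; but this refinement is not needed for the inequality.
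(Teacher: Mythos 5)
Your proof is correct and follows essentially the same route as the paper's: both split into the cases $z\notin[x,x']$ (where Proposition \ref{ arbre z sur la géodésique entre x et x'} gives a single class) and $z=x_{i_0}\in[x,x']$ (where the decomposition of Corollary \ref{corollaire decomposition si z dans la geodesique} gives at most $2(d-i_0+1)\le 2d(x,x')+2$ classes). Your closing remark that possible coincidences among listed classes can only lower the count is a harmless refinement not present in the paper.
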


\begin{proof}

Let $(n,k,z) \in U_{x} $.

If $z \notin [x,x']$ according to Proposition \ref{ arbre z sur la géodésique entre x et x'}, we need only one class $I_{z'}^{n',k',x'}$ to decompose $I_{z}^{n,k,x}$. \\

If $z\in [x,x']$, we set $z=x_{i_{0}}$. According to Proposition \ref{arbre z dans le geod entre x et x'}, there are $2(d-i_{0}+1)$ classes $I_{z'}^{n',k',x'}$ which appear in the decomposition. By noticing that $d-i_{0}<d(x,x')$, we proved the proposition.

\end{proof}

For the reverse enumeration, we get the following proposition :

\begin{proposition}\label{arbredenombrement2}

For each $(n',k',z') \in U_{x'} $, the class $I_{z'}^{n',k',x'}$ appears in the decompositions of at most $2d(x,x')+3$ classes $I_{z}^{n,k,x}$ for $(n,k,z) \in U_{x} $.

\end{proposition}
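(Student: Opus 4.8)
The plan is to mirror the case analysis of Proposition~\ref{arbredenombrement1}, but now from the point of view of a \emph{fixed} target class $I_{z'}^{n',k',x'}$, splitting according to whether its center $z'$ lies on the geodesic $[x,x']$ or not. First I would dispose of the off-geodesic case $z'\notin[x,x']$. By Proposition~\ref{ arbre z sur la géodésique entre x et x'}, every class with a center off $[x,x']$ is of the form $I_z^{n,k,x}=I_{z'}^{n',k',x'}$ with $z=z'$, $k=d(x,z')$ and $n=n'-k'+k$; moreover the decompositions provided by Corollary~\ref{corollaire decomposition si z dans la geodesique} (valid when $z\in[x,x']$) only ever involve building blocks whose centers are the vertices $x_i$ of $[x,x']$. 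Hence a target with $z'\notin[x,x']$ appears in exactly one decomposition, which is trivially below the claimed bound.

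The core of the argument is the case $z'=x_j\in[x,x']$. Here $k'=d(x',x_j)=d-j$, so the index $j=d-k'$ is completely determined by $k'$. Reading off Corollary~\ref{corollaire decomposition si z dans la geodesique}, the class $I_{x_j}^{n',k',x'}$ can occur in the decomposition of $I_{x_{i_0}}^{n,k,x}$ (where necessarily $k=i_0$) only as one of the two blocks whose center is $x_j$: either as a minuend $I_{x_i}^{n-2i+d,\,d-i,\,x'}$ with $i=j$, or as a subtrahend $I_{x_{i-1}}^{n-2i+d,\,d-i+1,\,x'}$ with $i=j+1$. In the minuend case the projection distance $d-i=d-j=k'$ matches automatically, while equating radii forces $n-2j+d=n'$, i.e. $n=n'+2j-d$; the constraint $i_0\le i=j$ then leaves $i_0\in\{0,\dots,j\}$ as the only free parameter, giving at most $j+1$ triples. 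In the subtrahend case the same bookkeeping forces $n=n'+2j+2-d$ and $i_0\le i=j+1\le d$, giving at most $j+2$ triples.

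To finish I would add the two contributions. Since the two prescribed values of $n$ differ by $2$, the triples counted as minuend-appearances and those counted as subtrahend-appearances are genuinely distinct, so no overcounting occurs; their total is at most $(j+1)+(j+2)=2j+3\le 2d+3$, using $0\le j\le d$. Together with the single appearance in the off-geodesic case, this establishes the uniform bound $2d(x,x')+3$.

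The step I expect to be the main obstacle is precisely the index bookkeeping. The key observation that makes the count finite is that fixing the center $z'=x_j$ together with the pair $(n',k')$ pins down $i$ (through the center), hence the radius $n-2i+d$, hence $n$ itself, so that the \emph{only} surviving degree of freedom is $i_0$, which ranges over a set of cardinality linear in $d$. Carrying the shifted radii $n-2i+d$ and the projection distances $d-i$ and $d-i+1$ through correctly is what guarantees that the minuend- and subtrahend-contributions land on different values of $n$, and is the one place where an off-by-one slip would spoil the bound.
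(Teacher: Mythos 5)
Your proof is correct and follows essentially the same route as the paper: you split on whether $z'$ lies on $[x,x']$, observe the off-geodesic case contributes a single decomposition, and for $z'=x_j$ you count at most $j+1$ appearances as a minuend and $j+2$ as a subtrahend in Corollary~\ref{corollaire decomposition si z dans la geodesique}, giving $2j+3\le 2d(x,x')+3$. Your explicit bookkeeping (pinning down $n=n'+2j-d$ versus $n=n'+2j+2-d$, hence disjointness of the two counts) is slightly more detailed than the paper's, but it is the same argument.
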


\begin{proof}

If $z' \notin [x,x']$ according to Propositions \ref{ arbre z sur la géodésique entre x et x'} and \ref{arbre z dans le geod entre x et x'} , the class $I_{z'}^{n',k',x'}$ appears in the decomposition of one class, namely $I_{z'}^{n,k,x}$ for $k=d(x,z')$ and $n=n'-k'+k$. \\

If $z' \in [x,x']$, we set $z'=x_{i_{0}}$. According to proposition \ref{ arbre z sur la géodésique entre x et x'} and \ref{arbre z dans le geod entre x et x'}, the class $I_{z'}^{n',k',x'}$ appears positively in $i_{0}+1$ classes $I_{z}^{n,k,x}$. In fact, the possible values for $z$ are the $x_{i}$ for $0 \le i \le i_{0}$ and for each possible $z$ there is only one choice of $k$ and $n$. As $i_{0}+1=d(x,z)+1$, it appears in at most $d(x,x')+1$ decompositions of classes $I_{z}^{n,k,x}$.

In the same way, this class $I_{z'}^{n',k',x'}$ appears negatively in at most $i_{0}+2$ classes $I_{z}^{n,k,x}$. The possible values for $z$ are the $x_{i}$ for $0 \le i \le i_{0}+1$ and for each $z$ there is a unique choice of $k$ and $n$, so at most $d(x,x')+2$ classes $I_{z}^{n,k,x}$.

To conclude, the class $I_{z'}^{n',k',x'}$ appears in at most $2d(x,x')+3$ decompositions of classes  $I_{z}^{n,k,x}$.
    
\end{proof}

Let $A$ be the matrix from $\ell^{2} (U_{x'}) $ to $\ell^{2} (U_{x}) $ with coefficient $\frac{n+1}{n'+1}$ if $I_{z'}^{n',k',x'}$ appears positively in the decomposition of $I_{z}^{n,k,x}$, $-\frac{n+1}{n'+1}$ if $I_{z'}^{n',k',x'}$ appears negatively in the decomposition of $I_{z}^{n,k,x}$ and $0$ otherwise. According to Propositions \ref{ arbre z sur la géodésique entre x et x'} and \ref{arbre z dans le geod entre x et x'}, we remark that $A \circ \Theta_{x'}=\Theta_{x}$. The next proposition gives a bound on the norm of $A$.\\

\begin{proposition}\label{boundonthenorm of A}

Let $A$ be the matrix defined above. Then, we have that $$ \vert\vert A\lvert\lvert \le \sqrt{2d(x,x')+2} \sqrt{2d(x,x')+3} (d(x,x')+1) .$$

\end{proposition}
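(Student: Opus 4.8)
The plan is to estimate $\|A\|$ with the Schur test. Recall the elementary bound: if a matrix $(a_{pq})$ has all entries of modulus at most $M$, with at most $R$ nonzero entries in each row and at most $C$ nonzero entries in each column, then
$$\|A\| \le \sqrt{\Bigl(\max_p \sum_q |a_{pq}|\Bigr)\Bigl(\max_q \sum_p |a_{pq}|\Bigr)} \le M\sqrt{RC}.$$
Propositions \ref{arbredenombrement1} and \ref{arbredenombrement2} already give the sparsity bounds $R = 2d(x,x')+2$ for the rows and $C = 2d(x,x')+3$ for the columns. Hence the whole statement reduces to a single pointwise estimate, namely $M = d(x,x')+1$ on the absolute value of each entry.

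The step I would carry out first is therefore this entry bound. Write $d := d(x,x')$. A nonzero entry links a class $I_{z}^{n,k,x}$ to a class $I_{z'}^{n',k',x'}$ appearing (positively or negatively) in its decomposition, and its modulus is $\tfrac{n+1}{n'+1}$. In every case of Propositions \ref{ arbre z sur la géodésique entre x et x'} and \ref{arbre z dans le geod entre x et x'} the indices satisfy $n' \ge n-d$: when $z \notin [x,x']$ one has $n' = n-k+k'$ with $|k-k'| = |d(x,z)-d(x',z)| \le d$ by the triangle inequality, and when $z = x_{i_0} \in [x,x']$ the classes occurring have $n' = n - 2i + d$ with $i \le d$, so again $n' \ge n - d$. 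Consequently $n+1 \le (n'+1)+d \le (d+1)(n'+1)$, since $n'+1 \ge 1$, and therefore $\tfrac{n+1}{n'+1} \le d+1$.

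Combining the two ingredients finishes the proof. By Proposition \ref{arbredenombrement1} each row of $A$ has at most $2d+2$ nonzero entries, so $\max_p \sum_q |a_{pq}| \le (2d+2)(d+1)$; by Proposition \ref{arbredenombrement2} each column has at most $2d+3$ nonzero entries, so $\max_q \sum_p |a_{pq}| \le (2d+3)(d+1)$. Feeding these into the Schur bound yields
$$\|A\| \le \sqrt{(2d+2)(d+1)}\,\sqrt{(2d+3)(d+1)} = (d+1)\sqrt{2d+2}\,\sqrt{2d+3},$$
which is exactly the claimed inequality once one substitutes $d = d(x,x')$.

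The only genuinely delicate point is the entry bound: a priori the ratio $\tfrac{n+1}{n'+1}$ could blow up, and the whole estimate rests on the observation that $n' \ge n - d$, i.e. that moving the basepoint from $x$ to $x'$ changes the radius $n$ of a point by at most $d(x,x')$. This is precisely where the factor $d(x,x')+1$ in the statement originates; everything else is a mechanical application of the Schur test together with the two counting propositions.
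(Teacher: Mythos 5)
Your proof is correct and follows essentially the same route as the paper: the entry bound $\frac{n+1}{n'+1}\le d(x,x')+1$, the row/column sparsity bounds from Propositions \ref{arbredenombrement1} and \ref{arbredenombrement2}, and the Schur test combining them. The only difference is cosmetic: where you re-derive $n'\ge n-d$ case by case from the index formulas of Propositions \ref{ arbre z sur la géodésique entre x et x'} and \ref{arbre z dans le geod entre x et x'}, the paper gets the (two-sided) bound $\vert n-n'\vert=\vert d(x,a)-d(x',a)\vert\le d(x,x')$ in one line by picking a point $a\in I_{z}^{n,k,x}\cap I_{z'}^{n',k',x'}$ and applying the reverse triangle inequality.
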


\begin{proof}

Let $I_{z'}^{n',k',x'}$ be a class in the decomposition of $I_{z}^{n,k,x}$ and $a \in I_{z}^{n,k,x} \cap I_{z'}^{n',k',x'}$, according to the reverse triangle inequality, we remark that : $$ \vert n-n' \lvert = \vert d(x,a)-d(x',a) \lvert \le d(x,x'). $$

From this inequality, we get a bound on the coefficients of $A$, indeed $\frac{n+1}{n'+1} \le d(x,x')+1$. According to Propositions \ref{arbredenombrement1} and \ref{arbredenombrement2}
, in every line of $A$ there are at most $2d(x,x')+2$ non-zero coefficients and in every column at most $2d(x,x')+3$ non-zero coefficients. From this, we deduce the following bound of the norm of $A$ :  

$$ \vert\vert A\lvert\lvert \le \sqrt{2d(x,x')+2} \sqrt{2d(x,x')+3} (d(x,x')+1) .$$

\end{proof}

According to Proposition \ref{boundonthenorm of A}, we have a polynomial control in $d(x,x')$ of the norm of $A$. Thus for all $f \in \mathbb{C}^{(X)}$, we have that $ \vert \vert f \lvert \lvert_{H_{x}} \le \vert \vert A \lvert \lvert ~ \vert \vert f \lvert \lvert_{H_{x'}}$ and then $ \vert \vert \pi(g) f \lvert \lvert_{H} \le \vert \vert A \lvert \lvert ~ \vert \vert \pi(g)f \lvert \lvert_{H_{x'}}$.

With the fact that, for all $f\in \mathbb{C}^{(X)}$, for all $g\in G$, $\vert \vert \pi(g)f \lvert \lvert_{H_{x'}}= \vert \vert f \lvert \lvert_{H} $, as $x'=g(x)$, we deduce that $\pi(g)$ is continuous from $H_{x}$ dans $H_{x}$ and its norm is bounded by $\sqrt{2d(x,x')+2} \sqrt{2d(x,x')+3} (d(x,x')+1)=\sqrt{l(g)+2} \sqrt{l(g)+3} (l(g)+1)$.\\

 To summarize, we have built a linear continuous representation from $G$ on an Hilbert space which is sub-exponential. To conclude, we will show that the map $\sum_{a} f(a)e_{a} \mapsto \sum_{a} f(a)$ is a continous linear form.

 \begin{proposition}\label{formelinéairecontinue}

 The map : $$\phi : \sum_{a} f(a)e_{a} \mapsto \sum_{a} f(a)$$ from $H$ to $\mathbb{C}$ is a continuous linear form.

\end{proposition}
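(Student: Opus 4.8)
The plan is to extract from the defining norm of $H_x$ just enough information to bound $|\phi(f)| = |\sum_a f(a)|$ by a constant multiple of $\|f\|_{H_x}$, and for this only the coarsest layer of the norm --- the one indexed by $k=0$ --- will be needed.

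First I would record the key identity for $k=0$: since $S_x^0 = \{x\}$ and $x$ lies on every geodesic $[x,a]$, the class $I_x^{n,0,x}$ equals the entire sphere $S_x^n$. Hence, keeping only the summands with $k=0$ in the definition of the norm yields the lower bound
$$\left\| \sum_a f(a) e_a \right\|_{H_x}^2 \geq \sum_{n \in \mathbb{N}} (n+1)^2 \left| \sum_{a \in S_x^n} f(a) \right|^2.$$
Next, setting $c_n = \sum_{a \in S_x^n} f(a)$ (a finite sum for each $n$, which vanishes for all but finitely many $n$ since $f$ has finite support), I would observe that partitioning $X$ into spheres around $x$ gives
$$\phi(f) = \sum_a f(a) = \sum_{n \in \mathbb{N}} c_n.$$

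The last step is a weighted Cauchy--Schwarz estimate: inserting the factors $(n+1)$ and $(n+1)^{-1}$,
$$\left| \sum_{n \in \mathbb{N}} c_n \right| \leq \left( \sum_{n \in \mathbb{N}} \frac{1}{(n+1)^2} \right)^{1/2} \left( \sum_{n \in \mathbb{N}} (n+1)^2 |c_n|^2 \right)^{1/2} = \frac{\pi}{\sqrt{6}} \left( \sum_{n \in \mathbb{N}} (n+1)^2 |c_n|^2 \right)^{1/2},$$
where I used $\sum_{n \geq 0} (n+1)^{-2} = \pi^2/6$. Combining this with the lower bound above gives $|\phi(f)| \leq \frac{\pi}{\sqrt{6}} \|f\|_{H_x}$, so $\phi$ is continuous on the dense subspace $\mathbb{C}^{(X)}$ and extends uniquely to a continuous linear form on $H = H_x$ of norm at most $\pi/\sqrt{6}$.

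There is no serious obstacle here; the whole content is the observation that the $k=0$ component of the norm already controls the sums $c_n$ over spheres, together with the fact that the quadratic weight $(n+1)^2$ is exactly strong enough to make $\sum_n (n+1)^{-2}$ converge, which is what powers the Cauchy--Schwarz step. The only points requiring a little care are that the finite support of $f$ legitimizes all the rearrangements of sums, and that boundedness on the dense subspace $\mathbb{C}^{(X)}$ suffices to obtain the continuous extension to all of $H$.
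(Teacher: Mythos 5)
Your proof is correct and is essentially the paper's own argument: both decompose $\sum_a f(a)$ over the spheres $S_x^n$, use that the $k=0$ class $I_x^{n,0,x}$ is all of $S_x^n$ so the corresponding terms are dominated by $\|f\|_{H_x}^2$, and finish with the weighted Cauchy--Schwarz inequality using $\sum_n (n+1)^{-2} < \infty$. The only differences are cosmetic (you state the lower bound first and make the density/extension step explicit), so there is nothing to change.
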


\newpage

\begin{proof}

Let $f \in H$.

$$\begin{aligned} 
           \biggr\vert \sum_{a} f(a) \biggr\lvert^{2}&= \biggr\vert \sum_{n \in \mathbb{N}}\sum_{a \in S_{x}^{n}} f(a) \biggr\lvert^{2} \\
            & \leq \bigg(\sum_{n \in \mathbb{N}} \frac{1}{(n+1)^2}\bigg) \bigg(\sum_{n \in \mathbb{N}} (n+1)^{2}\biggr\vert\sum_{a \in S_{x}^{n}} f(a) \biggr\lvert^{2} \bigg) ~(\text{according to  the Cauchy-Schwarz inequality})\\      
            & \leq \sum_{n \in \mathbb{N}} \frac{1}{(n+1)^2} \bigg(\sum_{n \in \mathbb{N}} (n+1)^{2} \sum_{k=0}^{n} \sum_{z \in S_{x}^{n} } { \biggr\vert \sum_{a \in I_{z}^{n,k,x}} f(a) \biggr\lvert }^{2}\bigg)~(\text{since } I_{x}^{n,0,x}=S_{x}^{n})\\
            & \leq \sum_{n \in \mathbb{N}} \frac{1}{(n+1)^{2}} \left\| f \right\|_{H_{x}}^{2}
\end{aligned}$$

Then, the linear form $\phi$ is continuous with $ ||| \phi |||^{2} \leq \sum_{n \in \mathbb{N}} \frac{1}{(n+1)^{2}} < \infty $.

\end{proof}

The representation $\pi$ on $H$ is sub-exponential. It preserves also the linear form $\phi$, which is continuous according to Proposition \ref{formelinéairecontinue}. As $G$ has the strong Property $(T)$, according to Proposition \ref{Proposition : fixed point for strong Property (T)}, this representation should admit a non-zero invariant vector.

The following proposition allows us to conclude the proof of Theorem \ref{theoremeimportant arbre} :

\begin{proposition}\label{Proposition : Orbites bornées}
If the representation $H$ admits a non-zero $G$-invariant vector, then every orbit for the action of $G$ on $X$ is bounded.

\end{proposition}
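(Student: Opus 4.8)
The plan is to prove the contrapositive: assuming that some orbit of $G$ on $X$ is unbounded, I will show that the only $G$-invariant vector in $H$ is $0$, which contradicts the hypothesis. First I would record that, because $G$ acts by isometries, boundedness of orbits is an all-or-nothing phenomenon. Indeed, if $G.x$ has finite diameter $D = \sup_{g \in G} d(x, g.x)$, then for any $y \in X$ and $g \in G$,
\[
 d(y, g.y) \le d(y,x) + d(x, g.x) + d(g.x, g.y) = 2 d(x,y) + d(x, g.x) \le 2 d(x,y) + D,
\]
so every orbit is bounded. Hence assuming orbits unbounded is the same as assuming that the single orbit $G.x$ is unbounded, and in the graph $X$ an unbounded subset is infinite; the same estimate shows that then \emph{every} orbit $G.y$ is infinite.

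Next I would unwind the invariance condition. A vector $f \in H$ satisfies $\pi(g) f = f$ for all $g \in G$ precisely when $f(g^{-1}a) = f(a)$ for all $a \in X$ and $g \in G$, that is, when $f$ is constant along each $G$-orbit.

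The analytic heart is the lower bound on the norm already obtained in the proof of Proposition \ref{c'est une norme}: keeping only the diagonal terms $k = n$, for which $I_z^{n,n,x} = \{z\}$, one has for every $f \in H$
\[
 \left\| f \right\|_{H_x}^2 \ge \sum_{n \in \mathbb{N}} (n+1)^2 \sum_{z \in S_x^n} |f(z)|^2 = \sum_{a \in X} (d(x,a)+1)^2 |f(a)|^2 .
\]
Given a $G$-invariant $f \in H$, I would fix an arbitrary $y \in X$, set $c = f(y)$, and use that $f \equiv c$ on the orbit $G.y$. Restricting the previous sum to $G.y$ gives
\[
 \left\| f \right\|_{H_x}^2 \ge \sum_{a \in G.y} (d(x,a)+1)^2 |c|^2 \ge |c|^2 \, \bigl| G.y \bigr| .
\]
Since $G.y$ is infinite while the left-hand side is finite, this forces $c = f(y) = 0$; as $y$ is arbitrary, $f = 0$, contradicting the existence of a non-zero invariant vector. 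This proves that the orbits must be bounded.

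The argument is short and I do not anticipate a genuine obstacle. The only points requiring care are the reduction from a single orbit to all orbits, which rests on the action being isometric, and the observation that an unbounded orbit in a graph is infinite, which is exactly what makes the series $\sum_{a \in G.y}(d(x,a)+1)^2$ diverge and drives the conclusion.
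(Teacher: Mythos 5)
Your proof is correct and follows essentially the same route as the paper: both arguments rest on the facts that boundedness of orbits is an all-or-nothing property for isometric actions, that invariant vectors are constant on orbits, and that the $H_x$-norm dominates the (weighted) $\ell^2$-norm via the $k=n$ singleton classes, so a non-zero constant on an infinite orbit is impossible. The only difference is cosmetic — you argue by contraposition (unbounded orbit $\Rightarrow$ invariant vector vanishes) whereas the paper argues directly (non-zero invariant vector $\Rightarrow$ its supporting orbit is finite, hence bounded, hence all orbits are bounded).
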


\begin{proof}
The first point is the fact that if one orbit is bounded, then every orbit is bounded. Let $x \in X$; assume that the orbit of $x$ by $G$ is bounded. Let $y \in X$, for all $g \in G$, we have :

$$d(y,g.y) \le d(y,x)+d(x,g.x)+d(g.x,g.y) \le 2d(x,y)+ diam(G.x) . $$

Thus the orbit of $y$ is bounded.

Now, we need to prove that one orbit is bounded.

Let $f$ be a non-zero invariant vector of $H$, for all $g \in G$, for all $x \in X$, $$\pi(g)(f)(x)=f(g^{-1}x)=f(x). $$

Then, $f$ is constant on every orbit of the action of $G$ on $H$.

Since $f$ is non-zero, there exists $x_{0} \in X$ such that $f(x_{0})\neq0$. By definition, $H$ is the set of  functions $f$ such that the sum :
$$ \left\| \sum_{a} f(a)e_{a} \right\|_{H_{x}}^{2}= \sum_{n \in \mathbb{N}} (n+1)^{2} \sum_{k=0}^{n} \sum_{z \in S_{x}^{k} } { \biggr\vert \sum_{a \in I_{z}^{n,k,x}} f(a) \biggr\lvert }^{2}$$ is finite.

This norm is finer than the $\ell^{2}$ norm, indeed :
$$\begin{aligned} 
           \left\| \sum_{a} f(a)e_{a} \right\|_{H_{x}}^{2} & = \sum_{n \in \mathbb{N}} (n+1)^{2} \sum_{k=0}^{n} \sum_{z \in S_{x}^{n} } { \biggr\vert \sum_{a \in I_{z}^{n,n,x}} f(a) \biggr\lvert }^{2}\\      
            & \geq \sum_{n \in \mathbb{N}} (n+1)^{2} \sum_{z \in S_{x}^{n} } { \biggr\vert \sum_{a \in I_{z}^{n,k,x}} f(a) \biggr\lvert }^{2}\\
            & \geq \sum_{n \in \mathbb{N}} (n+1)^{2} \sum_{z \in S_{x}^{n} } { \vert f(z) \lvert }^{2}.
\end{aligned}$$

Therefore, $f$ cannot be constant on an infinite set. The orbit $G.x_{0}$ is then finite, thus bounded, and according to the first point every orbit is bounded.

\end{proof}

\section{Relatively hyperbolic groups do not have the strong Property $(T)$}\label{section : Preuve groupes relativement hyperbolic }

In this section, we will adapt Lafforgue's proof in \cite{Lafforgue} to generalize Lafforgue theorem for relatively hyperbolic groups and prove Theorem \ref{theorem:main result}.

Let $\delta \ge 0$, in this section $X$ will denote a uniformly fine $\delta$-hyperbolic graph and $G$ a group with the strong Property $(T)$ equipped with an isometric action on $X$.\\
For all $n \in \mathbb{N}$ and for all  $x \in X$, we denote $S_{x}^{n}$ the sphere centered at $x$ and of radius $n$. For all $k,n \in \mathbb{N}$, we introduce the following partition of $S_{x}^{n}$ for the equivalence relation :
$a\mathcal{R}b$ if for all $z\in B(x,k)$, we have $d(a,z)=d(b,z)$. We denote $J^{n,k,x}$ the set of equivalence classes of this equivalence relation and $S_{x}^{n}=\bigcupdot_{i\in J^{n,k,x}} I_{i}^{n,k,x}$, where the $I_{i}^{n,k,x}$ denote the equivalence classes. Note that, for a fixed $x\in X$, the equivalences classes $I_{i}^{n,k,x}$ are either disjoint or contained in one another. \\

We denote $\mathbb{C}^{(X)}$ the vector space of functions from $X$ to $\mathbb{C}$ with finite support. Then, for each $x\in X$, we define the Hilbert space $H_{x}$, which is the completion of $\mathbb{C}^{(X)}$ for the following norm :

$$ \left\| \sum_{a} f(a)e_{a} \right\|_{H_{x}}^{2}= \sum_{n \in \mathbb{N}} (n+1)^{2} \sum_{k=0}^{n} \sum_{i \in J^{n,k,x} } { \biggr\vert \sum_{a \in I_{i}^{n,k,x}} f(a) \biggr\lvert }^{2}.$$\\

For each $x \in X$, we define a length on $G$ $l_{x}$, by $l_{x}(g)=d(x,g.x)$ for all $g \in G$.\\

We will show the following results :

\begin{itemize}
    \item the action of $G$ by left translations on $\mathbb{C}^{(X)}$, given by the formula  $\pi(g)(f)(x)=f(g^{-1}x)$ for $g \in G$, $f \in \mathbb{C}^{(X)}$, $x \in X$, admits an extension to a continuous representation of $G$ on $H=H_{x}$ for some $x \in X$,

    \item there exists a polynomial $P$ such that $||\pi(g)||_{\mathcal{L}(H)} \leq P(l_{x}(g))$ for all $g \in G$, for some $x \in X$,

    \item the dual representation $H^{\ast}$ admits a non-zero invariant vector,

    \item finally $H$ admits a non-zero vector if and only if the orbits of the action of $G$ on $X$ are bounded.
\end{itemize}

For all $x \in X$, we set $U_{x} = \{(n,k,i), 0 \leq  k\leq n, i\in J^{n,k,x} \}$.
Let $g \in G$, we set $x'=g(x)$ such that $d(x,x')=l_{x}(g)$. By definition of $H=H_{x}$, there exists an isometry $\Theta_{x} : H \mapsto \ell^{2} (U_{x})$  defined by :

$$ \Theta_{x} \left( \sum_{a} f(a)e_{a} \right) =\left((n+1) \sum_{a \in I_{i}^{n,k,x}}f(a)\right)_{(n,k,i)}. $$

In the same way, we introduce $\Theta_{x'}$ an isometry between $H_{x'}$ and $\ell^{2} (U_{x'}) $.\\

In the rest of the section, we fix $x,x' \in X$, such that $x'=g(x)$, the goal will be to compare the norms $H_{x}$ and $H_{x'}$. In contrast with Lafforgue's paper \cite{Lafforgue}, we will not write every class $I_{i}^{n,k,x}$ as a disjoint union of $I_{i'}^{n',k',x'}$ but rather as a disjoint union of differences between classes $I_{i'}^{n',k',x'}$. In the rest, of the section, we will denote by $l$ the length $l_{x}$. \\

We remark that the two following lemmas of Lafforgue's article \cite{Lafforgue}, which allow to describe the equivalence classes $I_{i}^{n,k,x}$, are still true for uniformly fine graphs. In fact their proofs only use the hyperbolicity of the graph without any finiteness assumption like uniform local finiteness. Therefore we will use the following lemmas in the rest of the section :

\begin{lem} \label{classedequnpoint} \cite[Lemme 1.8]{Lafforgue}

Let $a\in X$,  $n=d(x,a)$, $k\in \{0,...,n\}$ and $z$ a vertex in $B(x,k)$
at minimal distance of $a$ (i.e. $n-k$).
Then the knowledge of the distances from $a$ to the points of  $B(x,k)\cap B(z,3 \delta) $ implies the knowledge of the distances from $a$ to the points of $B(x,k)$.

\end{lem}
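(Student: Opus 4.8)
The guiding picture is the tree case. When $X$ is a tree one checks that $z$ actually lies \emph{on} every geodesic $[a,w]$ with $w\in B(x,k)$: the median $c$ of $\{a,w,x\}$ satisfies $d(x,c)=(a,w)_{x}\le d(x,w)\le k=d(x,z)$, so $c\in[x,z]$ and hence $z\in[a,c]\subset[a,w]$. Consequently $d(a,w)=d(a,z)+d(z,w)$, which is already determined by $d(a,z)=n-k$ and the intrinsic distance $d(z,w)$. The plan is to prove the $\delta$-hyperbolic analogue: for each $w\in B(x,k)$ there is a vertex $w'$ lying on a geodesic $[a,w]$ with $w'\in B(x,k)\cap B(z,3\delta)$. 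Granting this, the equality $d(a,w)=d(a,w')+d(w',w)$ holds because $w'\in[a,w]$, the term $d(w',w)$ is intrinsic, and $d(a,w')$ belongs to the given data since $w'\in B(x,k)\cap B(z,3\delta)$; this recovers $d(a,w)$ for every $w\in B(x,k)$.

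First I would record the elementary facts about $z$: from $n-k=d(a,z)\ge d(a,x)-d(x,z)=n-d(x,z)$ one gets $d(x,z)=k$ and $d(a,z)+d(z,x)=n=d(a,x)$, so $z$ lies on a geodesic $[a,x]$ at distance $n-k$ from $a$ and realises the minimum of $d(a,\cdot)$ on $B(x,k)$. Next, for $w\in B(x,k)$, I would locate $[a,w]$ near $z$ using the $\delta$-thin triangle $[a,x,w]$: since $z\in[a,x]$, either $d(z,[x,w])\le\delta$ or $d(z,[a,w])\le\delta$. In the first case a point $s\in[x,w]$ with $d(z,s)\le\delta$ satisfies $d(x,s)\ge k-\delta$ and $d(x,s)\le d(x,w)\le k$, whence $d(s,w)\le\delta$ and $d(z,w)\le 2\delta$; then $w$ itself lies in $B(x,k)\cap B(z,3\delta)$ and $d(a,w)$ is already given. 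So the only remaining case is $d(z,[a,w])\le\delta$, which produces a vertex $u\in[a,w]$ with $d(z,u)\le\delta$.

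It remains to replace $u$ by a vertex $w'$ on $[a,w]$ that is still within $3\delta$ of $z$ and, crucially, lies inside $B(x,k)$; this in-ball correction is the main obstacle, and it is exactly what forces the radius $3\delta$ rather than $\delta$. The point $u$ only satisfies $d(x,u)\le d(x,z)+\delta=k+\delta$, so it may sit just outside the ball. To push it back in I would travel along $[a,w]$ from $u$ towards $w$ (recall $d(x,w)\le k$) and take the first vertex $w'$ with $d(x,w')\le k$; bounding how far one must travel, and hence $d(u,w')$ and therefore $d(z,w')$, by $2\delta$, uses the quasi-convexity of $t\mapsto d(x,\cdot)$ along $[a,w]$. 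The latter follows from the fellow-travelling estimate $(x,w)_{a}\ge n-d(x,w)\ge n-k$ between $[a,x]$ and $[a,w]$ (which places $z=\gamma(n-k)$ inside the fellow-travelling range) together with Lemma \ref{lemme de Bridson}. Once $w'\in B(x,k)\cap B(z,3\delta)$ is found on the geodesic $[a,w]$, the identity $d(a,w)=d(a,w')+d(w',w)$ concludes the argument.
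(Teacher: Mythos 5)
The paper never reproves this statement: it is imported verbatim from \cite{Lafforgue} (Lemme 1.8) together with the observation that Lafforgue's proof uses only hyperbolicity and no local finiteness, so your attempt has to be judged on its own. Your reduction is the natural and correct one: it suffices to produce, for each $w\in B(x,k)$, a vertex $w'$ of $B(x,k)\cap B(z,3\delta)$ lying on some geodesic $[a,w]$, since then $d(a,w)=\min\{d(a,v)+d(v,w)\ :\ v\in B(x,k)\cap B(z,3\delta)\}$ is computed from the given data. Your preliminary facts ($d(x,z)=k$, $z\in I(a,x)$) and your Case 1 (if $d(z,[x,w])\le\delta$ then $d(z,w)\le 2\delta$, so $w'=w$ works) are correct.

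The gap is the in-ball correction in Case 2 --- exactly the step you yourself flag as the main obstacle --- and the tools you cite cannot close it. You need to control $[a,w]$ at parameters \emph{larger} than $d(a,u)\approx n-k$, but Lemma \ref{lemme de Bridson} applied to $\gamma=[a,x]$, $\gamma'=[a,w]$ with $t_0=n-k$ and $K=\delta$ gives $d(\gamma(t),\gamma'(t))\le 2\delta$ only for $t\le n-k-2\delta$: the guaranteed fellow-travelling range stops $2\delta$ \emph{before} $z$, so it says nothing about the segment you travel along. This is not a bookkeeping issue: fellow-travelling of $[a,x]$ and $[a,w]$ genuinely terminates at the parameter $(x,w)_a$, and your inequality $(x,w)_a\ge n-k$ leaves no margin --- equality $(x,w)_a=n-k$ does occur, for instance whenever $w$ is another point of $B(x,k)$ with $d(x,w)=k$ and $d(a,w)=n-k$. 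Past that parameter $[a,w]$ tracks the third side $[x,w]$, which your argument never uses; and ``quasi-convexity'' of $d(x,\cdot)$ along $[a,w]$ only produces upper bounds of the form $\max(\cdot,\cdot)+O(\delta)$, which cannot force the distance to drop below $k$ within $2\delta$ of $u$. Even in the favourable regime $(x,w)_a\ge n-k+3\delta$, the corrected estimate ($d(x,\gamma'(t))\le n-t+2\delta$ for $t\le (x,w)_a-\delta$) guarantees entry into $B(x,k)$ only at parameter $n-k+2\delta$, i.e.\ travel up to $3\delta$ from $u$ (which may sit at parameter $n-k-\delta$), and yields a point $w'$ only within $4\delta$ of $z$; in the regime $(x,w)_a<n-k+3\delta$ you need a separate argument via $[x,w]$, with worse constants still. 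So what your strategy actually proves is the lemma with $3\delta$ replaced by a larger multiple of $\delta$; the constant $3\delta$ is calibrated to Lafforgue's definition of $\delta$-hyperbolicity (a four-point/Gromov-product inequality), not to the thin-triangle $\delta$ used in this paper and in your argument, and any honest write-up would have to confront this case analysis and constant discrepancy explicitly.
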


\begin{lem}\label{classedeqdeuxpoint} \cite[Lemme 1.9]{Lafforgue}

Let $a\in X$,  $n=d(x,a)$, $k\in \{0,...,n\}$ and $z$ a vertex in $B(x,k)$
at minimal distance of $a$ (i.e. $n-k$).
If $a'$ is another vertex of $X$, such that $a$ and $a'$ are at the same distance to the points of $B(x,k)\cap B(z,3\delta)$, then $a'$ belongs the same class $I_{i}^{n,k,x}$ as $a$.

\end{lem}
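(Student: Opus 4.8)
The plan is to read Lemma~\ref{classedequnpoint} as a \emph{reconstruction procedure} and then to invoke it for both points at once. By definition of the equivalence relation, $a'$ lies in the class $I_i^{n,k,x}$ of $a$ precisely when $a'\in S_x^n$ and $d(a',w)=d(a,w)$ for every $w\in B(x,k)$; so everything reduces to propagating the assumed equality of distances from the small set $B(x,k)\cap B(z,3\delta)$ to all of $B(x,k)$. Lemma~\ref{classedequnpoint} is exactly the statement that, once $x$, $k$ and a nearest vertex $z$ are fixed, the family $\bigl(d(\cdot,w)\bigr)_{w\in B(x,k)}$ is a fixed function --- depending only on $X,x,k,z$ --- of the restricted family $\bigl(d(\cdot,w)\bigr)_{w\in B(x,k)\cap B(z,3\delta)}$. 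Feeding $a$ and $a'$ into this common function with identical inputs yields identical outputs, so $d(a',w)=d(a,w)$ for all $w\in B(x,k)$; since $x\in B(x,k)$ this already gives $d(x,a')=d(x,a)=n$, whence $a'\in S_x^n$ and $a'$ lies in the class of $a$.

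The one thing that must be secured before running this argument is that Lemma~\ref{classedequnpoint} is applicable to $a'$ \emph{with the same vertex} $z$, i.e. that $z$ realizes the distance from $a'$ to $B(x,k)$. The easy half is immediate: $z\in B(x,k)\cap B(z,3\delta)$, so the hypothesis gives $d(a',z)=d(a,z)=n-k$, and the triangle inequality through $z$ gives $d(x,a')\le d(x,z)+d(z,a')=n$. It remains to exclude $d(x,a')=n'<n$ and to check that no vertex of $B(x,k)$ is strictly closer to $a'$ than $z$; this is the only place where hyperbolicity, rather than bookkeeping, enters.

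I expect this to be where the real work lies. Fix a geodesic $[x,a]$ through $z$ and use its initial segment as $[x,z]$; then, since $p\in[x,a]$, every vertex $p\in[x,z]$ with $d(z,p)=t$ has $d(a,p)=(n-k)+t$. Assume $d(x,a')=n'<n$. The Gromov product $(x,a')_z=\tfrac12\bigl(d(z,x)+d(z,a')-d(x,a')\bigr)=\tfrac12(n-n')$ is then positive, so $[z,x]$ and $[z,a']$ fellow-travel on a segment of positive length issuing from $z$; taking the vertex $p\in[x,z]$ at distance $t=\min(3\delta,(x,a')_z)$ from $z$, $\delta$-thinness of the triangle $x,z,a'$ gives $d(a',p)\le (n-k)-t+\delta$, which is strictly less than $d(a,p)=(n-k)+t$ whenever $t>\delta/2$. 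Since $p\in[x,z]\subseteq B(x,k)$ lies within $3\delta$ of $z$, it belongs to $B(x,k)\cap B(z,3\delta)$, so $d(a',p)<d(a,p)$ contradicts the assumed agreement there. This forces $d(x,a')=n$, and then $d(a',w)\ge d(a',x)-d(x,w)\ge n-k$ for all $w\in B(x,k)$ shows that $z$ is a nearest point for $a'$ as well; Lemma~\ref{classedequnpoint} thus applies to $a'$ verbatim and closes the argument. The one delicate point is the exact constant in the thin-triangle estimate when $n-n'$ is too small to make $t>\delta/2$, a case that has to be handled by the same careful distance computation already carried out in the proof of Lemma~\ref{classedequnpoint} rather than by the crude estimate above.
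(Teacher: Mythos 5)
Your overall reduction is sensible: read Lemma~\ref{classedequnpoint} as a reconstruction statement, apply it to $a'$ with the same triple $(x,k,z)$, and the easy verifications are indeed easy ($d(a',z)=d(a,z)=n-k$ gives $d(x,a')\le n$, and once $d(x,a')=n$ is known, every $w\in B(x,k)$ satisfies $d(a',w)\ge d(a',x)-d(x,w)\ge n-k$, so $z$ is again a closest point). The genuine gap is exactly where you flag it, and it is not a matter of tightening constants that can be deferred. First, your key estimate is overstated: Rips thinness of the triangle $x,z,a'$ only places $p$ within $\delta$ of $[z,a']\cup[x,a']$; if the near point lies on $[x,a']$, the bound $d(a',p)\le (n-k)-t+\delta$ fails and one only gets $d(a',p)\le n'-k+t+2\delta$, so that the contradiction with $d(a,p)=n-k+t$ requires $n-n'>2\delta$. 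Hence your argument rules out only the regime where $n-n'$ is large compared with $\delta$, and the case $0<n-n'\le 2\delta$ is untouched. Second, your proposed repair --- handle it ``by the same careful distance computation already carried out in the proof of Lemma~\ref{classedequnpoint}'' --- is doubly unavailable: the paper never proves either lemma (both are imported from Lafforgue with the remark that the proofs use only hyperbolicity), and, more importantly, no computation using the same test points can close the case. For vertices $p\in[x,z]$ with $d(z,p)=t\le 3\delta$, every inequality hyperbolicity gives between $d(a,p)=n-k+t$ and $d(a',p)$ carries an additive error of order $\delta$, so exact agreement of these finitely many integer distances is perfectly compatible with a deficit $n-n'$ of size up to about $2\delta$.

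What the remaining case needs is a different mechanism, using test points off the geodesic $[x,z]$ together with the minimality of $n-k$ over \emph{all} of $B(x,k)$. Suppose $n'=d(x,a')<n$ and let $m$ be the vertex on a geodesic $[x,a']$ with $d(x,m)=k-\lfloor\tfrac{n-n'}{2}\rfloor$, i.e.\ a vertex near the internal point of the triangle $x,z,a'$ on the side $[x,a']$. Then $m\in B(x,k)$ and $d(a',m)=n'-d(x,m)\le \tfrac{n+n'}{2}-k<n-k$, while thinness of the triangle gives $d(z,m)\le \tfrac{n-n'}{2}+2\delta+O(1)$, which lies below $3\delta$ precisely in the troublesome regime where $n-n'$ is small (this is where the constant $3\delta$ in the statement has to be threaded against the thinness constant, the bookkeeping any complete proof must do). Then $m\in B(x,k)\cap B(z,3\delta)$, so the hypothesis forces $d(a,m)=d(a',m)<n-k$, contradicting $d(a,w)\ge d(a,x)-d(x,w)\ge n-k$ for every $w\in B(x,k)$. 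Both essential ingredients here --- test vertices lying on $[x,a']$ rather than on $[x,z]$, and the global lower bound $d(a,\cdot)\ge n-k$ on $B(x,k)$ --- are absent from your argument, which is why the missing case is a missing idea and not a loose end.
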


Nevertheless, for $X$ a uniformly locally finite graph, the set $B(x,k)\cap B(z,3\delta)$ is uniformly finite. Then in a uniformly finite hyperbolic graph a corollary of Lemma \ref{classedequnpoint} is the fact that if we take $a\in S_{x}^n$ and $z$ a point at minimal distance of $a$ in $B(x,k)$, $a$ could be included in a finite number of classes $I_{i}^{n,k,x}$, which is a function of $ \delta$. Therefore to enumerate the classes $I_{i'}^{n',k',x'}$ in the decomposition of $I_{i}^{n,k,x}$, we can enumerate the number of $z'$ associated to the classes $I_{i'}^{n',k',x'}$. When $X$ is no longer uniformly finite, we will prove a similar lemma adjusted to our setting. \\

\begin{proposition}\label{conereldeq}
Let $a\in X$,  $n=d(x,a)$, $k\in \{0,...,n\}$ and $z$ a vertex in $B(x,k)$
at minimal distance of $a$ (i.e. $n-k$), $\gamma$ a geodesic between $x$ and $a$ and $e$ the edge of $\gamma$ between $x$ and $z$ containing $z$. Then the knowledge of the distances from $a$ to the points of  $Cone_{50\delta}(e) $ implies the knowledge of the distances from $a$ to the points of $B(x,k)$.
\end{proposition}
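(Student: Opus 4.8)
The plan is to reduce the statement to the already quoted Lemma \ref{classedequnpoint}, which asserts that the distances from $a$ to the points of $B(x,k)\cap B(z,3\delta)$ already determine the distances from $a$ to all of $B(x,k)$. Hence it suffices to recover the numbers $d(a,w)$ for $w\in B(x,k)\cap B(z,3\delta)$ from the data $\{d(a,q):q\in Cone_{50\delta}(e)\}$, and the cleanest way to do this is to prove the inclusion
\[
B(x,k)\cap B(z,3\delta)\subseteq Cone_{50\delta}(e).
\]
Once this inclusion is in hand the conclusion is immediate; the whole point, by contrast with the uniformly locally finite case, is that $Cone_{50\delta}(e)$ is \emph{finite} by Proposition \ref{cardinal cones}, whereas $B(x,k)\cap B(z,3\delta)$ need not be.

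First I would record the elementary metric constraints on a point $w$ of the intersection. Since $d(x,z)=k$ and $d(z,w)\le 3\delta$, the triangle inequality gives $k-3\delta\le d(x,w)\le k$, so $w$ is genuinely within $3\delta$ of $z$ in every relevant sense. In particular a path from $e$ to $w$ obtained by prepending $e$ to a geodesic $[z,w]$ has length at most $3\delta+1$, which is below the bound $50\delta$ (one may assume $\delta\ge 1$); this disposes of the length requirement in the definition of the cone, and it remains only to control the angles along such a path.

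The angle control is the heart of the matter. For the \emph{first} edge the argument is clean: if $f$ denotes the initial edge of a geodesic $[z,w]$, then $\measuredangle_z(e,f)\le 12\delta$, for otherwise Proposition \ref{angletoutegéod} would place $z$ on every geodesic $[x,w]$, forcing $d(x,w)=d(x,z)+d(z,w)=k+d(z,w)>k$ and contradicting $w\in B(x,k)$. To propagate this control along the whole path I would feed the triangle $[x,z,w]$ into the conical fineness of Proposition \ref{conetriangle} together with its sharpened form Proposition \ref{conetriangleplusfort}, and use the triangle inequality for angles (Proposition \ref{Triangle inequality}) to compare, at each interior vertex, the incoming and outgoing directions with the direction of $x$: a sharp turn away from $e$ — which points toward $x$ — would again move the endpoint radially outward and violate $d(x,w)\le k$.

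The main obstacle is exactly this propagation. Bounding the first angle is easy, but showing that the \emph{entire} connecting path stays inside $Cone_{50\delta}(e)$ requires the genuine hyperbolic geometry near the nearest-point projection $z$, and one cannot simply follow the geodesic $[z,w]$, whose interior vertices may a priori sit at near cut-vertices with large angles. This is where the hypothesis that $z$ realises the minimal distance $n-k$ from $a$ to $B(x,k)$, combined with the fact that $e$ is the edge of $\gamma$ pointing toward $x$, must be used in an essential way; it is also the reason the degenerate tree case, where cones collapse to a single edge, is handled separately in Section \ref{section : le cas des arbres} rather than through cones.
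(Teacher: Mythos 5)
There is a genuine gap: the inclusion $B(x,k)\cap B(z,3\delta)\subseteq Cone_{50\delta}(e)$, on which your whole plan rests, is false in a fine graph. By Proposition \ref{cardinal cones} the cone $Cone_{50\delta}(e)$ is \emph{finite}, whereas $B(x,k)\cap B(z,3\delta)$ can be infinite. Concretely, in a coned-off Cayley graph let $p$ be a parabolic (infinite-valence) vertex on $\gamma$ immediately preceding $z$, so that $e=\{p,z\}$ and $d(x,p)=k-1$; every neighbour $w$ of $p$ satisfies $d(x,w)\le d(x,p)+1=k$ and $d(z,w)\le 2\le 3\delta$, so infinitely many such $w$ lie in $B(x,k)\cap B(z,3\delta)$, while only finitely many of them can lie in the finite set $Cone_{50\delta}(e)$ (indeed $\measuredangle_{p}(z,w)$ is unbounded over these $w$). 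All hypotheses of the statement are met in this configuration, since any vertex at distance $k$ from $x$ on a geodesic $[x,a]$ is automatically at minimal distance from $a$ in $B(x,k)$. This is not a repairable technicality: it is precisely the phenomenon (infinite balls versus finite cones) that the proposition is designed to get around, which is why your admitted difficulty in "propagating" the first-angle bound $\measuredangle_{z}(e,f)\le 12\delta$ cannot be overcome — the obstruction sits at the endpoint $w$ itself, not along the intermediate vertices. Consequently the reduction to Lemma \ref{classedequnpoint} is a dead end here; that lemma is the right tool only when $B(x,k)\cap B(z,3\delta)$ is (uniformly) finite, i.e.\ in Lafforgue's uniformly locally finite setting.

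The paper's proof avoids any such inclusion and works pointwise. For each individual $u\in B(x,k)$ it takes a geodesic triangle $[x,u,a]$ with $[x,a]=\gamma$ and applies conical fineness (Proposition \ref{conetriangle}) to the edge $e$: there is an edge $e'$ with $e\in Cone_{50\delta}(e')$, lying either on $[x,u]$ at distance $k-1$ from $x$, or on $[u,a]$ at distance $n-k$ from $a$. In the first case, since $d(x,u)\le k$ the edge $e'$ must be the last edge of $[x,u]$, so $u\in e'$ and hence $u\in Cone_{50\delta}(e)$ by symmetry of the defining condition of cones; thus $d(a,u)$ is part of the given data. In the second case one walks from $u$ toward $a$ along $[u,a]$, decreasing the distance to $a$ by exactly one at each step, until first reaching a vertex of $Cone_{50\delta}(e)$ (at worst a vertex of $e'$); then $d(a,u)$ equals the known distance from $a$ to that vertex plus the number of steps taken. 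Said more compactly, the argument shows $d(a,u)=\min_{w\in Cone_{50\delta}(e)}\bigl(d(a,w)+d(w,u)\bigr)$, which is determined by the distances from $a$ to the cone since the quantities $d(w,u)$ do not involve $a$.
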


\begin{figure}[!ht]
    \centering
    \includegraphics[scale=0.6]{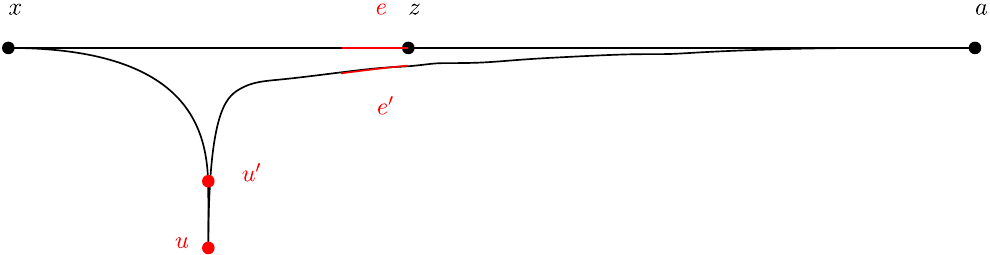}
    \caption{ Proof of Proposition \ref{conereldeq} in the case where $e' \subset [u,a]$}
    \label{ Proof of Proposition conereldeq in the case where e' subset [u,a] }
\end{figure}

\begin{proof}

Let $u \in B(x,k)$ and consider a geodesic triangle $[x,u,a]$ such that $[x,a]=\gamma$. According to Proposition \ref{conetriangle}, $e$ is contained in a cone of parameter $50\delta$ around an edge $e'$ that is either on $[u,a]$ at same distance of $a$ than $e$, either on $[x,u]$ at same distance of $x$ than $e$.

Let assume that $e'\in [x,u]$ then $d(x,e')=d(x,e)=k-1$. Since $[x,u]\subset B(x,k)$ then $u\in e'$. From this we deduce that $u\in Cone_{50\delta}(e)$ and then the distance from $a$ to $u$ is known.

Let assume that $e' \in [u,a]$. If $u \in Cone_{50\delta}(e)$ then the result is clear. Otherwise, we take $u'\in [u,a] $ such that $d(a,u)=d(a,u')+1$ to get a point on $[u,a]$ closer to $a$ than $u$, such that the knowledge of the distance from $a$ to $u'$ implies the knowledge of the distance from $a$ to $u$. In the case, where $u' \in Cone_{50\delta}(e)$, the result is proved. Otherwise, we can iterate the process until we get a vertex of $e'$, in other words a point in $Cone_{50\delta}(e)$. This ends the proof of the proposition.

\end{proof}

As cones in $X$ are uniformly finite,
Proposition \ref{conereldeq} leads to the following corollary: 

\begin{corollary} \label{denombrement relation dequivalence cones}
Let $a\in X$,  $n=d(x,a)$, $k\in \{0,...,n\}$ and $z$ a vertex in $B(x,k)$
at minimal distance of $a$ (i.e. $n-k$). Then the number of classes $I_{i}^{n,k,x}$ in which $a$ could be included for fixed $z$, is bounded above by a function depending only on $\delta$ and the uniform finesse function $\varphi$. We will denote $L(\delta,\varphi)$ this function and we will sometimes use the abuse of notation $L$.

\end{corollary}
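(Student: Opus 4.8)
The plan is to convert the statement into a counting of distance vectors indexed by a \emph{uniformly finite} set. Fix $a$, set $n=d(x,a)$, fix $k$ and the closest vertex $z\in B(x,k)$, and choose a geodesic $\gamma=[x,a]$ through $z$ with terminal edge $e$ arriving at $z$. By Proposition \ref{conereldeq} the distances from $a$ to the points of $Cone_{50\delta}(e)$ already determine the distances from $a$ to all of $B(x,k)$, and since the equivalence class $I_{i}^{n,k,x}$ is \emph{defined} purely in terms of these latter distances, the assignment sending a class to the tuple $\bigl(d(a,w)\bigr)_{w\in Cone_{50\delta}(e)}$ is injective on the classes sharing the pair $(z,e)$. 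It therefore suffices to bound the number of such tuples by a function of $\delta$ and $\varphi$.

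I would obtain that bound from two quantitative inputs. First, Proposition \ref{cardinal cones} gives, in a uniformly fine graph, a bound $|Cone_{50\delta}(e)|\le N(\delta,\varphi)$ that is independent of the edge $e$. Second, every $w\in Cone_{50\delta}(e)$ is joined to $e$ by a path of length at most $50\delta$, so $d(w,z)\le 50\delta+1$; since $d(a,z)=n-k$ is fixed, the reverse triangle inequality forces $d(a,w)$ to lie in the interval $[\,(n-k)-(50\delta+1),\,(n-k)+(50\delta+1)\,]$, that is, one of at most $100\delta+3$ integer values. Consequently the number of admissible tuples, and hence the number of classes attached to a fixed pair $(z,e)$, is at most $(100\delta+3)^{N(\delta,\varphi)}$, a quantity depending only on $\delta$ and $\varphi$.

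The main obstacle is that the edge $e$ is \emph{not} determined by $z$ alone: distinct points $a$ with the same closest vertex $z$ may be served by geodesics entering $z$ through different terminal edges, and because $X$ need not be locally finite there could a priori be infinitely many such edges — this is exactly the difficulty that forced the replacement of Lafforgue's finite set $B(x,k)\cap B(z,3\delta)$ (see Lemma \ref{classedequnpoint}) by a cone. To close this gap I would argue that the number of terminal edges at $z$ is itself controlled: all geodesics from $x$ to $z$ pairwise $2\delta$-fellow-travel, so their terminal vertices $z^{-}$ lie pairwise within $2\delta$ of one another; when the realizing short path avoids $z$ the corresponding edges lie in a common cone of bounded parameter, which is uniformly finite by Proposition \ref{cardinal cones}, while the large-angle configurations are pinned down by Proposition \ref{angletoutegéod}. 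The delicate point, and the one I expect to require the most care, is verifying that hyperbolicity genuinely prevents infinitely many mutually large-angle terminal edges at $z$; granting this, the number of relevant edges at a fixed $z$ is bounded in terms of $\delta$ and $\varphi$, and multiplying by $(100\delta+3)^{N(\delta,\varphi)}$ yields the desired uniform bound $L(\delta,\varphi)$.
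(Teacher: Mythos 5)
Your first two paragraphs reproduce what is in fact the paper's (implicit) argument: by Proposition \ref{conereldeq} the map sending a class to the tuple $\bigl(d(a,w)\bigr)_{w\in Cone_{50\delta}(e)}$ is injective, Proposition \ref{cardinal cones} bounds $|Cone_{50\delta}(e)|$ by some $N(\delta,\varphi)$ independently of $e$, and since every $w$ in the cone satisfies $d(w,z)\le 50\delta+1$ while $d(a,z)=n-k$ is fixed, each coordinate of the tuple ranges over at most $100\delta+3$ values; this yields the bound $(100\delta+3)^{N(\delta,\varphi)}$. The problem is your third paragraph. As written, your proof is incomplete: you explicitly leave unverified the claim that there are only boundedly many mutually large-angle terminal edges at $z$, and everything after ``granting this'' is conditional on that claim. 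An admitted conditional step is a gap, and it is the only thing standing between your text and a complete proof.

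The real point, however, is that this gap is self-inflicted: the obstacle you set out to overcome does not exist. Proposition \ref{conereldeq} does not tie the edge $e$ to a geodesic chosen by $a$; it allows you to pick \emph{any} geodesic $\gamma$ from $x$ to $a$ and to use the edge of $\gamma$ at $z$. Now, since $z$ is at minimal distance $n-k$ from $a$ in $B(x,k)$, the triangle inequality gives $n=d(x,a)\le d(x,z)+d(z,a)\le k+(n-k)=n$, so $d(x,z)=k$ and $z\in I(x,a)$. Consequently, fix once and for all a geodesic $[x,z]$ and let $e$ be its last edge, the one containing $z$ (the case $k=0$ is trivial, as there is then a single class). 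For \emph{every} $a$ being counted, the concatenation $[x,z]\cup[z,a]$ is a geodesic from $x$ to $a$ whose edge at $z$ on the $x$-side is this same $e$, so Proposition \ref{conereldeq} applies with the single fixed pair $(z,e)$ to all such $a$ simultaneously. Your paragraphs one and two then already constitute the whole proof; no enumeration of terminal edges, and no multiplication over them, is needed. (If you did insist on your route, the unproven claim is true and can be closed with the tools of the paper: if $w_{1},w_{2}$ are endpoints of two terminal edges, one connects $w_{1}$ to $w_{2}$ avoiding $z$ by going back roughly $2\delta$ along one geodesic $[x,z]$, jumping at most $\delta$ to the other by thinness of the bigon, and coming forward again, giving $\measuredangle_{z}(\{w_{1},z\},\{w_{2},z\})\le 6\delta+2$; hence all terminal edges at $z$ lie in a common cone of parameter comparable to $\delta$, whose cardinality Proposition \ref{cardinal cones} bounds in terms of $\delta$ and $\varphi$. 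But this is precisely the work the fixed-edge observation renders unnecessary.)
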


Now we have all the tools to prove the decomposition of $I_{i}^{n,k,x}$ as a disjoint union of classes $I_{i'}^{n',k',x'}$ and differences between classes $I_{i'}^{n',k',x'}$. The key property will be the following one. We will say that a class $I_{i'}^{n',k',x'}$ appears negatively if we pull this class out of the decomposition and appears positively otherwise. Its proof will be given in subsection \label{subsec : conclusion}  using the result of subsections \ref{subsec : premier cas}, \ref{subsec : deuxième cas}, \ref{subsec : troisième cas} and \ref{subsec : quatrième cas}.

\begin{proposition} \label{propriété clé décomposition}
For each $(n,k,i) \in U_{x}$, the class $I_{i}^{n,k,x}$ can be written as a finite disjoint union of classes $I_{i'}^{n',k',x'}$ or differences between classes $I_{i'}^{n',k',x'}$ where  $(n',k',i') \in U_{x'} $.\\  

Furthermore, there exists a constant $K$ depending only on $\delta$ and $f$  such that the number of classes $I_{i'}^{n',k',x'}$ , which appear positively in the decomposition of  $I_{i}^{n,k,x}$ is bounded above by $K(d(x,x')+1)$. In the same way, the number of classes $I_{i'}^{n',k',x'}$ which appear negatively is bounded above by $K(d(x,x')+1)$.\\

Conversely, each class $I_{i'}^{n',k',x'}$ appears in at most $K(d(x,x')+1)$ decompositions of classes $I_{i}^{n,k,x}$.

\end{proposition}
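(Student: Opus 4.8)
The plan is to reduce everything to the cone characterization of the classes supplied by Proposition~\ref{conereldeq}, and then to transport that data from the basepoint $x$ to the basepoint $x'$. Fix a class $I_{i}^{n,k,x}$, choose a representative $a$, a geodesic $\gamma=[x,a]$, and the vertex $z$ on $\gamma$ at distance $k$ from $x$ realizing the minimal distance $n-k$ to $a$ inside $B(x,k)$; let $e$ be the edge of $\gamma$ at level $k$ containing $z$. By Proposition~\ref{conereldeq} the class $I_{i}^{n,k,x}$ is entirely determined by the distances from $a$ to the finite set $Cone_{50\delta}(e)$, and by Corollary~\ref{denombrement relation dequivalence cones} at most $L(\delta,\varphi)$ classes are compatible with a fixed $z$. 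The whole argument then consists in recognizing this cone data, for appropriate levels $k'$ at $x'$, as cone data for classes $I_{i'}^{n',k',x'}$.

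The case distinction generalizes the tree dichotomy ``$z\in[x,x']$ versus $z\notin[x,x']$'' of Section~\ref{section : le cas des arbres}. Applying Theorem~\ref{formenormaledestriangles} to the triple $(x,x',a)$ produces a normal-form triangle whose sides $[x,x']$ and $[x,a]$ coincide along $[x,\tilde{x}]$ and whose central sub-segments have angles at most $100\delta$. The governing quantity is the comparison of the level $k$ with the Gromov product $(x',a)_{x}$, i.e. with $d(x,\tilde{x})$ up to a controlled error: first when the edge $e$ sits on the common segment $[x,\tilde{x}]$ (so $z$ lies on a geodesic $[x,x']$), and secondly when $e$ has already branched towards $a$. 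The boundary layer around $\tilde{x}$ of width $O(\delta)$, together with the symmetric comparison of $k$ and $d(x,x')$, is what I expect to split the analysis into the four configurations treated in Subsections~\ref{subsec : premier cas}--\ref{subsec : quatrième cas}.

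Given this, the decomposition should take two shapes. In the branched regime the nearest-point structure seen from $x'$ is merely a reparametrization of the one seen from $x$: setting $k'=d(x',z)$ and $n'=n-k+k'$, the cone $Cone_{50\delta}(e)$ and its distances to $a$ are unchanged, so $I_{i}^{n,k,x}=I_{i'}^{n',k',x'}$ for a single class, exactly as in Proposition~\ref{ arbre z sur la géodésique entre x et x'}. In the common-segment regime the class $I_{i}^{n,k,x}$ is coarser than the classes seen from $x'$: moving the relevant nearest vertex along the vertices $x=x_{0},\dots,x_{d}=x'$ of $[x,x']$ refines it, and I would write $I_{i}^{n,k,x}$ as a finite telescoping disjoint union of differences $I_{i'}^{n',k',x'}-I_{i''}^{n'',k'',x'}$ indexed by consecutive levels along $[x,x']$, mirroring Corollary~\ref{corollaire decomposition si z dans la geodesique}; the number of terms is $O(d(x,x'))$ since the index set is a subset of the vertices of a geodesic of length $d(x,x')$. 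Combining the two regimes, each of the $O(d(x,x'))$ levels contributes at most $L(\delta,\varphi)$ classes by Corollary~\ref{denombrement relation dequivalence cones}, giving the bound $K(d(x,x')+1)$ with $K$ a constant multiple of $L(\delta,\varphi)$; the converse count follows symmetrically, fixing $z'$ and bounding the number of levels $k$ at $x$ for which $I_{i'}^{n',k',x'}$ can occur.

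The hard part will be to make this cone characterization genuinely robust under the change of basepoint. In a tree the projection onto $[x,x']$ is canonical and unique, whereas here geodesics are non-unique and $X$ may be locally infinite, so I must verify that the differences of classes really \emph{partition} $I_{i}^{n,k,x}$ — that no point is double-counted and none omitted — despite the $50\delta$ slack in the cones and the $L(\delta,\varphi)$ ambiguity in the nearest-vertex data. Concretely, the delicate point is to show that membership of a point $a'$ in $I_{i}^{n,k,x}$ can be detected by finitely many, uniformly bounded cone comparisons centred along $[x,x']$, so that the combinatorial bookkeeping inflates the count only by the \emph{uniform} constant $L(\delta,\varphi)$ and not by anything depending on the (possibly infinite) local valence. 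This is precisely where uniform fineness is indispensable, through the finiteness and uniform cardinality bound for cones of Proposition~\ref{cardinal cones}.
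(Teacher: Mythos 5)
You have the right scaffolding (normal-form triangle from Theorem \ref{formenormaledestriangles}, cone characterization via Proposition \ref{conereldeq} and Corollary \ref{denombrement relation dequivalence cones}, uniform fineness for the counting), but your two-regime dichotomy is too coarse, and the step it skips is exactly where the proposition is hard. Your ``branched regime'' is declared relative to the point where $[x,a]$ leaves $[x,x']$ (i.e.\ near $\tilde{x}$ or the Gromov product point $u$), and there you claim the exact equality $I_{i}^{n,k,x}=I_{i'}^{n',k',x'}$ with $k'=d(x',z)$, $n'=n-k+k'$. In a tree this is fine, because past the tripod center the geodesics from $x$ and from $x'$ to $a$ actually merge. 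In a general $\delta$-hyperbolic fine graph they only $2\delta$-fellow-travel on the whole (possibly arbitrarily long) segment between $u$ and the branch point $\tilde{a}$ of Definition \ref{pointloingrandangle}, and on that segment your claim fails: $z$ need not lie on any geodesic from $x'$ to $a$, so $n'=n-k+d(x',z)$ can exceed $d(x',a)$ and the class you name does not even contain $a$; and even when it does, the equality of classes requires forcing geodesics through $z$, i.e.\ the large-angle property that by definition only holds at or before $\tilde{a}$ (this is what the paper's Proposition \ref{ cas z  dans [tilde a, a]} uses, and why it is restricted to $z\in[\tilde{a},a]$). The paper spends Subsections \ref{subsec : deuxième cas}--\ref{subsec : quatrième cas} precisely on this intermediate region, replacing equality by inclusions $I_{i'}^{n',k',x'}\subset I_{i}^{n,k,x}$ at shifted levels $k'=d(x',v)+8\delta$ or $k'=d(x',z)+3\delta$ (Propositions \ref{zentre aetu facile} and \ref{z entre u et atilde }), with a separate treatment when $\tilde{a}$ is $O(\delta)$-close to $v$ or $z$.

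The second gap is where the differences of classes come from and why the union is disjoint. Your telescoping along the vertices of $[x,x']$, mirroring Corollary \ref{corollaire decomposition si z dans la geodesique}, is a tree artifact: it relies on each point of the class having a canonical projection onto $[x,x']$ and on ``points projecting exactly to $x_i$'' being a difference of two classes, neither of which survives non-unique geodesics. In the paper the differences arise in a different way (Subsection \ref{subsec : quatrième cas}): around each point's own branch point $\tilde{a}$, one takes the class at level $k'=d(x',\tilde{a})$ and removes the finitely many subclasses at level $k'+1$ consisting of points with small angle $\measuredangle_{\tilde{a}}(x,\cdot)\le 24\delta$ (Propositions \ref{caracterisationpointpartieenx'dansx}, \ref{lapartieprivéedecertaines est incluse}, \ref{z'' est à angle borné de atilde }); disjointness of the resulting pieces is not automatic and needs the nested-or-disjoint Lemma \ref{disjoint ou inclus} together with Lemma \ref{casrelou de inclus ou disjoint}. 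Finally, your count ``$O(d(x,x'))$ levels times $L$ classes per level'' presupposes that the nearest points $z'$ of the classes in the decomposition are vertices of $[x,x']$; in general they only lie in $Cone_{150\delta}([x,x'])$ or $Cone_{200\delta}(\hat{e})$ (Propositions \ref{denombrement z dans [x,u] et atilde loin}, \ref{denombrement cas où z dans [u,tilde a] et tilde a loin de z}, \ref{denombrement partie quand atilde est trop proche et z dand [x,u]}), and the linear bound $K(d(x,x')+1)$ comes from $|Cone_{\theta}([x,x'])|\le d(x,x')\,|Cone_{\theta}(\hat{e})|$ via uniform fineness (Proposition \ref{cardinal cones}), not from counting levels. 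So the statement you flag as ``the hard part'' is indeed the actual content of the proof, and the structure you propose to carry it out would break down in the fellow-traveling region.
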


Let $a \in I_{i}^{n,k,x}$, by definition of the classes $I_{i}^{n,k,x}$, the set of points in $B(x,k)$ at minimal distance of $a$ does not depend on $a$. So we can talk about the points of $B(x,k)$ at minimal distance of $I_{i}^{n,k,x}$. We will work in a geodesic triangle $[x,x',a]$ between the points $x$, $x'$ and $a$ given by Theorem \ref{formenormaledestriangles}. We introduce the points $\tilde{x}$, $\tilde{x'}$ et $\tilde{a}$ defined as in Definition \ref{pointloingrandangle}. We denote $z$ the point on $B(x,k)$ at minimal distance of $a$ on the triangle, i.e. the point on the side $[x,a]$ of the triangle such that $d(x,z)=k$ and $d(z,a)=n-k$.

We will prove the decompostion of the class $I_{i}^{n,k,x}$ by exhaustion according to the position of $z$ in the triangle.

\subsection{ The case $z \in [\tilde{a},a]$ and $z \neq \tilde{a} $ } \label{subsec : premier cas}

\begin{proposition}  \label{ cas z  dans [tilde a, a]}

Let us assume that $z \in [\tilde{a},a] $ and $z\neq \tilde{a}$ except if  $a=\tilde{a}$. Let $k'=d(x',z)$, $n'=d(x',a)$ and $I_{i'}^{n',k',x'}$ the equivalence class of $a$ for $x'$, $n'$, $k'$ then :
 $$I_{i}^{n,k,x}=I_{i'}^{n',k',x'}. $$

\end{proposition}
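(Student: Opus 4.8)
The plan is to follow the tree case, where the hypothesis placed $z$ off the segment $[x,x']$; here its analogue is $z\in[\tilde a,a]$. The heart is a single geometric observation, after which the two classes will be shown to be cut out by the same local data. By Theorem~\ref{formenormaledestriangles} the sides $[x,a]$ and $[x',a]$ of the chosen triangle coincide on $[a,\tilde a]$; as $z\in[\tilde a,a]$ lies on this common subsegment, $z\in[x',a]$, so
$$ n'=d(x',a)=d(x',z)+d(z,a)=k'+(n-k). $$
Every $w\in B(x',k')$ obeys $d(w,a)\ge d(x',a)-d(x',w)\ge n'-k'=n-k$, so $z$ realizes the minimal distance from $a$ to $B(x',k')$ as well. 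Moreover $\tilde a\in B(x,k)$ since $d(x,\tilde a)\le d(x,z)=k$, it lies on $[x',a]$ with $d(x',\tilde a)\le k'$, and (the degenerate case $a=\tilde a$, which forces $z=a$ and $k=n$, gives singleton classes and is immediate, so) by construction $\measuredangle_{\tilde a}(x',a)>50\delta$.

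Because $z$ is the minimal-distance vertex of $B(x,k)$ (resp.\ of $B(x',k')$) to $a$, Lemmas~\ref{classedequnpoint} and~\ref{classedeqdeuxpoint} identify $I_i^{n,k,x}$ with the set of vertices having the same distances as $a$ to the finite set $B(x,k)\cap B(z,3\delta)$, and identify $I_{i'}^{n',k',x'}$ with the set of vertices matching $a$ on $B(x',k')\cap B(z,3\delta)$. It therefore suffices to prove that these two local sets coincide. For $w\in B(z,3\delta)$ I would use that $[x,z]$ and $[x',z]$ share the terminal segment $[\tilde a,z]$, together with the large angle at $\tilde a$ (via Proposition~\ref{angletoutegéod}), to force the additive formulas $d(x,w)=d(x,\tilde a)+d(\tilde a,w)$ and $d(x',w)=d(x',\tilde a)+d(\tilde a,w)$. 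Since $k-d(x,\tilde a)=d(\tilde a,z)=k'-d(x',\tilde a)$, the conditions $d(x,w)\le k$ and $d(x',w)\le k'$ both reduce to $d(\tilde a,w)\le d(\tilde a,z)$; hence $B(x,k)\cap B(z,3\delta)=B(x',k')\cap B(z,3\delta)$, the two classes are cut out by the same constraint, and the equality $I_i^{n,k,x}=I_{i'}^{n',k',x'}$ follows.

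The step I expect to be delicate is exactly the additivity through $\tilde a$ for $w\in B(z,3\delta)$: applying Proposition~\ref{angletoutegéod} requires $\measuredangle_{\tilde a}(x,w)>12\delta$, which is immediate from $\measuredangle_{\tilde a}(x,a)>50\delta$ only when $w$ is far enough from the branch point $\tilde a$ that $\measuredangle_{\tilde a}(a,w)$ stays small. The regime $d(z,\tilde a)\le 3\delta$, where a point $w\in B(z,3\delta)$ may lie near or beyond $\tilde a$, must be treated separately; there I would use the hypothesis $z\neq\tilde a$ and the quantitative thinness of Lemma~\ref{lemme de Bridson} to control $\measuredangle_{\tilde a}(a,w)$, and hence $\measuredangle_{\tilde a}(x,w)$ and $\measuredangle_{\tilde a}(x',w)$, so that the additive formulas and the equality of the two local sets survive. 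This is the analogue, in a genuinely hyperbolic graph, of the clean tree identity where the geodesics through $z$ are unique and no such estimate is needed.
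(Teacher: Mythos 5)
Your preliminary observations are correct (that $z\in[x',a]$ because the sides $[x,a]$ and $[x',a]$ of the triangle of Theorem \ref{formenormaledestriangles} coincide on $[a,\tilde a]$, that $z$ realizes the minimal distance from $a$ to $B(x',k')$, and the dismissal of the degenerate case $a=\tilde a$), and the reduction of each class to the vertices matching $a$ on a $3\delta$-local set via Lemmas \ref{classedequnpoint} and \ref{classedeqdeuxpoint} is legitimate. The gap is your key claim that $B(x,k)\cap B(z,3\delta)=B(x',k')\cap B(z,3\delta)$: this is not merely delicate in the regime $d(z,\tilde a)\le 3\delta$ you flag, it is false there, so no amount of angle control or use of Lemma \ref{lemme de Bridson} can make it "survive". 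The additive formula $d(x,w)=d(x,\tilde a)+d(\tilde a,w)$ necessarily fails for vertices $w$ on the $x$-side of $\tilde a$, and such vertices do lie in $B(z,3\delta)$ as soon as $d(z,\tilde a)<3\delta$. Concretely, let $X$ be a tripod tree (a uniformly fine graph that is $\delta$-hyperbolic with $\delta=1$) with center $t$ and legs of length $10$ to $x$, $x'$ and $a$; then $\tilde a=t$, and taking $z\in[t,a]$ with $d(t,z)=1$ the hypotheses of the proposition hold with $k=k'=11$, $n=n'=20$. The vertex $w_{0}\in[x,t]$ with $d(t,w_{0})=2$ satisfies $d(z,w_{0})=3\le 3\delta$ and $d(x,w_{0})=8\le k$, yet $d(x',w_{0})=12>k'$; symmetrically the vertex on $[x',t]$ at distance $2$ from $t$ lies in $B(x',k')\cap B(z,3\delta)$ but not in $B(x,k)$. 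So the two local sets differ, even though both classes coincide (each is the set of vertices at distance $9$ from $z$ in the component of $X\setminus\{z\}$ not containing $t$): the proposition is true, but your intermediate claim is not, so the argument as structured cannot be completed.

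What is needed instead is precisely the restructuring the paper carries out: fix $a'\in I_{i'}^{n',k',x'}$ and verify $d(a',y)=d(a,y)$ point by point for $y\in B(x,k)$, splitting on whether $\tilde a\in I(x,y)$. If $\tilde a\in I(x,y)$, a short computation shows $y\in B(x',k')$, so the distances agree because $a,a'$ lie in the same class for $x'$. If $\tilde a\notin I(x,y)$, Proposition \ref{angletoutegéod} (contrapositive) gives $\measuredangle_{\tilde a}(x,y)\le 12\delta$; moreover $z\neq\tilde a$ forces $\tilde a\notin I(a,a')$, hence $\measuredangle_{\tilde a}(a,a')\le 12\delta$, and combining with $\measuredangle_{\tilde a}(a,x)>50\delta$ yields $\measuredangle_{\tilde a}(a,y)>12\delta$ and $\measuredangle_{\tilde a}(a',y)>12\delta$, so that both distances decompose as $d(\cdot,\tilde a)+d(\tilde a,y)$, and they agree since $\tilde a\in B(x',k')$ gives $d(a,\tilde a)=d(a',\tilde a)$. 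The moral difference from your plan: points of $B(x,k)$ on the far side of $\tilde a$ need not belong to $B(x',k')$ at all; what must be proved is that distances from the class to those points factor through $\tilde a$, not that the two balls agree near $z$.
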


\begin{figure}[!ht]
    \centering
    \includegraphics[scale=0.62]{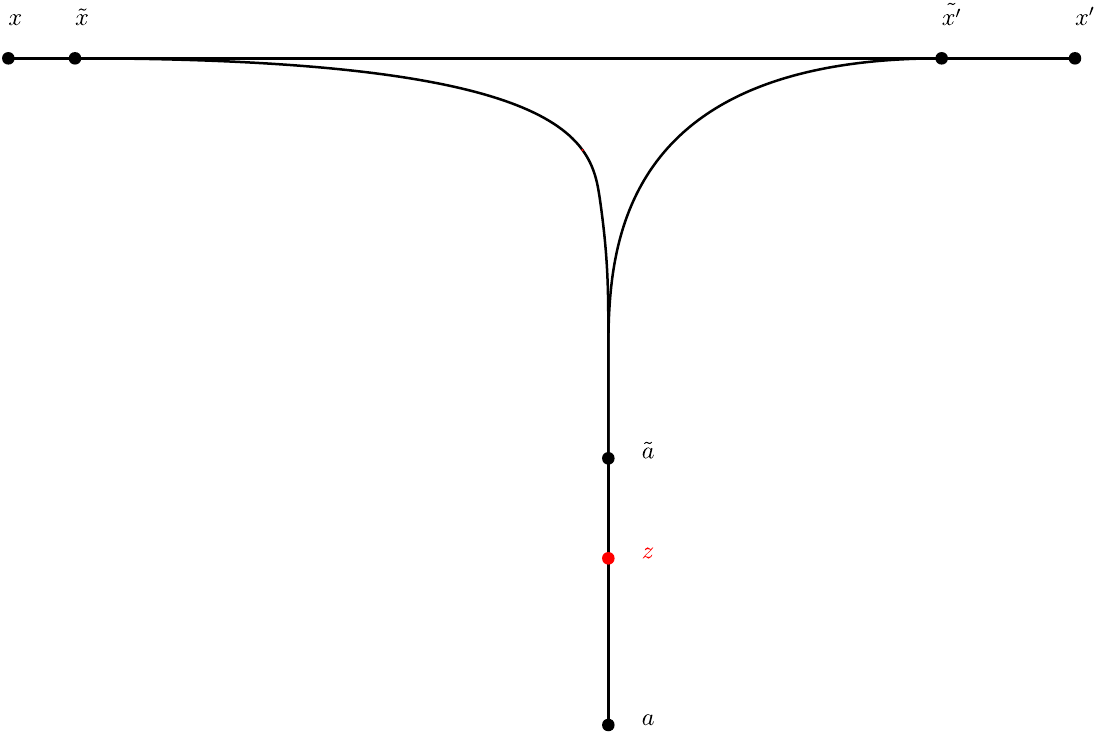}
    \caption{Case where $z$ is between $a$ and $\tilde{a}$ }
    \label{ fig: case where z is in a et tildea }
\end{figure}

\begin{proof}

We will show that $I_{i'}^{n',k',x'} \subset I_{i}^{n,k,x}$. Let $a' \in I_{i'}^{n',k',x'} $, we will then show that for all $y \in B(x,k)$, $d(a',y)=d(a,y)$.  \\

Let $y\in B(x,k)$. We assume first that $\tilde{a} \in I(x,y)$. In this case we will show that $y \in B(x,k')$. In fact : \\

$$\begin{aligned} 
            d(y,\tilde{a})& = d(x,y)-d(\tilde{a},x) \\      
            & \leq k- d(x,z)+d(\tilde{a},z) \\
            &\leq k-k+d(\tilde{a},z)\\
            &\leq d(\tilde{a},z).
\end{aligned}$$

Therefore :\\

$$\begin{aligned} 
            d(y,x')& \leq d(y,\tilde{a})+d(\tilde{a},x') \\      
            & \leq d(z,\tilde{a})+k'-d(\tilde{a},z) \\
            &\leq k'. 
\end{aligned}$$

Then $y    \in B(x,k')$ so $d(a,y)=d(a',y)$, since $a,a' \in I_{i'}^{n',k',x'} $.\\

Now we assume that $\tilde{a} \notin I(x,y)$. We will show that $\tilde{a} \in I(a',y)$. By contraposition of Proposition \ref{angletoutegéod}, we have $\measuredangle_{\tilde{a}}(x,y) \leq 12\delta$.  

 Then thanks to Proposition \ref{Triangle inequality}, we have that :

$$\begin{aligned} 
            \measuredangle_{\tilde{a}}(a,x)& \leq \measuredangle_{\tilde{a}}(a,a')+\measuredangle_{\tilde{a}}(a',x) \\      
            & \leq \measuredangle_{\tilde{a}}(a,a')+ \measuredangle_{\tilde{a}}(a',y)+ \measuredangle_{\tilde{a}}(y,x) .
\end{aligned}$$\\

Yet we know with Definition \ref{pointloingrandangle} of $\tilde{a}$, that $\measuredangle_{\tilde{a}}(a,x) \geq 50\delta$. Moreover $d(a,a') \le d(a,z)+d(z,a') < d(a,\tilde{a})+d(\tilde{a},a') $ then $\measuredangle_{\tilde{a}}(a,a')< 12 \delta$ since $\tilde{a} \notin I(a,a')$. Then :

$$ \measuredangle_{\tilde{a}}(a',y) \geq \measuredangle_{\tilde{a}}(a,x)-\measuredangle_{\tilde{a}}(y,x)-\measuredangle_{\tilde{a}}(a,a') \geq 36 \delta > 12\delta .$$

We deduce that $\tilde{a} \in I(a',y)$, so $d(a',y)=d(a',\tilde{a})+d(\tilde{a},y)$.\\

In the same way, we remark that $\tilde{a} \in I(a,y)$. In fact,
$$ \measuredangle_{\tilde{a}}(a,y)\geq \measuredangle_{\tilde{a}}(a,x) - \measuredangle_{\tilde{a}}(x,y) > 12\delta .$$

Then, we have $d(a,y)=d(a,\tilde{a})+d(\tilde{a},y)$. We have $\tilde{a} \in B(x',k')$, since $a,a' \in I_{i'}^{n',k',x'}$, we have $d(a,\tilde{a})=d(a',\tilde{a})$, and so $d(a,y)=d(a',y)$.   \\

We proved that $I_{i'}^{n',k',x} \subset I_{i}^{n,k,x}$. We deduce the reverse inclusion by symmetry between the roles of $I_{i'}^{n',k',x}$ and $I_{i}^{n,k,x}$.
 
\end{proof}

In the rest of the section, we denote by $u$ the point of the side $[x,a]$ of the triangle such that $d(x,u)=(a,x')_{x}$ and $d(a,u)=(x,x')_{a}$, $v$ the point of the side $[x',a]$ of the triangle such that $d(x',v)=(a,x)_{x'}$ and $d(a,v)=(x,x')_{a}$. We will denote by $t$ a quasi-center of the triangle $[x,x',a]$ as in Proposition \ref{quasicentreparticulier} such that $d(t,u) \leq \delta$ and $d(t,v) \leq \delta$.\\

Let $\theta>0$, we denote $Cone_{\theta}([x,x'])=\bigcup_{e \in [x,x']} Cone_{\theta}(e)$.\\

Now, we assume that $z \in [x,\tilde{a}]$.
We have to prove the decomposition in three other cases :

\begin{itemize}
    \item the case where $z\in [x,u]$ and $\tilde{a}$ is far enough from $v$ in Subection \ref{subsec : deuxième cas},
    
    \item the case where $z\in [u,\tilde{a}]$ and $\tilde{a}$ is far enough from $z$, in Subsection \ref{subsec : troisième cas},
    
     \item and finally the case where $\tilde{a}$ is too close from $v$ or too close from $z$, in Subsection \ref{subsec : quatrième cas}.

\end{itemize}

\subsection{The case where $z\in [x,u]$ and $\tilde{a}$ is far enough from $v$} \label{subsec : deuxième cas}

\begin{proposition} \label{zentre aetu facile}

Assume that $k \leq (a,x')_{x}$, i.e. $z\in [x,u]$, and that $d(x',\tilde{a}) \geq d(x',v) + 8\delta$. Then if we set $n'=d(x',a)$, $k'=d(x',v)+8\delta$ and $I_{i'}^{n',k',x'}$ the equivalence class of $a$ for $x'$, $n'$ and $k'$ then
 $I_{i'}^{n',k',x'} \subset I_{i}^{n,k,x}$.\\

\end{proposition}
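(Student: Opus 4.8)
The plan is to prove the inclusion one point at a time: fix an arbitrary $a' \in I_{i'}^{n',k',x'}$ and show that $a' \in I_{i}^{n,k,x}$. By definition of the equivalence classes this amounts exactly to checking that $d(a',y)=d(a,y)$ for every $y\in B(x,k)$; applying this with the particular point $y=x\in B(x,k)$ already forces $d(x,a')=d(x,a)=n$, so that $a'\in S_{x}^{n}$ and the radius needs no separate verification. Thus everything reduces to the single identity $d(a',y)=d(a,y)$ for $y\in B(x,k)$.

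The key device is to route every such distance through one auxiliary vertex that $a$ and $a'$ see identically. Let $w$ be the vertex of the side $[x',a]$ with $d(x',w)=k'=d(x',v)+8\delta$. Then $w\in I(x',a)$, and since $d(x',a')=n'=d(x',a)$ while $a'$ has the same distances to $B(x',k')$ as $a$, we also get $d(a',w)=d(a,w)=n'-k'$ and $w\in I(x',a')$; in particular $d(a,w)=d(a',w)$. The hypothesis $d(x',\tilde a)\ge d(x',v)+8\delta=k'$ ensures that $w$ lies on the subsegment $[v,\tilde a]$ of $[x',a]$, i.e.\ at least $\sim 8\delta$ on the $a$-side of the quasi-centre $t$ (recall $d(t,v)\le\delta$); this buffer is exactly what will keep the angle estimates at $w$ clear of the $4\delta$-thin core of the triangle. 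Granting the geometric claim that $w\in I(a,y)$ and $w\in I(a',y)$ for every $y\in B(x,k)$, the conclusion is immediate, since then $d(a,y)=d(a,w)+d(w,y)=d(a',w)+d(w,y)=d(a',y)$.

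For the membership $w\in I(a,y)$ I would use that the hypothesis $k\le (a,x')_{x}=d(x,u)$ confines every such $y$ to the $x$-side of the quasi-centre. Applying Lemma \ref{lemme de Bridson} to the two geodesics $[a,y]$ and $[a,x']$ issuing from $a$ shows that $[a,y]$ $2\delta$-fellow-travels $[a,x']$ past $w$: here $w$ sits at distance $d(a,w)=(x,x')_{a}-8\delta$ from $a$, and $k\le(a,x')_{x}$ guarantees that this is still before the branch parameter $(x',y)_{a}$. The $8\delta$ buffer then lets one upgrade this $2\delta$-proximity, via the angle triangle inequality (Proposition \ref{Triangle inequality}) together with the bound $\measuredangle_{w}(x',a)\le 100\delta$ coming from Theorem \ref{formenormaledestriangles} (as $w\in\,]\tilde{x'},\tilde a[\,$), to the genuine lower bound $\measuredangle_{w}(a,y)>12\delta$, whence $w\in I(a,y)$ by Proposition \ref{angletoutegéod}.

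The \textbf{main obstacle} is the identical statement $w\in I(a',y)$, because $a'$ is known only through its distances to $B(x',k')$, so one cannot run the fellow-travelling argument on $[a',y]$ directly: the Gromov product $(x',y)_{a'}$ involves the very quantity $d(a',y)$ one is trying to compute, making a naive repetition circular. To break this I would exploit that $w$ is a \emph{closest} point of $B(x',k')$ to $a'$, since $d(a',w)=n'-k'=d(x',a')-k'$ is the minimal such distance. Proposition \ref{conereldeq} then says that the distances from $a'$ to $B(x',k')$ — which by hypothesis coincide with those from $a$ — are already determined by the distances from $a'$ to $\mathrm{Cone}_{50\delta}(e)$, where $e$ is the edge of $[x',a']$ at $w$. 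This local coincidence pins the geodesic $[x',a']$ to track $[x',a]$ near $w$ up to bounded angle, so that $w\in I(a',y)$ follows from the already-established $w\in I(a,y)$. Carrying out this transfer cleanly, and in particular checking that the shared cone data really forces the geodesic $[a',y]$ through $w$, is where the delicate work of the proof lies.
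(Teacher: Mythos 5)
Your proof rests on the claim that the vertex $w \in [x',a]$ with $d(x',w)=k'$ satisfies $w\in I(a,y)$ (and then $w\in I(a',y)$) for every $y\in B(x,k)$, so that every distance factors as $d(a,y)=d(a,w)+d(w,y)$. This claim is false in general, and the argument you sketch for it cannot work: to apply Proposition \ref{angletoutegéod} you need the \emph{lower} bound $\measuredangle_{w}(a,y)>12\delta$, but everything you invoke --- the $2\delta$-fellow-travelling from Lemma \ref{lemme de Bridson}, the angle triangle inequality (Proposition \ref{Triangle inequality}), and the bound $\le 100\delta$ from Theorem \ref{formenormaledestriangles} --- is an \emph{upper} bound, and no combination of upper bounds produces a lower bound on an angle. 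Nor can any lower bound exist: $w$ is chosen only by its distance to $x'$, not by any large-angle property (contrast with $\tilde{a}$, which is defined precisely by $\measuredangle_{\tilde{a}}(a,x)>50\delta$ and is the only kind of vertex through which Proposition \ref{angletoutegéod} can force geodesics). In a uniformly fine hyperbolic graph in which all angles are small --- so that $\tilde{a}=a$, $\tilde{x}=x$, $\tilde{x}'=x'$, and the hypotheses of Proposition \ref{zentre aetu facile} are easily met --- geodesics from $y\in B(x,k)$ to $a$ pass within a bounded distance of $w$ but in general not through it; already for $y=z$ the subsegment of $[x,a]$ from $z$ to $a$ is a geodesic missing $w$, and $d(a,z)<d(a,w)+d(w,z)$ generically. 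This inability to route geodesics through a prescribed vertex is exactly what separates $\delta$-hyperbolic graphs from trees, where your argument would be correct.

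Moreover, you yourself flag the second half of the plan ($w\in I(a',y)$) as an unresolved ``main obstacle'', so even granting the first claim the proof would be incomplete. The paper's proof avoids both problems by never forcing geodesics through anything: it propagates distance data with Lafforgue's Lemma \ref{classedeqdeuxpoint}, which says that agreement of distances on $B(\cdot,\cdot)\cap B(z,3\delta)$ around a closest point $z$ implies agreement on the whole ball. Concretely, if $d(z,u)\le 3\delta$ then $B(z,3\delta)\subset B(v,8\delta)\subset B(x',k')$, so $a$ and $a'$ agree on that small ball and one application of Lemma \ref{classedeqdeuxpoint} (center $x$, radius $k$, closest point $z$) concludes; if $d(z,u)>3\delta$ then $B(u,3\delta)\subset B(x',k')$ and the lemma is applied twice, first with center $z$, radius $d(z,u)$ and closest point $u$, then with center $x$, radius $k$ and closest point $z$. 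You should rebuild your argument on this distance-propagation mechanism rather than on membership of a fixed vertex in geodesic intervals.
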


\begin{figure}[!ht]
    \centering
    \includegraphics[scale=0.7]{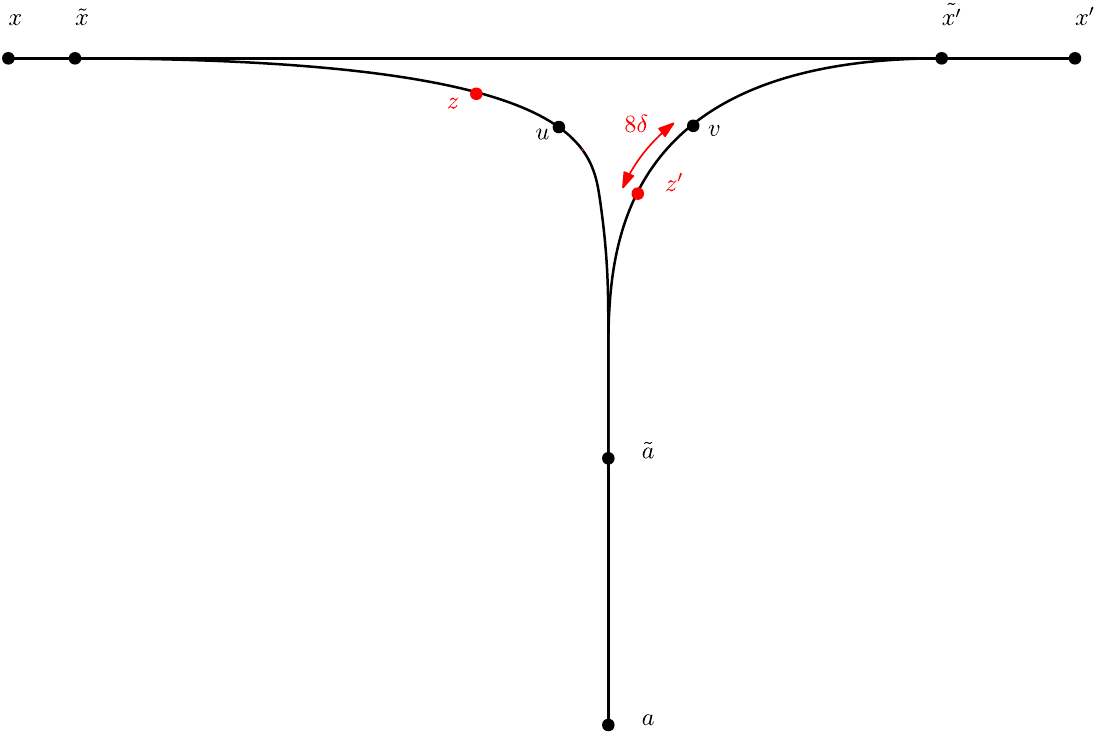}
    \caption{Case where $z$ is between $x$ and $u$ and $\tilde{a}$ is far enough }
    \label{ fig: cas oÃ¹ z est avant u ( premier dessin) }
\end{figure}

\begin{proof}

By assumption, we have $d(a,z)\ge d(a,u)$.\\

If $d(z,u)\leq 3\delta$ then $B(z,3\delta) \subset B(v,8\delta)$ so $B(z,3\delta) \subset B(x',k')$ and according to Lemma \ref{classedeqdeuxpoint}, we deduce that $I_{i'}^{n',k',x'} \subset I_{i}^{n,k,x}$.\\

If $d(z,u)> 3\delta$, we still have that $B(u,3\delta) \subset B(x',k')$. Let $a' \in I_{i'}^{n',k',x'} $, we will show that $a' \in I_{i}^{n,k,x} $. Therefore, the distances of $a$ to $B(u,3\delta)$ and $a'$ to $B(u,3\delta)$ are the same since $a,a' \in I_{i'}^{n',k',x'} $. As $u \in I(a,z)$, $u$ is a point of $B(z,d(z,u))$ at minimal distance of $a$. According to Lemma \ref{classedeqdeuxpoint}, since the distances of $a$ to $B(u,3\delta)$ and $a'$ to $B(u,3\delta)$ are the same, then the distances of $a$ to $B(z,d(z,u))$ and $a'$ to $B(u,d(z,u))$ are the same. Finally, with the remark that $B(z,3\delta) \subset B(z,d(z,u)) $, we can reuse Lemma \ref{classedeqdeuxpoint} to prove that the distances of $a$ and $a'$ to $B(x,k)$ are the same, so $I_{i'}^{n',k',x'} \subset I_{i}^{n,k,x}$.  \\

\end{proof}

Now we will count the number of classes $I_{i'}^{n',k',x'}$ involved in this case.

\begin{proposition} \label{denombrement z dans [x,u] et atilde loin}

Under the assumptions of Proposition \ref{zentre aetu facile}, the number of classes
$I_{i'}^{n',k',x'}$ in the decomposition is bounded above by $L |Cone_{150\delta}([x,x'])|$, where $L$ is the number depending on $\delta$ and $\varphi$ introduced in Corollary \ref{denombrement relation dequivalence cones}.

\end{proposition}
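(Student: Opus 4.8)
The plan is to count the classes $I_{i'}^{n',k',x'}$ appearing in the decomposition by attaching to each of them the point $z'$ lying on $[x',a]$ at distance $k'=d(x',v)+8\delta$ from $x'$, which realizes the minimal distance from $a$ to $B(x',k')$. Observe first that $z'$ already determines the pair $(n',k')$: indeed $k'=d(x',z')$, and since $d(x',v)=(a,x)_{x'}=\tfrac12(n'+d(x,x')-n)$ (where $n=d(x,a)$ is fixed on the class $I_{i}^{n,k,x}$), one recovers $n'=n-d(x,x')+2k'-16\delta$ from the mere knowledge of $z'$. By Corollary \ref{denombrement relation dequivalence cones}, once the triple $(n',k',z')$ is fixed there are at most $L=L(\delta,\varphi)$ classes $I_{i'}^{n',k',x'}$ having $z'$ as minimal point. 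Hence it suffices to prove that every such $z'$ lies in $Cone_{150\delta}([x,x'])$: the number of admissible $z'$ will then be at most $|Cone_{150\delta}([x,x'])|$, and the total count at most $L\,|Cone_{150\delta}([x,x'])|$.

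The core of the argument is therefore the geometric claim $z'\in Cone_{150\delta}([x,x'])$, which I would prove as follows. Let $e$ be the edge of $[x',a]$ at distance $d(x',v)-8\delta=(a,x)_{x'}-8\delta$ from $x'$. Applying Proposition \ref{conetriangleplusfort} to the triangle $[x',x,a]$ at the vertex $x'$, this edge lies in $Cone_{50\delta}(e'')$ for the edge $e''$ of $[x,x']$ at the same distance from $x'$; concretely, there is a path of length at most $50\delta$ with all consecutive angles at most $50\delta$ joining $e''$ to $e$. Now extend this path along $[x',a]$ from $e$ up to the edge carrying $z'$, which is only $16\delta$ further from $x'$. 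The standing hypothesis $d(x',\tilde a)\ge d(x',v)+8\delta$ of Proposition \ref{zentre aetu facile} guarantees that $z'$, and hence this whole extension, lies in the middle segment $]\tilde{x'},\tilde a[$ of the normal-form triangle provided by Theorem \ref{formenormaledestriangles}, so the consecutive angles along the extension are at most $100\delta$. Concatenating, one obtains a path from $e''$ to the edge at $z'$ of length at most $50\delta+16\delta\le 150\delta$ with all consecutive angles at most $100\delta\le 150\delta$, which is precisely the assertion that $z'\in Cone_{150\delta}(e'')\subset Cone_{150\delta}([x,x'])$.

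Combining the two points, the number of classes $I_{i'}^{n',k',x'}$ in the decomposition is bounded by $L\,|Cone_{150\delta}([x,x'])|$, which is finite by Proposition \ref{cardinal cones}. The step I expect to be the main obstacle is the geometric claim of the second paragraph: one must splice the $50\delta$-cone coming from conical thinness (Proposition \ref{conetriangleplusfort}) with the short $16\delta$ walk along $[x',a]$ while keeping every intermediate angle under control, and in particular one must check that the segment of $[x',a]$ around $z'$ genuinely sits inside the region $]\tilde{x'},\tilde a[$ where the $100\delta$ angle bound of Theorem \ref{formenormaledestriangles} holds. This is exactly where the hypothesis $d(x',\tilde a)\ge d(x',v)+8\delta$ enters, and it is what distinguishes the present case from the remaining ones treated in Subsections \ref{subsec : troisième cas} and \ref{subsec : quatrième cas}.
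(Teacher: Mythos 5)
Your overall strategy coincides with the paper's: reduce to counting the possible points $z'$ on $[x',a]$ with $d(x',z')=k'$, show each such $z'$ lies in $Cone_{150\delta}([x,x'])$, and multiply by $L$ via Corollary \ref{denombrement relation dequivalence cones}; and your spliced-path argument is essentially the paper's proof in the case where the splice stays inside the middle segment. But there is a genuine gap, located exactly at the step you flagged as the main obstacle: you assert that the standing hypothesis $d(x',\tilde{a}) \ge d(x',v)+8\delta$ guarantees that the whole extension from $e$ to $z'$ lies in $]\tilde{x'},\tilde{a}[$. It does not. That hypothesis controls only the $\tilde{a}$-end of the extension. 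Your edge $e$ sits at distance $(a,x)_{x'}-8\delta = d(x',v)-8\delta$ from $x'$, so for $e$ and the initial portion of the extension to lie in $]\tilde{x'},\tilde{a}[$ you need $d(v,\tilde{x'}) > 8\delta$, and nothing in the hypotheses of Proposition \ref{zentre aetu facile} gives you that.

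When $d(v,\tilde{x'}) \le 8\delta$, your argument breaks down: $e$ then lies on $[x',\tilde{x'}]$, the portion where the sides $[x',x]$ and $[x',a]$ of the normal-form triangle coincide, and the path from $e$ to $z'$ along $[x',a]$ must cross the vertex $\tilde{x'}$. Theorem \ref{formenormaledestriangles} bounds angles only at vertices strictly inside $]\tilde{x'},\tilde{a}[$; at $\tilde{x'}$ itself the angle between the incoming and outgoing edges is, by Definition \ref{pointloingrandangle}, larger than $50\delta$ and admits no upper bound at all --- in a tree, or at a cone vertex of a coned-off graph, it is infinite. So the consecutive-angle condition in the definition of a cone fails at that vertex, and no cone of any fixed parameter around an edge of $[x,x']$ captures $z'$ through that path, even though $e$ itself is then literally an edge of $[x,x']$. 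This is why the paper's proof has a second case: when $d(v,\tilde{x'})\le 8\delta$ it instead takes $e$ to be the edge of $[\tilde{x'},\tilde{a}]$ containing $\tilde{x'}$, shows $z' \in Cone_{100\delta}(e)$ by a path that stays inside the middle segment (hence never crosses $\tilde{x'}$), and then applies Proposition \ref{conetriangleplusfort} in the triangle $[\tilde{x},\tilde{x'},\tilde{a}]$ to get $e \in Cone_{50\delta}(e')$, where $e'$ is the edge of $[\tilde{x},\tilde{x'}] \subset [x,x']$ containing $\tilde{x'}$, whence $z' \in Cone_{150\delta}([x,x'])$. You would need to add this case (or an equivalent workaround) for your proof to be complete.
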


\begin{proof}

To control the number of classes $I_{i'}^{n',k',x'}$ in the decomposition of $I_{i}^{n,k,x}$, we will control the number of $z'$ such that $z' \in [x',a]$ and $d(x',z')=k'$. As such points $z'$ are points of $B(x',k')$ at minimal distance of $a$, we will conclude thanks to Corollary \ref{denombrement relation dequivalence cones}.\\

Let assume that $d(v,\tilde{x'})> 8 \delta$. We denote by $e$ the edge of the side $[v,x']$ such that $d(v,e)=8\delta$. By definition of $z'$, we have $d(z',e)=16\delta$. Furthermore $z' \in [\tilde{a},\Tilde{x'}] $ since $d(x',\tilde{a}) \geq d(x',v) + 8\delta$ and $e$ is an edge of $[v,\tilde{x'}]\subset [\tilde{a},\Tilde{x'}]$. According to Theorem \ref{formenormaledestriangles}, along the side $[x',a]$ of the triangle, from $v$ to $e$, angles are bounded above by $100\delta$. Then we deduce that $z' \in Cone_{100\delta}(e)$. According to Proposition \ref{conetriangleplusfort}, as $d(v,e)> 8 \delta$, there exists an edge $e'$ of the side $[x,x']$ such that $e \in Cone_{50\delta}(e')$, we can deduce that $z' \in Cone_{150\delta}(e') \subset Cone_{150\delta}([x,x']) $. Thus the number of $z'$ is bounded above by $|Cone_{150\delta}([x,x'])|$. According to Corollary \ref{denombrement relation dequivalence cones}, the number of equivalence classes of $a$ for fixed $n',k',x'$ and $z'$ is bounded by $L$ and this leads to the conclusion of the proposition in this case. \\

Let assume that $d(v,\tilde{x'})\le8\delta$. Now we denote by $e$ the edge of the side $[a,x']$ between $v$ and $\tilde{x'}$, which contains $\tilde{x'}$. Since $z'\in [\tilde{a},\tilde{x'}]$ and $e$ is an edge of $[\tilde{a},\tilde{x'}]$, angles from $z'$ to $e$ along the triangle are bounded by $100\delta$, thus $z' \in Cone_{100\delta}(e)$. According to Proposition \ref{conetriangleplusfort} in the triangle $  [ \tilde{x},\tilde{x'} , \tilde{a}] $, $e \in Cone_{50\delta}(e')\subset Cone_{50\delta}([x,x'])$ where $e'$ is the edge of $[\tilde{x},\tilde{x'}]\subset[x,x']$ which contains $\tilde{x'}$. Thus $z' \in Cone_{150\delta}([x,x'])$ and according to Corollary \ref{denombrement relation dequivalence cones}, the number of equivalence classes $I_{i'}^{n',k',x'}$ is bounded above by $L |Cone_{150\delta}([x,x'])|$.

\end{proof}

We will now prove the reserve counting.

\begin{proposition}\label{denombrement reciproque si z dans x et u et atilde loin}

Under the assumptions of Proposition \ref{zentre aetu facile}, we have that $z \in Cone_{150\delta}([x,x']) $. 
Then the equivalence classes $I_{i'}^{n',k',x'}$, chosen like in Proposition \ref{z entre u et atilde }, appear in at most  $L|Cone_{150\delta}([x,x'])|$ classes $I_{i}^{n,k,x}$ chosen like in Proposition \ref{z entre u et atilde }.

\end{proposition}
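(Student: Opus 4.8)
The plan is to establish the two assertions of the proposition separately: first the geometric containment $z \in Cone_{150\delta}([x,x'])$, and then the counting, which will follow formally from the first assertion together with Corollary \ref{denombrement relation dequivalence cones}. This mirrors the reverse-enumeration step already carried out in the tree case (Proposition \ref{arbredenombrement2}), but now the bound on the number of admissible positions for $z$ comes from the uniform finiteness of cones rather than from the segment length itself.

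For the geometric part, recall that under the hypotheses of Proposition \ref{zentre aetu facile} the point $z$ lies on the side $[x,a]$ with $d(x,z)=k \le (a,x')_x = d(x,u)$, so $z \in [x,u]$. I would apply Proposition \ref{conetriangleplusfort} to the triangle $[x,x',a]$, in the role where $x$ plays the vertex $a$ of that statement, $a$ plays $b$ and $x'$ plays $c$: every edge $e$ on $[x,a]$ with $d(x,e) \le (a,x')_x - 8\delta$ is contained in $Cone_{50\delta}(e')$ for the edge $e'$ on $[x,x']$ at the same distance from $x$, so any such $z$ already lies in $Cone_{50\delta}([x,x']) \subseteq Cone_{150\delta}([x,x'])$. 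The only points requiring separate treatment are those in the boundary strip $(a,x')_x - 8\delta < d(x,z) \le (a,x')_x$, i.e. those within $8\delta$ of $u$ along $[x,a]$. Here I would fix the edge $e_0$ on $[x,a]$ at distance $(a,x')_x - 8\delta$ from $x$, which lies in $Cone_{50\delta}([x,x'])$ by the above, and travel along $[x,a]$ from $e_0$ to the edge containing $z$: this subpath has length at most $8\delta$, and by the normal form of the triangle (Theorem \ref{formenormaledestriangles}) its consecutive edges make angles at most $100\delta$. Concatenating it with the path realising $e_0 \in Cone_{50\delta}([x,x'])$ produces a path from $[x,x']$ to $z$ whose length and whose angles are all bounded by $150\delta$, so $z \in Cone_{150\delta}([x,x'])$; the gap between $50\delta$ and $150\delta$ is precisely what absorbs these boundary corrections.

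For the counting, I would observe that a class $I_{i}^{n,k,x}$ arising in the present case is entirely determined by its associated point $z$, namely the point of $B(x,k)$ on the side $[x,a]$ at minimal distance from $a$, together with the equivalence-class data relative to this $z$. By the first part every such $z$ belongs to $Cone_{150\delta}([x,x'])$, which by uniform finiteness of cones (Proposition \ref{cardinal cones}) contains at most $|Cone_{150\delta}([x,x'])|$ vertices, so there are at most that many possible values of $z$. For each fixed $z$, Corollary \ref{denombrement relation dequivalence cones} bounds by $L$ the number of classes $I_{i}^{n,k,x}$ in which a given point, here any $a \in I_{i'}^{n',k',x'}$, can be contained. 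Multiplying these two bounds shows that $I_{i'}^{n',k',x'}$ appears in at most $L\,|Cone_{150\delta}([x,x'])|$ classes $I_{i}^{n,k,x}$, as claimed.

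I expect the main obstacle to be the geometric part, specifically the boundary strip near $u$: away from it the containment is an immediate instance of conical fineness, but within $8\delta$ of $u$ one must combine the normal-form angle bound of Theorem \ref{formenormaledestriangles} with a concatenation of cone-paths while keeping the total parameter below $150\delta$, and some care is needed to check that the junction angle where the two paths meet at $e_0$ remains controlled by the angle triangle inequality (Proposition \ref{Triangle inequality}). The counting part, by contrast, is purely formal once the containment is in hand.
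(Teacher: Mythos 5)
Your overall strategy (direct use of Proposition \ref{conetriangleplusfort} away from $u$, a concatenation argument in the $8\delta$-strip near $u$, then the formal counting via Corollary \ref{denombrement relation dequivalence cones}) is the same as the paper's, and your counting step coincides with the paper's. But the strip argument has a genuine gap: you invoke Theorem \ref{formenormaledestriangles} to bound by $100\delta$ the angles at all consecutive vertices of the subpath of $[x,a]$ from $e_{0}$ to $z$. That theorem only bounds angles at vertices lying strictly inside $]\tilde{x},\tilde{a}[$ (and the other open middle segments); it says nothing about the angle at $\tilde{x}$ itself. Under the hypotheses of Proposition \ref{zentre aetu facile} there is no lower bound on $d(u,\tilde{x})$, so $\tilde{x}$ may lie inside your strip, i.e. $(a,x')_{x}-8\delta < d(x,\tilde{x}) \le (a,x')_{x}$, and then for $z \in [\tilde{x},u]$ your path from $e_{0}$ to $z$ passes through $\tilde{x}$. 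The angle at $\tilde{x}$ between the incoming and outgoing edges of $[x,a]$ is controlled by nothing in the setup --- in a coned-off graph $\tilde{x}$ is typically a parabolic cone vertex, where this angle is arbitrarily large --- so your concatenated chain does not witness $z \in Cone_{150\delta}([x,x'])$. (When $z$ lies in the strip but on $[x,\tilde{x}]$, your angle claim is equally unjustified, but there the conclusion is trivial since $[x,\tilde{x}] \subset [x,x']$.)

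The paper splits the strip case according to whether $d(u,\tilde{x}) \ge 8\delta$ or not. When $d(u,\tilde{x}) \ge 8\delta$ its argument is essentially yours: the reference edge is the edge of $[\tilde{x},u]$ at distance $8\delta$ from $u$, so the whole subpath from it to $z$ stays in the normal-form region. When $d(u,\tilde{x}) < 8\delta$, it instead chains $z$ to the edge $e$ of $[\tilde{x},\tilde{a}]$ containing $\tilde{x}$ --- all intermediate vertices of this chain lie in $]\tilde{x},\tilde{a}[$, so the $100\delta$ bound does apply --- and then passes from $e$ to the edge $e'$ of $[\tilde{x},\tilde{x'}] \subset [x,x']$ containing $\tilde{x}$ by applying Proposition \ref{conetriangleplusfort} to the inner triangle $[\tilde{x},\tilde{x'},\tilde{a}]$. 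That cone relation $e \in Cone_{50\delta}(e')$ is produced by the loop construction in Proposition \ref{conetriangleplusfort}, not by any bounded angle at $\tilde{x}$, which is exactly how the uncontrolled vertex is sidestepped. Your proof needs this extra case (or an equivalent device) to be complete; the junction at $e_{0}$ that you flagged as the delicate point is in fact harmless, since its shared vertex lies in $]\tilde{x},\tilde{a}[$ whenever your argument applies at all.
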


\begin{proof}
If $z \in [x,\tilde{x}]$ then $z \in [x,x']$ according to Theorem \ref{formenormaledestriangles} and so $z \in Cone_{150\delta}([x,x'])$. In the rest of proof, let us assume that $z \in [\tilde{x},u]$.\\

If $d(u,z) \ge 8 \delta$, then according to Proposition \ref{conetriangleplusfort}, $z \in Cone_{50\delta}([x,x']) $.\\

For that reason, let us assume that $d(u,z) \le 8 \delta $.\\

Let assume further that $d(u,\tilde{x})\ge 8\delta$. We denote by $e$ the edge of the side $[u,\tilde{x}]$ such that $d(u,e)=8\delta$. Since $z \in [u,\tilde{x}]$, then $d(z,e)\le 8\delta$. Furthermore $z \in [\tilde{a},\tilde{x}]$, then according to Theorem \ref{formenormaledestriangles}, along the side of the triangle $[x,a]$, angles from $z$ to $e$ are bounded above by $100\delta$. Thus $z \in Cone_{100\delta}(e)$. According to Proposition \ref{conetriangleplusfort}, $e\in Cone_{50\delta}([x,x'])$ since $d(u,e)=50\delta$ and then we can deduce that $z \in Cone_{150\delta}([x,x'])$.\\

Let assume now that $ d(u,\tilde{x}) < 8 \delta $. We denote now by $e$ the edge of $[u,\tilde{x}]$, which contains $\tilde{x}$. Since $z \in [u,\tilde{x}]$ and $e$ is an edge of $[ \tilde{a},\tilde{x}]$, angles from $z$ to $e$ are bounded above by $100 \delta$, furthermore $d(e,z)\le 8\delta$ and then $ z \in Cone_{ 100 \delta } (e)$. According to Proposition \ref{conetriangleplusfort} in the triangle $ [ \tilde{x},\tilde{x'},\tilde{a}]$, $e \in Cone_{50\delta}(e') \subset Cone_{50\delta}([x,x'])$ where $e'$ is the edge of $ [ \tilde{x},\tilde{x'}] \subset  [x,x']$ which contains $\tilde{x}$. Thus $z \in Cone_{  150 \delta}([x,x'])$.\\

To conclude the proof of the proposition, we just need to use Corollary \ref{denombrement relation dequivalence cones} and we get the bound $L|Cone_{150\delta}([x,x'])|$.

\end{proof}

\subsection{The case where $z\in [u,\tilde{a}]$ and $\tilde{a}$ is far enough from $z$} \label{subsec : troisième cas}

\begin{proposition} \label{z entre u et atilde }

Let assume that $k > (a,x')_{x}$ and $k \le d(x,\tilde{a})$, i.e. $z\in [u,\tilde{a}]$ and $z\neq u$ , and that $d(x',\tilde{a}) \geq d(x',z)+3\delta$. Then if we set $n'=d(x',a)$, $k'=d(x',z)+3\delta$ and $I_{i'}^{n',k',x'}$ the equivalence class of $a$ for $x'$, $n'$ and $k'$, we have $I_{i'}^{n',k',x'} \subset I_{i}^{n,k,x}$.\\

\end{proposition}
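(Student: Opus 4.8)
The plan is to prove the inclusion $I_{i'}^{n',k',x'} \subset I_{i}^{n,k,x}$ directly, by transferring the distance data that defines the $x'$-class into the distance data that defines the $x$-class. The mechanism for this transfer is Lemma \ref{classedeqdeuxpoint}: if $z$ is a vertex of $B(x,k)$ at minimal distance $n-k$ from $a$, then any $a'$ that agrees with $a$ on the small set $B(x,k)\cap B(z,3\delta)$ already lies in the same class $I_{i}^{n,k,x}$. So the whole proof reduces to checking that every $a'\in I_{i'}^{n',k',x'}$ agrees with $a$ on $B(x,k)\cap B(z,3\delta)$, and the only thing that makes this work is the specific choice $k'=d(x',z)+3\delta$.

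First I would record the elementary geometric facts. Since $z$ lies on the side $[x,a]$ with $d(x,z)=k$, the reverse triangle inequality gives $d(a,w)\ge d(a,x)-d(x,w)\ge n-k=d(a,z)$ for every $w\in B(x,k)$, so $z$ is a minimal-distance point of $B(x,k)$ from $a$, as required by Lemmas \ref{classedequnpoint} and \ref{classedeqdeuxpoint}. Next I would check that the triple $(n',k',i')$ is admissible, i.e. $k'\le n'$: because $\tilde a$ lies on every geodesic between $x'$ and $a$ (Definition \ref{pointloingrandangle}) we have $d(x',\tilde a)\le d(x',a)=n'$, and the standing hypothesis $d(x',\tilde a)\ge d(x',z)+3\delta$ then yields $k'=d(x',z)+3\delta\le d(x',\tilde a)\le n'$. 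Finally, the key computation: for any $w\in B(z,3\delta)$ the triangle inequality gives $d(x',w)\le d(x',z)+3\delta=k'$, hence $B(z,3\delta)\subset B(x',k')$.

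With these in hand the inclusion is immediate. Take $a'\in I_{i'}^{n',k',x'}$; by definition $a'$ and $a$ have equal distances to every point of $B(x',k')$, and since $B(x,k)\cap B(z,3\delta)\subset B(z,3\delta)\subset B(x',k')$, they in particular agree on $B(x,k)\cap B(z,3\delta)$. Lemma \ref{classedeqdeuxpoint}, applied with the minimal point $z$ relative to $x$, then forces $a'\in I_{i}^{n,k,x}$, which is the desired inclusion.

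The main point to get right — and the reason this case is genuinely easier than the case $z\in[x,u]$ of Proposition \ref{zentre aetu facile} — is that here $z$ sits beyond the quasi-centre $u$, on the $a$-side of the triangle, so it is directly visible from $x'$ at controlled distance; this is exactly what lets the single ball $B(z,3\delta)$ already be contained in $B(x',k')$ for the cheap radius $k'=d(x',z)+3\delta$, instead of having to route the argument through $u$ and $v$ and iterate Lemma \ref{classedeqdeuxpoint} as in the earlier case. The one subtlety worth flagging is that I never verify that $z$ is a minimal-distance point of $B(x,k)$ from $a'$; this is not needed, because Lemma \ref{classedeqdeuxpoint} is asymmetric in $a$ and $a'$ and only requires minimality for the fixed reference point $a$. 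The positional hypotheses $k>(a,x')_x$ and $k\le d(x,\tilde a)$ serve only to situate $z$ in the segment $[u,\tilde a]$ for the exhaustive case split; the analytic content of the inclusion rests entirely on $d(x',\tilde a)\ge d(x',z)+3\delta$ and the ball containment above.
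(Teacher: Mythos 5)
Your proof is correct and takes essentially the same route as the paper's: the paper's entire proof consists of the observation that $B(z,3\delta)\subset B(x',k')$ together with an appeal to Lafforgue's lemma (the paper cites Lemma \ref{classedequnpoint}, though the two-point version, Lemma \ref{classedeqdeuxpoint}, which you invoke, is the one that directly yields the class inclusion). Your additional verifications --- minimality of $z$ in $B(x,k)$, the admissibility $k'\le n'$ via $d(x',\tilde{a})\ge d(x',z)+3\delta$, and the triangle-inequality ball containment --- are exactly the details the paper leaves implicit.
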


\begin{figure}[!ht]
    \centering
    \includegraphics[scale=0.7]{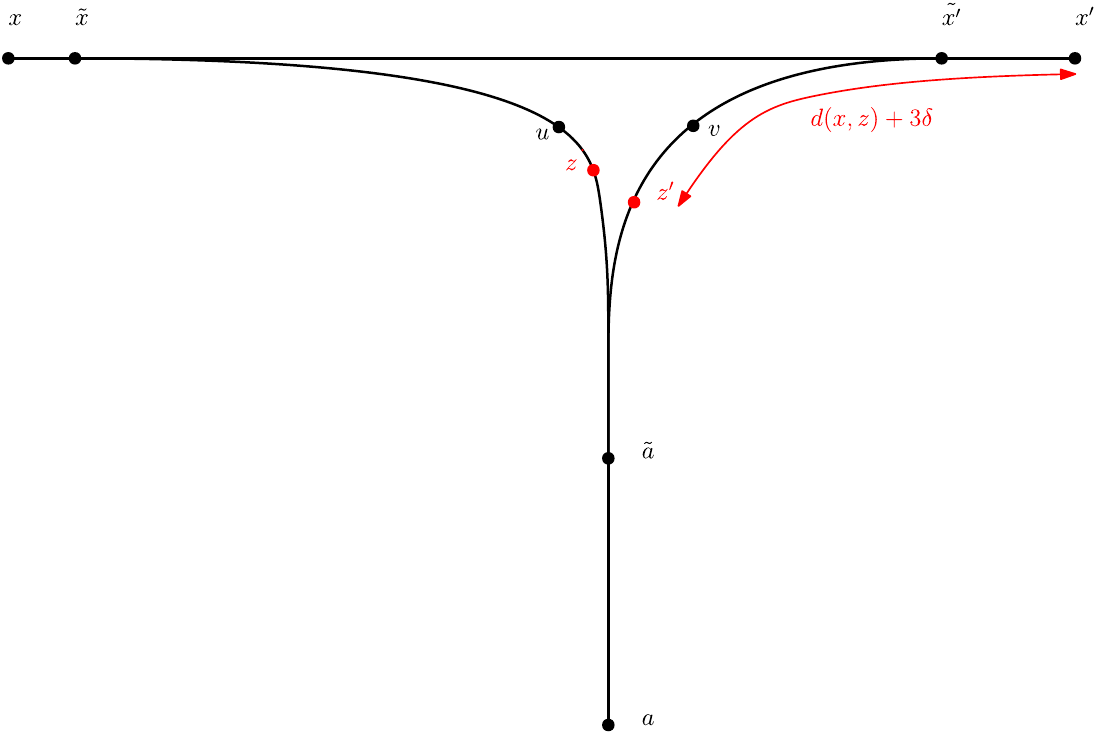}
    \caption{Case where $z$ is between $u$ and $\tilde{a}$ and $\tilde{a}$ is far enough }
    \label{ fig: cas oÃ¹ z est avant u ( premier dessin) }
\end{figure}

\begin{proof}

To prove the inclusion, we have to use Lemma \ref{classedequnpoint} since $B(z,3\delta) \subset B(x',k')$, then $I_{i'}^{n',k',x'} \subset I_{i}^{n,k,x}$.

 \end{proof}

 \begin{proposition} \label{denombrement cas où z dans [u,tilde a] et tilde a loin de z}

Under the assumptions of Proposition \ref{z entre u et atilde }, we denote by $\hat{e}$ an edge such that $\hat{e} \subset I(x,z)$, such that $z \in \hat{e}$ , the number of classes $I_{i'}^{n',k',x'}$ in the decomposition is bounded above by $ L |Cone_{200\delta}(\hat{e})|$, where $L$ is the constant defined in Corollary \ref{denombrement relation dequivalence cones}.

\end{proposition}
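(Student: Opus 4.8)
The plan is to follow the same scheme as in the proof of Proposition~\ref{denombrement z dans [x,u] et atilde loin}. Up to the bounded multiplicity furnished by Corollary~\ref{denombrement relation dequivalence cones}, the classes $I_{i'}^{n',k',x'}$ occurring in the decomposition of $I_{i}^{n,k,x}$ are indexed by the vertices $z'$ that can arise as a point of $B(x',k')$ at minimal distance from some $a'\in I_{i}^{n,k,x}$: once $z'$ is fixed, Corollary~\ref{denombrement relation dequivalence cones} bounds by $L$ the number of attached classes. Hence it suffices to prove that every such $z'$ lies in $Cone_{200\delta}(\hat{e})$, which yields the total count $L\,|Cone_{200\delta}(\hat{e})|$, finite because $X$ is uniformly fine.

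First I would locate $z'$ inside the normal-form triangle $[x,x',a]$ of Theorem~\ref{formenormaledestriangles} (taking $a$ as representative). Since $k>(a,x')_{x}$, the vertex $z$ lies strictly beyond the branching point $u$ (recall $d(a,u)=(x,x')_{a}$), so from $u$ to $a$ the sides $[x,a]$ and $[x',a]$ fellow-travel towards their common endpoint. A short Gromov-product computation gives $d(a,z')=n'-k'=d(x',a)-d(x',z)-3\delta$, which fellow-travelling shows to be within a bounded multiple of $\delta$ of $d(a,z)=n-k$; thus $z'$ sits, along $[x',a]$, within a bounded multiple of $\delta$ of the edge $e'$ of $[x',a]$ at distance $n-k$ from $a$. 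The hypothesis $d(x',\tilde{a})\ge d(x',z)+3\delta$ forces $d(x',z')=k'\le d(x',\tilde{a})$, so $z'$ remains on the $x'$-side of $\tilde{a}$, i.e. in $]\tilde{a},\tilde{x'}[$, where Theorem~\ref{formenormaledestriangles} bounds every angle by $100\delta$; the same holds for $e'$, since $d(a,\tilde{a})\le n-k<(x,x')_{a}$.

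Next I would connect $\hat{e}$ to $z'$ by cones. Applying Proposition~\ref{conetriangleplusfort} at the vertex $a$, with $\hat{e}$ the edge of $[x,a]$ at distance $n-k$ from $a$, gives $\hat{e}\in Cone_{50\delta}(e')$ as soon as $d(a,\hat{e})\le (x,x')_{a}-8\delta$. As $e'$ and $z'$ are a bounded multiple of $\delta$ apart along $[x',a]$ through vertices of angle at most $100\delta$, we get $z'\in Cone_{100\delta}(e')$ up to a bounded additive term, and chaining the two cone relations places $z'\in Cone_{200\delta}(\hat{e})$. Corollary~\ref{denombrement relation dequivalence cones} then produces the announced bound $L\,|Cone_{200\delta}(\hat{e})|$.

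The main obstacle is the boundary regime $(a,x')_{x}<k<(a,x')_{x}+8\delta$, in which $\hat{e}$ sits within $8\delta$ of the quasi-centre $u\approx t$ and the hypothesis of Proposition~\ref{conetriangleplusfort} at $a$ fails. Here I would instead route the cone through the portion of $[x,x']$ near the centre: both $\hat{e}$ (near $u$) and $e'$ (near $v$) lie within a bounded multiple of $\delta$ of $[x,x']$, and on $]\tilde{x},\tilde{x'}[$ the angles are again at most $100\delta$ by Theorem~\ref{formenormaledestriangles}, so a bounded-length, bounded-angle path still places $z'$ in $Cone_{200\delta}(\hat{e})$. The generous parameter $200\delta$ is precisely what absorbs these boundary estimates, and I expect verifying the angle bounds uniformly across the two regimes to be the delicate part of the argument.
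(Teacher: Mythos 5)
Your skeleton is the same as the paper's: reduce the count to the number of possible vertices $z'$ of $B(x',k')$ at minimal distance from the class, pay a factor $L$ via Corollary \ref{denombrement relation dequivalence cones}, and trap $z'$ in a cone around $\hat{e}$ by chaining the angle bounds of Theorem \ref{formenormaledestriangles} with Proposition \ref{conetriangleplusfort}. But there is a genuine gap exactly where the statement is delicate: you write ``with $\hat{e}$ the edge of $[x,a]$ at distance $n-k$ from $a$'', i.e.\ you identify $\hat{e}$ with the edge $e$ of the chosen triangle side containing $z$. The $\hat{e}$ of the statement is an \emph{arbitrary, fixed} edge of $I(x,z)$ containing $z$; it need not lie on the side $[x,a]$ of the normal-form triangle, and it cannot be taken to, because that side (hence $e$) changes with the representative $a$ and with the choice of geodesics, whereas the bound must be a single cone, independent of $a$, if it is to control the total number of classes in the decomposition. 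The paper closes precisely this gap with one extra application of Proposition \ref{conetriangleplusfort} in the degenerate triangle formed by $[x,z]\subset[x,a]$ and the geodesic from $x$ to $z$ containing $\hat{e}$, which yields $e \in Cone_{50\delta}(\hat{e})$; that step is where the last $50\delta$ of the $200\delta$ budget is spent. Note also that your bookkeeping leaves no room for it: your chain already spends $50\delta+100\delta$ plus unquantified ``bounded additive terms'' to reach $200\delta$, so after inserting the missing $e\to\hat{e}$ step you would have to redo the constants ($100\delta+50\delta+50\delta$ fits exactly, but only if all your additive terms are genuinely absorbed).

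The boundary regime is the second weak point. There you argue that $\hat{e}$ and $e'$ ``lie within a bounded multiple of $\delta$ of $[x,x']$'' and conclude cone membership; but metric proximity does not give cone membership --- you need angle bounds along the connecting paths, which is exactly what Proposition \ref{conetriangleplusfort} provides and what its $8\delta$ hypothesis (the one failing in this regime) is for. Moreover, routing through $[x,x']$ lands $z'$ in a cone around an edge of $[x,x']$, not around $\hat{e}$, and the conversion back to $Cone_{200\delta}(\hat{e})$ is additional chaining you never perform. The paper avoids this regime altogether: it splits on $d(v,\tilde{a})$ versus $8\delta$ (not on $d(z,u)$), anchoring the chain at the edge $f$ at distance exactly $8\delta$ from $v$ on $[v,\tilde{a}]$ so that Proposition \ref{conetriangleplusfort} always applies, and treating the case $d(v,\tilde{a})\le 8\delta$ through the triangle $[\tilde{x},\tilde{x'},\tilde{a}]$. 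Your case split could probably be made to work, but as written it is an outline of intent, not an argument, and it is the part you yourself flag as delicate.
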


\begin{proof}

 To control the number of classes in the decomposition, we will bound the number of $z'$ such that $d(x',z')=k'$ and $z'\in [x',a]$ and conclude thanks to Corollary \ref{denombrement relation dequivalence cones}. Let $e$ be the edge such that  $e \subset [x,z]$ and $z \in e$.\\

 Let assume that $d(v,\tilde{a})> 8\delta$. We denote by $f$ the edge such that $f \subset[v,\tilde{a}]$ such that $d(f,v)=8\delta$. We remark that $d(z',f)< 8\delta$. Since $z'\in [x',\tilde{a}]$ and $f \subset [x',tilde{a}]$, according to Theorem \ref{formenormaledestriangles}, angles on the side of the triangle from $z'$ to $f$ are bounded by $100\delta$, thus $z'\in Cone_{100\delta}(f)$. According to Proposition \ref{conetriangleplusfort}, there exists an edge $f'$ such that $f' \subset [x,a]$ at the same distance from $a$ than $f$ such that $f \in Cone_{50\delta}(f')$. Therefore we remark that $f'$ is an edge of $[u,\tilde{a}]\subset [\tilde{x},\tilde{a}]$ and such that $d(e,f')=d(z,f')\le d(u,f')\le 8 \delta$ since $d(u,f')=d(u,a)-d(a,f')=d(v,a)-d(a,f)=d(v,f)=8\delta$. Thus $d(z',e)\le d(z',f)+d(f,f')+d(f',e) \le 66\delta$. A new use of Theorem \ref{formenormaledestriangles} gives the fact that every angle on this path from $z'$ to $e$ is bounded by $100\delta$ and so we have that $z'\in Cone_{100\delta}(e)$. The edge $e$ depends on the choice of the triangle $[x,x',a]$, therefore it depends on $a$, we need to include $z'$ in a set that does not depend on $a$. Then to conclude, we use Theorem \ref{conetriangleplusfort} in the degenerate triangle with the edges $[x,u]$ and the geodesic between $x$ and $u$ which contains $\hat{e}$ to remark that $e\in Cone_{50\delta}(\hat{e})$ and we conclude that $z' \in Cone_{150\delta}(\hat{e})$. \\

Let assume that $d(v,\tilde{a})\le 8\delta$. We denote $f$ the edge such that $f \subset[\tilde{a},x']$ and $\tilde{a} \in f$. According to Theorem \ref{formenormaledestriangles}, $z'\in Cone_{100\delta}(f)$ since $z'\in [\tilde{a},\tilde{x}]$ and $f$ such that $d(z',f) \le 8\delta$. Applying Theorem \ref{formenormaledestriangles} to the triangle  $  [ \tilde{x},\tilde{x'}, \tilde{a}]$ gives the fact that $f \in Cone_{50\delta}(f')$ where $f'$ is the edge such that $f' \subset [x,\tilde{a}]$ and $ \tilde{a} \in f'$. According to the assumption $d(v,\tilde{a})\le 8\delta$, we have $d(f,u)\le 8\delta$, then $d(f,z)\le 8\delta$. Furthermore $z \in [u,\tilde{a}]$ then angles on the side of the triangle from $f$ to $z$ are bounded by $100 \delta$ and $f\in Cone_{100\delta}(e)$. In the same way as the previous case, we have $z' \in Cone_{150\delta}(e)$ and by application of Proposition \ref{conetriangleplusfort}, we obtain $z' \in Cone_{200\delta}(\hat{e})$. \\

In both cases, we have $z' \in Cone_{200\delta}(\hat{e})$. By application of Proposition \ref{conereldeq}, we can finally prove the proposition and control the number of classes $I_{i'}^{n',k',x'}$ in the decomposition.

 \end{proof}

 We will now do the reverse counting.

 \begin{proposition} \label{denombrement inverse cas z dans u atilde}

Let $a \in I_{i}^{n,k,x}$ under the assumptions of Proposition \ref{z entre u et atilde } and $I_{i'}^{n',k',x'}$ the associated class chosen like in Proposition \ref{z entre u et atilde }. We denote $z'$ the point on $B(x',k')$ at minimal distance of $a$ on the triangle, i.e. the point on the side $[x',a]$ of the triangle such that $d(x',z')=k'$ and $d(z',a)=n'-k'$.\\
Then for any edge $\hat{f} \subset I(z',x')$ such that $z'\in \hat{f}$, we have that $z \in Cone_{201\delta}(\hat{f})$. \\

The equivalence class $I_{i'}^{n',k',x'}$ appears in the decomposition of at most $L|Cone_{201\delta}(\hat{f})|$ classes $I_{i}^{n,k,x}$ chosen like in Proposition \ref{z entre u et atilde }.

\end{proposition}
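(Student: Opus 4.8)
The plan is to deduce the statement from the \emph{forward} count already carried out in the proof of Proposition~\ref{denombrement cas où z dans [u,tilde a] et tilde a loin de z}, by reading the cone path it produces backwards and then splicing the prescribed edge $\hat{f}$ in front of it. Recall that for the very pair $(z,z')$ attached to $a$ in this sub-case, that proof exhibits $z'\in Cone_{200\delta}(\hat{e})$ for the edge $\hat{e}\subset I(x,z)$ with $z\in\hat{e}$; concretely it produces a path $\hat{e}=E_{0},E_{1},\dots,E_{M}$ with $M\le 200\delta$, $z'\in E_{M}$ and consecutive angles at most $200\delta$, whose final portion reaching $z'$ runs along the side $[x',a]$ of the normal-form triangle, so that $E_{M}$ is one of the two edges of $[x',a]$ meeting at $z'$. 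Since a cone path may be read in reverse with the same length and angle bounds (angles being symmetric), the sequence $E_{M},E_{M-1},\dots,E_{0}=\hat{e}$ joins an edge through $z'$ to an edge through $z$. The whole difficulty is thus to prepend the given edge $\hat{f}$ at the common vertex $z'$ while keeping the turning angle controlled.

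To bound the turn at $z'$, I would first record that $z'$ lies on the open segment $]\tilde{x'},\tilde{a}[$ of $[x',a]$: the hypothesis $k'=d(x',z)+3\delta\le d(x',\tilde{a})$ of Proposition~\ref{z entre u et atilde } places $z'$ before $\tilde{a}$, and since $z$ lies beyond the quasi-centre on $[x,a]$ the $3\delta$ shift pushes $z'$ past $\tilde{x'}$ (the boundary case where $z'$ is close to $\tilde{x'}$, hence already on the coinciding segment $[x',\tilde{x'}]\subset[x',x]$, is treated exactly like the $d(v,\tilde{a})\le 8\delta$ split of the forward proof). Granting this, the last bullet of Theorem~\ref{formenormaledestriangles} bounds by $100\delta$ the angle at $z'$ between the two edges of $[x',a]$ meeting there; write $e^{-}$ for the one pointing towards $x'$. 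As $\hat{f}$ and $e^{-}$ are both initial edges of geodesics from $z'$ to $x'$, the same degenerate-triangle argument via Proposition~\ref{conetriangleplusfort} that gives $e\in Cone_{50\delta}(\hat{e})$ in the forward proof yields $\measuredangle_{z'}(\hat{f},e^{-})\le 50\delta$. The angle triangle inequality (Proposition~\ref{Triangle inequality}) at $z'$ then gives
$$\measuredangle_{z'}(\hat{f},E_{M})\le \measuredangle_{z'}(\hat{f},e^{-})+\measuredangle_{z'}(e^{-},E_{M})\le 50\delta+100\delta=150\delta,$$
since $E_{M}\in\{e^{-},e^{+}\}$. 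Hence $\hat{f},E_{M},E_{M-1},\dots,E_{0}$ is an admissible cone path from $\hat{f}$ to an edge containing $z$, of length $M+1\le 201\delta$ and with all angles at most $150\delta$, which is precisely $z\in Cone_{201\delta}(\hat{f})$.

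The counting assertion is then immediate and mirrors the forward enumeration. For a fixed class $I_{i'}^{n',k',x'}$ the minimal point $z'$, hence any edge $\hat{f}\subset I(z',x')$ through $z'$, is determined; by the first part, every class $I_{i}^{n,k,x}$ of this sub-case into whose decomposition $I_{i'}^{n',k',x'}$ enters has its minimal point $z$ inside $Cone_{201\delta}(\hat{f})$. There are at most $|Cone_{201\delta}(\hat{f})|$ choices for such a vertex $z$, a number bounded independently of $\hat{f}$ by uniform fineness (Proposition~\ref{cardinal cones}), and for each fixed $z$ Corollary~\ref{denombrement relation dequivalence cones} allows at most $L=L(\delta,\varphi)$ classes $I_{i}^{n,k,x}$ admitting $z$ as minimal point. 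Multiplying, $I_{i'}^{n',k',x'}$ appears in at most $L\,|Cone_{201\delta}(\hat{f})|$ classes $I_{i}^{n,k,x}$ chosen as in Proposition~\ref{z entre u et atilde }.

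I expect the genuine obstacle to be the splicing step, namely verifying that $z'\in\,]\tilde{x'},\tilde{a}[$ so that the normal-form angle bound of Theorem~\ref{formenormaledestriangles} applies at $z'$, together with the clean treatment of the degenerate boundary sub-case; once the turn at $z'$ is under control, reversing the forward path and performing the enumeration are routine.
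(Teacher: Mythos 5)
Your overall strategy is the same as the paper's: reverse the cone path produced by the forward count (Proposition \ref{denombrement cas où z dans [u,tilde a] et tilde a loin de z}), splice at $z'$ using the angle bound of Theorem \ref{formenormaledestriangles} together with a degenerate-triangle comparison between $\hat{f}$ and the side $[z',x']$, then conclude by Corollary \ref{denombrement relation dequivalence cones}. Two parts of your write-up are solid: the verification that $z'$ lies in $]\tilde{x'},\tilde{a}]$ (indeed any vertex lying on both a geodesic from $x'$ to $x$ and a geodesic from $x'$ to $a$ is at distance at most $(x,a)_{x'}$ from $x'$, so $d(x',\tilde{x'})\le d(x',v)\le d(x',z)<k'$; the paper merely asserts this), and the final counting paragraph.

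The gap is the splicing step. You claim that Proposition \ref{conetriangleplusfort}, applied to the bigon formed by $[z',x']\subset[x',a]$ and a geodesic containing $\hat{f}$, ``yields $\measuredangle_{z'}(\hat{f},e^{-})\le 50\delta$''. What that proposition yields is the cone containment $e^{-}\in Cone_{50\delta}(\hat{f})$, and cone membership does not bound the angle at a prescribed vertex: a cone path from $\hat{f}$ to $e^{-}$ is a chain of edges with small angles at various intermediate vertices, and nothing prevents that chain, or the short paths witnessing those intermediate angles, from passing through $z'$ itself, whereas $\measuredangle_{z'}(\hat{f},e^{-})$ is by definition a distance in $X\setminus\{z'\}$. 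So the inequality $\measuredangle_{z'}(\hat{f},E_{M})\le 150\delta$, on which your whole $201\delta$ budget rests, is not established. (The angle bound is in fact true, but proving it requires re-running the proof of Proposition \ref{conetriangleplusfort} via Lemma \ref{lemme de Bridson} at the apex $z'$: there the transition segment $\sigma_{0}$ degenerates, the resulting loop of length at most $22\delta$ passes through $z'$ exactly once, and going around it the other way gives a path between the endpoints of $\hat{f}$ and $e^{-}$ avoiding $z'$; this extra argument is what your citation silently replaces.) The paper's proof is arranged precisely so as to never need an angle bound between $\hat{f}$ and a side edge: it reverses only the shorter sub-path from the edge $e$ at $z$ (length $150\delta$), prepends the side edge $f$ at $z'$ using the $100\delta$ normal-form angle --- a genuine angle at $z'$ between two triangle-side edges --- to get $z\in Cone_{151\delta}(f)$, and then composes cones, using $f\in Cone_{50\delta}(\hat{f})$, which is exactly the statement of Proposition \ref{conetriangleplusfort}, to reach $Cone_{201\delta}(\hat{f})$. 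Note that if you repair your argument by pure cone composition instead (prepending the cone path $\hat{f}\leadsto e^{\pm}$ to your full reversed $200\delta$ path), the budget becomes roughly $300\delta$: still enough for the purposes of Proposition \ref{propriété clé décomposition}, where only some uniform constant matters, but not the $201\delta$ of the statement you set out to prove, so your stated constant genuinely depends on the unjustified angle estimate.
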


\begin{proof}
The proof of this fact is kind of a reciprocal of Proposition \ref{denombrement cas où z dans [u,tilde a] et tilde a loin de z}. In Proposition \ref{denombrement cas où z dans [u,tilde a] et tilde a loin de z}, we proved that $z' \in Cone_{150\delta}(e)$ where $e$ is the edge such that $e \in [\tilde{a},z]$ and $z\in e$. We denote by $\bar{f}$ the edge of the triangle such that $\bar{f} \subset [z',\tilde{a}]$ and $z' \in \bar{f}$. From the fact that $z' \in Cone_{150\delta}(e) $, we know that there exists a path from $z'$ to $e$ of length bounded by $150\delta$ and where every angle is bounded by $150\delta$. Since $z\in e$ and $z' \in \bar{f}$, there exists a path from $z$ to $\bar{f}$ of length bounded by $150\delta$ and where every angle is bounded by $150\delta$, therefore $z \in Cone_{150\delta}(\bar{f})$.  \\

We denote by $f$ the edge of the triangle such that $f \subset [z',x']$ and $z' \in f$. Since $z' \in [\tilde{a},\tilde{x'}]$, according to Theorem \ref{formenormaledestriangles}, we have that $\measuredangle_{v}(\bar{f},f) \leq 100\delta$. From this fact, we deduce that $z \in Cone_{151\delta}(f)$.\\

In Proposition \ref{z entre u et atilde }, we proved that $z' \in Cone_{200\delta}(\hat{e})$, where $\hat{e}$ is an edge such that $\hat{e} \subset I(z,x)$ and $z\in \hat{e}$. Therefore, $z \in Cone_{200\delta}(e)$, where $e$ is the edge such that $e \in [\tilde{a},z']$ and $z'\in e$.\\

To conclude, we have to use Proposition \ref{conetriangleplusfort} in the degenerate triangle $([z',x'],\gamma)$, where $\gamma$ is a geodesic from $z'$ to $x'$, which contains $\hat{f}$. Then we know that $f' \in Cone_{50\delta}(\hat{f})$ and thus $z'\in Cone_{201 \delta}(\hat{f})$.\\

We conclude the proof of the Proposition by an application of Corollary \ref{denombrement relation dequivalence cones}.

\end{proof}

\subsection{ The cases where $\tilde{a}$ is too close from $v$ or too close from $z$ } \label{subsec : quatrième cas}

In this subsection, we assume that $z\in [x,u]$ and $d(x',\tilde{a}) < d(x',v) + 8\delta$ or that $z\in [u,\tilde{a}]$, $z\neq u$ and $d(x',\tilde{a}) < d(x',z)+3\delta$. We set $n'=d(x',a)$, $k'=d(x',\tilde{a})$ and $I_{i'}^{n',k',x'}$ the equivalence class of $a$ for $x'$, $n'$ and $k'$. By definition, we have $a \in I_{i'}^{n',k',x'}$, but since $\tilde{a}$ is too close from $z$, we do not have $I_{i'}^{n',k',x'} \subset I_{i}^{n,k,x}$ in general. We will use the following property of points in $I_{i}^{n,k,x}$:\\

\begin{proposition} \label{tildea sur toute geod}

For every $y \in B(x,k)$, $\tilde{a} \in I(a,y)$.

\end{proposition}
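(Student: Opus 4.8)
The plan is to reduce the statement to a single angle estimate at the vertex $\tilde{a}$ and then invoke Proposition \ref{angletoutegéod}. Indeed, that proposition tells us that as soon as $\measuredangle_{\tilde{a}}(a,y) > 12\delta$, every geodesic between $a$ and $y$ must pass through $\tilde{a}$, which is precisely the claim $\tilde{a} \in I(a,y)$. So the whole argument will consist in bounding $\measuredangle_{\tilde{a}}(a,y)$ from below for an arbitrary $y \in B(x,k)$.

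First I would clear away the degenerate situations. If $\tilde{a} = a$ (the fallback value of Definition \ref{pointloingrandangle}) or if $y = \tilde{a}$, then $\tilde{a}$ is an endpoint of any geodesic $[a,y]$ and the conclusion is immediate. Hence I may assume $\tilde{a} \notin \{a,y\}$; in particular the defining inequality $\measuredangle_{\tilde{a}}(a,x) > 50\delta$ from Definition \ref{pointloingrandangle} is available, and it forces $\tilde{a} \ne x$ as well, so every angle I write down is taken between a genuine pair of distinct vertices.

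The core is the estimate $\measuredangle_{\tilde{a}}(x,y) \le 12\delta$, which combined with the angle triangle inequality (Proposition \ref{Triangle inequality}) gives
$$\measuredangle_{\tilde{a}}(a,y) \ge \measuredangle_{\tilde{a}}(a,x) - \measuredangle_{\tilde{a}}(x,y) > 50\delta - 12\delta > 12\delta,$$
and then Proposition \ref{angletoutegéod} finishes the proof. To get $\measuredangle_{\tilde{a}}(x,y) \le 12\delta$ I would show that $\tilde{a}$ lies on no geodesic from $x$ to $y$, i.e. $\tilde{a} \notin I(x,y)$, and apply the contrapositive of Proposition \ref{angletoutegéod}. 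This is exactly where the hypotheses of the subsection are used: since $z$ lies in $[x,u]$ or in $[u,\tilde{a}]$, it lies on $[x,\tilde{a}]$, whence $d(x,y) \le k = d(x,z) \le d(x,\tilde{a})$. If $\tilde{a}$ were on some geodesic between $x$ and $y$, then $d(x,y) = d(x,\tilde{a}) + d(\tilde{a},y) \ge d(x,\tilde{a}) \ge d(x,y)$, forcing $d(\tilde{a},y) = 0$, i.e. $y = \tilde{a}$, contrary to our assumption. Therefore $\tilde{a} \notin I(x,y)$, as desired.

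I do not expect a genuine obstacle here; the only point demanding care is the borderline equality $d(x,y) = d(x,\tilde{a})$ (which occurs exactly when $z = \tilde{a}$), where the distance comparison by itself does not exclude $\tilde{a} \in I(x,y)$. The refinement that $d(\tilde{a},y) = 0$ implies $y = \tilde{a}$ still rules it out once $y \ne \tilde{a}$ has been assumed, so no separate case is needed. Beyond this bookkeeping and the verification that $z \in [x,\tilde{a}]$ in both configurations of the subsection, the proof is just one use of the angle triangle inequality sandwiched between two applications of Proposition \ref{angletoutegéod}.
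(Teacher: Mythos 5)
Your proof is correct and follows essentially the same route as the paper's: show $\tilde{a} \notin I(x,y)$ by the distance comparison $d(x,y) \le k \le d(x,\tilde{a})$, deduce $\measuredangle_{\tilde{a}}(x,y) \le 12\delta$ from the contrapositive of Proposition \ref{angletoutegéod}, then combine the angle triangle inequality with $\measuredangle_{\tilde{a}}(a,x) > 50\delta$ to get $\measuredangle_{\tilde{a}}(a,y) > 12\delta$ and conclude by Proposition \ref{angletoutegéod}. If anything, your handling of the degenerate cases $\tilde{a}=a$ and $y=\tilde{a}$ is slightly more careful than the paper, whose strict inequality chain $d(x,\tilde{a})+d(\tilde{a},y) > d(x,\tilde{a}) > k$ tacitly assumes $y \neq \tilde{a}$ and $z \neq \tilde{a}$.
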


\begin{proof}
Let $y \in B(x,k)$. We have that $\tilde{a} \notin I(x,y)$, since :

$$\begin{aligned} 
            d(x,\tilde{a})+d(\tilde{a},y)& > d(x,\tilde{a})\\      
            & > k \\
            &\geq d(x,y).
\end{aligned}$$\\

Then by contraposition with Proposition \ref{angletoutegéod}, we deduce that $ \measuredangle_{\tilde{a}}(x,y) \leq 12\delta$ . Therefore, since $\measuredangle_{\tilde{a}}(a,x) > 50 \delta$ according to Definition \ref{pointloingrandangle} of $\tilde{a}$, we have : 
$$\begin{aligned} 
            \measuredangle_{\tilde{a}}(a,y)
            & \geq \measuredangle_{\tilde{a}}(a,x) - \measuredangle_{\tilde{a}}(x,y)\\      
            & > 50\delta-12\delta \\
            &\geq 38\delta \\
            & > 12\delta.
\end{aligned}$$

Then by application of Proposition \ref{angletoutegéod}, we deduce that $\tilde{a} \in I(a,y)$. \\

\end{proof}

The following proposition about points in $I_{i}^{n,k,x}$  characterize the points of  $I_{i'}^{n',k',x'}$, which belong to $I_{i}^{n,k,x}$.

\begin{proposition} \label{caracterisationpointpartieenx'dansx}
Let $a' \in I_{i'}^{n',k',x'}$. Then $a' \in I_{i}^{n,k,x}$ if and only if for all $y \in B(x,k) $, $\tilde{a} \in I(a',y)$.

\end{proposition}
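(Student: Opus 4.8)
The plan is to reduce everything to two facts already in hand: that $\tilde a$ itself lies in the ball $B(x',k')$ (since by definition $k'=d(x',\tilde a)$), and Proposition \ref{tildea sur toute geod}, which says that $\tilde a\in I(a,y)$, equivalently $d(a,y)=d(a,\tilde a)+d(\tilde a,y)$, for every $y\in B(x,k)$. The first fact, together with $a,a'\in I_{i'}^{n',k',x'}$ (i.e.\ $a$ and $a'$ have the same distances to every point of $B(x',k')$), gives the single identity $d(a',\tilde a)=d(a,\tilde a)$. The whole proposition will then follow by rewriting the defining condition of $I_{i}^{n,k,x}$ --- namely that $a'$ has the same distances to all of $B(x,k)$ as $a$ --- in terms of the geodesic condition $\tilde a\in I(a',y)$.

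For the forward implication, I would assume $a'\in I_{i}^{n,k,x}$ and fix $y\in B(x,k)$, then chain three equalities: $d(a',y)=d(a,y)$ because $a'\in I_{i}^{n,k,x}$; then $d(a,y)=d(a,\tilde a)+d(\tilde a,y)$ by Proposition \ref{tildea sur toute geod}; and finally $d(a,\tilde a)=d(a',\tilde a)$ because $a'\in I_{i'}^{n',k',x'}$ and $\tilde a\in B(x',k')$. Combining these yields $d(a',y)=d(a',\tilde a)+d(\tilde a,y)$, which is exactly $\tilde a\in I(a',y)$.

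For the converse, I would assume $\tilde a\in I(a',y)$ for all $y\in B(x,k)$ and first apply this to $y=x\in B(x,k)$ to check that $a'\in S_{x}^{n}$: indeed $d(x,a')=d(a',\tilde a)+d(\tilde a,x)=d(a,\tilde a)+d(\tilde a,x)=d(a,x)=n$, where the last equality again uses Proposition \ref{tildea sur toute geod} with $y=x$. Once $a'$ is known to lie on the correct sphere, the very same three-term computation run in reverse gives $d(a',y)=d(a,y)$ for every $y\in B(x,k)$, so $a'$ is equivalent to $a$ and hence $a'\in I_{i}^{n,k,x}$.

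No genuine geometric difficulty arises here; the content is entirely carried by Proposition \ref{tildea sur toute geod}. The only points demanding care are purely bookkeeping: recognizing that membership in the $x'$-class controls precisely the distance to $\tilde a$ (which needs $\tilde a\in B(x',k')$), and remembering, in the backward direction, to verify $d(x,a')=n$ first, so that the equivalence relation defining $I_{i}^{n,k,x}$ actually applies to $a'$.
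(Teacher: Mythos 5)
Your proposal is correct and uses exactly the same two ingredients as the paper's proof: Proposition \ref{tildea sur toute geod} and the identity $d(a,\tilde a)=d(a',\tilde a)$ coming from $a,a'\in I_{i'}^{n',k',x'}$ together with $\tilde a\in B(x',k')$. The paper merely packages the two directions into a single chain $d(a,y)=d(a',\tilde a)+d(\tilde a,y)\ge d(a',y)$ whose equality case is precisely $\tilde a\in I(a',y)$, whereas you spell out the forward and backward implications separately (with the harmless extra check that $d(x,a')=n$, which is in fact the $y=x$ instance of the general computation); the content is identical.
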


\begin{proof}
For all $y \in B(x,k) $ :

$$\begin{aligned} 
      d(a,y)      
    & = d(a,\tilde{a})+d(\tilde{a},y) ~ (\tilde{a} \in I(a,y) ~ \text{according to Proposition \ref{tildea sur toute geod}} ) \\
    & = d(a',\tilde{a})+d( \tilde{a},y) ~ ( d(a,\tilde{a})=d(a',\tilde{a}) ~ \text{since} ~ a,a' \in I_{i'}^{n',k',x'} ~ \text{and} ~ k'=d(x',\tilde{a}) ) \\      
    & \geq d(a',y).
\end{aligned}$$

The equality case holds if and only if $\tilde{a} \in I(a',y)$ and we proved the proposition.\\
    
\end{proof}

\begin{corollary} \label{condition suffisante pointdepartieenx' dans partie en x}

Let $a' \in I_{i'}^{n',k',x'}$. If $\measuredangle_{\tilde{a}}(a',x) > 24 \delta$ then $a' \in I_{i}^{n,k,x} $.

\end{corollary}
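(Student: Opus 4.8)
The plan is to reduce to the characterization already proved in Proposition \ref{caracterisationpointpartieenx'dansx}: it suffices to show that, under the hypothesis $\measuredangle_{\tilde{a}}(a',x) > 24\delta$, we have $\tilde{a} \in I(a',y)$ for every $y \in B(x,k)$. So I fix an arbitrary $y \in B(x,k)$ and try to produce a lower bound on $\measuredangle_{\tilde{a}}(a',y)$ large enough to invoke Proposition \ref{angletoutegéod}.

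First I would recall the estimate that was established inside the proof of Proposition \ref{tildea sur toute geod}. For any $y \in B(x,k)$ one has $d(x,\tilde{a}) > k \geq d(x,y)$, hence $\tilde{a} \notin I(x,y)$, and by the contrapositive of Proposition \ref{angletoutegéod} this forces $\measuredangle_{\tilde{a}}(x,y) \leq 12\delta$. This is the only geometric input beyond the hypothesis, and it holds uniformly over all $y \in B(x,k)$.

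Next I would combine this with the angle triangle inequality (Proposition \ref{Triangle inequality}) at the vertex $\tilde{a}$, writing
$$ \measuredangle_{\tilde{a}}(a',y) \geq \measuredangle_{\tilde{a}}(a',x) - \measuredangle_{\tilde{a}}(x,y) > 24\delta - 12\delta = 12\delta. $$
Since $\measuredangle_{\tilde{a}}(a',y) > 12\delta$, Proposition \ref{angletoutegéod} guarantees that every geodesic between $a'$ and $y$ passes through $\tilde{a}$, that is, $\tilde{a} \in I(a',y)$. As $y \in B(x,k)$ was arbitrary, Proposition \ref{caracterisationpointpartieenx'dansx} then yields $a' \in I_{i}^{n,k,x}$, which is the desired conclusion.

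I do not expect a genuine obstacle here: the corollary is essentially a quantitative packaging of the characterization, and the factor $24\delta$ in the hypothesis is chosen precisely so that subtracting the uniform bound $12\delta$ on $\measuredangle_{\tilde{a}}(x,y)$ still leaves strictly more than $12\delta$, the threshold needed for Proposition \ref{angletoutegéod}. The only point deserving care is to note that the bound $\measuredangle_{\tilde{a}}(x,y) \leq 12\delta$ is available for \emph{all} $y \in B(x,k)$ simultaneously (it depends only on $d(x,\tilde{a}) > k$ and not on $y$), so that the argument applies uniformly and the hypothesis of Proposition \ref{caracterisationpointpartieenx'dansx} is verified for every such $y$.
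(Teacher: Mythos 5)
Your proof is correct and follows essentially the same route as the paper: the angle triangle inequality at $\tilde{a}$ combined with the bound $\measuredangle_{\tilde{a}}(x,y) \leq 12\delta$, then Proposition \ref{angletoutegéod} and Proposition \ref{caracterisationpointpartieenx'dansx}. The only difference is cosmetic: you explicitly re-derive the bound $\measuredangle_{\tilde{a}}(x,y) \leq 12\delta$ (which the paper leaves implicit, having established it in the proof of Proposition \ref{tildea sur toute geod}), which makes your write-up slightly more self-contained.
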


\begin{proof}
Let $y \in B(x,k)$, we have :

$$\begin{aligned} 
            \measuredangle_{\tilde{a}}(a',y)
            & \geq \measuredangle_{\tilde{a}}(a',x) - \measuredangle_{\tilde{a}}(x,y)\\      
            & > 24\delta-12\delta \\
            & \geq 12\delta . 
\end{aligned}$$

Then by application of Proposition \ref{angletoutegéod}, we deduce that $\tilde{a} \in I(a',y)$. Then by application of Proposition \ref{caracterisationpointpartieenx'dansx} we have that $a' \in I_{i}^{n,k,x}$.

\end{proof}

\begin{proposition} \label{lapartieprivéedecertaines est incluse}

   We set $(I_{l}^{n',k'+1,x'})_{l \in L}$ the equivalence classes of points in $\{ a' \in I_{i'}^{n',k',x'} ~|~  \measuredangle_{\tilde{a}}(x,a') \le 24\delta \} $ with respect to $(n',k'+1,x')$, with the convention that $I_{l}^{n',n'+1,x'}= \emptyset$. Then :

   $$I_{i'}^{n',k',x'} - \bigcupdot\limits_{ l \in L} I_{l}^{n',k'+1,x'} \subseteq I_{i}^{n,k,x} .$$

\end{proposition}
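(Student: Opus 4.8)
The plan is to obtain this inclusion as a purely set-theoretic consequence of Corollary \ref{condition suffisante pointdepartieenx' dans partie en x}, which already carries all the geometric content: every $a' \in I_{i'}^{n',k',x'}$ with $\measuredangle_{\tilde{a}}(a',x) > 24\delta$ lies in $I_{i}^{n,k,x}$. So the strategy is to show that every point surviving the removal of the classes $I_{l}^{n',k'+1,x'}$ has large angle, and then invoke the corollary.

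First I would record that, by construction, every point of the set $B := \{ a' \in I_{i'}^{n',k',x'} \mid \measuredangle_{\tilde{a}}(x,a') \le 24\delta \}$ lies in one of the classes $I_{l}^{n',k'+1,x'}$. Indeed, these classes are by definition the $(n',k'+1,x')$-equivalence classes of the points of $B$, so each $a' \in B$ belongs to its own $(n',k'+1,x')$-class, which is one of the $I_{l}^{n',k'+1,x'}$; hence $B \subseteq \bigcupdot_{l \in L} I_{l}^{n',k'+1,x'}$. Moreover, since $k'+1 > k'$, the $(n',k'+1,x')$-partition refines the $(n',k',x')$-partition, as noted just after the definition of the classes $I_{i}^{n,k,x}$; thus each $I_{l}^{n',k'+1,x'}$ is genuinely contained in $I_{i'}^{n',k',x'}$ and the difference appearing on the left-hand side is well-defined.

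Taking complements inside $I_{i'}^{n',k',x'}$ then yields
$$ I_{i'}^{n',k',x'} - \bigcupdot_{l \in L} I_{l}^{n',k'+1,x'} \subseteq I_{i'}^{n',k',x'} \setminus B = \{ a' \in I_{i'}^{n',k',x'} \mid \measuredangle_{\tilde{a}}(x,a') > 24\delta \}. $$
Using the symmetry $\measuredangle_{\tilde{a}}(x,a') = \measuredangle_{\tilde{a}}(a',x)$ and Corollary \ref{condition suffisante pointdepartieenx' dans partie en x}, every element of the right-hand set belongs to $I_{i}^{n,k,x}$, which is exactly the claimed inclusion.

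Since all the geometric work is done by Corollary \ref{condition suffisante pointdepartieenx' dans partie en x} (and, behind it, Propositions \ref{tildea sur toute geod} and \ref{caracterisationpointpartieenx'dansx}), I do not expect a genuine obstacle here: the argument is essentially bookkeeping. The only points demanding care are that the classes $I_{l}^{n',k'+1,x'}$ cover all of $B$ — for which the containment $B \subseteq \bigcupdot_{l} I_{l}^{n',k'+1,x'}$ suffices, so no finer description of these classes is needed for \emph{this} inclusion — and that the set difference on the left is consistently understood as taken within $I_{i'}^{n',k',x'}$, which the refinement property guarantees.
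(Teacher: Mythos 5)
Your proposal is correct and follows essentially the same route as the paper's own proof: the paper likewise observes that the classes $I_{l}^{n',k'+1,x'}$ cover the set $\{ a' \in I_{i'}^{n',k',x'} \mid \measuredangle_{\tilde{a}}(x,a') \le 24\delta \}$ by construction, so removing their union removes at least this set, and then applies Corollary \ref{condition suffisante pointdepartieenx' dans partie en x} to the remaining points. Your extra remarks on the refinement of the partitions and the symmetry of the angle are harmless elaborations of bookkeeping the paper leaves implicit.
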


\begin{proof}

According to Corollary \ref{condition suffisante pointdepartieenx' dans partie en x}, we know that $I_{i'}^{n',k',x'} - \{ a' \in I_{i'}^{n',k',x'} ~|~  \measuredangle_{\tilde{a}}(x,a') \le 24\delta \} \subseteq I_{i}^{n,k,x}$. We define $(I_{l}^{n',k'+1,x'})_{l \in L}$ as the equivalence classes of points in $\{ a' \in I_{i'}^{n',k',x'} ~|~  \measuredangle_{\tilde{a}}(x,a') \le 24\delta \} $ with respect to $(n',k'+1,x')$, with the convention that $I_{l}^{n',n'+1,x'}= \emptyset$. Thereby we have that : 

 $$I_{i'}^{n',k',x'} - \bigcupdot\limits_{ l \in L} I_{l}^{n',k'+1,x'} \subseteq I_{i'}^{n',k',x'} - \{ a' \in I_{i'}^{n',k',x'} ~|~  \measuredangle_{\tilde{a}}(x,a') \le 24\delta \} \subseteq I_{i}^{n,k,x} $$
    
\end{proof}

We still have to prove that $a \in I_{i'}^{n',k',x'} - \bigcupdot\limits_{ l \in L} I_{l}^{n',k'+1,x'}  $
and to enumerate precisely the number of classes, which appear in the decomposition.
The following property will be useful to prove both points.

\begin{proposition}\label{z'' est à angle borné de atilde }
Let $a' \in I_{i'}^{n',k',x'}$ such that $\measuredangle_{\tilde{a}}(x,a') \le 24\delta $. We denote by $I_{l}^{n',k'+1,x'}$ the equivalence class of $a'$ with respect to $(n',k'+1,x')$. There exists a point $z''$ such that $d(x',z'')=k'+1$,  $d(z'',a')=n'-(k'+1)$ and $\tilde{a} \in I(x',z")$, in other words $z''$ is a vertex of $B(x',k'+1)$ at minimal distance of $a'$ such that $\tilde{a} \in I(x',z")$.\\
Furthermore, for all such $z''$, we have that $\measuredangle_{\tilde{a}}(x,z'') \le 24\delta $.
    
\end{proposition}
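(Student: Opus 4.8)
The plan is to first locate $\tilde{a}$ on a geodesic from $x'$ to $a'$, then produce $z''$ by stepping one vertex further along such a geodesic, and finally transfer the angle bound from $a'$ to $z''$ by exploiting that $z''$ is adjacent to $\tilde{a}$.

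First I would show that $\tilde{a} \in I(x',a')$. By Definition \ref{pointloingrandangle}, $\tilde{a}$ lies on every geodesic between $a$ and $x'$, so $\tilde{a} \in I(x',a)$ and hence $d(\tilde{a},a) = d(x',a) - d(x',\tilde{a}) = n' - k'$. Since $d(x',\tilde{a}) = k'$, the vertex $\tilde{a}$ belongs to $B(x',k')$; as $a$ and $a'$ lie in the same class $I_{i'}^{n',k',x'}$, they are at the same distance from every point of $B(x',k')$, so $d(\tilde{a},a') = d(\tilde{a},a) = n' - k'$. Therefore $d(x',a') = n' = k' + (n'-k') = d(x',\tilde{a}) + d(\tilde{a},a')$, which is exactly the statement $\tilde{a} \in I(x',a')$. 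Note that the hypothesis $\measuredangle_{\tilde{a}}(x,a') \le 24\delta$ presupposes $a' \neq \tilde{a}$, so $k' < n'$ and the construction below is non-degenerate. To obtain $z''$, I would then pick a geodesic from $x'$ to $a'$ through $\tilde{a}$ and let $z''$ be the vertex immediately after $\tilde{a}$, at distance $k'+1$ from $x'$; this gives $d(x',z'')=k'+1$, $d(z'',a')=n'-(k'+1)$, and $\tilde{a} \in I(x',z'')$, proving existence.

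For the last assertion, let $z''$ be \emph{any} vertex with $d(x',z'')=k'+1$, $d(z'',a')=n'-(k'+1)$ and $\tilde{a} \in I(x',z'')$. The condition $\tilde{a} \in I(x',z'')$ together with $d(x',\tilde{a})=k'$ forces $d(\tilde{a},z'')=1$, so $z''$ is adjacent to $\tilde{a}$; moreover $d(\tilde{a},z'') + d(z'',a') = 1 + (n'-k'-1) = n'-k' = d(\tilde{a},a')$, so $z'' \in I(\tilde{a},a')$ and the edge $e = \{\tilde{a},z''\}$ starts a geodesic from $\tilde{a}$ to $a'$. Since $z''$ is a neighbour of $\tilde{a}$, the only edge containing $\tilde{a}$ that lies on a geodesic from $\tilde{a}$ to $z''$ is $e$ itself. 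Hence, for every edge $e_1$ containing $\tilde{a}$ that lies on a geodesic from $\tilde{a}$ to $x$, the hypothesis $\measuredangle_{\tilde{a}}(x,a') \le 24\delta$ applied to the pair $(e_1,e)$ yields $\measuredangle_{\tilde{a}}(e_1,e) \le 24\delta$. By the definition of the angle $\measuredangle_{\tilde{a}}(x,z'')$ between the vertices $x$ and $z''$, this is precisely $\measuredangle_{\tilde{a}}(x,z'') \le 24\delta$.

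The step I expect to be most delicate is this final one: it hinges on reading the abuse-of-notation definition of $\measuredangle_{\tilde{a}}(x,z'')$ correctly, namely that an upper bound ``$\le 24\delta$'' must hold for \emph{all} admissible edge pairs, and on the observation that the adjacency of $z''$ to $\tilde{a}$ collapses the choice on the $z''$-side to the single edge $e$. Because $e$ simultaneously starts a geodesic towards $a'$, the bound already available for $a'$ transfers verbatim to $z''$, whereas if $z''$ were farther from $\tilde{a}$ one would have to control a whole family of initial edges and the argument would not be immediate.
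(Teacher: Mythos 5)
Your proposal is correct and follows essentially the same route as the paper: you establish $\tilde{a} \in I(x',a')$ by combining $\tilde{a} \in I(x',a)$ with the fact that $a$ and $a'$ have equal distances to the points of $B(x',k')$, construct $z''$ one step past $\tilde{a}$ on a geodesic from $x'$ to $a'$, and transfer the angle bound by noting that the unique edge $\{\tilde{a},z''\}$ extends to a geodesic from $\tilde{a}$ to $a'$, so every admissible edge pair for $\measuredangle_{\tilde{a}}(x,z'')$ is also admissible for $\measuredangle_{\tilde{a}}(x,a')$. Your write-up is in fact slightly more explicit than the paper's at the final step, where the paper only sketches why the geodesic from $\tilde{a}$ to $z''$ extends to one ending at $a'$.
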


\begin{proof}

By definition of the class $I_{i'}^{n',k',x'}$, we know that $a \in I_{i'}^{n',k',x'}$ and $k'=d(x',\tilde{a})$ and by Definition \ref{pointloingrandangle} of the point $\tilde{a}$, we know that $\tilde{a} \in I(a,x')$. Therefore, $\tilde{a}$ is a point of $B(x',k')$ at minimal distance of the class $I_{i'}^{n',k',x'}$, then $\tilde{a} \in I(a',x')$. Thus for every geodesic between $x'$ to $a'$ going through $\tilde{a}$, a point on this geodesic at distance $k'+1$ from $a'$ is a point $z''$ as in the definition.\\

To prove the second point, we come back to the definition of angle. We take an edge $e_{1}$  with $\tilde{a} \in e_{1}$ on a geodesic between $\tilde{a}$ and $x$ and an edge $e_{2}$ with $\tilde{a} \in e_{2}$ on a geodesic between $\tilde{a}$ and $z''$. Thanks to the first point the geodesic between $\tilde{a}$ and $z''$ could be extended to a geodesic between $\tilde{a}$ and $a'$. Since $\measuredangle_{\tilde{a}}(x,a') \le 24\delta $, one deduces that $\measuredangle_{\tilde{a}}(e_{1},e_{2}) \le 24\delta $ and then $\measuredangle_{\tilde{a}}(x,z'') \le 24\delta $ by definition of angles.

\end{proof}

\begin{corollary}
$a \notin \bigcupdot\limits_{ l \in L} I_{l}^{n',k'+1,x'}$.

\end{corollary}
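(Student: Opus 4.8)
The plan is to argue by contradiction. Suppose $a \in I_l^{n',k'+1,x'}$ for some $l \in L$. By construction this class is the $(n',k'+1,x')$-equivalence class of a point $a' \in I_{i'}^{n',k',x'}$ with $\measuredangle_{\tilde a}(x,a') \le 24\delta$; being in the same class, $a$ and $a'$ have equal distances to every point of $B(x',k'+1)$. I would aim to deduce $\measuredangle_{\tilde a}(x,a) \le 24\delta$, which contradicts Definition \ref{pointloingrandangle}, since $\tilde a$ satisfies $\measuredangle_{\tilde a}(x,a) = \measuredangle_{\tilde a}(a,x) > 50\delta$. The degenerate case $\tilde a = a$ forces $k' = n'$, so each $I_l^{n',n'+1,x'}$ is empty by the stated convention and there is nothing to prove.

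First I would reduce the claim to a statement about first edges of geodesics out of $\tilde a$. Fix any geodesic from $\tilde a$ to $a$ and let $w$ be its initial vertex, so $w$ is adjacent to $\tilde a$ and lies on this geodesic. Since $\tilde a \in I(x',a)$ by Definition \ref{pointloingrandangle}, elementary interval computations give $d(x',w) = k'+1$, $d(w,a) = n'-(k'+1)$ and $\tilde a \in I(x',w)$. Because $w \in B(x',k'+1)$ and $a,a'$ agree on all distances to $B(x',k'+1)$, we also get $d(w,a') = d(w,a) = n'-(k'+1)$. Hence $w$ is exactly one of the points $z''$ produced by Proposition \ref{z'' est à angle borné de atilde } for $a'$, and that proposition yields $\measuredangle_{\tilde a}(x,w) \le 24\delta$. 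As $w$ is adjacent to $\tilde a$, the single edge $\{\tilde a, w\}$ is the only edge starting a geodesic from $\tilde a$ to $w$, so this first edge of our chosen geodesic makes angle at most $24\delta$ with every edge pointing towards $x$.

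Since nothing in the previous step singled out a particular geodesic, every edge that can begin a geodesic from $\tilde a$ to $a$ makes angle at most $24\delta$ with the $x$-direction at $\tilde a$. By the $\le$-convention in the definition of the angle between two vertices (which quantifies over all geodesics and their initial edges), this is precisely $\measuredangle_{\tilde a}(x,a) \le 24\delta$, the contradiction sought.

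I expect the main difficulty to be twofold and to lie entirely in the second paragraph: first, the geodesic-interval bookkeeping needed to certify that $w$ meets all three defining conditions of a point $z''$ in Proposition \ref{z'' est à angle borné de atilde } — in particular $\tilde a \in I(x',w)$ and $d(w,a') = n'-(k'+1)$, where the transfer from $a$ to $a'$ rests on the level-$(k'+1)$ equivalence; and second, respecting the quantifier structure of the angle convention, since the bound must be established for the initial edge of every geodesic $\tilde a \to a$ — not merely for one witnessing edge — in order to promote the edgewise estimate to the vertex inequality $\measuredangle_{\tilde a}(x,a) \le 24\delta$.
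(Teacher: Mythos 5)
Correct, and essentially the paper's own argument: both proofs rest on the fact that a neighbour $w$ of $\tilde{a}$ on a geodesic toward $a$ satisfies $d(x',w)=k'+1$, $d(w,a)=n'-(k'+1)$ and $\tilde{a}\in I(x',w)$, transfer these distances to $a'$ via the equivalence at level $k'+1$, and then play the angle bound of Proposition \ref{z'' est à angle borné de atilde } against $\measuredangle_{\tilde{a}}(x,a)>50\delta$ from Definition \ref{pointloingrandangle}. The only difference is the direction of the quantifier — the paper exhibits the single witness $w$ spanned by an edge realizing the angle $>50\delta$ and contradicts the proposition at that one vertex, whereas you bound the angle at every first edge and conclude $\measuredangle_{\tilde{a}}(x,a)\le 24\delta$ — and your explicit treatment of the degenerate case $\tilde{a}=a$ and of the distance transfer from $a$ to $a'$ spells out steps the paper leaves implicit.
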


\begin{proof}
 We know that $\measuredangle_{\tilde{a}}(x,a) > 50\delta $ by definition of $\tilde{a}$.  Thus there exists an edge $e_{1}$ with $\tilde{a}\in e_{1}$ on a geodesic between $\tilde{a}$ and $x$ and an edge $e_{2}$ with $\tilde{a}\in e_{2}$ on a geodesic between $\tilde{a}$ and $a$ such that $\measuredangle_{\tilde{a}}(e_{1},e_{2}) > 50\delta $. Let us denote $w$ the other vertex of the edge $e_{2}$. Since $\tilde{a} \in I(a,x')$ and $w \in I(a,\tilde{a})$, we can deduce that $w \in I(a,x')$, $\tilde{a} \in I(w,x')$ and that $d(x',w)=d(x',\tilde{a})+d(\tilde{a},w)=k'+1$,  $d(w,a)=n'-(k'+1)$. Thus the vertex $w$ satisfies the same assumptions as the vertex $z''$ in Proposition \ref{z'' est à angle borné de atilde } such that $\measuredangle_{\tilde{a}}(x,w) > 50\delta $, so one deduces that $a \notin  \bigcupdot\limits_{ l \in L} I_{l}^{n',k'+1,x'}$.

\end{proof}

Now we can count the number of classes, which appear in the decomposition.

We will now distinguish two cases :

\begin{itemize}
    \item the case where $z$ is between $x$ and $u$, and $\tilde{a}$ is too close from $v$ ;
    \item the case where $z$ is between $u$ and $\tilde{a}$, and $z$ is too close from $\tilde{a}$.
\end{itemize}

\subsubsection{The case where $z$ is between $x$ and $u$, and $\tilde{a}$ is too close from $v$}

In this paragraph, we will assume that $z\in [x,u]$ and that $d(x',\tilde{a}) < d(x',v) + 8\delta$.

\begin{proposition}\label{denombrement partie quand atilde est trop proche et z dand [x,u]}

 The number of equivalence classes $I_{i'}^{n',k',x'}$ of $a$ for $n'=d(x',a)$, $k'=d(x',\tilde{a})$ , which appear positively in the decomposition, is bounded above by $ L |Cone_{150\delta}([x,x'])|$, where $L$ is the constant defined in Corollary \ref{denombrement relation dequivalence cones}. \\
 The number of equivalence classes $I_{l}^{n',k'+1,x'}$ as in Proposition \ref{z'' est à angle borné de atilde }, which appear negatively in the decomposition, is bounded above by $ L |Cone_{224\delta}([x,x'])|$, where $L$ is the constant defined in Corollary \ref{denombrement relation dequivalence cones}.

\end{proposition}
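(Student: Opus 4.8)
The plan is to reduce both enumerations to the counting of minimal-distance points lying in a single cone around $[x,x']$, and then to let Corollary \ref{denombrement relation dequivalence cones} absorb the factor $L$. Recall that in the present regime we have set $k'=d(x',\tilde{a})$, and since $\tilde{a}\in I(a,x')$ by Definition \ref{pointloingrandangle}, the point $\tilde{a}$ is a vertex of $B(x',k')$ at minimal distance from $a$; it is thus the minimal-distance point attached to the positively appearing class $I_{i'}^{n',k',x'}$. Dually, by Proposition \ref{z'' est à angle borné de atilde } each negatively appearing class $I_{l}^{n',k'+1,x'}$ carries a minimal-distance point $z''$ with $d(\tilde{a},z'')=1$ and $\measuredangle_{\tilde{a}}(x,z'')\le 24\delta$. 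Hence it suffices to bound the number of distinct points $\tilde{a}$ (resp. $z''$) that can occur and to confine each of them to a fixed cone: Corollary \ref{denombrement relation dequivalence cones} then multiplies the cardinality of that cone by $L$.

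For the positive count I would show $\tilde{a}\in Cone_{150\delta}([x,x'])$, following the geometry of Proposition \ref{denombrement z dans [x,u] et atilde loin} essentially verbatim. The present hypothesis $d(x',\tilde{a})<d(x',v)+8\delta$ is exactly the negation of the hypothesis of Proposition \ref{zentre aetu facile}, so I would split on the position of $\tilde{a}$ along $[x',a]$. If $d(x',\tilde{a})\le (a,x)_{x'}-8\delta=d(x',v)-8\delta$, then Proposition \ref{conetriangleplusfort} applies directly to the side $[x',a]$ and puts the edge of $\tilde{a}$ into $Cone_{50\delta}([x,x'])$. Otherwise $\tilde{a}$ sits within $8\delta$ of $v$, and as in Proposition \ref{denombrement z dans [x,u] et atilde loin} I would route the cone-path through an auxiliary edge near $v$, splitting once more according to whether $d(v,\tilde{x'})>8\delta$ (take $e\subset[v,x']$ with $d(v,e)=8\delta$) or $d(v,\tilde{x'})\le 8\delta$ (take the edge of $[v,\tilde{x'}]$ through $\tilde{x'}$): the normal-form angle bound of Theorem \ref{formenormaledestriangles} yields $\tilde{a}\in Cone_{100\delta}(e)$ and Proposition \ref{conetriangleplusfort} transfers $e$ onto $[x,x']$ at the cost of $50\delta$, giving $\tilde{a}\in Cone_{150\delta}([x,x'])$. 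Thus at most $|Cone_{150\delta}([x,x'])|$ points $\tilde{a}$ occur and the positive classes number at most $L\,|Cone_{150\delta}([x,x'])|$.

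For the negative count I would propagate this membership across the single edge $\{\tilde{a},z''\}$. Since $\measuredangle_{\tilde{a}}(x,z'')\le 24\delta$, that edge makes angle at most $24\delta$ with the edge of $[x,a]$ leaving $\tilde{a}$ towards $x$; continuing from $\tilde{a}$ back along $[x,a]$ (where Theorem \ref{formenormaledestriangles} keeps the angles at most $100\delta$) to an edge placed $8\delta$ before the Gromov point $u$, and transferring to $[x,x']$ through Proposition \ref{conetriangleplusfort}, I would obtain $z''\in Cone_{224\delta}([x,x'])$; the increment over $150\delta$ comes from the extra small-angle edge to $z''$ together with the extra cone-transfer needed to reattach the path to $[x,x']$. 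A final appeal to Corollary \ref{denombrement relation dequivalence cones} bounds the negatively appearing classes $I_{l}^{n',k'+1,x'}$ by $L\,|Cone_{224\delta}([x,x'])|$, as claimed in Proposition \ref{lapartieprivéedecertaines est incluse}'s decomposition.

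The main obstacle is the positive estimate in the degenerate regime: because the hypothesis has pushed $\tilde{a}$ to within $8\delta$ of the quasi-center point $v$, Proposition \ref{conetriangleplusfort} cannot be quoted directly (its requirement $d(x',e)\le (a,x)_{x'}-8\delta$ is precisely what fails), and one is forced to insert the auxiliary edge and to split on the location of $\tilde{x'}$ exactly as in Proposition \ref{denombrement z dans [x,u] et atilde loin}. The two delicate points I expect are, first, checking that every edge of the constructed path lies in the portion of the sides where Theorem \ref{formenormaledestriangles} guarantees angles at most $100\delta$, and second, the bookkeeping of cone parameters through the (at most two) transfers plus the extra angle so that the constants emerge as the stated $150\delta$ and $224\delta$; the finiteness of the resulting cones, which makes the bounds meaningful, is supplied by Proposition \ref{cardinal cones}.
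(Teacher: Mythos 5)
Your positive count coincides with the paper's argument: the minimal-distance point attached to a positively appearing class is $z'=\tilde{a}$, and the paper places $\tilde{a}$ in $Cone_{150\delta}([x,x'])$ by exactly your two sub-cases on $d(v,\tilde{x'})$, with the same bookkeeping $100\delta+50\delta$ via Theorem \ref{formenormaledestriangles} and Proposition \ref{conetriangleplusfort}. One remark: your preliminary sub-case $d(x',\tilde{a})\le d(x',v)-8\delta$ is vacuous. Since $\tilde{a}$ lies on every geodesic from $a$ to $x$ and from $a$ to $x'$, one has $d(a,\tilde{a})\le (x,x')_{a}=d(a,v)$, so $\tilde{a}$ always lies between $v$ and $a$ on $[x',a]$ and hence $d(x',\tilde{a})\ge d(x',v)$; this is harmless, and it is in fact why the paper can assert $d(\tilde{a},e)\le 16\delta$ without any sub-split.

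The genuine gap is in your negative count. Your route sends $z''$ across the edge $\{z'',\tilde{a}\}$, then down the side $[x,a]$ from $\tilde{a}$ to an edge placed $8\delta$ before $u$, then across to $[x,x']$ by Proposition \ref{conetriangleplusfort}. This fails in the degenerate configurations: if $d(\tilde{x},u)<8\delta$, the designated edge lies in $[x,\tilde{x}]$ and your connecting path passes through the vertex $\tilde{x}$, where Theorem \ref{formenormaledestriangles} gives no angle bound (its $100\delta$ bound only holds at vertices of the \emph{open} segments $]\tilde{x},\tilde{a}[$, etc., and angles at the corner points $\tilde{x},\tilde{x'},\tilde{a}$ can be arbitrarily large), so the claimed cone membership is not established; and if $d(x,u)\le 8\delta$ the designated edge does not exist at all. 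These configurations are genuinely possible --- the paper's own reverse-counting statements (for instance Proposition \ref{denombrement reciproque cas atilde trop pret 1}) devote separate cases to them and resolve them by replacing your edge with the edge at $\tilde{x}$ and transferring through the inner triangle $[\tilde{x},\tilde{x'},\tilde{a}]$. You flag this as a ``delicate point,'' but what is needed is not a verification, it is a modification of the route. The paper avoids the problem entirely: it puts $z''\in Cone_{24\delta}(f)$ for $f\subset[x,\tilde{a}]$ the edge at $\tilde{a}$ (Proposition \ref{z'' est à angle borné de atilde }), makes a single transfer across the corner at $\tilde{a}$ to the edge $f'\subset[x',\tilde{a}]$, giving $z''\in Cone_{74\delta}(f')$, and then concatenates with the $150\delta$ path along $[x',a]$ already built in the positive half; this is where $224\delta=24\delta+50\delta+150\delta$ actually comes from. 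Your explanation of the constant (``extra small-angle edge plus extra cone-transfer'') does not correspond to a computation: where your route does apply it yields a parameter of roughly $150\delta$, which would suffice by nestedness of cones, but it shows the stated $224\delta$ is not produced by your path.
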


\begin{figure}[!ht]
    \centering
    \includegraphics[scale=0.7]{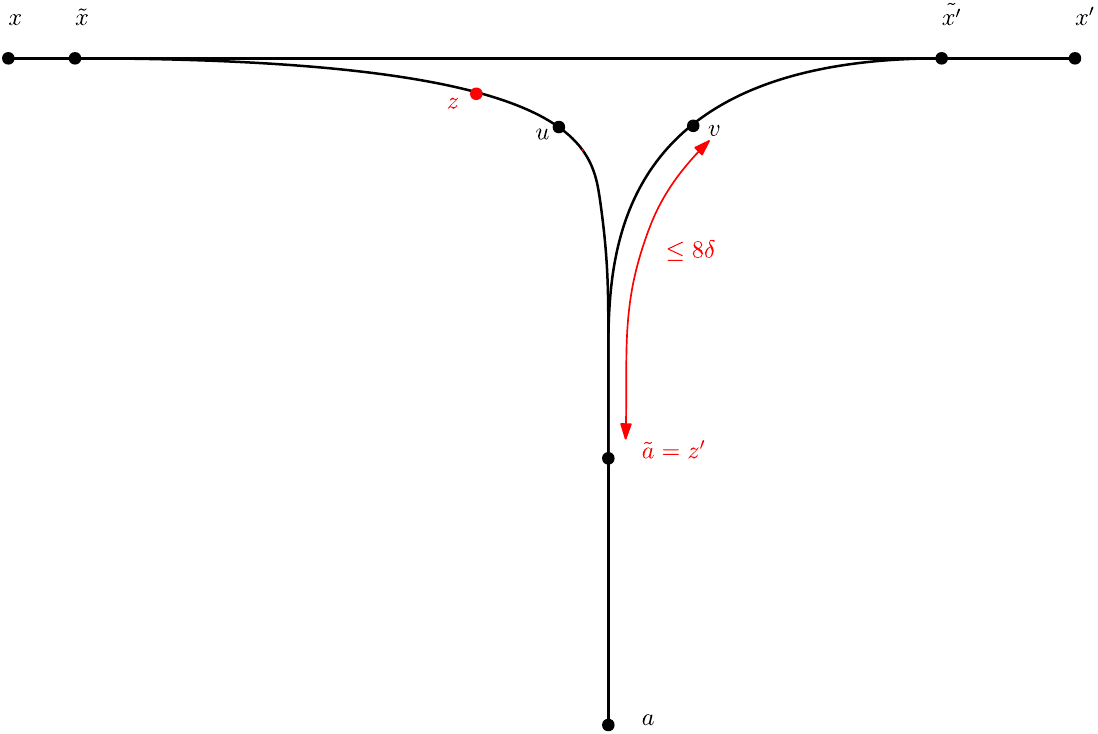}
    \caption{Case where $z$ is between $x$ and $u$, and $\tilde{a}$ is too close from $v$ }
    \label{ fig: cas oÃ¹ z est avant u ( premier dessin) }
\end{figure}

\begin{proof}

Like in the other cases we will bound the number of $z'$ such that $d(x',z')=k'$ and $z'\in [x',a]$ and conclude thanks to Corollary \ref{denombrement relation dequivalence cones}. For the classes, which appears positively, we have in this case that $z'=\tilde{a}$.\\

Let assume that $d(v,\tilde{x}')>8 \delta$. We denote $e$ the edge such that $e \subset [v,x']$ such that $d(v,e)=8\delta$. By assumption, we have that $d(\tilde{a},e)\le 16\delta$. Furthermore, since $e$ is an edge of $[\tilde{a},\tilde{x'}]$ from $\tilde{a}$ to $e$ angles are bounded by $100\delta$ according to Theorem \ref{formenormaledestriangles}, then $\tilde{a} \in Cone_{100\delta}(e)$. According to Proposition \ref{conetriangleplusfort}, as $d(v,e) \ge 8\delta$, there exists an edge $e'$ of the side $[x,x']$ such that $e\in Cone_{50\delta}(e')$ and we can deduce that $\tilde{a} \in Cone_{150\delta}(e') \subset Cone_{150\delta}([x,x'])$.\\

Let assume that $d(v,\tilde{x}')\le 8\delta$. Now we denote $e$ the edge such that $e \subset [v,\tilde{x'}]$ and $\tilde{x'} \in e$. Since $e$ is an edge of $[\tilde{a},\tilde{x'}]$, angles from $\tilde{a}$ to $e$ along the triangle are bounded by $100\delta$, thus $\tilde{a} \in Cone_{100\delta}(e)$. According to Proposition \ref{conetriangleplusfort} in the triangle $  [ \tilde{x},\tilde{x'} , \tilde{a}] $, we know that $e \in Cone_{50\delta}(e')\subset Cone_{50\delta}([x,x'])$, where $e'$ is the edge such that $e' \subset [\tilde{x},\tilde{x'}]\subset[x,x']$ and $e' \in \tilde{x}' $, thus $\tilde{a} \in Cone_{150\delta}([x,x'])$.\\

In both cases $\tilde{a} \in Cone_{150\delta}([x,x'])$ 
, according to Corollary \ref{denombrement relation dequivalence cones}, the number of equivalence classes $I_{i'}^{n',k',x'}$ is bounded above by $L |Cone_{150\delta}([x,x'])|$.\\

To control the number of classes which appear negatively in the decomposition, we will control the number of $z''$ as in Proposition \ref{z'' est à angle borné de atilde }. For such $z''$, we know thanks to this proposition that $d(\tilde{a},z'')=1$ and that $\measuredangle_{\tilde{a}}(x,z'') \le 24\delta $. Then $z'' \in Cone_{24\delta}(f)$ where $f$ is the edge such that $f \subset [x,\tilde{a}]$ and $\tilde{a} \in f$. According to Proposition \ref{conetriangleplusfort}, $z'' \in Cone_{74\delta}(f')$, where $f'$ is the edge such $f' \subset [x',\tilde{a}]$ and $\tilde{a} \in f'$. Then, thanks to the beginning of the proof, one can deduce that $z'' \in Cone_{224\delta}([x,x'])$. This conclude the proof of the proposition.

\end{proof}

We will do now the reverse counting in this case.

\begin{proposition} \label{denombrement reciproque cas atilde trop pret 1}
Under the assumptions of this paragraph, we have that $z \in Cone_{150 \delta}([x,x']) $.

Then the classes $I_{i'}^{n',k',x'}$ and $(I_{l}^{n',k'+1,x'})_{l \in L}$ chosen like in Proposition \ref{lapartieprivéedecertaines est incluse} appear in at most $L |Cone_{150 \delta}([x,x'])|$ decompositions of equivalence classes $I_{i}^{n,k,x}$ chosen under the assumptions of this paragraph.

\end{proposition}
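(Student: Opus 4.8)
The plan is to reuse, almost verbatim, the geometric argument already carried out for the companion Proposition~\ref{denombrement reciproque si z dans x et u et atilde loin}, and then to feed its output into Corollary~\ref{denombrement relation dequivalence cones}. The point is that the first assertion, $z \in Cone_{150\delta}([x,x'])$, depends only on the hypothesis $z \in [x,u]$ and on the normal form of the triangle $[x,x',a]$ supplied by Theorem~\ref{formenormaledestriangles}; it is completely insensitive to whether $\tilde{a}$ is far from or close to $v$. So I would simply repeat the case analysis on the position of $z$ along $[x,u]$. If $z \in [x,\tilde{x}]$, then $z \in [x,x']$ by Theorem~\ref{formenormaledestriangles}, hence $z \in Cone_{150\delta}([x,x'])$ trivially. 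If $z \in [\tilde{x},u]$ with $d(u,z) \ge 8\delta$, then Proposition~\ref{conetriangleplusfort} already gives $z \in Cone_{50\delta}([x,x'])$. Finally, if $d(u,z) \le 8\delta$, I would introduce the auxiliary edge $e$ of $[u,\tilde{x}]$ at distance $8\delta$ from $u$ (or, when $d(u,\tilde{x}) < 8\delta$, the edge of $[u,\tilde{x}]$ containing $\tilde{x}$, working inside the triangle $[\tilde{x},\tilde{x'},\tilde{a}]$), use the angle bound $100\delta$ from Theorem~\ref{formenormaledestriangles} to obtain $z \in Cone_{100\delta}(e)$, and compose with $e \in Cone_{50\delta}([x,x'])$ coming from Proposition~\ref{conetriangleplusfort} to conclude $z \in Cone_{150\delta}([x,x'])$.

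For the reverse count, the decisive observation is that every class $I_{i}^{n,k,x}$ whose decomposition (built in Proposition~\ref{lapartieprivéedecertaines est incluse}) involves one of the prescribed pieces $I_{i'}^{n',k',x'}$ or $I_{l}^{n',k'+1,x'}$ has its distinguished minimal-distance vertex $z$ lying in the finite set $Cone_{150\delta}([x,x'])$, by the first part. Since $z$ determines the radius $k = d(x,z)$, there are at most $|Cone_{150\delta}([x,x'])|$ possible values of $z$, and for each fixed $z$ Corollary~\ref{denombrement relation dequivalence cones} bounds by $L$ the number of classes $I_{i}^{n,k,x}$ admitting $z$ as minimal-distance vertex. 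Multiplying the two bounds yields the announced $L\,|Cone_{150\delta}([x,x'])|$; and since the same vertex $z$ governs both the positively- and the negatively-appearing pieces, the bound applies uniformly to the class $I_{i'}^{n',k',x'}$ and to the family $(I_{l}^{n',k'+1,x'})_{l\in L}$.

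The step I expect to be the main obstacle is not geometric but bookkeeping: one must check that a fixed $x'$-side class can be reached only from $x$-side classes whose minimal-distance vertex is precisely the $z$ located above, so that the enumeration over $z \in Cone_{150\delta}([x,x'])$ is genuinely exhaustive, and that Corollary~\ref{denombrement relation dequivalence cones} is invoked with the correct triple $(n,k,z)$. The actual hyperbolic content — that $z$ cannot escape $Cone_{150\delta}([x,x'])$ — is identical to what was already established in Proposition~\ref{denombrement reciproque si z dans x et u et atilde loin}, so no new estimate is needed here; the only care required is to verify that the proof there nowhere used the assumption $d(x',\tilde{a}) \ge d(x',v) + 8\delta$, which is exactly the hypothesis that fails in the present paragraph.
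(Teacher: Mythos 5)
Your proposal is correct and follows essentially the same route as the paper: the paper's own proof simply repeats, word for word, the case analysis on the position of $z$ from Proposition \ref{denombrement reciproque si z dans x et u et atilde loin} (the cases $z \in [x,\tilde{x}]$, then $d(u,z) \ge 8\delta$ via Proposition \ref{conetriangleplusfort}, then the two subcases on $d(u,\tilde{x})$ using the $100\delta$ angle bound of Theorem \ref{formenormaledestriangles}), precisely because that argument never invokes the hypothesis $d(x',\tilde{a}) \ge d(x',v)+8\delta$, and then concludes with Corollary \ref{denombrement relation dequivalence cones} exactly as you do. Your elaboration of the bookkeeping step (enumerating the possible $z$ in $Cone_{150\delta}([x,x'])$ and applying the corollary for each fixed $z$) is in fact slightly more explicit than the paper's one-line conclusion.
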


\begin{proof}
The proof of the fact that $z \in Cone_{50\delta}([x,x'])$ is in fact exactly the same as the proof of Proposition \ref{denombrement reciproque si z dans x et u et atilde loin}.\\
If $z \in [x,\tilde{x}]$ then $z \in [x,x']$ according to Theorem \ref{formenormaledestriangles} and so $z \in Cone_{150\delta}([x,x'])$. In the rest of proof, let us assume that $z \in [\tilde{x},u]$.\\

If $d(u,z) \ge 8 \delta$, then according to Proposition \ref{conetriangleplusfort}, $z \in Cone_{50\delta}([x,x']) $.\\

For that reason, let us assume that $d(u,z) \le 8 \delta $.\\

Let assume further that $d(u,\tilde{x})\ge 8\delta$. We denote by $e$ the edge of the side $[u,\tilde{x}]$ such that $d(u,e)=8\delta$. Since $z \in [u,\tilde{x}]$, then $d(z,e)\le 8\delta$. Furthermore $z \in [\tilde{a},\tilde{x}]$, then according to Theorem \ref{formenormaledestriangles}, along the side of the triangle $[x,a]$, angles from $z$ to $e$ are bounded above by $100\delta$. Thus $z \in Cone_{100\delta}(e)$. According to Proposition \ref{conetriangleplusfort}, $e\in Cone_{50\delta}([x,x'])$ since $d(u,e)=50\delta$ and then we can deduce that $z \in Cone_{150\delta}([x,x'])$.\\

Let assume now that $ d(u,\tilde{x}) < 8 \delta $. We denote now by $e$ the edge of $[u,\tilde{x}]$, which contains $\tilde{x}$. Since $z \in [u,\tilde{x}]$ and $e$ is an edge of $[ \tilde{a},\tilde{x}]$, angles from $z$ to $e$ are bounded above by $100 \delta$, furthermore $d(e,z)\le 8\delta$ and then $ z \in Cone_{ 100 \delta } (e)$. According to Proposition \ref{conetriangleplusfort} in the triangle $ [ \tilde{x},\tilde{x'},\tilde{a}]$, $e \in Cone_{50\delta}(e') \subset Cone_{50\delta}([x,x'])$ where $e'$ is the edge of $ [ \tilde{x},\tilde{x'}] \subset  [x,x']$ which contains $\tilde{x}$. Thus $z \in Cone_{  150 \delta}([x,x'])$.\\

To conclude the proof of the proposition, we just need to use Corollary \ref{denombrement relation dequivalence cones} and we get the bound $L|Cone_{150\delta}([x,x'])|$.

\end{proof}
    
\newpage

\subsubsection{The case where $z$ is between $u$ and $\tilde{a}$, and $z$ is too close from $\tilde{a}$  }

In this paragraph, we will assume that $z\in [u,\tilde{a}]$, $z\neq u$ and that $d(x',\tilde{a}) < d(x',z)+3\delta$.

\begin{figure}[!ht]
    \centering
    \includegraphics[scale=0.7]{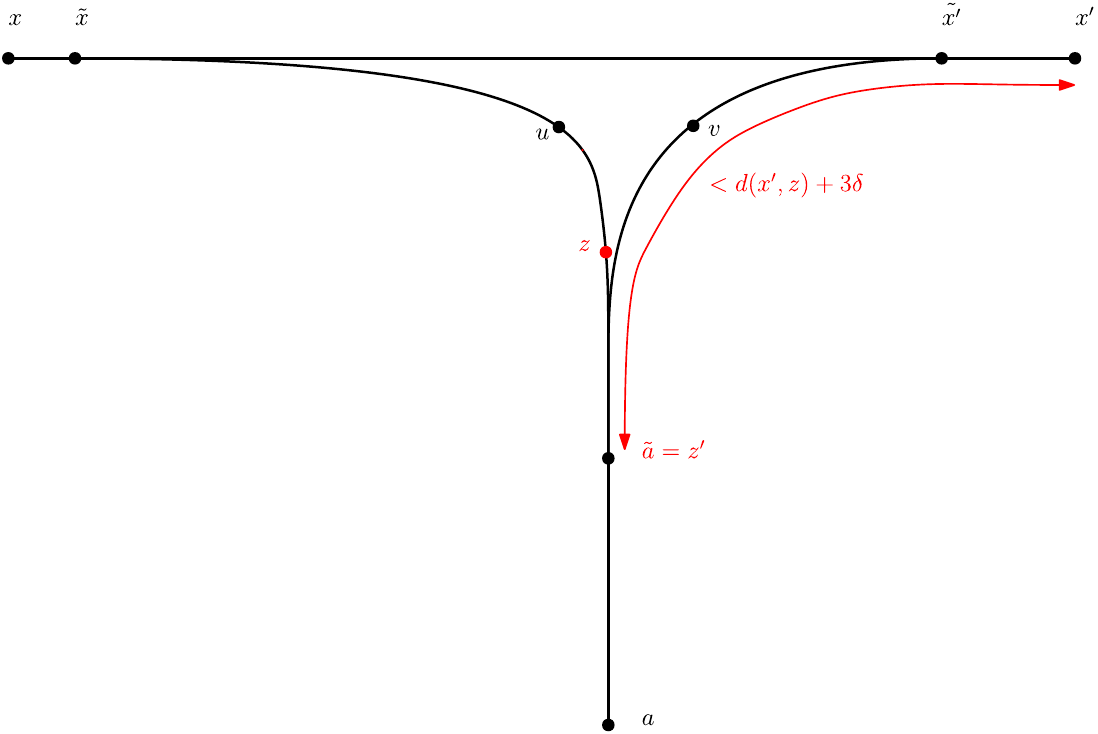}
    \caption{Case where $z$ is between $u$ and $\tilde{a}$, and $z$ is too close from $\tilde{a}$ }
    \label{ fig: cas oÃ¹ z est avant u ( premier dessin) }
\end{figure}

Under the assumptions of this paragraph, $z$ is at bounded distance of $\tilde{a}$. 
We will need this fact to count the number of equivalence classes in that case.

\begin{lem} \label{grace à Martin z est proche de tildea }

Under the assumptions of this paragraph, we have that $d(\tilde{a},z)\le 23\delta$.

\end{lem}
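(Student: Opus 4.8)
The plan is to exploit that $z$ and $\tilde{a}$ both lie on the side $[x,a]$, that $\tilde{a}$ lies in addition on $[x',a]$, and then to use the fellow--travelling of the two sides issuing from $a$ in order to convert the standing hypothesis $d(x',\tilde{a})<d(x',z)+3\delta$ into an upper bound on $d(\tilde{a},z)$. First I would record the elementary positional facts. Since $z\in[u,\tilde{a}]\subset[x,a]$ with $k=d(x,z)>(a,x')_{x}=d(x,u)$ and $k\le d(x,\tilde{a})$, passing to distances from $a$ and using $d(a,u)=(x,x')_{a}=:m$ gives $d(a,\tilde{a})\le d(a,z)=:s\le m$. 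As $z$ and $\tilde{a}$ lie on the same geodesic $[x,a]$, this yields $d(\tilde{a},z)=s-d(a,\tilde{a})$. Moreover $\tilde{a}\in I(a,x')$ by Definition \ref{pointloingrandangle}, so with $n'=d(x',a)$ we have $d(a,\tilde{a})=n'-d(x',\tilde{a})$, whence $d(\tilde{a},z)=d(x',\tilde{a})-(n'-s)$. It therefore remains to compare $n'-s$ with $d(x',z)$.

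The key step is a fellow--travelling estimate, for which I would apply Lemma \ref{lemme de Bridson} to the geodesics $\gamma=[a,x]$ and $\gamma'=[a,x']$ sharing the initial point $a$. By the choice of the quasi--centre $t$ of Proposition \ref{quasicentreparticulier} one has $d(u,v)\le d(u,t)+d(t,v)\le 2\delta$, and since $v\in[a,x']$ the point $u=\gamma(m)$ satisfies $d(u,\mathrm{im}(\gamma'))\le 2\delta$. Lemma \ref{lemme de Bridson} then gives $d(\gamma(s'),\gamma'(s'))\le 2\delta$ for all $s'\le m-3\delta$. Writing $s=d(a,z)$, there are two cases: if $s\le m-3\delta$, then $z^{*}:=\gamma'(s)\in[a,x']$ has $d(z,z^{*})\le 2\delta$ and $d(x',z^{*})=n'-s$; if instead $m-3\delta<s\le m$, then $d(z,u)=m-s<3\delta$, so $v\in[a,x']$ satisfies $d(z,v)\le d(z,u)+d(u,v)<5\delta$, with $d(x',v)=n'-m\le n'-s$. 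In either case there is a point of $[a,x']$ within $5\delta$ of $z$ whose distance to $x'$ is at most $n'-s$, so $d(x',z)\le n'-s+5\delta$; the reverse inequality $d(x',z)\ge n'-s$ is immediate from the triangle inequality.

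Combining the two steps gives $d(\tilde{a},z)=d(x',\tilde{a})-(n'-s)\le d(x',\tilde{a})-\bigl(d(x',z)-5\delta\bigr)=\bigl(d(x',\tilde{a})-d(x',z)\bigr)+5\delta<3\delta+5\delta$, the last inequality being exactly the hypothesis $d(x',\tilde{a})<d(x',z)+3\delta$ of this paragraph. This already yields $d(\tilde{a},z)<8\delta\le 23\delta$; the generous constant $23\delta$ in the statement leaves ample room if one instead invokes the coarser $4\delta$--quasi--centre bounds of Proposition \ref{quasicentreparticulier} at each place where a quasi--centre or a thin--triangle estimate is used. The only genuine obstacle is this fellow--travelling bound, specifically controlling $z$ when $d(a,z)$ is within a few $\delta$ of the Gromov product $m$, that is when $z$ sits essentially at the centre $u$; this is exactly the second case treated above, and it is where the various multiples of $\delta$ accumulate.
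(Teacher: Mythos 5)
Your proof is correct and follows essentially the same route as the paper's: both apply Lemma \ref{lemme de Bridson} to the geodesics $[a,x]$ and $[a,x']$ issuing from $a$, split into cases according to whether $d(z,u)$ is below the fellow-travelling threshold, and then feed the hypothesis $d(x',\tilde{a})<d(x',z)+3\delta$ into the resulting estimate. The only difference is bookkeeping: you route through the exact identity $d(\tilde{a},z)=d(x',\tilde{a})-(n'-s)$ where the paper chains triangle inequalities through the nearby point $y$ (or $v$) to get $d(\tilde{a},z)+d(z,x')\le d(\tilde{a},x')+\mathrm{const}$, and your quasi-centre constant is tighter (the paper takes $d(u,v)\le 4\delta$ and lands at $23\delta$), but as you note, either choice of constants stays within the stated bound.
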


\begin{proof}

We will apply Lemma \ref{lemme de Bridson} to the geodesics $c=[a,x]$, $c'=[a,x']$. Since $d(u,v)\le 4\delta$, we know that $d(c(t_{0}),Im(c')) \le 2\delta$, with $t_{0}=(x|x')_{a}$. Then with Lemma \ref{lemme de Bridson} and the fact that triangles are $\delta$-thin, we know that $d(c(t),c'(t))\le 2\delta$ for all $t \le (x|x')_{a}-6\delta$.\\

Then for $z \in [u,\tilde{a}]$ if $d(z,u)\ge 6\delta$, according to Lemma \ref{lemme de Bridson}, there exists $y \in [\tilde{a},v]$ such that $d(z,y) \le 2\delta$. Then $d(z,x') \le 2\delta + d(y,x') $ and $d(\tilde{a},z) \le  d(\tilde{a},y) + 2 \delta$, therefore :

$$ d(\tilde{a},z)+d(z,x') \le d(\tilde{a},y)+d(y,x')+4\delta \le d(\tilde{a},x')+4\delta .$$

Since $y \in [\tilde{a},x']$. From this we deduce that : 

$$ d(\tilde{a},z) \le d(\tilde{a},x')+64\delta - d(z,x') \le 7 \delta$$

with the assumption that $d(x',\tilde{a}) < d(x',z)+3\delta$.\\

If $d(z,u) < 6\delta$, then $d(z,v) \le 10 \delta$. Therefore :

$$ d(\tilde{a},z)+d(z,x') \le d(\tilde{a},v)+d(v,x')+20\delta \le d(\tilde{a},x')+20\delta .$$

Then, we deduce that :

$$ d(\tilde{a},z) \le d(\tilde{a},x')+20\delta - d(z,x') \le 23 \delta$$

with the assumption that $d(x',\tilde{a}) < d(x',z)+3\delta$.\\

In both case, $d(z,\tilde{a})\le 23\delta$, so we proved the lemma.
    
\end{proof}

\begin{proposition}\label{denombrement partie quand atilde est trop proche et z dand [u,atilde]}

Let $\hat{e}$ be an edge such that $\hat{e} \subset I(x,z)$ and $z \in \hat{e}$.

 The number of equivalence classes $I_{i'}^{n',k',x'}$ of $a$ for $n'=d(x',a)$, $k'=d(x',\tilde{a})$ , which appear positively in the decomposition, is bounded above by $ L |Cone_{150\delta}(\hat{e})|$, where $L$ is the constant defined in Corollary \ref{denombrement relation dequivalence cones}. \\
 The number of equivalence classes $I_{l}^{n',k'+1,x'}$ as in Proposition \ref{z'' est à angle borné de atilde }, which appear negatively in the decomposition, could be bounded above by $ L |Cone_{174\delta}(\hat{e})|$, where $L$ is the constant defined in Corollary \ref{denombrement relation dequivalence cones}.

\end{proposition}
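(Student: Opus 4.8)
The plan is to bound separately the classes appearing positively and negatively, and in each case to reduce the count to an enumeration of the associated minimal-distance points, which I will confine to a single cone around the fixed edge $\hat{e}$. Since $\hat{e}$ is an edge of $I(x,z)$ containing $z$, it depends only on $x$ and $z$ and not on the particular $a\in I_{i}^{n,k,x}$; hence $Cone_{150\delta}(\hat{e})$ (and likewise $Cone_{174\delta}(\hat{e})$) is finite, because $X$ is uniformly fine, and is fixed once and for all. Once every relevant point is placed inside such a cone, Corollary \ref{denombrement relation dequivalence cones} supplies the factor $L$ and closes the argument. The decisive input specific to this subsubsection is Lemma \ref{grace à Martin z est proche de tildea }, which gives $d(\tilde{a},z)\le 23\delta$: it is precisely the proximity of $z$ to $\tilde{a}$ that lets me cone $\tilde{a}$ and its neighbours from an edge at $z$, instead of from the whole of $[x,x']$ as in the previous cases.

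For the positive classes, the associated minimal-distance point is $z'=\tilde{a}$, with $k'=d(x',\tilde{a})$. First I would note that the sub-segment of the side $[x,a]$ joining $z$ to $\tilde{a}$ is contained in $[\tilde{x},\tilde{a}]$ (since $z\in[u,\tilde{a}]\subset[\tilde{x},\tilde{a}]$), so by Theorem \ref{formenormaledestriangles} all its consecutive edges make angles at most $100\delta$; combined with $d(\tilde{a},z)\le 23\delta$ from Lemma \ref{grace à Martin z est proche de tildea }, this yields $\tilde{a}\in Cone_{100\delta}(e)$, where $e$ is the edge of the side $[x,a]$ containing $z$ with $e\subset[x,z]$. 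Next I would transfer $e$ to $\hat{e}$: since $e$ and $\hat{e}$ are the terminal edges at $z$ of two geodesics from $x$ to $z$, conical fineness of triangles (Proposition \ref{conetriangle}), applied to the degenerate triangle they span, gives $e\in Cone_{50\delta}(\hat{e})$. Composing the two cones produces $\tilde{a}\in Cone_{150\delta}(\hat{e})$, so the number of admissible $z'$ is at most $|Cone_{150\delta}(\hat{e})|$ and, by Corollary \ref{denombrement relation dequivalence cones}, the number of positive classes is at most $L\,|Cone_{150\delta}(\hat{e})|$.

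For the negative classes $I_{l}^{n',k'+1,x'}$ I would invoke Proposition \ref{z'' est à angle borné de atilde }: each is recorded by a point $z''$ with $d(\tilde{a},z'')=1$ and $\measuredangle_{\tilde{a}}(x,z'')\le 24\delta$, hence $z''\in Cone_{24\delta}(f)$ for $f$ the edge at $\tilde{a}$ pointing towards $x$. Because the path realising $\tilde{a}\in Cone_{150\delta}(\hat{e})$ reaches $\tilde{a}$ along $[z,\tilde{a}]\subset[x,\tilde{a}]$, its terminal edge is exactly $f$, so I can splice the single edge $\tilde{a}z''$ onto it; the junction angle is $\measuredangle_{\tilde{a}}(z,z'')\le\measuredangle_{\tilde{a}}(x,z'')\le 24\delta$ by Proposition \ref{Triangle inequality} (using $z\in[x,\tilde{a}]$), and the resulting path certifies $z''\in Cone_{174\delta}(\hat{e})$. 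Corollary \ref{denombrement relation dequivalence cones} then bounds the negative classes by $L\,|Cone_{174\delta}(\hat{e})|$. The main obstacle, as in the companion propositions, is keeping track of the cone compositions with the correct parameters while ensuring every reference edge stays independent of $a$; the one genuinely delicate point is the endpoint transfer $e\mapsto\hat{e}$, where $e$ contains the shared endpoint $z$, so the distance-restricted Proposition \ref{conetriangleplusfort} cannot be used (its hypothesis would read $d(x,e)\le d(x,z)-8\delta$, which fails for the last edge) and one must rely instead on the unconditional conical fineness of Proposition \ref{conetriangle}.
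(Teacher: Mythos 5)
Your proposal is correct and follows essentially the same argument as the paper: the positive classes are counted by placing $z'=\tilde{a}$ in $Cone_{150\delta}(\hat{e})$ via the $100\delta$ angle bound of Theorem \ref{formenormaledestriangles}, the bound $d(z,\tilde{a})\le 23\delta$ of Lemma \ref{grace à Martin z est proche de tildea } and a $50\delta$ edge-transfer from $e$ to $\hat{e}$, and the negative classes by splicing the edge $\{\tilde{a},z''\}$ furnished by Proposition \ref{z'' est à angle borné de atilde } onto that chain to land in $Cone_{174\delta}(\hat{e})$, with Corollary \ref{denombrement relation dequivalence cones} supplying the factor $L$ in both counts. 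Your only deviation is in the edge-transfer step, where the paper applies Proposition \ref{conetriangleplusfort} to the degenerate triangle on the two geodesics from $x$ to $z$ while you invoke Proposition \ref{conetriangle}; your choice is in fact the more accurate citation, since the hypothesis $d(x,e)\le (z,z)_{x}-8\delta$ of Proposition \ref{conetriangleplusfort} fails for the terminal edge containing $z$.
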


\begin{proof}

Like the other cases we will bound the number of $z'$ such that $d(x',z')=k'$ and $z'\in [x',a]$ and conclude thanks to Corollary \ref{denombrement relation dequivalence cones}. For the classes which appears positively, we have in this case that $z'=\tilde{a}$.
Let $e$ be the edge such $e \subset [x,z]$ such that $z \in e$.\\

Since $z \in [\tilde{x},\tilde{a}]$ according to Theorem \ref{formenormaledestriangles} angles from $\tilde{a}$ to $z$ are bounded above by $100\delta$. According to Lemma \ref{grace à Martin z est proche de tildea }, we know that $d(z,\tilde{a})\le 23\delta$. Therefore, $\tilde{a} \in Cone_{100\delta}(e)$. An application of Proposition \ref{conetriangleplusfort} in the degenerate triangle with sides $[x,z]$ and the geodesic from $x$ to $z$ which contains $\hat{e}$ gives the fact that $ z \in Cone_{150\delta}(\hat{e})$.\\

To control the number of classes, which appear negatively in the decomposition, we will control the number of $z''$ as in Proposition \ref{z'' est à angle borné de atilde }. For such $z''$, we know thanks to this proposition that $d(\tilde{a},z'')=1$ and that $\measuredangle_{\tilde{a}}(x,z'') \le 24\delta $. Then $z'' \in Cone_{24\delta}(f)$ where $f$ is the edge such that $f \subset [x,\tilde{a}]$ and $\tilde{a} \in f$. Then, thanks to the beginning of the proof, one can deduce that $z'' \in Cone_{174\delta}(\hat{e})$. This conclude the proof of the proposition.

\end{proof}

We will do now the reverse counting in this case.

\begin{proposition} \label{denombrement reciproque cas a tilde trop pret 2}
Let $a \in I_{i}^{n,k,x}$ under the assumptions of Proposition \ref{z entre u et atilde } and $I_{i'}^{n',k',x'}$, $(I_{l}^{n',k'+1,x'})_{l \in L}$ the associated classes chosen like in Proposition \ref{z entre u et atilde } and Proposition \ref{lapartieprivéedecertaines est incluse}.\\

Then for any edge $\hat{f} \subset I(\tilde{a},x')$ such that $\tilde{a}\in \hat{f}$, we have that $z \in Cone_{200\delta}(\hat{f})$. \\

The equivalence classes $I_{i'}^{n',k',x'}$ and $(I_{l}^{n',k'+1,x'})_{l \in L}$ appear in the decomposition of at most $L|Cone_{200\delta}(\hat{f})|$ classes $I_{i}^{n,k,x}$ chosen like in Proposition \ref{z entre u et atilde }.

\end{proposition}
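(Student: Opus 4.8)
The plan is to mirror the reverse-counting scheme of Proposition \ref{denombrement inverse cas z dans u atilde}, adapted to the degeneracy $z'=\tilde{a}$ that characterises this case. The whole argument reduces to establishing the containment $z\in Cone_{200\delta}(\hat{f})$ for an edge $\hat{f}\subset I(\tilde{a},x')$ with $\tilde{a}\in\hat{f}$: since such an $\hat{f}$ is determined only by the pair $(\tilde{a},x')=(z',x')$, every admissible $z$ (equivalently every point of $B(x,k)$ realising the minimal distance to $a$) then lies in the single set $Cone_{200\delta}(\hat{f})$, whose cardinality is bounded by a function of $\delta$ and $\varphi$ by Proposition \ref{cardinal cones}. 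Corollary \ref{denombrement relation dequivalence cones} shows each such $z$ is contained in at most $L$ classes $I_{i}^{n,k,x}$, which gives the bound $L|Cone_{200\delta}(\hat{f})|$. The negative classes $(I_{l}^{n',k'+1,x'})_{l\in L}$ are indexed by vertices $z''$ with $d(\tilde{a},z'')=1$ (Proposition \ref{z'' est à angle borné de atilde }), so they are covered by the very same cone (enlarged by one edge) and obey the identical count.

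To prove the containment I would first record that $z$ is uniformly close to $\tilde{a}$: Lemma \ref{grace à Martin z est proche de tildea } gives $d(z,\tilde{a})\le 23\delta$, and since $z\in[u,\tilde{a}]\subset[\tilde{x},\tilde{a}]$ the interior angles along this portion of $[x,a]$ are at most $100\delta$ by Theorem \ref{formenormaledestriangles}. The main obstacle is that $z$ sits on the side $[x,a]$ while $\hat{f}$ sits at $\tilde{a}$ on the side $[x',a]$, and at $\tilde{a}$ itself the angle between the direction of $x$ and the direction of $x'$ is \emph{a priori} unbounded (it is infinite in the tree model, which is exactly why that case is vacuous there). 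Hence one cannot route the required bounded-angle path through a turn at $\tilde{a}$, and the reciprocal trick of Proposition \ref{denombrement inverse cas z dans u atilde}, which relied on an edge $\bar{f}\subset[z',\tilde{a}]$, degenerates because $z'=\tilde{a}$.

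To overcome this I would cross between the two sides at a vertex strictly past $\tilde{a}$ towards the quasi-centre, where Proposition \ref{conetriangleplusfort} applies with apex $a$. Since $d(a,z)=(x,x')_a-d(z,u)$, whenever $d(z,u)\ge 8\delta$ we have $d(a,z)\le (x,x')_a-8\delta$, so the edge of $[x,a]$ at $z$ lies in the $Cone_{50\delta}$ of the edge $e'$ of $[x',a]$ at the same distance from $a$; this $e'$ lies on $[\tilde{a},x']$ within $23\delta$ of $\tilde{a}$, and travelling along $[\tilde{a},x']$ back to the edge $f$ of the triangle at $\tilde{a}$ costs only bounded length and angles $\le 100\delta$ by Theorem \ref{formenormaledestriangles}. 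A final use of Proposition \ref{conetriangleplusfort} in the degenerate triangle formed by $[\tilde{a},x']$ and a geodesic $I(\tilde{a},x')$ containing $\hat{f}$ relates $f$ to $\hat{f}$ by one further $Cone_{50\delta}$-step, and composing the three pieces gives $z\in Cone_{200\delta}(\hat{f})$. The borderline case $d(z,u)<8\delta$ is treated identically after first travelling along $[x,a]$ from $z$ to the admissible edge at distance $(x,x')_a-8\delta$ from $a$; here Lemma \ref{grace à Martin z est proche de tildea } together with the proximity $d(u,v)\le 2\delta$ of the two projections to the quasi-centre keeps every length uniformly bounded, so all the cone parameters track through routinely and the claimed enumeration follows.
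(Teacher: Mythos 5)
Your reduction (containment $z\in Cone_{200\delta}(\hat{f})$ plus Corollary \ref{denombrement relation dequivalence cones}) is the same as the paper's, and your main case $d(z,u)\ge 8\delta$ is a correct alternative route to the containment: you cross between the two sides at the level of $z$, using Proposition \ref{conetriangleplusfort} with apex $a$ (whose hypothesis $d(a,e)\le (x,x')_{a}-8\delta$ is exactly $d(z,u)\ge 8\delta$), land on the edge $e'$ of $[\tilde{a},x']$ at distance $d(\tilde{a},z)\le 23\delta$ from $\tilde{a}$, walk along $[\tilde{a},x']$ to $f$ with angles at most $100\delta$ by Theorem \ref{formenormaledestriangles}, and finish with the degenerate triangle; this composes within the $200\delta$ budget. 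The paper takes the mirror route and thereby avoids any case split: it first walks from $z$ to the edge $e$ of $[x,\tilde{a}]$ containing $\tilde{a}$ (Lemma \ref{grace à Martin z est proche de tildea } and Theorem \ref{formenormaledestriangles} give $z\in Cone_{100\delta}(e)$), then crosses at the apex via $e\in Cone_{50\delta}(f)$ from Proposition \ref{conetriangleplusfort}, then relates $f$ to $\hat{f}$ as you do.

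The genuine gap is your borderline case. When $d(z,u)<8\delta$, the edge of $[x,a]$ at distance $(x,x')_{a}-8\delta$ from $a$ lies strictly between $a$ and $\tilde{a}$ whenever $d(u,\tilde{a})<8\delta$, since $d(a,\tilde{a})=(x,x')_{a}-d(u,\tilde{a})$; and this configuration is compatible with all the hypotheses of the proposition ($z$ adjacent to $u$, $d(u,\tilde{a})$ a few edges, $\delta$ large --- it is only in a tree that it is vacuous). Your connecting path must then turn at $\tilde{a}$ from the direction of $x$ to the direction of $a$, and no bound on that angle is available: by Definition \ref{pointloingrandangle}, $\tilde{a}$ is precisely a vertex where $\measuredangle_{\tilde{a}}(a,x)>50\delta$, with no upper bound, and Theorem \ref{formenormaledestriangles} controls angles only at vertices of the open arcs $]\tilde{a},\tilde{x}[$, $]\tilde{a},\tilde{x'}[$, $]\tilde{x},\tilde{x'}[$. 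Bounded length does not imply cone membership in a fine graph --- that is the entire point of working with cones, and it is the very obstacle you flagged in your second paragraph --- so the claim that ``all the cone parameters track through routinely'' is unjustified as stated. Note also that the obstacle you diagnosed at $\tilde{a}$ is not actually an obstacle for the paper: the relation $e\in Cone_{50\delta}(f)$ from Proposition \ref{conetriangleplusfort} is produced by a chain running around through the fellow-travelling region of the two sides, not by a turn at the vertex $\tilde{a}$ itself; this is what lets the paper's route work uniformly, with no division into $d(z,u)\ge 8\delta$ and $d(z,u)<8\delta$. (Minor: the paper's Lemma \ref{grace à Martin z est proche de tildea } uses $d(u,v)\le 4\delta$, not $2\delta$.)
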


\begin{proof}

Let $e$ be the edge such that $e \subset [x,\tilde{a}]$ and $\tilde{a} \in e$. By Lemma \ref{grace à Martin z est proche de tildea } and Theorem \ref{formenormaledestriangles}, $z \in Cone_{100\delta}(e)$.\\

Let $f$ be the edge such that $f \in [\tilde{a},x']$ and  $\tilde{a} \in f$, according to Proposition \ref{conetriangleplusfort}, $e \in Cone_{50\delta}(f)$, therefore $z \in Cone_{150\delta}(e)$. Another application of Proposition \ref{conetriangleplusfort}, in the degenerate triangle $([\tilde{a},x'],\gamma)$, where $\gamma$ is a geodesic from $z'$ to $x'$, which contains $f$. We conclude that $z \in Cone_{200\delta}(\hat{f}) $.

\end{proof}

\subsection{Conclusion of the proof} \label{subsec : conclusion}

We are now ready to conclude the proof of Theorem \ref{theorem:main result}. The following lemmas allow to prove that the decomposition of $I_{i}^{n,k,x}$ in term of classes $I_{i'}^{n',k',x'}$ is disjoint.

\begin{lem} \label{casrelou de inclus ou disjoint}

Let $a, a' \in I_{i}^{n,k,x}$. Let  assume that $a$ and $a'$ are both in case of subsection \ref{subsec : quatrième cas}, in other words $\tilde{a}$ is too close from $v$ or too close from $z$ and the same property is true for $a'$. Let assume further that the equivalence classes, as in Proposition \ref{caracterisationpointpartieenx'dansx}, $I_{i'}^{n',k',x'}$, $I_{i''}^{n',k'',x'}$ for $k'=d(x',\tilde{a})$ and $a$ and for $k''=d(x'',\tilde{a'})$ and $a'$ respectively, are equal.

Then $\tilde{a}=\tilde{a'}$, therefore the sets of equivalence classes defined in Proposition \ref{lapartieprivéedecertaines est incluse} that we have to remove to $I_{i'}^{n',k',x'}$ with respect to $a$ and to $a'$ are the same. 

\end{lem}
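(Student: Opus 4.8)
The plan is to reduce the whole statement to the single claim $\tilde{a}=\tilde{a'}$. Indeed, once this is known, the families $(I_{l}^{n',k'+1,x'})_{l\in L}$ that Proposition \ref{lapartieprivéedecertaines est incluse} removes from $I_{i'}^{n',k',x'}$ with respect to $a$ and to $a'$ are defined by the \emph{same} condition $\measuredangle_{\tilde{a}}(x,a'')\le 24\delta$, inside the \emph{same} class $I_{i'}^{n',k',x'}=I_{i''}^{n',k'',x'}$, at the \emph{same} level (since $\tilde a=\tilde{a'}$ forces $k'=k''$, hence $k'+1=k''+1$); so the two removed families coincide. Thus I only have to prove $\tilde a=\tilde{a'}$.

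First I would extract distance information from the hypothesis. Writing $J:=I_{i'}^{n',k',x'}=I_{i''}^{n',k'',x'}$, the equality means $J$ is a class for $x'$ at both levels $k'$ and $k''$, so $a$ and $a'$ have equal distances to every point of $B(x',k')\cup B(x',k'')$. As $\tilde a$ lies on $[a,x']$ at distance $k'$ from $x'$ and $\tilde{a'}$ on $[a',x']$ at distance $k''$ from $x'$, this gives $d(a,\tilde a)=d(a',\tilde a)=n'-k'$ and $d(a,\tilde{a'})=d(a',\tilde{a'})=n'-k''$. Since $d(a',x')=n'$, the first identity yields $d(a',\tilde a)+d(\tilde a,x')=n'$, i.e.\ $\tilde a\in I(a',x')$; symmetrically $\tilde{a'}\in I(a,x')$. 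Applying Proposition \ref{caracterisationpointpartieenx'dansx} to $a'$ (which lies in $I_{i}^{n,k,x}$ by hypothesis) and, symmetrically, to $a$, together with Proposition \ref{tildea sur toute geod}, gives moreover $\tilde a\in I(a',y)$ and $\tilde{a'}\in I(a,y)$ for all $y\in B(x,k)$; in particular $\tilde a\in I(a',x)$ and $\tilde{a'}\in I(a,x)$.

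Next I would put $\tilde a$ and $\tilde{a'}$ on one common geodesic and reduce to $k'=k''$. Choose a geodesic $\gamma$ from $a$ to $x'$ through $\tilde{a'}$ (possible as $\tilde{a'}\in I(a,x')$); since $\measuredangle_{\tilde a}(a,x')>50\delta>12\delta$, Proposition \ref{angletoutegéod} forces $\tilde a\in\gamma$ as well. On $\gamma$ the vertex $\tilde a$ sits at distance $n'-k'$ from $a$ and $\tilde{a'}$ at distance $n'-k''$, so it suffices to prove $k'=k''$: the two vertices then lie at equal distance from $a$ on a single geodesic, hence coincide. To get $k'=k''$ I would invoke the maximality built into Definition \ref{pointloingrandangle}. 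The interval statements above show that $\tilde{a'}$ already satisfies the incidence conditions of the variational definition of $\tilde a$ for the triple $(a,x,x')$, and $\tilde a$ those of $\tilde{a'}$ for $(a',x,x')$; if in addition the two large-angle conditions transfer, then $\tilde{a'}$ is admissible in the definition of $\tilde a$ and $\tilde a$ in that of $\tilde{a'}$, whence $n'-k''\le n'-k'$ and $n'-k'\le n'-k''$, i.e.\ $k'=k''$.

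The main obstacle is exactly this transfer of the conditions $\measuredangle_{\cdot}(a,\cdot)>50\delta$ and $\measuredangle_{\cdot}(a',\cdot)>50\delta$ between $a$ and $a'$, the difficulty being that an angle is defined by an \emph{existential} choice of incident edges. The mechanism I would use is the following. Suppose $\tilde a\ne\tilde{a'}$ and, say, $k'> k''$; then on $\gamma$ the order is $a,\tilde a,\tilde{a'},x'$, and from $\tilde a,\tilde{a'}\in I(a',x')$ with $d(a',\tilde a)\le d(a',\tilde{a'})$ the order on $[a',x']$ is $a',\tilde a,\tilde{a'},x'$. Hence the geodesics from $\tilde{a'}$ to $a$ and from $\tilde{a'}$ to $a'$ both pass through $\tilde a$ and share the initial subgeodesic $[\tilde{a'},\tilde a]$ of length $k'-k''\ge 1$, so a common incident edge at $\tilde{a'}$ points toward $a$ and toward $a'$ at once; this is what should let the large angle $\measuredangle_{\tilde{a'}}(a',x')>50\delta$ pass to $\measuredangle_{\tilde{a'}}(a,x')>50\delta$ (and likewise toward $x$), making $\tilde{a'}$ admissible for $\tilde a$ yet strictly farther from $a$, contradicting maximality. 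The genuinely delicate point here is that the existential witness edge toward $a'$ need not be the shared one, so this step requires a careful analysis of the incident edges at $\tilde{a'}$ (controlling the witness via the shared segment and the fellow-travelling Lemma \ref{lemme de Bridson}); the degenerate configuration in which the Gromov product $(a,a')_{\tilde{a}}$ vanishes corresponds to a shared segment of length $0$, and there $k'=k''$ holds already. With $k'=k''$ established, $\tilde a$ and $\tilde{a'}$ are the unique vertex of $\gamma$ at distance $n'-k'$ from $a$, so $\tilde a=\tilde{a'}$, and the removed families coincide by the reduction in the first paragraph.
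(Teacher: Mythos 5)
Your first three paragraphs reproduce, correctly, the paper's own skeleton: from the equality of the two classes you get $d(a',\tilde{a})=n'-k'$ and $d(a,\tilde{a'})=n'-k''$, hence $\tilde{a}\in I(a',x')$ and $\tilde{a'}\in I(a,x')$; you then place $\tilde{a}$ and $\tilde{a'}$ on a common geodesic $\gamma$ from $a$ to $x'$ (Definition \ref{pointloingrandangle} together with Proposition \ref{angletoutegéod}); and you observe that everything reduces to $k'=k''$, because once $\tilde{a}=\tilde{a'}$ is known, the families removed in Proposition \ref{lapartieprivéedecertaines est incluse} are determined by the class and the point $\tilde{a}$ alone, so they coincide. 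All of this is sound and is exactly what the paper does.

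The gap is the step $k'=k''$, which is the crux of the lemma and which you never actually prove. The paper's proof takes this directly from the hypothesis: since $I_{i'}^{n',k',x'}=I_{i''}^{n',k'',x'}$, it reads off $d(x',\tilde{a})=k'=k''=d(x',\tilde{a'})$ and concludes $\tilde{a}=\tilde{a'}$ because both points lie on $\gamma$ at the same distance from $x'$; no transfer of angle conditions between $a$ and $a'$ appears anywhere. You instead try to obtain $k'=k''$ from the maximality in Definition \ref{pointloingrandangle}, which forces you to transfer the conditions $\measuredangle_{\tilde{a'}}(a',x)>50\delta$ and $\measuredangle_{\tilde{a'}}(a',x')>50\delta$ from $a'$ to $a$, and you concede yourself that the existential witness edge toward $a'$ need not be the shared edge pointing toward $\tilde{a}$; the words ``this is what should let'' and the unspecified appeal to Lemma \ref{lemme de Bridson} are exactly where the proof stops being a proof. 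Moreover this is not a routine verification to finish: the available tool, the triangle inequality for angles (Proposition \ref{Triangle inequality}) combined with the contrapositive of Proposition \ref{angletoutegéod} (which gives $\measuredangle_{\tilde{a'}}(a,a')\le 12\delta$ when some geodesic from $a$ to $a'$ avoids $\tilde{a'}$), yields only $\measuredangle_{\tilde{a'}}(a,x')>50\delta-12\delta=38\delta$ --- enough to force every geodesic from $a$ to $x'$ through $\tilde{a'}$, but strictly below the $50\delta$ threshold that admissibility in Definition \ref{pointloingrandangle} demands --- and in the complementary case, where every geodesic from $a$ to $a'$ does pass through $\tilde{a'}$, you get no bound on $\measuredangle_{\tilde{a'}}(a,a')$ at all. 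So the mechanism you sketch does not close as stated, and the statement remains unproved in your write-up.
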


\begin{proof}

By definition of the class $I_{i''}^{n',k'',x'}$, and since $I_{i'}^{n',k',x'}=I_{i''}^{n',k'',x'}$, $\tilde{a'}$ is a vertex in $B(x',k'')$
at minimal distance of $a$ (i.e. $n-k$). Then, we can deduce that $\tilde{a'} \in I(a,x')$, there exists a geodesic $\gamma$ from $x'$ to $a$, which goes through $\tilde{a'}$.\\

By Definition \ref{pointloingrandangle} of $\tilde{a}$, $\tilde{a} \in \gamma$. Since $I_{i'}^{n',k',x'}=I_{i''}^{n',k'',x'}$, $d(x',\tilde{a})=k'=k''=d(x',\tilde{a'})$. Then, we deduce that $\tilde{a}=\tilde{a'}$.\\

In Proposition \ref{lapartieprivéedecertaines est incluse}, the classes we remove from $I_{i'}^{n',k',x'}$ are defined by $I_{i'}^{n',k',x'}$
 and $\tilde{a}$ only. Then since $\tilde{a}=\tilde{a'}$, the classes we have to remove to $I_{i'}^{n',k',x'}$ with respect to $a$ and to $a'$ are the same.

\end{proof}

\begin{lem}\label{disjoint ou inclus}

Let $(n',k',i'), (n'',k'',i'') \in U_{x'} $, we consider the classes $I_{i'}^{n',k',x'}$ and $I_{i''}^{n'',k'',x'}$. Let $(I_{l}^{n',k'+1,x'})_{l \in L}$, $(I_{p}^{n'',k''+1,x'})_{p \in P}$ be sets of equivalence classes, chosen like in Proposition \ref{lapartieprivéedecertaines est incluse}, included in $I_{i'}^{n',k',x'}$ and $I_{i''}^{n'',k'',x'}$ respectively. We have the following properties :   

\begin{itemize}
    \item the classes $I_{i'}^{n',k',x'}$ and $I_{i''}^{n'',k'',x'}$ are either disjoint or one is included in the other;
    \item the sets $I_{i'}^{n',k',x'}$ and $I_{i''}^{n'',k'',x'} - \bigcupdot\limits_{ p \in P} I_{p}^{n'',k''+1,x'}$ are either disjoint or one is included in the other;
    \item the sets $I_{i'}^{n',k',x'}- \bigcupdot\limits_{ l \in L} I_{l}^{n'',k''+1,x'}$ and $I_{i''}^{n'',k'',x'} - \bigcupdot\limits_{ p \in P} I_{p}^{n'',k''+1,x'}$ are either disjoint or one is included in the other.
\end{itemize}

\end{lem}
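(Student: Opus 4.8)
The plan is to push everything back to the laminar structure of the classes at the single basepoint $x'$, reserving the previous Lemma \ref{casrelou de inclus ou disjoint} for the one configuration that pure nesting cannot handle. First I would treat $n' \neq n''$ separately: then $I_{i'}^{n',k',x'}$ together with all the $I_l^{n',k'+1,x'}$ lie on the sphere $S_{x'}^{n'}$, whereas $I_{i''}^{n'',k'',x'}$ together with the $I_p^{n'',k''+1,x'}$ lie on $S_{x'}^{n''}$; since distinct spheres are disjoint, every set in the first family is disjoint from every set in the second and the three assertions hold trivially. So I may assume $n'=n''=:N$ and argue inside $S_{x'}^N$, writing $A:=I_{i'}^{N,k',x'}$, $B:=I_{i''}^{N,k'',x'}$, $A^-:=A\setminus\bigcup_{l\in L} I_l^{N,k'+1,x'}$ and $B^-:=B\setminus\bigcup_{p\in P} I_p^{N,k''+1,x'}$.

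The structural fact I would use is that, at the fixed basepoint $x'$, raising $k$ refines the partition: as $B(x',k)\subseteq B(x',k+1)$, equal distances to all of $B(x',k+1)$ force equal distances to all of $B(x',k)$. Thus the whole family $\{I_i^{N,k,x'}\}_{0\le k\le N,\,i}$ is laminar (any two members are disjoint or nested), which is exactly the remark recorded just after the classes were introduced; two classes of the same level are disjoint or equal, and a class of level $k_2\ge k_1$ sits inside a unique class of level $k_1$. In particular, taking $k'\le k''$, either $A\cap B=\emptyset$, or $B\subseteq A$, or $A=B$ (a strict inclusion $A\subsetneq B$ being impossible, since the coarser class $A$ cannot be properly contained in the finer class $B$). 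Moreover, by Proposition \ref{lapartieprivéedecertaines est incluse} the removed families consist of classes exactly one level finer than $A$ and $B$, so they are themselves members of this laminar family.

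Point 1 is then immediate. For Point 2, comparing $A$ with $B^-$, I would run through the nesting of $A$ and $B$: if $A\cap B=\emptyset$ or $B\subseteq A$, then $B^-\subseteq B$ yields disjointness or $B^-\subseteq A$; and if $A\subsetneq B$ (which forces $A$ to sit at level $\ge k''+1$), laminarity compares $A$ with each removed child $I_p^{N,k''+1,x'}$, so $A$ either lies in one of them — making $A$ disjoint from $B^-$ — or is disjoint from all of them — making $A\subseteq B^-$. For Point 3, comparing $A^-$ with $B^-$, I may take $k'\le k''$ by symmetry and use the trichotomy above. Disjointness of $A,B$ passes to $A^-,B^-$. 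If $B\subsetneq A$ (so $k'<k''$), I would locate the unique level-$(k'+1)$ class $C$ with $B\subseteq C\subseteq A$ and split on whether $C$ is one of the removed children of $A$: if it is, then $A^-$ avoids $C\supseteq B\supseteq B^-$, giving disjointness; if it is not, then $C\subseteq A^-$, giving $B^-\subseteq B\subseteq C\subseteq A^-$.

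The only configuration escaping this scheme, and the step I expect to be the crux, is $A=B$ in Point 3: two reduced sets built from the very same class of $S_{x'}^N$ but a priori from different representatives, so that $A^-$ and $B^-$ need not be nested unless their removed families agree. This is precisely what Lemma \ref{casrelou de inclus ou disjoint} supplies: since both reductions are produced as in Proposition \ref{lapartieprivéedecertaines est incluse} (the case of subsection \ref{subsec : quatrième cas}), the associated points $\tilde a$ coincide, hence the removed families coincide and $A^-=B^-$. Everything else is the routine level bookkeeping above, so the laminar structure together with Lemma \ref{casrelou de inclus ou disjoint} closes all three points.
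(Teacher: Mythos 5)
Your proof is correct and follows essentially the same route as the paper's: both rest on the laminar (refinement) structure of the classes $I_{i}^{n,k,x'}$ at the fixed basepoint $x'$, a nesting case analysis for the second and third points, and Lemma \ref{casrelou de inclus ou disjoint} to identify the two removed families when the ambient classes coincide. The only differences are cosmetic: you dispose of $n'\neq n''$ upfront and route the case $B\subsetneq A$ through the intermediate level-$(k'+1)$ class $C$, whereas the paper picks a point in the nonempty intersection and compares $B$ directly with the removed children of $A$.
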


\begin{proof}
The first point is proved in Lafforgue's paper \cite{Lafforgue}. Let us recall it for the reader's convenience. Let us assume that $I_{i'}^{n',k',x'} \cap I_{i''}^{n'',k'',x'} \neq \emptyset $, we want to prove that $I_{i'}^{n',k',x'}\subset I_{i''}^{n'',k'',x'}$ or either $I_{i''}^{n'',k'',x'}\subset I_{i'}^{n',k',x'}$. Let $a \in I_{i'}^{n',k',x'} \cap I_{i''}^{n'',k'',x'} $  then by definition of the classes $n'=d(a,x')=n''$ so $n'=n''$. Then assume that $k' \geq k''$ and let $a' \in I_{i'}^{n',k',x'} $. By definition of the classes , for all $u \in B(x',k')$, $d(a',u)=d(a,u)$. It also true in particular for all $u \in B(x',k'')$ since $B(x',k'') \subset B(x',k')$ and then $a' \in I_{i'}^{n',k'',x'}  $. We deduce that $I_{i'}^{n',k',x'} \subset I_{i''}^{n',k'',x'}$ and the inclusion is strict when $k' \le k''$. In case where $k' \leq k''$, the reserve inclusion is true.\\

To prove the second point, let us assume that $I_{i'}^{n',k',x'} \cap (I_{i''}^{n'',k'',x'} - \bigcupdot\limits_{ p \in P} I_{p}^{n'',k''+1,x'}) \neq \emptyset $. Let $a \in I_{i'}^{n',k',x'} \cap (I_{i''}^{n'',k'',x'} - \bigcupdot\limits_{ p \in P} I_{p}^{n'',k''+1,x'}) $. In particular $a \in I_{i'}^{n',k',x'} \cap I_{i''}^{n'',k'',x'}$, according to the first point then $I_{i'}^{n',k',x'} \subset I_{i''}^{n'',k'',x'} $ or $ I_{i''}^{n'',k'',x'} \subset I_{i'}^{n',k',x'} $. If the second inclusion is true then in particular $(I_{i''}^{n'',k'',x'} - \bigcupdot\limits_{ p \in P} I_{p}^{n'',k''+1,x'}) \subset I_{i'}^{n',k',x'}$. Therefore, we assume that $I_{i'}^{n',k',x'} \subsetneq I_{i''}^{n'',k'',x'} $. According to the proof of the first point then $k'> k''$ . In particular $k' \geq k''+1$ then for every $p \in P$, $I_{i'}^{n',k',x'}$ is either disjoint or included in $I_{p}^{n'',k''+1,x'}$. Since $I_{i'}^{n',k',x'} \cap (I_{i''}^{n'',k'',x'} - \bigcupdot\limits_{ p \in P} I_{p}^{n'',k''+1,x'}) \neq \emptyset $, $I_{i'}^{n',k',x'}$ could not be included in some $I_{p}^{n'',k''+1,x'}$, then $I_{i'}^{n',k',x'}$ is disjoint of every $I_{p}^{n'',k''+1,x'}$ and then $I_{i'}^{n',k',x'} \subset (I_{i''}^{n'',k'',x'} - \bigcupdot\limits_{ p \in P} I_{p}^{n'',k''+1,x'}) $.\\

To prove the third point, let us assume that $(I_{i'}^{n',k',x'} - \bigcupdot\limits_{ l \in L} I_{l}^{n'',k''+1,x'}) \cap (I_{i''}^{n'',k'',x'} - \bigcupdot\limits_{ p \in P} I_{p}^{n'',k''+1,x'}) \neq \emptyset$. To begin, we can remark that if $I_{i'}^{n',k',x'}=I_{i''}^{n'',k'',x'}$, then according to Lemma \ref{casrelou de inclus ou disjoint}, we have that $ (I_{i'}^{n',k',x'} - \bigcupdot\limits_{ l \in L} I_{l}^{n'',k''+1,x'}) = (I_{i''}^{n'',k'',x'} - \bigcupdot\limits_{ p \in P} I_{p}^{n'',k''+1,x'}) $. Let us assume that $I_{i'}^{n',k',x'} \neq I_{i''}^{n'',k'',x'}$. Let $a \in (I_{i'}^{n',k',x'} - \bigcupdot\limits_{ l \in L} I_{l}^{n'',k''+1,x'}) \cap (I_{i''}^{n'',k'',x'} - \bigcupdot\limits_{ p \in P} I_{p}^{n'',k''+1,x'})$, then $a \in I_{i'}^{n',k',x'} \cap (I_{i''}^{n'',k'',x'} - \bigcupdot\limits_{ p \in P} I_{p}^{n'',k''+1,x'}) $. According to the second point either $I_{i'}^{n',k',x'} \subset (I_{i''}^{n'',k'',x'} - \bigcupdot\limits_{ p \in P} I_{p}^{n'',k''+1,x'}) $ and we deduce that $ (I_{i'}^{n',k',x'} - \bigcupdot\limits_{ l \in L} I_{l}^{n'',k''+1,x'}) \subset (I_{i''}^{n'',k'',x'} - \bigcupdot\limits_{ p \in P} I_{p}^{n'',k''+1,x'}) $, or $(I_{i''}^{n'',k'',x'} - \bigcupdot\limits_{ p \in P} I_{p}^{n'',k''+1,x'}) \subset I_{i'}^{n',k',x'} $. In this case, since $I_{i''}^{n'',k'',x'}$ and $I_{i'}^{n',k',x'} $ are assumed to be different, we remark that $k'<k''$ and in particular $k'+1 \leq k''$. Thereby $I_{i''}^{n'',k'',x'}$ is either disjoint or included in some $I_{p}^{n'',k''+1,x'}$. By assumption, it could not be included  in some $I_{p}^{n'',k''+1,x'}$, so we deduce that $(I_{i''}^{n'',k'',x'}  - \bigcupdot\limits_{ p \in P} I_{p}^{n'',k''+1,x'}) \subset (I_{i'}^{n',k',x'} - \bigcupdot\limits_{ l \in L} I_{l}^{n'',k''+1,x'}) $.

\end{proof}

We can prove Proposition \ref{propriété clé décomposition}, which summarizes the decompositions and gives a bound, in terms of $d(x,x')$, on the cardinality of the classes $I_{i'}^{n',k',x'}$ we have to use.

\begin{proof} of Proposition \ref{propriété clé décomposition}

Let consider $I_{i}^{n,k,x}$ for some $(n,k,i) \in U_{x}$.
The decomposition follows from Propositions \ref{ cas z  dans [tilde a, a]}, \ref{z entre u et atilde }, \ref{lapartieprivéedecertaines est incluse}. The fact that the decomposition is disjoint comes from Lemma \ref{disjoint ou inclus}.\\

According to Propositions \ref{ cas z  dans [tilde a, a]}, \ref{denombrement cas où z dans [u,tilde a] et tilde a loin de z}, \ref{denombrement partie quand atilde est trop proche et z dand [x,u]}, \ref{denombrement partie quand atilde est trop proche et z dand [u,atilde]}, the number of classes $I_{i'}^{n',k',x'}$, which appear positively in the decomposition is bounded above by : $$ L |Cone_{150\delta}([x,x'])| + L |Cone_{200\delta}(\hat{e})| .$$

In the same way, the number of classes, which appear negatively, is bounded above by :

$$ L |Cone_{224\delta}([x,x'])| + L |Cone_{174\delta}(\hat{e})| . $$

According to Propositions \ref{denombrement reciproque si z dans x et u et atilde loin}, \ref{denombrement inverse cas z dans u atilde}, \ref{denombrement reciproque cas atilde trop pret 1}, \ref{denombrement reciproque cas a tilde trop pret 2}, each class $I_{i'}^{n',k',x'}$ appears in the decomposition of at most $ L |Cone_{150\delta}([x,x'])| + L |Cone_{201\delta}(\hat{f})|  $ classes $I_{i}^{n,k,x}$.\\

By definition of $Cone_{224\delta}([x,x'])$, we know that :

$$ |Cone_{224\delta}([x,x'])| \leq  \sum_{e \in [x,x']} |Cone_{224\delta}(e)| \leq d(x,x') |Cone_{224\delta}(\hat{e})|.$$

Since the cardinality of the cones does not depend on the edge by Proposition \ref{cardinal cones}. Then, if we set $K=L|Cone_{224\delta}(\hat{e})|$, the proposition is proved.

\end{proof}

We can now compare the norms based at the point $x$ and based at the point point $x'=g.x$.

\begin{proposition} \label{norme de la matrice version 2}

Let $A$ be the matrix from $\ell^{2} (U_{x'}) $ to $\ell^{2} (U_{x})$ with coefficient $\frac{n+1}{n'+1}$ if $I_{i'}^{n',k',x'}$ appears positively in the decomposition of $I_{i}^{n,k,x}$, $-\frac{n+1}{n'+1}$ if $I_{i'}^{n',k',x'}$ appears negatively in the decomposition of $I_{i}^{n,k,x}$ and $0$ otherwise. Then $A \circ \Theta_{x'}=\Theta_{x}$ and furthermore :

$$ \vert\vert A\lvert\lvert \le (Kd(x,x')+K) (d(x,x')+1) . $$

\end{proposition}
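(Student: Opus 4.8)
The plan is to read off both assertions from the combinatorial decomposition of Proposition \ref{propriété clé décomposition} together with the disjointness of Lemma \ref{disjoint ou inclus}, exactly as in the tree case treated in Proposition \ref{boundonthenorm of A}. First I would verify $A\circ\Theta_{x'}=\Theta_x$ on the dense subspace $\mathbb{C}^{(X)}$. Fix $f\in\mathbb{C}^{(X)}$ and $(n,k,i)\in U_x$. By Proposition \ref{propriété clé décomposition} the set $I_i^{n,k,x}$ is a disjoint union of positively-appearing classes $I_{i'}^{n',k',x'}$ and of difference sets $I_{i'}^{n',k',x'}\setminus\bigsqcup_l I_l^{n',k'+1,x'}$, all pieces being pairwise disjoint by Lemma \ref{disjoint ou inclus}; at the level of indicator functions this reads $\mathbf{1}_{I_i^{n,k,x}}=\sum_{\text{pos}}\mathbf{1}_{I_{i'}^{n',k',x'}}-\sum_{\text{neg}}\mathbf{1}_{I_l^{n',k'+1,x'}}$, since for each difference the subtracted subclasses are contained in the minuend. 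Summing $f$ against this identity, $\sum_{a\in I_i^{n,k,x}}f(a)$ equals the corresponding signed sum of the partial sums $\sum_{a\in I_{i'}^{n',k',x'}}f(a)$.

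Now the $(n,k,i)$-entry of $A\,\Theta_{x'}(f)$ is $\sum_{(n',k',i')}\pm\frac{n+1}{n'+1}\,(\Theta_{x'}(f))_{(n',k',i')}$, and since $(\Theta_{x'}(f))_{(n',k',i')}=(n'+1)\sum_{a\in I_{i'}^{n',k',x'}}f(a)$, the factors $(n'+1)$ cancel and leave $(n+1)$ times the signed sum of the previous paragraph, namely $(n+1)\sum_{a\in I_i^{n,k,x}}f(a)=(\Theta_x(f))_{(n,k,i)}$. Hence $A\circ\Theta_{x'}=\Theta_x$.

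For the norm I would run the Schur test already used in Proposition \ref{boundonthenorm of A}. Every column index $(n',k',i')$ occurring in the decomposition of $I_i^{n,k,x}$ has first coordinate $n'=d(x',a)$ for some $a\in I_i^{n,k,x}$ (for the positive classes this is immediate, and each removed subclass inherits the $n'$ of the positive class it is subtracted from); since $n=d(x,a)$, the reverse triangle inequality gives $|n-n'|\le d(x,x')$ and therefore $\frac{n+1}{n'+1}\le d(x,x')+1$, a uniform bound on the entries of $A$. Proposition \ref{propriété clé décomposition} bounds the number of positive and of negative classes per decomposition each by $K(d(x,x')+1)$, so every row of $A$ carries at most $2K(d(x,x')+1)$ nonzero entries, while the converse count of the same proposition bounds every column by $K(d(x,x')+1)$ nonzero entries. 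Consequently $\sup_i\sum_j|a_{ij}|\le 2K(d(x,x')+1)^2$ and $\sup_j\sum_i|a_{ij}|\le K(d(x,x')+1)^2$, and the estimate $\|A\|\le\sqrt{(\sup_i\sum_j|a_{ij}|)(\sup_j\sum_i|a_{ij}|)}$ gives $\|A\|\le\sqrt2\,K(d(x,x')+1)^2$; absorbing $\sqrt2$ into the constant $K$ (which still depends only on $\delta$ and $f$) yields the stated $\|A\|\le(Kd(x,x')+K)(d(x,x')+1)$.

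The coefficient estimate and the Schur computation are routine and copy the tree case verbatim. The only genuinely delicate point is that the signed set-theoretic identity behind $A\circ\Theta_{x'}=\Theta_x$ must be \emph{exact}: the positive classes and the removed subclasses have to partition $I_i^{n,k,x}$ with the right signs and without spurious overlaps, and this must hold simultaneously across all the case distinctions of Subsections \ref{subsec : premier cas}--\ref{subsec : quatrième cas}. This is precisely what Proposition \ref{propriété clé décomposition} (for the decomposition itself) and Lemma \ref{disjoint ou inclus} (for disjointness of the pieces, in particular its third case where two difference sets meet) are designed to supply, so the work reduces to invoking them carefully rather than to any fresh estimate.
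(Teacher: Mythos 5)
Your proof is correct and follows essentially the same route as the paper's: the identity $A \circ \Theta_{x'}=\Theta_{x}$ is read off from the decomposition of Proposition \ref{propriété clé décomposition} together with the disjointness of Lemma \ref{disjoint ou inclus}, and the norm bound comes from the coefficient bound $\frac{n+1}{n'+1}\le d(x,x')+1$ (reverse triangle inequality) combined with the row/column sparsity counts, via the Schur-type estimate $\Vert A \Vert \le \sqrt{RC}\,M$. You are in fact slightly more careful than the paper on two points it glosses over: that a negatively appearing class $I_{l}^{n',k'+1,x'}$ inherits its $n'$ from the positive class it is subtracted from (the paper's choice of $a\in I_{i}^{n,k,x}\cap I_{i'}^{n',k',x'}$ is vacuous for removed subclasses), and the factor $2$ arising from counting positive and negative classes separately, which you legitimately absorb into the constant $K$ since it depends only on $\delta$ and the finesse function.
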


\begin{proof}
The first property is a direct corollary of the decompositions described in Proposition \ref{propriété clé décomposition}. To control the norm of the matrix $A$, we remark that according to Proposition \ref{propriété clé décomposition}, in each line of $A$ and in each column of $A$ there are at most $Kd(x,x')+K$ non-zero coefficients.

Furthermore, let $I_{i'}^{n',k',x'}$ be a class in the decomposition of $I_{i}^{n,k,x}$ and $a \in I_{i}^{n,k,x}\cap I_{i'}^{n',k',x'} $. Then :

$$ \vert n-n' \lvert = \vert d(x,a)-d(x',a) \lvert \le d(x,x'). $$

From this inequality, we get a bound on the coefficents of $A$, indeed $\frac{n+1}{n'+1} \leq d(x,x')+1$. Therefore, we get the following inequality on the norm of $A$ : 
$$\vert\vert A\lvert\lvert \le (Kd(x,x')+K) (d(x,x')+1) . $$
\end{proof}

To conclude, we will show that the map $\sum_{a} f(a)e_{a} \mapsto \sum_{a} f(a)$ is a continous linear form.

\begin{proposition}\label{formelinéairecontinue2}

The map :

$$\phi:\begin{array}{ccc}
  H  &  \to & \mathbb{C} \\
   \sum_{a} f(a)e_{a}  & \mapsto & \sum_{a} f(a)
\end{array} $$

is a continuous linear form.

\end{proposition}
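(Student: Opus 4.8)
The plan is to repeat, essentially verbatim, the argument used in the tree case (Proposition \ref{formelinéairecontinue}), since the only structural ingredient needed is that the $H_x$-norm controls the sphere sums $\sum_{a\in S_x^n} f(a)$. Linearity of $\phi$ is immediate on $\mathbb{C}^{(X)}$ and extends to $H$ by density, so the entire content of the statement is the continuity estimate $|\phi(f)|\le C\left\| f\right\|_{H_x}$ for some constant $C$ independent of $f$.

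First I would group the sum defining $\phi(f)$ according to spheres, writing $\sum_a f(a)=\sum_{n\in\mathbb{N}}\sum_{a\in S_x^n} f(a)$, and apply the Cauchy--Schwarz inequality to the factorization $1=(n+1)\cdot\frac{1}{n+1}$, which gives
$$ \left\vert \sum_a f(a) \right\vert^2 \le \left( \sum_{n\in\mathbb{N}} \frac{1}{(n+1)^2} \right) \left( \sum_{n\in\mathbb{N}} (n+1)^2 \left\vert \sum_{a\in S_x^n} f(a) \right\vert^2 \right). $$
The first factor is a convergent series. The key observation is that the second factor is dominated by $\left\| f\right\|_{H_x}^2$: since $B(x,0)=\{x\}$, every point of $S_x^n$ lies at the same distance $n$ from every point of $B(x,0)$, so the equivalence relation defining $J^{n,0,x}$ collapses all of $S_x^n$ into a single class. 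In other words $J^{n,0,x}$ is a singleton whose unique class is $I^{n,0,x}=S_x^n$. Consequently the $k=0$ layer of the norm equals exactly $\sum_{n\in\mathbb{N}}(n+1)^2\bigl\vert\sum_{a\in S_x^n} f(a)\bigr\vert^2$, and since every summand in the definition of $\left\| f\right\|_{H_x}^2$ is nonnegative, this $k=0$ layer is at most $\left\| f\right\|_{H_x}^2$.

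Combining these two points yields
$$ |\phi(f)|^2 \le \left( \sum_{n\in\mathbb{N}} \frac{1}{(n+1)^2} \right) \left\| f\right\|_{H_x}^2, $$
so that $\phi$ is continuous with $|||\phi|||^2 \le \sum_{n\in\mathbb{N}} \frac{1}{(n+1)^2} < \infty$. I do not expect any genuine obstacle here: the passage from the tree to the uniformly fine case changes nothing in this estimate, because the only feature exploited is the trivial $k=0$ layer of the partition, which reduces the general statement to the same computation as in Proposition \ref{formelinéairecontinue}. The one step worth making explicit is precisely the identification $I^{n,0,x}=S_x^n$, which is what allows the $k=0$ term of the norm to absorb the weighted sphere sums.
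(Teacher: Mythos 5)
Your proof is correct and is exactly the argument the paper intends: the paper's own proof simply says it is similar to Proposition \ref{formelinéairecontinue}, i.e.\ Cauchy--Schwarz over spheres followed by absorbing the weighted sphere sums into the $k=0$ layer of the norm. Your one added point of care --- noting that $B(x,0)=\{x\}$ forces $J^{n,0,x}$ to be a singleton with unique class $S_x^n$, so the $k=0$ layer of the general norm plays the role that $I_{x}^{n,0,x}=S_{x}^{n}$ played in the tree case --- is precisely the adaptation needed, and nothing else changes.
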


The proof is similar to the proof of Proposition \ref{formelinéairecontinue}.

To summarize, we built a continuous representation from $G$ on an Hilbert space which is sub-exponential.
Furthermore, this representation preserves the linear form $\phi$, which is continous according to Proposition \ref{formelinéairecontinue2}. As $G$ has the strong Property $(T)$, according to Proposition \ref{Proposition : fixed point for strong Property (T)},  this representation should admit a non-zero invariant vector. The following proposition allows us to conclude the proof of Theorem \ref{theorem:main result}.

\begin{proposition}
If the representation $H$ admits a non-zero $G$-invariant vector, then every orbit for the action of $G$ on $X$ is bounded.

\end{proposition}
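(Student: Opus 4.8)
The plan is to reproduce, essentially verbatim, the argument used for trees in Proposition~\ref{Proposition : Orbites bornées}, since nothing in that argument was specific to the tree structure. The two ingredients I would use are that a $G$-invariant vector must be constant on $G$-orbits in $X$, and that the norm of $H_x$ dominates the $\ell^2$-norm on $X$.

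First I would record the standard reduction that it suffices to bound a single orbit: if the orbit $G.x$ has diameter $D$, then for every $y\in X$ and $g\in G$ the triangle inequality gives $d(y,g.y)\le d(y,x)+d(x,g.x)+d(g.x,g.y)=2d(x,y)+d(x,g.x)\le 2d(x,y)+D$, so every orbit is bounded once one of them is.

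Next, let $f$ be a non-zero $G$-invariant vector of $H=H_x$. By definition of the representation, $\pi(g)f=f$ reads $f(g^{-1}a)=f(a)$ for all $a\in X$, so $f$ is constant on each orbit of $G$ acting on $X$. I would then observe that the $H_x$-norm is finer than the $\ell^2$-norm. The key point is that for $k=n$ the partition $J^{n,n,x}$ of the sphere $S_x^n$ consists of singletons: for $a\in S_x^n$ one has $a\in B(x,n)$, and the distance to $a$ itself separates $a$ from every other vertex of the sphere. Keeping only the $k=n$ term in the defining sum therefore yields
\[
\left\| f \right\|_{H_x}^2 \ge \sum_{n\in\mathbb{N}} (n+1)^2 \sum_{i\in J^{n,n,x}} \Big| \sum_{a\in I_i^{n,n,x}} f(a) \Big|^2 = \sum_{n\in\mathbb{N}} (n+1)^2 \sum_{a\in S_x^n} |f(a)|^2 \ge \sum_{a\in X} |f(a)|^2 ,
\]
so $f\in\ell^2(X)$.

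Finally I would combine these observations: since $f\in\ell^2(X)$, the set of vertices on which $|f|$ exceeds any fixed positive threshold is finite, so $f$ cannot be non-zero and constant on an infinite set. As $f\neq0$, choose $x_0$ with $f(x_0)\neq0$; then $f$ takes the non-zero value $f(x_0)$ on all of $G.x_0$, forcing $G.x_0$ to be finite and hence bounded, and by the first step every orbit is then bounded. The argument is routine, and the only point requiring a moment's care is the verification that the $k=n$ partition is into singletons for the equivalence relation used in the general definition; this is immediate because $a$ itself belongs to $B(x,n)$, so no new difficulty arises from the fact that $X$ is merely uniformly fine rather than a tree.
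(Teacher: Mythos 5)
Your proposal is correct and is essentially the paper's own argument: the paper simply says the proof is similar to the tree case (Proposition \ref{Proposition : Orbites bornées}), and your adaptation reproduces exactly that argument, including the one point that genuinely needs checking in the general setting, namely that for $k=n$ the equivalence classes $I_{i}^{n,n,x}$ are singletons (which holds since $a\in B(x,n)$ and taking $z=a$ in the defining relation forces $d(b,a)=0$), so that the $H_{x}$-norm dominates the $\ell^{2}$-norm and an invariant vector, being constant on orbits, can only be non-zero on finite orbits.
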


The proof of this Proposition is similar to the proof of Proposition \ref{Proposition : Orbites bornées}.

\subsection{Proof of Theorem \ref{theorem:main result bis}}

In Theorem \ref{theorem:main result}, we prove that every group $G$ hyperbolic relatively to a family of subgroups with infinite index in $G$ admits a unbounded representation of polynomial growth of degree $2$, on every Hilbert space $H_{x}$, for all $x\in X$, where $X$ is the coned-off graph. In particular, it acts on $H_{e}$. Moreover, this representation fixes the continuous linear form $\phi$ defined in \ref{formelinéairecontinue2}.\\

In the first part of \cite{AlvarezLafforguehyperboliclp}, it is explained how to get from this representation an affine representation. The representation $\pi$ preserves the hyperplane $\{ f \in H_{e}~|~ \pi(f)=0 \}$. We remark that the function $\delta_{e}: X \to \mathbb{C} $ defined by :

$$
\delta_{e}(x) = \left\{
    \begin{array}{ll}
        0 & \mbox{if} ~ x \ne e \\
        1 & \mbox{if} ~ x=e
    \end{array}
\right.
$$
is an element of $H_{e}$.\\

Moerover, for all $g \in G$, $\delta_{e}-g.\delta_{e} \in \{ f \in H_{e}~|~ \pi(f)=0 \} $. Therefore, according to \cite{AlvarezLafforguehyperboliclp}, $G$ acts on the affine Hilbert space $\delta_{e}+\{ f \in H_{e}~|~ \pi(f)=0 \}$ with linear part equals to $\pi$ and the cocycle $c(g)=\delta_{e}-g.\delta_{e}$.

To conclude, we remark that :

$$\begin{aligned} 
           \left\| \delta_{e}-g.\delta_{e}  \right\|_{H_{e}}^{2} & = \sum_{n \in \mathbb{N}} (n+1)^{2} \sum_{k=0}^{n} \sum_{i \in J^{n,k,e}} { \biggr\vert \sum_{a \in I_{i}^{n,k,e}} (\delta_{e}-g.\delta_{e})(a) \biggr\lvert }^{2}\\      
            & \geq \sum_{n \in \mathbb{N}} (n+1)^{2}  \sum_{i \in J^{n,n,e}} { \biggr\vert \sum_{a \in I_{i}^{n,n,e}} (\delta_{e}-g.\delta_{e})(a) \biggr\lvert }^{2}\\
            & \geq \sum_{n \in \mathbb{N}} (n+1)^{2} \sum_{a \in S_{x}^{n} } { \vert (\delta_{e}-g.\delta_{e})(a) \lvert }^{2}\\
            & \geq (d(1,g)+1)^{2}+2^{2}~(\text{where } d \text{ denote the coned-off distance}).
\end{aligned}$$

With this fact, we obtain that $\left\|c(g)\right\|_{H_{e}}^{2} \to \infty$ as $g \to \infty$ in the coned-off graph.

This prove Theorem \ref{theorem:main result bis}.

\newpage

\bibliographystyle{alpha}
\bibliography{bibliography}

\end{document}